\newtheorem{theorem}{Theorem}[chapter]
\newtheorem{definition}[theorem]{Definition}
\newtheorem{lemma}[theorem]{Lemma}
\newtheorem{proposition}[theorem]{Proposition}
\newtheorem*{rep@theorem}{\rep@title}
\newcommand{\newreptheorem}[2]{%
\newenvironment{rep#1}[1]{%
 \def\rep@title{#2 \ref{##1}}%
 \begin{rep@theorem}}%
 {\end{rep@theorem}}}
\newcommand{\RR}{\mathbb{R}}      % for Real numbers
\begin{document}
\phd
\title{An Experimental Mathematics Approach to Several Combinatorial Problems}
\author{YUKUN YAO}
\program{Mathematics}
\director{Doron Zeilberger}
\approvals{4} % CHANGE THIS IF YOU HAVE EXTRA COMMITTEE MEMBERS
\submissionyear{2020} %CHANGE TO APPLICABLE YEAR...
\submissionmonth{May} % ... AND MONTH

\abstract{

Experimental mathematics is an experimental approach to mathematics in which programming and symbolic computation are used to investigate mathematical objects, identify properties and patterns, discover facts and formulas and even automatically prove theorems. 

With an experimental mathematics approach, this dissertation deals with several combinatorial problems and demonstrates the methodology of experimental mathematics.

We start with parking functions and their moments of certain statistics. Then we discuss about spanning trees and ``almost diagonal" matrices to illustrate the methodology of experimental mathematics. We also apply experimental mathematics to Quicksort algorithms to study the running time. Finally we talk about the interesting peaceable queens problem.

}
\beforepreface

%%%%%%%%%%%%%%%%%%%
\acknowledgements{

First and foremost, I would like to thank my advisor, Doron Zeilberger, for all of his help, guidance and encouragement throughout my mathematical adventure in graduate school at Rutgers. He introduced me to the field of experimental mathematics and many interesting topics in combinatorics. Without him, this dissertation would not be possible.

I am grateful to many professors at Rutgers: Michael Kiessling, for serving on my oral qualifying exam and thesis defense committee; Vladimir Retakh, for serving on my defense committee; Shubhangi Saraf and Swastik Kopparty, for teaching me combinatorics and serving on my oral exam committee.

I am also grateful to Neil Sloane, for introducing me to the peaceable queens problem and numerous amazing integer sequences and for serving on my defense committee. 

I would like to thank other combinatorics graduate students here at Rutgers. From them I learned about a lot of topics in this rich and fascinating area. 

I would like to express my appreciation to my officemate, Lun Zhang, for interesting conversation and useful remarks. 

I appreciate Shalosh B. Ekhad's impeccable computing support and the administrative support from graduate director Lev Borisov and graduate secretary Kathleen Guarino.

Finally, I thank my parents. They always support and encourage me to pursue what I want in all aspects of my life.

}

%%%%%%%%%%%%%%%%%%%

\figurespage     % (make a page for list of figures)
%\tablespage      % (make a page for list of tables)

\afterpreface 

%%%%%%%%%%%%%%%%%%%%%%%%%%%%%%%%%%%%%%%%%%%%%%%%%%%%%%%%%%%%%%%%%%%%%%%%
\chapter{Introduction} 
\label{ch:introduction}
%%%%%%%%%%%%%%%%%%%%%%%%%%%%%%%%%%%%%%%%%%%%%%%%%%%%%%%%%%%%%%%%%%%%%%%%
Since the creation of computers, they have been playing a more and more important role in our everyday life and the advancement of science and technology, bringing efficiency and convenience and reshaping our world. 

Especially, the use of computers is becoming increasingly popular in mathematics. The proof of the four color theorem would be impossible without computers. Compared with human beings, computers are faster, more powerful, tireless, less error-prone. Computers can do much more than numerical computation. With the development of computer science and symbolic computation, experimental mathematics, as an area of mathematics, has been growing fast in the last several decades.

With experimental mathematics, it is much more efficient and easier to look for a pattern, test a conjecture, utilize data to make a discovery, etc. Computers can be programmed to make conjectures and provide rigorous proofs with little or no human intervention. They can also do what humans cannot do or what takes too long to complete, e.g., solving a large linear system, analyzing a large data set, symbolically computing complicated recurrence relations. Just as machine learning revolutionizes computer science, statistics and information technology, experimental mathematics revolutionizes mathematics.

The main theme of the dissertation is to use the methods of experimental mathematics to study different problems, and likewise, to illustrate the methodology and power of experimental mathematics by showing some case studies and how experimental mathematics works under various situations. 

In Chapter 2, we discuss the first problem that is related to the area statistic of of parking functions. Our methods are purely finitistic and elementary, taking full advantage, of course, of our beloved silicon servants. We first introduce the background and definition of parking functions and their generalizations. For $a$-parking functions, we derive the recurrence relation and the number of them when the length is $n$. Furthermore, a bijection between $a$-parking functions and labelled rooted forests is discovered (or possibly re-discovered). Then we consider the sum and area statistics. With the weighted counting of these statistics, the explicit formula between expectation and higher moments can be found. We also look at the limiting distribution of the area statistic, which is Airy distribution.

In Chapter 3, we use two instructive case studies on spanning trees of grid graphs and ``almost diagonal'' matrices, to show that often, just like Alexander the Great before us, the simple, ``cheating'' solution
to a hard problem is the best. So before you spend days (and possibly years) trying to answer
a mathematical question by analyzing and trying to `understand' its structure, let your computer generate enough data, and then let it guess the answer.
Often its guess can be  proved by a quick `hand-waving' (yet fully rigorous) `meta-argument'.

In Chapter 4, we apply experimental mathematics to algorithm analysis. Using recurrence relations, combined with symbolic computations, we make a detailed study of the running times of numerous variants of the celebrated Quicksort algorithms, where we consider the variants of single-pivot and multi-pivot Quicksort algorithms as discrete probability problems. With nonlinear difference equations, recurrence relations and experimental mathematics techniques, explicit expressions for expectations, variances and even higher moments of their numbers of comparisons and swaps can be obtained. For some variants, Monte Carlo experiments are performed, the numerical results are demonstrated and the scaled limiting distribution is also discussed.

In Chapter 5, finally, we discuss, and make partial progress on, the peaceable queens problem, the protagonist of OEIS sequence {\tt A250000}. Symbolically, we prove that Jubin's construction of two pentagons is at least a local optimum. Numerically, we find the exact numerical optimums for some specific configurations. Our method can be easily applied to more complicated configurations with more parameters. 

All accompanying Maple packages and additional input/output files can be found at the author's homepage:

{\tt http://sites.math.rutgers.edu/\~{}yao}  .

The accompanying Maple package for Chapter 2 is {\tt ParkingStatistics.txt}. There are lots of output files and nice pictures on the front of this chapter.

The accompanying Maple packages for Chapter 3 are {\tt JointConductance.txt}, {\tt GFMa-} {\tt trix.txt} and {\tt SpanningTrees.txt}. There are also numerous sample input and output files on the front of this chapter.

The accompanying Maple packages for Chapter 4 are {\tt QuickSort.txt} and {\tt Findrec.} {\tt txt}. {\tt QuickSort.txt} is the main package of this chapter and all procedures mentioned in the chapter are from this package unless noted otherwise. {\tt Findrec.txt} is mainly used to find a recurrence relation, i.e., difference equation of moments from the empirical data.

The accompanying Maple package for Chapter 5 is {\tt PeaceableQueens.txt}. There are lots of output files and nice pictures on the front of this chapter as well.

Experimental mathematics has made a huge impact on mathematics itself and how mathematicians discover new mathematics so far and is the mathematics of tomorrow. In the information era, the skyscraper of mathematics is becoming taller and taller. Hence we need tools better than pure human logic to maintain and continue building this skyscraper. While the computing capacity, patience and time of humans are limited, experimental mathematics, and ultimately, automated theorem proving, will be the choice of history.

%%%%%%%%%%%%%%
\chapter{The Statistics of Parking Functions}
%%%%%%%%%%%
This chapter is adapted from \cite{47}, which has been published on {\it The Mathematical Intelligencer}. It is also available on arXiv.org, number 1806.02680.

\section{Introduction}

Once upon a time, way back in the nineteen-sixties, there was a one-way street (with no passing allowed), with $n$  parking spaces
bordering the sidewalk. Entering the street were  $n$ cars, each driven by a loyal {\it husband}, and sitting next to him,
dozing off, was his capricious (and a little bossy) {\it wife}. At a random time (while still along the street), the wife wakes
up and orders her husband, {\bf park here, darling!}. If that space is unoccupied, the hubby gladly obliges,
and if the parking space is occupied, he  parks, if possible, at the next still-empty parking space.
Alas, if all the latter parking spaces are occupied, he has to go around the block, and drive back to the
beginning of this one-way street, and then look for the first available spot. Due to construction, this
wastes half an hour, making the wife very cranky. 

{\bf Q}: What is the probability that no one has to go around the block?

{\bf A}: $(n+1)^{n-1}/n^n \, \asymp  \, \frac{e}{n+1}$.

Both the question and its elegant answer are due to Alan Konheim and Benji Weiss \cite{29}.

Suppose wife $i$ ($1 \leq i \leq n$) prefers parking space $p_i$, then the preferences of the wives can
be summarized as an array $(p_1, \dots, p_n)$, where $1 \leq p_i \leq n$. So altogether there are $n^n$ possible
preference-vectors, starting from $(1, \dots , 1)$ where it is clearly possible for everyone  to park,
and ending with $(n,...,n)$ (all $n$), where every wife prefers the last parking space, and of course
it is impossible. Given a preference vector $(p_1, \dots, p_n)$, let $(p_{(1)}, \dots, p_{(n)})$
be its {\it sorted} version, arranged in (weakly) increasing order. \hfill\break
For example if $(p_1,p_2,p_3,p_4)=(3,1,1,4)$ then  $(p_{(1)},p_{(2)},p_{(3)},p_{(4)})=(1,1,3,4)$.

We invite our readers to convince themselves that a parking space preference vector  $(p_1, \dots, p_n)$
makes it possible for every husband to park without inconveniencing his wife if and only if $p_{(i)} \leq i$ for
$1 \leq i \leq n$. This naturally leads to the following definition.

\begin{definition}[Parking Function]
A vector of positive integers $(p_1, \dots, p_n)$ with $1 \leq p_i \leq n$ is
a {\bf parking function} if its (non-decreasing) sorted version $(p_{(1)}, \dots, p_{(n)})$ 
(i.e. $p_{(1)} \leq p_{(2)} \leq \dots  \leq p_{(n)}$, and the latter is a permutation of the former)
satisfies
$$
p_{(i)} \leq i , \quad (1 \leq i \leq n) .
$$
\end{definition}

As we have already mentioned above, Alan Konheim and Benji Weiss \cite{29} were the first to state and  prove the following theorem.

\begin{theorem}[The Parking Function Enumeration Theorem]
There are $(n+1)^{n-1}$ parking functions of length $n$.
\end{theorem}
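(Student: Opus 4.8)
The plan is to use Pollak's circular argument, which establishes the count with essentially no computation. First I would enlarge the problem: place $n+1$ parking spots around a circle, labelled by $\mathbb{Z}/(n+1)\mathbb{Z}$, and let each of the $n$ cars have a preference drawn from all $n+1$ spots, so that there are now $(n+1)^n$ preference vectors. I keep the same parking rule, except that a car reaching an occupied spot keeps going \emph{clockwise around the circle}. Since there are $n+1$ spots and only $n$ cars, this circular process always succeeds, and after all cars have parked exactly one spot is left empty.

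The next step is to identify which circular preference vectors correspond to genuine parking functions. I would show that a vector $(p_1,\dots,p_n) \in \{1,\dots,n+1\}^n$ leaves spot $n+1$ (the class $0$) empty if and only if, regarded as a vector with entries in $\{1,\dots,n\}$, it is a parking function: if spot $n+1$ is never used the circular process never wraps around and coincides with the original one-way-street process, which succeeds precisely for parking functions; conversely a parking function occupies exactly the spots $1,\dots,n$ and so leaves $n+1$ empty. Equivalently, one can check this directly against the sorted-coordinate criterion $p_{(i)} \le i$ from the Parking Function definition.

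The heart of the argument is a symmetry. Adding $1$ to every coordinate modulo $n+1$ and simultaneously relabelling every spot by adding $1$ is literally the same parking process, so it carries the final configuration to its rotation by one spot; in particular the empty spot moves from $s$ to $s+1$. Hence the additive action of $\mathbb{Z}/(n+1)\mathbb{Z}$ on the $(n+1)^n$ circular preference vectors is free, every orbit has size exactly $n+1$, and every orbit contains exactly one vector whose empty spot is $n+1$, i.e.\ exactly one parking function. Dividing, the number of parking functions of length $n$ is $(n+1)^n/(n+1) = (n+1)^{n-1}$.

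The step I expect to require the most care is the equivariance claim in the previous paragraph: one must argue that the map sending a preference vector to its set of occupied spots commutes with cyclic rotation, which reduces to checking that the clockwise search rule is rotation-invariant and that the order in which the cars are processed does not affect the \emph{final occupied set} (even though it affects who ends up where). An alternative, more computational route would set up a recurrence for the count by conditioning on the number of wives who prefer spot $1$, obtain an Abel-type convolution identity, and solve it; but this trades the one-line counting argument for a binomial-identity manipulation, so I would keep Pollak's argument as the main line and relegate the recurrence to a remark. A third option is to exhibit an explicit bijection with labelled rooted forests on $n$ vertices (equivalently labelled trees on $n+1$ vertices) and invoke Cayley's formula; this is natural since the thesis develops such a bijection anyway, but it presupposes the forest count, so I would treat it as a corollary of the bijection rather than as the primary proof.
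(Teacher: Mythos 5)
Your proof is correct, but it is a genuinely different argument from the one in the paper. You use Pollak's circular symmetry argument: embed the problem on $n+1$ spots arranged in a circle, observe that the $\mathbb{Z}/(n+1)\mathbb{Z}$ action of adding a constant to all preferences is free and shifts the unique empty spot equivariantly, and conclude that exactly $(n+1)^n/(n+1)$ orbit representatives leave spot $n+1$ empty, these being precisely the parking functions. The paper explicitly acknowledges Pollak's proof as ``possibly the slickest'' and then deliberately declines to use it, opting instead for the route you relegate to a remark: it conditions on the number of $1$'s in the preference vector, which forces the introduction of the more general $a$-parking functions to make the recurrence close on itself, yielding $p(n,a)=\sum_{k=0}^{n}\binom{n}{k}p(n-k,a+k-1)$; it then computes data in Maple, guesses $p(n,a)=a(a+n)^{n-1}$, and verifies the resulting Abel-type binomial identity by an elementary generating-function manipulation. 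What Pollak's argument buys is brevity and conceptual transparency for the bare count; what the paper's recurrence buys is a scheme that carries over verbatim to the weighted enumeration by the Sum and Area statistics (just insert a factor $x^n$ into the recurrence), which is the actual goal of the chapter and which the circular argument does not obviously support. One small remark on your own proof: the equivariance step you flag as delicate does not actually require that the final occupied set be independent of the order in which cars are processed; the cars are processed in a fixed order $1,\dots,n$, and it suffices that the clockwise search rule commutes with rotating all labels, which is immediate.
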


There are many proofs of this lovely theorem, possibly the slickest is due to the brilliant human
Henry Pollak, (who apparently did not deem it worthy of publication. It is quoted, e.g. in \cite{15}).
It is nicely described on pp. 4-5 of \cite{41} (see also \cite{42}), hence we will not repeat it here. 
Instead, as a warm-up to the `statistical' part, and to illustrate the power of experiments, we will
give a much uglier proof, that, however, is {\it motivated}.

Before going on to present {\it our} (very possibly not new) `humble' proof, we should mention that one natural way to prove the
Konheim-Weiss theorem is by a {\it bijection} with labeled trees on $n+1$ vertices, that Arthur
Cayley famously proved is also  enumerated by $(n+1)^{n-1}$. The first such bijection, as far as we know,
was given by the great formal linguist, Marco Sch\"utzenberger \cite{37}. This was followed by an elegant bijection
by the {\it classical} combinatorial giants Dominique Foata and John Riordan \cite{15}, and others.

Since we know (at least!) $16$ different proofs of Cayley's formula (see, e.g. \cite{53}), and at least four  different bijections
between parking functions and labeled trees, there
are at least $64$ different proofs (see also \cite{44}, ex. 5.49) of the Parking Enumeration theorem. To these one must add proofs
like Pollak's, and a few other ones.

Curiously, our `new' proof has some resemblance to the very first one in \cite{29}, since they both
use {\it recurrences} (one of the greatest tools in the experimental mathematician's tool kit!),
but our proof is (i) motivated and (ii) experimental (yet fully rigorous).

\section{An Experimental Mathematics Motivated Proof}

When encountering a new combinatorial family, the first task is to write a computer program to
enumerate as many terms as possible, and hope to {\it conjecture} a nice formula.
One can also try and ``cheat" and use the great OEIS, to see whether anyone came up with this sequence
before, and see whether this new combinatorial family is mentioned there.

A very brute force approach, that will not go very far (but would suffice to get the first five terms needed for the OEIS)
is to list the {\it superset}, in this case all the $n^n$ vectors in $\{1 \dots n\}^n$ and for each of them sort it,
and see whether the condition $p_{(i)} \leq i$ holds for all $1 \leq i \leq n$. Then count the vectors that
pass this test.

But a much better way is to use {\bf dynamical programming} to express the desired sequence, let's call it $a(n)$,
in terms of values $a(i)$ for $i<n$. 

Let's analyze the anatomy of a typical parking function of length $n$.
A natural parameter is the number of $1$'s that show up, let's call it $k$ ($0 \leq k \leq n$).
i.e.
$$
p_{(1)}=1 ,\quad \dots , \quad  p_{(k)}=1  , \quad 2 \leq p_{(k+1)} \leq k+1 
 , \quad \dots  , \quad  p_{(n)} \leq n  .
$$
Removing the $1$'s yields a shorter weakly-increasing vector
$$
2 \leq p_{(k+1)}  \leq p_{(k+2)} \leq  \dots \quad  \leq \, p_{(n)} ,
$$
satisfying  
$$
p_{(k+1)} \leq k+1, \quad p_{(k+2)} \leq k+2  , \quad \dots  , \quad  p_{(n)} \leq n  .
$$
Define
$$
(q_1, \dots, q_{n-k})\, := \, (p_{(k+1)}-1, \dots , p_{(n)}-1 )  .
$$
The vector $(q_1, \dots, q_{n-k})$ satisfies
$$
1 \leq q_1 \leq \dots \leq q_{n-k}   ,
$$
and
$$
q_1 \leq k , \quad q_2 \leq k+1  , \quad \dots, \quad q_{n-k} \leq n-1  .
$$

We see that the set of parking functions with exactly $k$ $1$'s may be obtained by taking the above set of vectors of length $n-k$,
adding $1$ to each component, scrambling it in every which way, and inserting the $k$ $1$'s in every which way.

Alas, the `scrambling' of the set of such $q$-vectors is not of the original form. We are forced to consider a
more general object, namely scramblings of vectors of the form $p_{(1)} \leq \dots \leq  p_{(n)}$ with
the condition
$$
p_{(1)} \leq a , \quad p_{(2)} \leq a+1  , \quad \dots  , \quad p_{(n)} \leq a+n-1  ,
$$
for a general, positive integer $a$, not just for $a=1$.
So in order to get the dynamical programming recurrence rolling we are forced to introduce a more general object,
called an {\bf $a$-parking function}. This leads to the following definition.

\begin{definition}[$a$-Parking Function]
 A vector of positive integers $(p_1, \dots, p_n)$ with $1 \leq p_i \leq n+a-1$ is
an {\bf $a$-parking function} if its (non-decreasing) sorted version $(p_{(1)}, \dots, p_{(n)})$ 
(i.e. $p_{(1)} \leq p_{(2)} \leq \dots  \leq p_{(n)}$, and the latter is a permutation of the former)
satisfies
$$
p_{(i)} \leq a+i-1 , \quad (1 \leq i \leq n)  .
$$
\end{definition}

Note that the usual parking functions are the special case $a=1$. So if we would be able to find an efficient
recurrence for counting $a$-parking functions, we would be able to answer our original question.

So let's redo the above `anatomy' for these more general creatures, and hope that the two parameters $n$ and $a$ would
suffice to establish a {\bf recursive scheme}, and we won't need to introduce yet more general creatures.

Let's analyze the anatomy of a typical $a$-parking function of length $n$.
Again, a natural parameter is the number of $1$'s that show up, let's call it $k$ ($0 \leq k \leq n$).
i.e.
$$
p_{(1)}=1 ,\quad \dots  , \quad  p_{(k)}=1  , \quad  2 \leq p_{(k+1)} \leq a+k  , \quad \dots,  p_{(n)} \leq a+n-1   .
$$
Removing the $1$'s yields a sorted vector
$$
2 \leq p_{(k+1)}  \leq p_{(k+2)} \leq  \dots \, \leq \, p_{(n)}  ,
$$
satisfying  
$$
p_{(k+1)} \leq k+a  , \quad p_{(k+2)} \leq k+a+1 , \quad  \dots , \quad  p_{(n)} \leq n+a-1  .
$$
Define
$$
(q_1, \dots, q_{n-k})\, := \, (p_{(k+1)}-1  , \quad \dots  , \quad p_{(n)}-1 )  .
$$
The vector $(q_1, \dots, q_{n-k})$ satisfies
$$
q_1 \leq \dots \leq q_{n-k}  
$$
and
$$
q_1 \leq k+a-1  , \quad q_2 \leq k+a  , \quad \dots , \quad q_{n-k} \leq n+a-1  .
$$

We see that the set of $a$-parking functions with exactly $k$ $1$'s may be obtained by taking the above set of vectors of length $n-k$,
adding $1$ to each component, scrambling it in every which way, and inserting the $k$ $1$'s in every which way.

But now the set of scramblings of the vectors $(q_1, \dots, q_{n-k})$ is an {\bf old friend!}. It is the set of $(a+k-1)$-parking functions of
length $n-k$. To get all $a$-parking functions of length $n$ with exactly $k$ ones we need to take each and every
member of the set of $(a+k-1)$-parking functions of length $n-k$, add $1$ to each component, and insert $k$  ones in every which
way. There are ${{n} \choose {k}}$ ways of doing it. Hence the number of $a$-parking functions of length $n$ with exactly $k$ ones
is  ${{n} \choose {k}}$ times the number of $(a+k-1)$-parking functions of length $n-k$.
Summing over all $k$ between $0$ and $n$ we get the following recurrence.

\begin{proposition}[Fundamental Recurrence for $a$-parking functions]

Let $p(n,a)$ be the number of $a$-parking functions of length $n$. We have the recurrence
$$
p(n,a) \, = \, \sum_{k=0}^{n} \, {{n} \choose {k}} p(n-k,a+k-1)  ,
$$
subject to the boundary conditions $p(n,0)=0$ for $n \geq 1$, and $p(0,a)=1$ for $a \geq 0$.
\end{proposition}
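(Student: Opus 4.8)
The statement is essentially proved by the ``anatomy'' analysis preceding it; the plan is to package that analysis as a clean bijective argument. First I would dispose of the boundary values. For $n=0$ the only vector is the empty one, which vacuously satisfies the defining inequalities for every $a\geq 0$, so $p(0,a)=1$. For $n\geq 1$, a $0$-parking function would require $p_{(1)}\leq 0$, impossible for a positive integer, so $p(n,0)=0$.

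For the recurrence, fix $a\geq 1$ and $n\geq 1$, let $\mathcal{P}(m,b)$ denote the set of all $b$-parking functions of length $m$ (so that $p(m,b)=|\mathcal{P}(m,b)|$), and for $0\leq k\leq n$ let $\mathcal{A}_k\subseteq\mathcal{P}(n,a)$ be the subset consisting of those $a$-parking functions with \emph{exactly} $k$ coordinates equal to $1$. The sets $\mathcal{A}_0,\dots,\mathcal{A}_n$ partition $\mathcal{P}(n,a)$, so it suffices to prove that $|\mathcal{A}_k|=\binom{n}{k}\,p(n-k,a+k-1)$, and for this I would exhibit a bijection
$$\mathcal{A}_k \;\longleftrightarrow\; \binom{[n]}{k}\times\mathcal{P}(n-k,\,a+k-1),$$
where $\binom{[n]}{k}$ denotes the family of $k$-element subsets of $\{1,\dots,n\}$. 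Given $(p_1,\dots,p_n)\in\mathcal{A}_k$, the forward map records the set $S$ of indices $i$ with $p_i=1$, deletes those $k$ coordinates, and subtracts $1$ from each of the $n-k$ surviving coordinates, leaving them in their original relative order; the inverse map takes a pair $(S,\mathbf{q})$, adds $1$ to every coordinate of $\mathbf{q}$, drops these $n-k$ values into the positions of $\{1,\dots,n\}\setminus S$ in increasing order of index, and fills the positions of $S$ with $1$'s. These two maps are visibly mutually inverse once they are shown to be well defined.

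The only real content is checking that ``well defined'' holds, and this is exactly the inequality bookkeeping already done in the excerpt. Running through it: if $(p_1,\dots,p_n)\in\mathcal{A}_k$ then its sorted version has $p_{(1)}=\dots=p_{(k)}=1$ and $p_{(k+1)}\geq 2$, while $p_{(k+j)}\leq a+k+j-1$ for $1\leq j\leq n-k$; hence the surviving entries, decreased by $1$, have sorted version $q_{(j)}:=p_{(k+j)}-1$ satisfying $1\leq q_{(j)}\leq (a+k-1)+j-1$, i.e.\ they form an $(a+k-1)$-parking function of length $n-k$. Conversely, adding $1$ back makes every reinserted entry $\geq 2$, so the reconstructed vector has exactly $k$ ones, and the same chain of inequalities, together with $a\geq 1$ (which handles the first $k$ sorted entries), shows it lies in $\mathcal{A}_k$. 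Since $\binom{[n]}{k}$ has $\binom{n}{k}$ elements and $\mathcal{P}(n-k,a+k-1)$ has $p(n-k,a+k-1)$ elements, summing $|\mathcal{A}_k|$ over $k=0,\dots,n$ yields the claimed recurrence.

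I expect the main obstacle to be purely expository rather than mathematical: turning ``scrambling in every which way'' into an honest map between finite sets, and, in particular, being careful that the condition defining $\mathcal{A}_k$ is ``exactly $k$ ones'' and not merely ``at least $k$ ones''. It is precisely this that forces each surviving $q_{(j)}$ to be $\geq 1$, so that $\mathbf{q}+\mathbf{1}$ has all entries $\geq 2$, and makes $\mathcal{A}_0,\dots,\mathcal{A}_n$ a genuine partition rather than an overcount. No asymptotic estimates or generating-function manipulations are needed; the argument is finite and elementary, in keeping with the spirit of the chapter.
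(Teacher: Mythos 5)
Your proposal is correct and follows essentially the same route as the paper: the paper's proof is exactly the ``anatomy'' argument you cite --- classifying $a$-parking functions by the exact number $k$ of coordinates equal to $1$, removing those coordinates, subtracting $1$ from the survivors to obtain an $(a+k-1)$-parking function of length $n-k$, and multiplying by $\binom{n}{k}$ for the placement of the ones. Your version merely makes the bijection and the well-definedness checks explicit, which is a faithful (and slightly more careful) rendering of the same argument.
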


Note that in the sense of Wilf \cite{46}, this already answers the enumeration problem to compute $p(n,a)$ and hence
$p(n,1)=p(n)$, since this gives us a polynomial time algorithm to compute $p(n)$ (and $p(n,a)$).

Moving the term $k=0$ from the right to the left, and denoting $p(n,a)$ by $p_n(a)$ we have
$$
p_n(a)- p_n(a-1) \, = \, \sum_{k=1}^{n} \, {{n} \choose {k}} p_{n-k}(a+k-1) .
$$

Hence we can express $p_n(a)$ as follows, in terms of $p_{m}(a)$ with $m<n$.
$$
p_n(a) = \sum_{b=0}^{a} \left ( \sum_{k=1}^{n} \, {{n} \choose {k}} p_{n-k}(b+k-1) \right ).
$$

Here is the  Maple code that implements it

{\obeylines
{\tt
p:=proc(n,a) local k,b:
if n=0 then  
RETURN(1) 
else 
factor(subs(b=a,sum(expand(add(binomial(n,k)*subs(a=a+k-1,p(n-k,a)),
k=1..n)),a=1..b))): 
fi:
end:
}
}

If you copy-and-paste this onto a Maple session, as well as the line below,

{\tt [seq(p(i,a),i=1..8)];}

you would {\tt immediately} get
$$
[a,a \left( a+2 \right) ,a \left( a+3 \right) ^{2},a \left( a+4 \right) ^{3},a \left( a+5 \right) ^{4},a \left( a+6 \right) ^{5},a \left( a+7 \right) ^{6},a
 \left( a+8 \right) ^{7}] .
$$

Note that these are {\it rigorously} proved exact expressions, in terms of {\it general} $a$ (i.e. {\it symbolic} $a$) for $p_n(a)$, for
$1 \leq n \leq 10$, and we can easily get more. The following {\bf guess} immediately comes to mind
$$
p(n,a)=p_n(a)= a(a+n)^{n-1}  .    
$$

How to prove this rigorously? If you set $q(n,a):=a(a+n)^{n-1}$, since $q(n,0)=0$ and $q(0,a)=1$, the fact that
$p(n,a)=q(n,a)$ would follow {\bf by induction} once you prove that $q(n,a)$ also satisfies the same fundamental recurrence.

$$
q(n,a) \, = \, \sum_{k=0}^{n} \, {{n} \choose {k}} q(n-k,a+k-1) .
$$
In other words, in order to prove that $p(n,a)=a(n+a)^{n-1}$, we have to prove the {\bf identity}
$$
a(a+n)^{n-1} \, = \, \sum_{k=0}^{n} \, {{n} \choose {k}} (a+k-1)(a+n-1)^{n-k-1}  .
$$

\begin{proof}
Let's define 
$$
f(x)\, := \, \sum_{k=0}^{n} \, {{n} \choose {k}} (a+k-1)x^{n-k-1} ,
$$
hence
\begin{equation*} 
\begin{split}
f(x) & = \frac{a-1}{x} \sum_{k=0}^{n} \, {{n} \choose {k}} x^{n-k} + \sum_{k=0}^{n} \, k {{n} \choose {k}} x^{n-k-1} \\
 & = \frac{a-1}{x} \sum_{k=0}^{n} \, {{n} \choose {k}} x^{n-k} + n \sum_{k=0}^{n} \, {{n} \choose {k}} x^{n-k-1} - \sum_{k=0}^{n} \, (n-k){{n} \choose {k}} x^{n-k-1} \\
 & = \frac{a-1+n}{x} \sum_{k=0}^{n} \, {{n} \choose {k}} x^{n-k} - \sum_{k=0}^{n} \, (n-k){{n} \choose {k}} x^{n-k-1}.
\end{split}
\end{equation*}

As an immediate consequence of the {\bf binomial theorem}:
$$
\sum_{k=0}^{n} \, {{n} \choose {k}} x^{n-k} = (1+x)^n
$$
and
$$
 \sum_{k=0}^{n} \,(n-k) {{n} \choose {k}} x^{n-k-1} = n(1+x)^{n-1},
$$
which is trivial to both humans and machines, we have
$$
f(x) = \frac{a-1+x}{x} (1+x)^n - n(1+x)^{n-1}.
$$
By setting $x = a+n-1$, we get
\begin{equation*} 
\begin{split}
f(x) & = (a+n)^n - n(a+n)^{n-1}\\
 & = a(a+n)^{n-1} .
\end{split}
\end{equation*}
This completes the proof.
\end{proof}

We have just rigorously reproved the following well-known theorem.

\begin{theorem}
The number of $a$-parking functions of length $n$ is
$$
p(n,a)=a\,(a+n)^{n-1}  .
$$
In particular, by substituting $a=1$, we reproved the original Konheim-Weiss theorem that $p(n,1)=(n+1)^{n-1}$.
\end{theorem}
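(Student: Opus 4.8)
The plan is to establish the closed form $p(n,a) = a(a+n)^{n-1}$ by induction on $n$, using the Fundamental Recurrence of the Proposition as the engine and the polynomial identity proved just above as the certificate for the inductive step.

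First I would introduce $q(n,a) := a(a+n)^{n-1}$ and check that it carries the same boundary data as $p$: for $n \geq 1$ one has $q(n,0) = 0 \cdot n^{n-1} = 0 = p(n,0)$, and $q(0,a) = a \cdot a^{-1} = 1 = p(0,a)$ for $a \geq 1$, with $q(0,0)$ simply declared to be $1$ so as to match the stipulated boundary value $p(0,0) = 1$.

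The substantive point is that $q$ obeys the very same recurrence as $p$, i.e. $q(n,a) = \sum_{k=0}^{n} \binom{n}{k} q(n-k, a+k-1)$. Substituting the closed form, this is exactly the identity $a(a+n)^{n-1} = \sum_{k=0}^{n} \binom{n}{k} (a+k-1)(a+n-1)^{n-k-1}$, which has already been verified above (via the auxiliary Laurent polynomial $f(x) = \sum_{k} \binom{n}{k}(a+k-1) x^{n-k-1}$, the splitting $k\binom{n}{k} = n\binom{n}{k} - (n-k)\binom{n}{k}$, and two applications of the binomial theorem, followed by the specialization $x = a+n-1$). Since the recurrence determines the length-$n$ value of any solution uniquely from its values at smaller lengths, and $p$ and $q$ agree on the boundary, induction on $n$ forces $p(n,a) = q(n,a)$ for all $n \geq 0$ and all $a \geq 1$ (indeed for $a \geq 0$). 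Putting $a = 1$ recovers the original Konheim--Weiss count $p(n,1) = (n+1)^{n-1}$.

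I do not expect a genuine obstacle here: the Proposition hands us the recurrence and the preceding computation hands us the identity, so the only thing to be careful about is the degenerate corner $n = 0$, where the expression $a(a+n)^{n-1}$ reads $a \cdot a^{-1}$ and must be interpreted as $1$ by convention, and correspondingly the $k = n$ summand $\binom{n}{n}(a+n-1)(a+n-1)^{-1}$ in the identity is read as $1$ while the division by $x$ inside $f(x)$ is only specialized at $x = a+n-1 \neq 0$, which is legitimate once $n \geq 1$ and $a \geq 1$. An alternative route, not pursued here, would be an explicit bijection between $a$-parking functions of length $n$ and a family of labelled rooted forests already known to be counted by $a(a+n)^{n-1}$, but the recurrence-plus-identity argument is the one naturally suggested by the development above.
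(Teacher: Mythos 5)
Your proposal is correct and is essentially the paper's own argument: define $q(n,a):=a(a+n)^{n-1}$, check that it matches the boundary data $q(n,0)=0$ and $q(0,a)=1$, verify via the already-proved binomial identity that $q$ satisfies the Fundamental Recurrence, and conclude $p=q$ by induction. The only point worth stating a touch more carefully is that the $k=0$ summand involves $q(n,a-1)$ at the \emph{same} length $n$, so the induction is really on $n$ and then on $a$ (equivalently, move the $k=0$ term to the left and telescope from $p(n,0)=0$), exactly as the paper does when it rewrites $p_n(a)-p_n(a-1)$ as a sum over smaller lengths.
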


\section{Bijection between $a$-Parking Functions \& Labelled Rooted Forests}
We consider forests with $a$ components and totally $a+n$ vertices where the roots in the $a$ components are $1, 2, \dots, a$. Vertices which are not roots are labelled $a+1, a+2, \dots, a+n$.

Let $t(n,a)$ be the number of such labelled rooted forests with $a$ components and $a+n$ vertices. 

\begin{theorem}
The number of labelled rooted forests with $a$ components and $a+n$ vertices is
$$
t(n,a) = a(a+n)^{n-1}.
$$
\end{theorem}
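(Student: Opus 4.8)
The plan is to prove that $t(n,a)$ satisfies the \emph{same} recurrence and boundary conditions as the counting function $p(n,a)$ of $a$-parking functions, and then quote the formula $p(n,a)=a(a+n)^{n-1}$ already established. So the goal is to show
\[
t(n,a) \;=\; \sum_{k=0}^{n}\binom{n}{k}\,t(n-k,\,a+k-1),\qquad t(0,a)=1\ \ (a\ge 0),\qquad t(n,0)=0\ \ (n\ge 1).
\]
Once this is in hand, the very induction used in the previous section applies verbatim: rewriting the recurrence as $t_n(a)-t_n(a-1)=\sum_{k\ge 1}\binom{n}{k}t_{n-k}(a+k-1)$ and summing in $a$ (using $t_n(0)=0$) expresses $t_n(a)$ through $t_m(\cdot)$ with $m<n$, so $t$ is uniquely determined by the displayed data; since $a(a+n)^{n-1}$ obeys exactly the same recurrence and boundary values, $t(n,a)=a(a+n)^{n-1}$.

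For the recurrence I would argue bijectively by ``pruning the root labelled $a$''. Given a forest $F$ counted by $t(n,a)$ — roots $1,\dots,a$, non-roots $a+1,\dots,a+n$ — let $k$ be the number of children of the vertex $a$ and let $K\subseteq\{a+1,\dots,a+n\}$ be that set of children. Deleting $a$ promotes each vertex of $K$ to a root, leaving a forest on $a+n-1$ vertices with root set $\{1,\dots,a-1\}\cup K$ (size $a-1+k$) and non-root set $\{a+1,\dots,a+n\}\setminus K$ (size $n-k$); relabelling by the unique order-preserving bijections that send the root set to $\{1,\dots,a-1+k\}$ and the non-root set to $\{a+k,\dots,a+n-1\}$ produces a bona fide forest $F'$ counted by $t(n-k,a+k-1)$. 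The map $F\mapsto (K,F')$ is a bijection onto the set of pairs consisting of a $k$-subset $K$ of the $n$-element set $\{a+1,\dots,a+n\}$ and a forest $F'$ of type $t(n-k,a+k-1)$: conversely, from $K$ one recovers the intended labels of $F'$, and reinserting $a$ as the common parent of the vertices whose recovered labels lie in $K$ rebuilds $F$ uniquely. Summing the $\binom{n}{k}$ choices of $K$ over all $k$ gives the recurrence; the boundary conditions are immediate, since the only forest with no non-roots consists of isolated roots, and a forest with no components has no vertices at all.

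The delicate point — where I expect to spend the most care — is checking that this is genuinely a bijection in both directions: that standardising the labels always yields a valid, standardly labelled forest of the claimed type, and that the pair $(K,F')$ reconstitutes $F$ with no ambiguity (here one uses that the promoted children always receive the \emph{largest} root labels after standardisation, so the partition of the roots of $F'$ into ``old'' and ``promoted'' is forced once $|K|$ is known). The binomial coefficient is precisely the price of this bijection: deleting $a$ forgets which $k$ of the $n$ non-root labels were its children. As an alternative — and the route hinted at by the section title — one could instead exhibit an explicit bijection directly between $a$-parking functions of length $n$ and these labelled rooted forests and then read off $t(n,a)=p(n,a)$ from the $a$-parking-function theorem; but the recurrence-matching argument above is already a complete, if less pictorial, proof.
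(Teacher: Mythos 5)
Your proposal is correct and follows essentially the same route as the paper: establish the boundary conditions and the recurrence $t(n,a)=\sum_{k=0}^{n}\binom{n}{k}\,t(n-k,a+k-1)$ by deleting one root and promoting its $k$ children (the paper prunes the root labelled $1$, you prune the root labelled $a$ — an immaterial choice), then conclude by matching with $p(n,a)$. Your write-up is in fact somewhat more careful than the paper's, which dismisses the relabelling step with ``there is a unique relabeling which makes it do,'' whereas you verify that the order-preserving standardisation and its inverse are well defined.
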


\begin{proof}
When $n=0$, obviously $t(n,a)=1$ for any $a$. When $n \geq 1$ and $a=0$, $t(n,a)=0$ since there does not exist such a tree with zero component and a positive number of vertices.

Since we want to prove $t(n,a) = p(n,a)$, the number of $a$-parking functions of length $n$ and they satisfy the same boundary condition, it is natural to think about the recurrence relation for $t(n,a)$. Consider the number of neighbors of vertex 1, say, the number is $k, (0 \leq k \leq n)$, then remove them with their own subtrees as new components and delete vertex 1. Then there are $a+k-1$ components and $n-k$ non-rooted vertices. Though in this case the labeling of vertices does not follow our rule, there is a unique relabeling which makes it do. When the number of neighbors of vertex 1 is $k$, there are ${{n} \choose {k}}$ choices, so
$$
t(n,a) =  \sum_{k=0}^{n} {{n} \choose {k}} t(n-k, a+k-1) .
$$
It has exactly the same recurrence relation as $p(n,a)$, hence
$$
t(n,a) = p(n,a) = a(a+n)^{n-1}.
$$
\end{proof}

As $p(n,a)$ and $t(n,a)$ are the same, it would be interesting to find some meaningful bijection between $a$-parking functions of length $n$ and labelled rooted forests with $a$ components and $a+n$ vertices. We discover or possibly re-discover a bijection between them. This bijection can be best demonstrated via an example as follows.  

Assume we already have a 2-parking function with length 7, say 5842121, we'd like to map it to a labelled rooted forests with 2 components and 9 vertices where 1 and 2 are the roots for the components. Because the vertices 1 and 2 are already roots, we use the following two-line notation (\#):
\begin{align*}
vertices: && 3 && 4 && 5 && 6 && 7 && 8 && 9 \\
2-parking function: && 5 && 8 && 4 && 2 && 1 && 2 && 1
\end{align*}

Let's consider the weakly-increasing version (*) first where we just sort the second line of (\#):
\begin{align*}
vertices: && 3 && 4 && 5 && 6 && 7 && 8 && 9 \\
2-parking function: && 1 && 1 && 2 && 2 && 4 && 5 && 8
\end{align*}

We interpret (*) as follows: the parent of vertices 3 and 4 is 1, 5's and 6's parent is 2, etc. Hence we have the following forest.
\begin{figure}[h!]
  \center
  \includegraphics[width=0.75\textwidth]{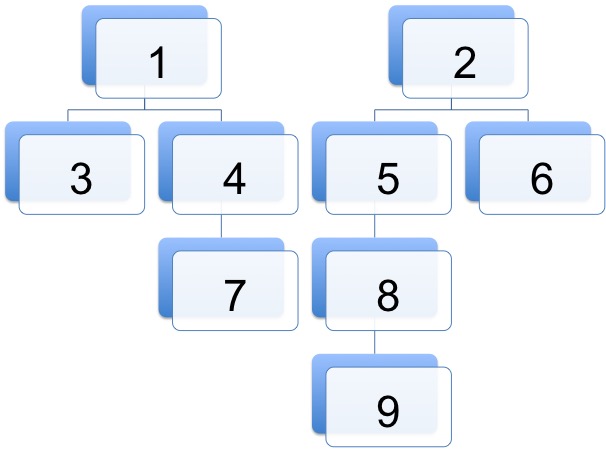}
  \caption{The Forest Associated with (*)}
\end{figure}

If we sort both lines of (\#) according to the second line, then we have
\begin{align*}
vertices: && 7 && 9 && 6 && 8 && 5 && 3 && 4 \\
2-parking function: && 1 && 1 && 2 && 2 && 4 && 5 && 8
\end{align*}

Comparing the first line with that of (*), we have a map
\begin{align*}
3 && 4 && 5 && 6 && 7 && 8 && 9 && \\
\downarrow && \downarrow && \downarrow && \downarrow && \downarrow && \downarrow && \downarrow \\
 7 && 9 && 6 && 8 && 5 && 3 && 4 
\end{align*}

So the 2-parking function 5842121 is mapped to the following forest:
\begin{figure}[h!]
  \center
  \includegraphics[width=0.75\textwidth]{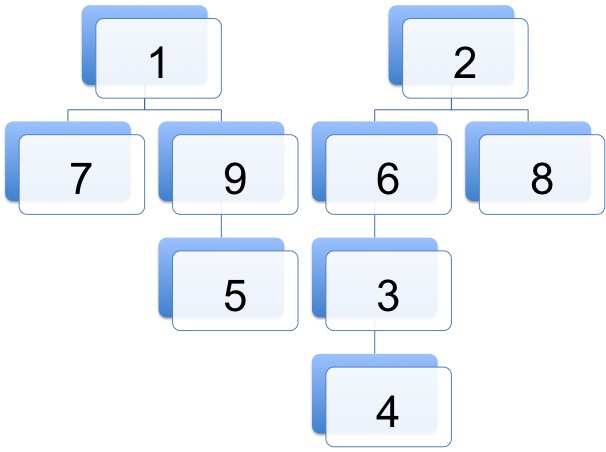}
  \caption{The Labelled Rooted Forest Which the 2-Parking Function 5842121 is mapped to}
\end{figure}
One convention is that when we draw the forests, for the same parent, we always place its children in an increasing order (from left to right). 

Conversely, if we already have the forest in Figure 2.2 and we'd like to map it to a 2-parking function, then we start with indexing each vertex. The rule is that we start from the first level, i.e. the root and start from the left, then we index the vertices 1, 2, \dots as follows with indexes in the bracket:
\begin{figure}[h!]
  \center
  \includegraphics[width=0.75\textwidth]{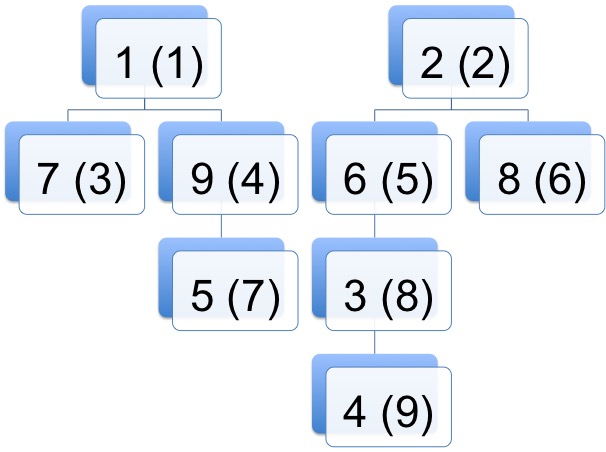}
  \caption{The Indexed Labelled Rooted Forest}
\end{figure}

Now let the first line still be 3456789. For each of them in the first line, the second line number should be the index of its parent. Thus we have
\begin{align*}
vertices: && 3 && 4 && 5 && 6 && 7 && 8 && 9 \\
2-parking function: && 5 && 8 && 4 && 2 && 1 && 2 && 1
\end{align*}
which is exactly (\#).

There are other bijections. For example, in the paper \cite{DV}, the authors define a bijection without using recurrence between the set of $a$-parking functions of length $n$ to the set of rooted labelled forests with $a$ components and $a+n$ vertices, for which 
$$
{{n+1} \choose 2} - Sum(p_1, \dots, p_n) = inv(F),
$$
where $Sum(p_1, \dots, p_n)$ is the sum statistic of parking functions defined in Section 2.5, $inv(F)$ is the inversion of a forest $F$, and the parking function $(p_1, \dots, p_n)$ is mapped to the forest $F$ (vice versa). 
%%%%%%%%%%%%%%%%%%%%%

\section{From Enumeration to Statistics}

Often in enumerative combinatorics, the class of interest has natural `statistics', like height, weight, and IQ for humans, and
one is interested rather than, for a finite set $A$,
$$
|A| \, := \, \sum_{a \in A} 1 ,
$$
called the {\it naive counting}, and getting a {\bf number} (obviously a non-negative integer), by the so-called {\it weighted counting},
$$
|A|_x \, := \,\sum_{a \in A} x^{f(a)} ,
$$
where $f:=A \rightarrow Z$ is the statistic in question. To go  from the weighted enumeration (a certain Laurent polynomial)
to straight enumeration, one sets $x=1$, i.e. $|A|_1 = |A|$.

Since this is {\it mathematics}, and not {\it accounting}, the usual scenario is not just {\bf one} specific set
$A$, but a sequence of sets $\{A_n\}_{n=0}^{\infty}$, and then the enumeration problem is to have an efficient
description of the numerical sequence $a_n:=|A_n|$, ready to be looked-up (or submitted) to the OEIS, and
its corresponding sequence of polynomials $P_n(x):=|A_n|_x$.

It often happens that the statistic $f$, defined on $A_n$, has a {\it scaled limiting distribution}.
In other words, if you draw a {\it histogram} of $f$ on $A_n$,, and do the
obvious {\it scaling}, they get closer and closer to a certain {\it continuous} curve,  as $n$ goes to infinity.

The scaling is as follows. Let $E_n(f)$ and $Var_n(f)$ the {\it expectation} and
{\it variance} of the statistic $f$ defined on $A_n$, and define the {\it scaled} random variable, for $a\in A_n$, by
$$
X_n (a):= \frac{f(a)- E_n(f)}{\sqrt{Var_n(f)}}  .
$$

If you draw the histograms of $X_n(a)$ for large $n$, they look practically the same, and converge to some {\it continuous limit}.

A famous example is {\it coin tossing}. If $A_n$ is $\{-1,1\}^n$, and $f(v)$ is the sum of $v$, then the limiting distribution
is the {\it bell shaped  curve} aka {\it standard normal distribution} aka {\it Gaussian distribution}.

As explained in \cite{54}, a purely finitistic approach to finding, and proving, a limiting scaled distribution, is via the
{\it method of moments}. Using {\it symbolic computation}, the computer can rigorously prove {\it exact} expressions
for as many moments as desired, and often (like in the above case, see \cite{54}) find a recurrence for the
sequence of moments.  This enables one to identify the limits of the scaled moments with the moments of the continuous limit
(in the example of coin-tossing [and many other cases], $\frac{e^{-x^2/2}}{\sqrt{2\pi}}$, 
whose moments are famously $1 , 0 , 1\cdot 3, 0, 1 \cdot 3 \cdot 5, 0 , 1 \cdot 3 \cdot 5 \cdot 7, 0, \dots$) .
Whenever this is the case the discrete family of random variables is called {\it asymptotically normal}.
Whenever this is {\bf not} the case, it is interesting and surprising.

\section{The Sum and Area Statistics of Parking Functions}

Let $\mathcal{P}(n,a)$ be the set of $a$-parking functions of length $n$.

A natural statistic is the sum
$$
Sum(p_1, \dots, p_n) := p_1 + p_2 + \dots + p_n =\sum_{i=1}^{n} p_i .
$$

Another, even more natural (see the beautiful article \cite{7}), happens to be
$$
Area(p):= \frac{n(2a+n-1)}{2} -Sum(p)  .
$$

Let $P(n,a)(x)$ be the weighted analog of $p(n,a)$, according to {\it Sum}, i.e.
$$
P(n,a)(x) \, := \, \sum_{p \in  \mathcal{P}(n,a)} x^{Sum(p)}  .
$$

Analogously, let $Q(n,a)(x)$ be the weighted analog of $p(n,a)$, according to {\it Area}, i.e.
$$
Q(n,a)(x) \, := \, \sum_{p \in  \mathcal{P}(n,a)} x^{Area(p)}.
$$

Clearly, one can easily go from one to the other
$$
Q(n,a)(x) \, = \, x^{(2a+n-1)n/2} \, P(n,a)(x^{-1})  , \quad
P(n,a)(x) \, = \, x^{(2a+n-1)n/2} \, Q(n,a)(x^{-1}) .
$$

How do we compute $P(n,a)(x)$?, (or equivalently, $Q(n,a)(x)$?). It is readily seen that the analog of
Fundamental Recurrence for the weighted counting is
$$
P(n,a)(x) \, = \, x^n \, \sum_{k=0}^{n} \, {{n} \choose {k}} P(n-k,a+k-1)(x) ,
$$
subject to the initial conditions $P(0,a)(x)=1$ and $P(n,0)(x)=0$.

So it is {\it almost} the same, the ``only" change is sticking $x^n$ in front of the sum on the right hand side.

Equivalently, 
$$
Q(n,a)(x) \, = \, \, \sum_{k=0}^{n} \, {{n} \choose {k}} x^{k(k+2a-3)/2} \,  Q(n-k,a+k-1)(x)  ,
$$
subject to the initial conditions $Q(0,a)(x)=1$ and $Q(n,0)(x)=0$.

Once again, in the sense of Wilf, this is already an {\it answer}, but because of the extra variable $x$, one
can not go as far as we did before for the naive, merely numeric, counting.

It is very unlikely that there is a ``closed form'' expression for $P(n,a)(x)$ (and hence $Q(n,a)(x)$), but for
{\it statistical purposes} it would be nice to get ``closed form'' expressions for

$\bullet$ the expectation,

$\bullet$ the variance,

$\bullet$  as many factorial moments as possible, from which the `raw' moments, and latter the {\it centralized} moments
and finally the {\it scaled moments} can be gotten. Then we can take the limits as $n$ goes to infinity, and
see if they match the moments of any of the known continuous distributions, and prove {\it rigorously}
that, at least for that many moments, the conjectured limiting distribution  matches.

In our case, the limiting distribution is the intriguing so-called {\it Airy distribution}, that Svante Janson prefers
to call ``the area under Brownian excursion". This result was stated and proved in \cite{7}, by using deep and sophisticated
continuous probability theory and continuous martingales. Here we will ``almost" prove this result, in the sense of
showing that the limits of the scaled moments of the area statistic on parking functions coincide
with the scaled moments of the Airy distribution up to the $30$-th moment, and we can go much further.

But we can do much more than continuous probabilists. We (or rather our computers, running Maple) can
find {\it exact} {\bf polynomial} expressions in $n$ and the expectation $E_1(n)$. 
We can do it for any desired number of moments, say $30$. Unlike continuous probability theorists,
our methods are entirely elementary, only using {\it high school algebra}.

We can also do the same thing for the more general $a$-parking functions. Now the expressions are
polynomials in $n$, $a$, and the expectation $E_1(n,a)$.

Finally, we believe that our approach, using the  fundamental recurrence of area statistic, can be used
to give a full proof (for all moments), by doing it asymptotically, and deriving a recurrence
for the leading terms of the asymptotics for the factorial moments that would coincide with the
well-known recurrence for the moments of the Airy distribution given, for example in  Eqs. (4) and (5) of
Svante Janson's article \cite{22}.  This is left as a challenge to our readers.

The expectation of the sum statistic, let's call it $E_{sum}(n,a)$ is given by 
$$
E_{sum}(n,a) \, = \, \frac{P'(n,a)(1)}{P(n,a)(1)} \,  = \,  \frac{P'(n,a)(1)}{a(a+n)^{n-1}} ,
$$
where the prime denotes, as usual, differentiation w.r.t. $x$.

Can we get a {\it closed-form expression} for $P'(n,a)(1)$, and hence for $E_{sum}(n,a)$?

Differentiating the fundamental recurrence of $P(n,a)(x)$ with respect to $x$, using the product rule, we get
$$
P(n,a)'(x) \, =\, x^n \, \sum_{k=0}^{n} \, {{n} \choose {k}}  P(n-k,a+k-1)'(x) \, + \,
 \, n x^{n-1} \, \sum_{k=0}^{n} \, {{n} \choose {k}}  \, P(n-k,a+k-1)(x)  .
$$
Plugging-in $x=1$ we get that $P(n,a)'(1)$  satisfies the recurrence 
$$
P(n,a)'(1) - \sum_{k=0}^{n} \, {{n} \choose {k}} P(n-k,a+k-1)'(1) =
n\, \sum_{k=0}^{n} \, {{n} \choose {k}} P(n-k,a+k-1)(1) = n\,p(n,a)  .
$$

Using this recurrence, we can, just as we did for $p(n,a)$ above, get  expressions, as polynomials in $a$,
for numeric $1 \leq n \leq 10$, say, and then conjecture that
$$
P'(n,a)(1)= \frac{1}{2} \, a\,n \, (a+n-1) \, (a+n)^{n-1}-
\frac{1}{2} \sum_{j=1}^{n} {{n} \choose {j}} \, j! \, a \, (a+n)^{n-j} .
$$
To prove it, one plugs in the left side into the above recurrence of $P(n,a)'(1)$, changes the order of summation, and
simplifies. This is rather tedious, but since at the {\it end of the day}, these are equivalent to
{\it polynomial} identities in $n$ and $a$, checking it for sufficiently many special values of $n$ and $a$
would be a rigorous proof.

It follows that
$$
E_{sum}(n,a) \,= \,
\frac{n(a+n+1)}{2} - \frac{1}{2} \, \sum_{j=1}^{n} \frac{n!}{(n-j)! (a+n)^{j-1}}  .
$$
This formula first appears in \cite{30}.

Equivalently,

$$
E_{area}(n,a) \,= \,
\frac{n\,(a-2)}{2} \,+ \,\frac{1}{2} \, \sum_{j=1}^{n} \frac{n!}{(n-j)! (a+n)^{j-1}}  .
$$
In particular, for the primary object of interest, the case $a=1$, we get
$$
E_{area}(n,1) \,= \,
- \frac{n}{2} \,+ \,\frac{1}{2} \, \sum_{j=1}^{n} \frac{n!}{(n-j)! (n+1)^{j-1}} .
$$
This rings a bell! It may written as
$$
E_{area}(n,1) \,= \,- \frac{n}{2} \,+ \, \frac{1}{2} W_{n+1} ,
$$
where $W_n$ is the {\bf iconic} quantity, 
$$
W_n \, = \, \frac{n!}{n^{n-1}} \sum_{k=0}^{n-2} \frac{n^k}{k!} ,
$$
proved by Riordan and Sloane \cite{34} to be the expectation of another very important quantity, the sum of the heights
on rooted labeled trees on $n$ vertices. In addition to its considerable mathematical interest, this quantity, $W_n$,
has great {\it historical significance}, it was the {\it first sequence}, sequence $A435$
of the amazing On-Line Encyclopedia of Integer Sequences
(OEIS), now with more than  $300000$ sequences! See \cite{11} for details, and far-reaching extensions, analogous to the present chapter.

[{The reason it is not sequence A1  is that initially the sequences were arranged in lexicographic order.}]

Another  fact, that will be of great use later in this chapter, is that, as noted in \cite{34}, Ramanujan and Watson proved that
 $W_n$ (and hence $W_{n+1}$) is asymptotic to 
$$
\frac{\sqrt{2\pi}}{2} \, n^{3/2}  .
$$

It is very possible that the formula $E_{area}(n,1) \,= \,- \frac{n}{2} \,+ \, \frac{1}{2} W_{n+1}$ may also be
deduced from the Riordan-Sloane result via one of the numerous known bijections between parking functions
and rooted labeled trees. More generally, the results below, for the special case $a=1$, might be deduced, from
those of \cite{11}, but we believe that the present {\it methodology} is interesting for its own sake, and besides
in our {\it current} approach (that uses recurrences rather than the Lagrange Inversion Formula),
it is much faster to compute higher moments, hence, going in the other direction, would produce many more moments for
the statistic on rooted labeled trees considered in \cite{11}, provided that there is indeed such a correspondence
that sends the area statistic on parking functions (suitably tweaked) to the Riordan-Sloane statistic on rooted labeled trees.

\section{The Limiting Distribution}

Given a combinatorial family, one can easily get an idea of the limiting distribution
by taking a large enough $n$, say $n=100$, and generating a large enough number of random
objects, say $50000$, and drawing a {\it histogram}, see Figure 2 in Diaconis and
Hicks' insightful article \cite{7}. But, one does not have to resort to simulation.
While it is impractical to consider {\it all} $101^{99}$ parking functions of length $100$, the
generating function $Q(100,1)(x)$ contains the exact count for each
conceivable area from $0$ to ${{100} \choose {2}}$. 

\begin{figure}[h!]
  \center
  \includegraphics[width=0.75\textwidth]{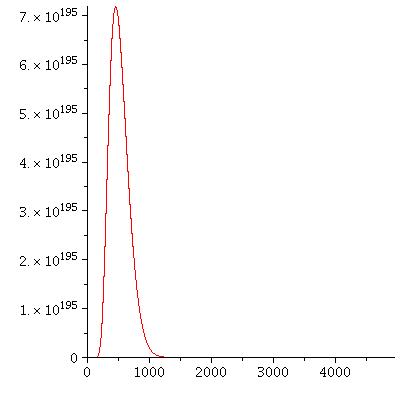}
  \caption{A Histogram of the Area of Parking Functions of Length 100}
\end{figure}

But an even more informative way to investigate the limiting distribution
 is to draw the histogram of the probability generating function
of the scaled distribution 
$$
X_n (p):= \frac{Area(p)- E_n}{\sqrt{Var_n}}  ,
$$
where $E_n$ and $Var_n$ are the expectation and variance respectively.

\begin{figure}[h!]
  \center
  \includegraphics[width=0.75\textwidth]{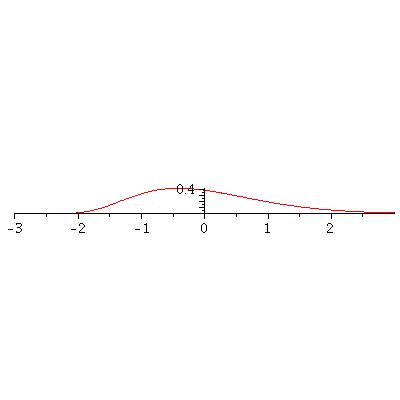}
  \caption{The Scaled Distribution of the Area of Parking Functions of Length 100}
\end{figure}

\begin{figure}[h!]
  \center
  \includegraphics[width=0.75\textwidth]{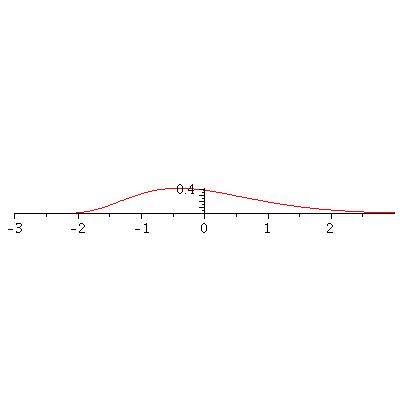}
  \caption{The Scaled Distribution of the Area of Parking Functions of Length 120}
\end{figure}

As proved in \cite{7} (using deep results in continuous probability due to David Aldous, Svante Janson,
and Chassaing and Marcket) the limiting distribution is the {\it Airy distribution}.
We will soon ``almost" prove it, but do much more by discovering exact expressions for the first $30$ moments,
not just their limiting asymptotics.

\section{Truly Exact Expressions for the Factorial Moments}

In \cite{31} there is an ``exact'' expression for the general moment, that is not very useful for our purposes.
If one traces their proof, one can, conceivably, get explicit expressions for each specific moment,
but they did not bother to implement it, and the asymptotics are not immediate.

We discovered,  the following important fact.

{\bf Fact.} Let $E_1(n,a):=E_{area}(n,a)$ be the expectation of the area statistic on $a$-parking functions
of length $n$,  given above, and let $E_k(n,a)$ be the $k$-th factorial moment
$$
E_k(n,a) \, := \, \frac{ Q^{(k)}(n,a)(1)}{a(a+n)^{n-1}} ,
$$
then there exist polynomials $A_k(n,a)$ and $B_k(n,a)$ such that
$$
E_k(n,a) \, = A_k(n,a) \,+\,  B_k(n,a) \, E_1(n,a)  .
$$

The beauty of experimental mathematics is that these can be found by cranking out enough data, using
the sequence of probability generating functions $Q(n,a)(x)$, obtained by using the fundamental recurrence of area statistic, getting sufficiently many numerical data for the moments,
and using {\it undetermined coefficients}. These can be proved {\it a posteriori} by taking
these truly exact formulas and  verifying that the implied recurrences for the $k$-th factorial moment,
in terms of the previous ones. But this is {\bf not} necessary. Since, at the end of the day,
it all boils down to verifying {\bf polynomial identities}, so, once again, verifying them
for sufficiently many different values of $(n,a)$ constitutes a rigorous proof. To be fully rigorous,
one  needs to prove {\it a priori} bounds for the degrees in $n$ and $a$, but, in our humble opinion,
it is not that important, and could be left to the obtuse reader.

\begin{theorem}[equivalent to a result in \cite{30}]
The expectation of the area statistic on parking functions of length $n$ is
$$
E_1(n):= \,- \frac{n}{2} \,+ \, \frac{1}{2} \, \frac{(n+1)!}{(n+1)^{n}} \sum_{k=0}^{n-1} \frac{(n+1)^k}{k!}  ,
$$
and asymptotically it equals $\frac{\sqrt{2\pi}}{4} \cdot n^{3/2} +O(n)$.
\end{theorem}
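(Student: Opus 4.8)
The plan is to assemble the ingredients already in hand rather than start from scratch, since the closed form is (as the label indicates) equivalent to a result of \cite{30} and the asymptotic piece rests on the Ramanujan--Watson estimate quoted above. First I would recall, from the computation preceding the statement, the exact formula
$$
E_1(n) \,=\, E_{area}(n,1) \,=\, -\frac{n}{2} \,+\, \frac{1}{2}\sum_{j=1}^{n}\frac{n!}{(n-j)!\,(n+1)^{j-1}},
$$
which came from specializing $E_{area}(n,a)$ to $a=1$, itself a consequence of the closed form for $P'(n,a)(1)$ verified as a polynomial identity via the differentiated fundamental recurrence. The first genuine step is then a pure reindexing of this sum: substituting $m=n-j$, so that $j$ running from $1$ to $n$ corresponds to $m$ running from $n-1$ down to $0$ and $j-1 = n-1-m$, gives
$$
\sum_{j=1}^{n}\frac{n!}{(n-j)!\,(n+1)^{j-1}} \,=\, \sum_{m=0}^{n-1}\frac{n!\,(n+1)^{m}}{m!\,(n+1)^{n-1}} \,=\, \frac{(n+1)!}{(n+1)^{n}}\sum_{m=0}^{n-1}\frac{(n+1)^{m}}{m!}.
$$
This is exactly $W_{n+1}$, where $W_n = \frac{n!}{n^{n-1}}\sum_{k=0}^{n-2}\frac{n^{k}}{k!}$ (replace $n$ by $n+1$), so the displayed closed form for $E_1(n)$ is established and the identity $E_1(n) = -\frac{n}{2} + \frac{1}{2}W_{n+1}$ is re-confirmed.

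For the asymptotic claim I would simply invoke the Ramanujan--Watson result recalled above and in \cite{34}, namely that $W_n = \frac{\sqrt{2\pi}}{2}\,n^{3/2} + O(n)$ (the subdominant term in the classical expansion of $W_n$ is of order $n$). Applying it with $n$ replaced by $n+1$, and using $(n+1)^{3/2} = n^{3/2} + O(n^{1/2})$, yields $\frac{1}{2}W_{n+1} = \frac{\sqrt{2\pi}}{4}\,n^{3/2} + O(n)$; since the remaining summand $-\frac{n}{2}$ is itself $O(n)$, we conclude $E_1(n) = \frac{\sqrt{2\pi}}{4}\,n^{3/2} + O(n)$, as stated.

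I expect no serious obstacle here; the only point requiring care is the precise shape of the error term. For the bare leading-order statement $E_1(n)\sim \frac{\sqrt{2\pi}}{4}n^{3/2}$ the asymptotic $W_n\sim\frac{\sqrt{2\pi}}{2}n^{3/2}$ already suffices. To obtain the sharper $+\,O(n)$ form exactly as written one must cite the next term of the asymptotic expansion of $W_n$ (equivalently of Ramanujan's $Q$-function), which is indeed $O(n)$ — classical, but worth a precise reference. Alternatively, one can sidestep any independent estimate altogether: because $E_1(n) - \frac{1}{2}W_{n+1} = -\frac{n}{2}$ is an exact identity, the entire error in $E_1(n)$ is inherited, up to a trivial index shift and the $-\frac{n}{2}$ term, from the error in the asymptotics of $W_{n+1}$.
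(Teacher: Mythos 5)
Your proposal is correct and follows essentially the same route as the paper: the closed form is exactly the specialization $E_{area}(n,1) = -\frac{n}{2} + \frac{1}{2}W_{n+1}$ derived in the preceding section (your reindexing $m=n-j$ just makes the identification with $W_{n+1}$ explicit), and the asymptotics come from the quoted Ramanujan--Watson estimate for $W_n$. Your remark that the stated $O(n)$ error term requires the next-order term in the expansion of $W_n$ (not merely the leading asymptotic the paper cites) is a fair and worthwhile point of extra care.
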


\begin{theorem}
The {\bf second} factorial moment of the area statistic on parking functions of length $n$ is
$$
 -\frac{7}{3} (n+1) \, E_1(n)+{\frac {5}{12}}\,{n}^{3}- \frac{1}{12} \,{n}^{2}- \frac{1}{3} \,n  ,
$$
and asymptotically it equals $\frac{5}{12} \cdot n^3 +O(n^{5/2})$.
\end{theorem}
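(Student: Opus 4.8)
The plan is to reproduce, for the second factorial moment, exactly what was done above for the expectation, driving everything by the area-weighted fundamental recurrence
$$
Q(n,a)(x) \, = \, \sum_{k=0}^{n} \, \binom{n}{k} \, x^{k(k+2a-3)/2} \, Q(n-k,a+k-1)(x).
$$
Write $m_k := k(k+2a-3)/2$. Differentiating twice with respect to $x$ by the product rule and then setting $x=1$ yields
$$
Q(n,a)''(1) \, = \, \sum_{k=0}^{n} \binom{n}{k}\Bigl[\, Q(n-k,a+k-1)''(1) \,+\, 2 m_k\, Q(n-k,a+k-1)'(1) \,+\, m_k(m_k-1)\, p(n-k,a+k-1)\,\Bigr].
$$
Because $m_0=0$, the $k=0$ term on the right is just $Q(n,a-1)''(1)$, so this telescopes in $a$ exactly as the fundamental recurrence did and becomes a genuine recursion in $n$: every other term on the right has smaller first argument $n-k<n$ or is one of the already-known quantities $p(m,b)=b(b+m)^{m-1}$ and $Q(m,b)'(1)=E_{area}(m,b)\,p(m,b)$, with $Q(0,a)''(1)=0$.

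First I would crank out this recurrence for $1\le n\le N$ (say $N=20$) with symbolic (or sufficiently many numeric) $a$, extract the second factorial moment $E_2(n,a):=Q(n,a)''(1)/\bigl(a(a+n)^{n-1}\bigr)$, and fit it against the ansatz supplied by the \textbf{Fact}: $E_2(n,a)=A_2(n,a)+B_2(n,a)\,E_1(n,a)$ with $A_2,B_2$ polynomials of a priori bounded degree in $n$ and $a$. Undetermined coefficients pin $A_2,B_2$ down uniquely; setting $a=1$ collapses them to $B_2(n,1)=-\tfrac{7}{3}(n+1)$ and $A_2(n,1)=\tfrac{5}{12}n^3-\tfrac{1}{12}n^2-\tfrac{1}{3}n$, which is the claimed formula.

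To certify the formula rigorously I would run this backwards: substitute the conjectured $Q(n,a)''(1)=\bigl(A_2(n,a)+B_2(n,a)E_1(n,a)\bigr)a(a+n)^{n-1}$ (using $p(n,a)E_1(n,a)=Q(n,a)'(1)$ and the closed form for $Q(n,a)'(1)$) into the recurrence above, interchange the order of the two summations, and simplify. For each fixed $n$ the quantities $p(n,a)$, $Q(n,a)'(1)$, $Q(n,a)''(1)$ are polynomials in $a$ (both $(a+n)^{n-1}$ and the finite tails $\sum_j \tfrac{n!}{(n-j)!}(a+n)^{n-j}$ are), so what remains is a polynomial identity, provable by checking enough pairs $(n,a)$ — the ``polynomial identities in disguise'' move used throughout this chapter. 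The main obstacle is purely clerical: producing honest a priori bounds on the degrees of $A_2$ and $B_2$ (by an induction on $n$ through the recurrence, tracking how $m_k$ inflates degrees), so that ``enough pairs'' becomes an explicit finite list, and keeping straight how the polynomial pieces interact with the $\sum_j$-type tails hidden inside $E_1$ when the double sum is collapsed.

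The asymptotic claim is then immediate. By the Ramanujan and Watson asymptotics recalled above, $E_1(n)\sim \tfrac{\sqrt{2\pi}}{4}\,n^{3/2}$, so $-\tfrac{7}{3}(n+1)E_1(n)=O(n^{5/2})$, whereas the polynomial part $\tfrac{5}{12}n^3-\tfrac{1}{12}n^2-\tfrac{1}{3}n$ contributes the dominant term $\tfrac{5}{12}n^3$; hence the second factorial moment equals $\tfrac{5}{12}n^3+O(n^{5/2})$.
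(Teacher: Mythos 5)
Your proposal is correct and follows essentially the same route as the paper: the paper derives these moment formulas precisely by iterating the area-weighted fundamental recurrence for $Q(n,a)(x)$, fitting the ansatz $E_k(n,a)=A_k(n,a)+B_k(n,a)E_1(n,a)$ by undetermined coefficients, and certifying the result as a polynomial identity in $(n,a)$ checked at sufficiently many points (with the a priori degree bounds likewise deferred), and the asymptotics come from the same Ramanujan--Watson estimate $E_1(n)\sim\frac{\sqrt{2\pi}}{4}n^{3/2}$. The only addition on your part is writing out explicitly the twice-differentiated recurrence for $Q(n,a)''(1)$, which is exactly the ``implied recurrence'' the paper alludes to but does not display.
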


\begin{theorem}
The {\bf third} factorial moment of the area statistic on 
parking functions of length $n$ is
$$
-{\frac {175}{192}}\,{n}^{4}-{\frac {283}{192}}\,{n}^{3}+{\frac {199}{
192}}\,{n}^{2}+{\frac {259}{192}}\,n+ \left( {\frac {15}{32}}\,{n}^{3}+
{\frac {521}{96}}\,{n}^{2}+{\frac {1219}{96}}\,n+{\frac {743}{96}} 
 \right) \, E_1(n)  ,
$$
and asymptotically it equals $\frac{15}{128}\sqrt{2\pi} \cdot n^{9/2} +O(n^{4})$.
\end{theorem}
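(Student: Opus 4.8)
The plan is to follow, one derivative further, the same recurrence-plus-undetermined-coefficients strategy that this chapter already used for $E_{sum}(n,a)$ and $P'(n,a)(1)$. Everything hangs on the fundamental recurrence for the area generating function,
$$
Q(n,a)(x) \, = \, \sum_{k=0}^{n} \binom{n}{k} x^{k(k+2a-3)/2}\, Q(n-k,a+k-1)(x),
$$
with $Q(0,a)(x)=1$ and $Q(n,0)(x)=0$ for $n \ge 1$; this already lets a computer produce $Q(n,1)(x)$ exactly (the recursive calls only ever involve numerical arguments), and hence the third factorial moment $E_3(n):=Q^{(3)}(n,1)(1)/(n+1)^{n-1}$ together with $E_1(n)$ for as many $n$ as we please. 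Writing $c_k=c_k(a):=k(k+2a-3)/2$, pulling the $k=0$ term (whose summand is just $Q(n,a-1)(x)$) to the left, differentiating three times in $x$, and setting $x=1$, the Leibniz rule turns the recurrence into
$$
Q^{(3)}(n,a)(1)-Q^{(3)}(n,a-1)(1) \, = \, \sum_{k=1}^{n}\binom{n}{k}\sum_{j=0}^{3}\binom{3}{j}\, c_k(c_k-1)\cdots(c_k-j+1)\, Q^{(3-j)}(n-k,a+k-1)(1),
$$
with the convention that the empty product equals $1$; telescoping in $a$ (using $Q(n,0)\equiv 0$) makes this an explicit recursive scheme that computes $Q^{(3)}$ from smaller values of $Q^{(3)}$ and from the lower derivatives $Q^{(2)},Q^{(1)},Q^{(0)}$ --- the last of which is $p(n,a)=a(a+n)^{n-1}$, while $Q^{(1)}$ is controlled by the closed form for $P'(n,a)(1)$ and $Q^{(2)}$ by the second-factorial-moment theorem (in its general-$a$ form, obtained identically).

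Concretely, I would first tabulate $E_3(n)$ and $E_1(n)$ for $n=1,2,\dots$ up to, say, $n\approx 15$, posit the ansatz $E_3(n)=A_3(n)+B_3(n)\,E_1(n)$ coming from the \textbf{Fact} of the previous section, with $\deg A_3 \le 4$ and $\deg B_3 \le 3$ (degrees read off from the data), and solve the resulting linear system for the eight rational coefficients; this returns precisely the two polynomials in the statement. The proof is then completed \emph{a posteriori} and purely formally: substitute the candidate $E_3(n,a)=A_3(n,a)+B_3(n,a)E_1(n,a)$ (carried out with general $a$, since the recurrence mixes the arguments $a+k-1$), together with the known closed forms for $E_1$ and $E_2$, into the differentiated recurrence above; after multiplying through by $a(a+n)^{n-1}$, using $p(n,a)=a(a+n)^{n-1}$, and invoking the recurrence that $E_1$ itself satisfies to absorb the $E_1$-linear part, one is left with an ordinary polynomial identity in $n$ and $a$. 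Such an identity, of bounded degree, is rigorously certified by checking it at sufficiently many values of $(n,a)$, and an induction on $n$ then finishes the proof; specializing $a=1$ yields the displayed formula.

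The one genuinely delicate ingredient --- and the chapter itself flags it --- is an \emph{a priori} bound on the degrees of $A_3(n,a)$ and $B_3(n,a)$ in $n$ and $a$, which is what turns the undetermined-coefficients fit into a proof rather than a heuristic. I would obtain it by propagating degree estimates through the telescoped recurrence: each step in $n$ raises polynomial degrees in a controlled way, and since $E_1(n,a)\asymp n^{3/2}$ is the only non-polynomial ingredient, it is forced to appear only linearly and with polynomial coefficient, pinning down both the shape $A_3+B_3E_1$ and the degree caps. Carrying that bookkeeping out carefully is the main chore. Once the exact formula is in hand the asymptotics are immediate: by the expectation theorem $E_1(n)=\frac{\sqrt{2\pi}}{4}\,n^{3/2}+O(n)$, so the leading contribution comes from $\frac{15}{32}n^{3}\cdot\frac{\sqrt{2\pi}}{4}n^{3/2}=\frac{15}{128}\sqrt{2\pi}\,n^{9/2}$; the remaining terms of $B_3(n)E_1(n)$ are $O(n^{7/2})$, and $A_3(n)$ together with $\frac{15}{32}n^{3}\cdot O(n)$ are $O(n^{4})$, giving $\frac{15}{128}\sqrt{2\pi}\,n^{9/2}+O(n^{4})$ as claimed.
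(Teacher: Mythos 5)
Your proposal is correct and follows essentially the same route as the paper: generate exact data from the fundamental recurrence for $Q(n,a)(x)$, fit the ansatz $E_3(n)=A_3(n)+B_3(n)E_1(n)$ by undetermined coefficients, certify the result a posteriori as a polynomial identity in $(n,a)$ checked at sufficiently many points (with the a priori degree bounds being the one loose end, which the paper itself explicitly leaves to the reader), and read off the asymptotics from $E_1(n)=\frac{\sqrt{2\pi}}{4}n^{3/2}+O(n)$.
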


\begin{theorem}
 The {\bf fourth} factorial moment of the area statistic on 
parking functions of length $n$ is
$$
{\frac {221}{1008}}\,{n}^{6}+{\frac {63737}{30240}}\,{n}^{5}+{\frac {
101897}{15120}}\,{n}^{4}+{\frac {22217}{5040}}\,{n}^{3}-{\frac {1375}{
189}}\,{n}^{2}-{\frac {187463}{30240}}\,n
$$
$$
+ \left( -{\frac {35}{16}}\,{n
}^{4}-{\frac {449}{27}}\,{n}^{3}-{\frac {130243}{2520}}\,{n}^{2}-{
\frac {7409}{105}}\,n-{\frac {503803}{15120}} \right) \, E_1(n) ,
$$
and asymptotically it equals $\frac{221}{1008}\cdot n^{6} +O(n^{11/2})$.
\end{theorem}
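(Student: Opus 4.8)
The plan is to follow exactly the experimental-yet-rigorous pipeline already used for the first, second and third factorial moments, pushed up one level. The starting point is the fundamental recurrence for the area generating function,
$$
Q(n,a)(x) \, = \, \sum_{k=0}^{n} \, {{n} \choose {k}} x^{k(k+2a-3)/2} \,  Q(n-k,a+k-1)(x) .
$$
Differentiating this identity four times with respect to $x$ by the Leibniz rule and then setting $x=1$ produces a linear recurrence expressing $Q^{(4)}(n,a)(1)$ in terms of the quantities $Q^{(j)}(n-k,a+k-1)(1)$ for $0 \le j \le 4$ and $1 \le k \le n$; the derivatives of the monomial $x^{k(k+2a-3)/2}$ at $x=1$ contribute explicit falling-factorial polynomials in the exponent $k(k+2a-3)/2$, so all coefficients in this recurrence are polynomials in $n$, $a$ and $k$. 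Dividing through by $p(n,a)=a(a+n)^{n-1}$ and using the closed forms for $p(n,a)$ and for $E_1,E_2,E_3$ in the general-$a$ form (the latter two being the general-$a$ analogues of the previous two theorems, obtained by the same method), this becomes a recurrence for $E_4(n,a)$ whose inhomogeneous part is a known explicit function of $n$ and $a$ together with $E_1(n,a)$.

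Next I would invoke the structural \textbf{Fact} at level $k=4$: posit the ansatz $E_4(n,a) = A_4(n,a) + B_4(n,a)\,E_1(n,a)$ with $A_4, B_4$ polynomials in $n$ and $a$ of (conjecturally) bounded degree. To pin down the coefficients, use the fundamental recurrence for $Q(n,a)(x)$ to crank out $Q(n,a)(x)$, hence $E_4(n,a)$, for a grid of small values of $(n,a)$ — more than the number of unknown coefficients — and solve the resulting linear system by undetermined coefficients. Specializing $a=1$ then yields the displayed polynomial in $n$ and $E_1(n)$.

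The verification step makes this rigorous. Substitute the ansatz $A_4(n,a)+B_4(n,a)E_1(n,a)$ back into the recurrence derived above; because $E_1(n,a)$ itself satisfies its own known recurrence, every occurrence of $E_1$ at a shifted argument $E_1(n-k,a+k-1)$ can be re-expressed, and the claimed equality collapses to a pair of \emph{polynomial} identities in $n$ and $a$ — the coefficient of $E_1$ and the $E_1$-free part, separately. A polynomial identity in two variables of known degree is confirmed by checking it at sufficiently many lattice points, which the computer does instantly; specializing $a=1$ and invoking induction on $n$ against the shared boundary conditions $p(0,a)=1$, $p(n,0)=0$ finishes the exact formula. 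For the asymptotic statement one simply plugs the known estimate $E_1(n) = \frac{\sqrt{2\pi}}{4}\,n^{3/2} + O(n)$ into the formula: the leading polynomial term $\frac{221}{1008}\,n^6$ dominates, while the $E_1$-term contributes only $O(n^{4}\cdot n^{3/2}) = O(n^{11/2})$, giving $\frac{221}{1008}\,n^6 + O(n^{11/2})$.

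The main obstacle is not any single computation but the rigor of the undetermined-coefficients step: to claim that finitely many evaluations suffice, one needs \emph{a priori} upper bounds on $\deg_n A_4$, $\deg_a A_4$, $\deg_n B_4$ and $\deg_a B_4$. These can in principle be extracted by tracking degrees through the fourfold Leibniz differentiation and the recurrence, but the bookkeeping — especially the interaction of the $x^{k(k+2a-3)/2}$ weights, which inflate degrees quadratically in $k$, with the binomial sums — is the delicate part; in the spirit of the chapter this degree-bound accounting may be left to the obtuse reader, after which the whole argument is fully rigorous.
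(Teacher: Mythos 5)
Your proposal is correct and follows essentially the same pipeline the paper uses for all the moment theorems: generate data from the fundamental recurrence for $Q(n,a)(x)$, posit the ansatz $E_4(n,a)=A_4(n,a)+B_4(n,a)\,E_1(n,a)$ from the stated \textbf{Fact}, fit by undetermined coefficients, and certify the result a posteriori as a polynomial identity in $(n,a)$ checked at sufficiently many points, with the a priori degree bounds explicitly deferred to the reader exactly as the paper does. Your asymptotic step is also the paper's: the $B_4(n)E_1(n)$ term contributes $O(n^{4}\cdot n^{3/2})=O(n^{11/2})$, leaving $\frac{221}{1008}n^6$ as the leading term.
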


\begin{theorem}
The {\bf fifth} factorial moment of the area statistic on 
parking functions of length $n$ is
$$
-{\frac {105845}{110592}}\,{n}^{7}-{\frac {2170159}{290304}}\,{n}^{6}-{
\frac {99955651}{3870720}}\,{n}^{5}-{\frac {30773609}{725760}}\,{n}^{4}
$$
$$
-{\frac {94846903}{11612160}}\,{n}^{3}+{\frac {24676991}{483840}}\,{n}^
{2}
+{\frac {392763901}{11612160}}\,n
$$
$$
+ ( {\frac {565}{2048}}\,{n}^{
6}+{\frac {1005}{128}}\,{n}^{5}+{\frac {9832585}{165888}}\,{n}^{4}+{
\frac {1111349}{5184}}\,{n}^{3}+{\frac {826358527}{1935360}}\,{n}^{2}
$$
$$
+{
\frac {159943787}{362880}}\,n+{\frac {1024580441}{5806080}} )  E_1(n) ,
$$
and asymptotically it equals $\frac{565}{8192    }\sqrt{2\pi} \cdot n^{15/2} +O(n^{7})$.
\end{theorem}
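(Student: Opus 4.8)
The plan is to imitate, essentially verbatim, the procedure that produced the expressions for the first four factorial moments; the only genuinely new ingredient is one asymptotic estimate, the rest being bookkeeping. By the \textbf{Fact} stated above there are polynomials $A_5(n,a)$ and $B_5(n,a)$ with
$$
E_5(n,a)\,=\,A_5(n,a)\,+\,B_5(n,a)\,E_1(n,a),
$$
so for parking functions ($a=1$) it suffices to determine the one-variable polynomials $A_5(n):=A_5(n,1)$ and $B_5(n):=B_5(n,1)$ and then read off the leading asymptotics. To generate data I would start from the fundamental recurrence for the area-weighted count,
$$
Q(n,a)(x)\,=\,\sum_{k=0}^{n}\binom{n}{k}x^{k(k+2a-3)/2}\,Q(n-k,a+k-1)(x),\qquad Q(0,a)(x)=1,\ Q(n,0)(x)=0,
$$
and let Maple compute $Q(n,1)(x)$ for $n=1,2,\dots,N$ with $N$ comfortably past the number of unknown coefficients --- or, more economically, propagate only the five-tuple $Q^{(j)}(n,1)(1)$, $0\le j\le 5$, using the recurrences obtained by differentiating the $Q$-recurrence $j$ times and setting $x=1$ (each differentiation of $x^{k(k+2a-3)/2}$ at $x=1$ merely contributes falling-factorial factors in the exponent $k(k+2a-3)/2$). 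Dividing $Q^{(5)}(n,1)(1)$ by $p(n,1)=(n+1)^{n-1}$ produces the exact rationals $E_5(n)$.

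Next I would fit and then verify. Assuming the degree bounds suggested by the four lower moments, $\deg_n A_5\le 7$ and $\deg_n B_5\le 6$, I would set up the linear system $E_5(n)=A_5(n)+B_5(n)E_1(n)$ for sufficiently many $n$, each $E_1(n)$ being the explicit rational from the expectation formula above; since the only ``non-polynomial'' content of $E_5(n)$ is the single number $E_1(n)$, the \textbf{Fact} guarantees the system is consistent and one checks it has a unique solution, namely the expression in the statement. To upgrade this guess to a \emph{proof}, I would repeat the computation with symbolic $a$ (so $Q(n,a)(x)\in\Q[a][x]$, $E_5(n,a)\in\Q(a)$), obtain candidate two-variable polynomials $A_5(n,a),B_5(n,a)$, substitute $A_5(n,a)+B_5(n,a)E_1(n,a)$ into the coupled recurrence satisfied by $Q^{(5)}(n,a)(1)$, clear the factor $a(a+n)^{n-1}$, and invoke the already-established recurrences and closed forms for $E_1$ and for $E_2,E_3,E_4$: the whole thing collapses to a polynomial identity in $n$ and $a$, which is certified by evaluating both sides at enough integer pairs $(n,a)$.

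The asymptotics are then immediate. The polynomial $A_5(n)$ has degree $7$, while by the expectation formula together with the Ramanujan--Watson estimate $W_{n+1}\sim\frac{\sqrt{2\pi}}{2}n^{3/2}$ one has $E_1(n)\sim\frac{\sqrt{2\pi}}{4}n^{3/2}$, and the leading coefficient of $B_5(n)$ is $\frac{565}{2048}$; hence $B_5(n)E_1(n)\sim\frac{565}{2048}\cdot\frac{\sqrt{2\pi}}{4}\,n^{15/2}=\frac{565}{8192}\sqrt{2\pi}\,n^{15/2}$, which dominates the degree-$7$ contribution of $A_5(n)$ and gives $E_5(n)=\frac{565}{8192}\sqrt{2\pi}\,n^{15/2}+O(n^{7})$, as claimed. (This is the first of these moments for which the $B_kE_1$ term rather than the $A_k$ term is dominant, since here $\deg B_5+\tfrac32=\tfrac{15}{2}>7=\deg A_5$, whereas at $k=4$ one had $\deg A_4=6>\tfrac{11}{2}$.) The real obstacle --- precisely the point the text hands to ``the obtuse reader'' --- is supplying rigorous \emph{a priori} bounds for $\deg_n$ and $\deg_a$ of $A_k$ and $B_k$; without them the undetermined-coefficients step is only a heuristic. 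I would obtain these by an induction on $k$ tracking how solving the $k$-fold differentiated $Q$-recurrence inflates degrees: multiplying by $x^{k(k+2a-3)/2}$ and differentiating at most $k$ times raises the $a$-degree by $O(k)$ and the $n$-degree in a controlled way, while the final division by $a(a+n)^{n-1}$ is what converts the nested sums into ``polynomial plus $E_1$'' form. Everything else (the proliferation of product-rule terms in the fifth derivative, re-expressing factorial moments, and so on) is routine.
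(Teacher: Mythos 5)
Your proposal follows exactly the paper's methodology: generate exact data for $E_5(n)$ (or $E_5(n,a)$) from the fundamental recurrence for $Q(n,a)(x)$, fit $A_5$ and $B_5$ by undetermined coefficients, certify the result as a polynomial identity by checking sufficiently many values of $(n,a)$, and read off the asymptotics from $E_1(n)\sim\frac{\sqrt{2\pi}}{4}n^{3/2}$ together with the leading coefficient $\frac{565}{2048}$ of $B_5$ --- with the \emph{a priori} degree bounds left aside, exactly as the paper itself does. One small correction to your parenthetical aside: the fifth moment is not the first for which the $B_kE_1$ term dominates; this already happens at $k=3$, where $\deg B_3+\tfrac32=\tfrac92>4=\deg A_3$ and the stated asymptotic $\frac{15}{128}\sqrt{2\pi}\,n^{9/2}$ indeed comes from $B_3E_1$ --- the actual pattern is that the odd moments are dominated by $B_kE_1$ (half-integer power, $\sqrt{2\pi}$ factor) and the even moments by $A_k$.
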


\begin{theorem}
 The {\bf sixth} factorial moment of the area statistic of
parking functions of length $n$ is
$$
{\frac {82825}{576576}}\,{n}^{9}+{\frac {373340075}{110702592}}\,{n}^{8
}+{\frac {9401544029}{332107776}}\,{n}^{7}+{\frac {14473244813}{
127733760}}\,{n}^{6}+{\frac {414139396709}{1660538880}}\,{n}^{5}
$$
$$
+{
\frac {88215445651}{332107776}}\,{n}^{4}-{\frac {18783816473}{332107776
}}\,{n}^{3}-{\frac {643359542029}{1660538880}}\,{n}^{2}-{\frac {
358936540409}{1660538880}}\,n
$$
$$
+ ( -{\frac {3955}{2048}}\,{n}^{7}-{
\frac {186349}{6144}}\,{n}^{6}-{\frac {259283273}{1161216}}\,{n}^{5}-{
\frac {119912501}{129024}}\,{n}^{4}-{\frac {149860633081}{63866880}}\,{
n}^{3}
$$
$$
-{\frac {601794266581}{166053888}}\,{n}^{2}-{\frac {864000570107}
{276756480}}\,n-{\frac {921390308389}{830269440}} ) \, E_1(n)  ,
$$
and asymptotically it equals $\frac{ 82825}{576576}\cdot n^{9} +O(n^{17/2})$.
\end{theorem}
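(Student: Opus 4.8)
The plan is to run the experimental‑mathematics template that already produced the first five factorial moments, now with $k=6$. By the \textbf{Fact} recorded above there are polynomials $A_6(n,a),B_6(n,a)$ with $E_6(n,a)=A_6(n,a)+B_6(n,a)\,E_1(n,a)$; the displayed formula is precisely the $a=1$ specialization of $A_6(n,1)+B_6(n,1)\,E_1(n)$, so the whole task is to determine $A_6$ and $B_6$, set $a=1$, and read off the asymptotics.

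First I would convert the fundamental recurrence for $Q(n,a)(x)$ into a recurrence for the sixth derivative at $x=1$. Set $e(k,a):=k(k+2a-3)/2$; for $k\ge1,\ a\ge1$ this is a nonnegative integer, so each $Q(n,a)(x)$ is an honest polynomial and differentiating and evaluating at $1$ is harmless. Moving the $k=0$ term (which is just $Q(n,a-1)(x)$, since $e(0,a)=0$) to the left, applying Leibniz's rule
$$\frac{d^{6}}{dx^{6}}\Bigl(x^{e(k,a)}Q(n-k,a+k-1)(x)\Bigr)\Big|_{x=1}=\sum_{j=0}^{6}\binom{6}{j}\,e(k,a)\bigl(e(k,a)-1\bigr)\cdots\bigl(e(k,a)-j+1\bigr)\;Q^{(6-j)}(n-k,a+k-1)(1),$$
and telescoping in $a$ exactly as the text did for $p(n,a)$ and for $P'(n,a)(1)$, one obtains
$$Q^{(6)}(n,a)(1)=\sum_{b=1}^{a}\sum_{k=1}^{n}\binom{n}{k}\sum_{j=0}^{6}\binom{6}{j}\,e(k,b)^{\underline{j}}\,Q^{(6-j)}(n-k,b+k-1)(1),$$
with $Q^{(6)}(0,a)(1)=0$. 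This is a genuine descending recurrence in $n$: the lone $j=0$ term involves $Q^{(6)}$ at the smaller argument $n-k$, while $j=1,\dots,6$ involve only the lower derivatives $Q^{(j)}(n-k,\cdot)(1)$, $j\le5$, all already known in closed form ($j=0$ gives $p(n,a)=a(a+n)^{n-1}$, $j=1$ gives the $a$‑analogue of $E_{\mathrm{area}}$, and $j=2,\dots,5$ are the $a$‑analogues of the preceding four theorems). Running it symbolically yields $E_6(n,a)=Q^{(6)}(n,a)(1)/\bigl(a(a+n)^{n-1}\bigr)$ as an explicit polynomial in $a$ for $n=1,2,\dots,N$.

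Next I would recover $A_6$ and $B_6$ by undetermined coefficients: posit generic polynomials in $n$ and $a$ of the relevant degrees, subtract the known exact form of $E_1(n,a)$, and solve the linear system forced by the computed data for $N$ large enough. Since the whole identity, once cleared of the finite sum hidden in $E_1$, is a polynomial identity in $n$ and $a$, the resulting guesses for $A_6,B_6$ are then \emph{proved} by checking that $A_6(n,a)+B_6(n,a)E_1(n,a)$ satisfies the recurrence (plus initial conditions) at sufficiently many integer pairs $(n,a)$. Specializing $a=1$ gives the stated sixth‑moment formula. For the asymptotics: the expectation theorem gives $E_1(n)=\tfrac{\sqrt{2\pi}}{4}n^{3/2}+O(n)$, while $B_6(n,1)$ has degree $7$, so $B_6(n,1)E_1(n)=O(n^{17/2})$; since the polynomial part $A_6(n,1)$ has degree $9$ with leading coefficient $\tfrac{82825}{576576}$, the sixth factorial moment is $\tfrac{82825}{576576}\,n^{9}+O(n^{17/2})$.

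The one place needing care — the main obstacle — is the rigor of the undetermined‑coefficients step: to be certain that finitely many checks establish the identity one needs \emph{a priori} upper bounds on the degrees of $A_6$ and $B_6$ in $n$ and in $a$. These can in principle be extracted from the structure of the telescoped recurrence (the $b$‑summation raises the $a$‑degree by one per level of recursion, the falling‑factorial factors $e(k,b)^{\underline{j}}$ contribute degree $2j$, and $Q^{(0)}=a(a+n)^{n-1}$ fixes the scale of the $n$‑degree), but making that bookkeeping airtight is the only part that is not purely mechanical; following the text, I would be content to delegate the precise degree bounds to the obtuse reader and treat everything else as a finite symbolic computation.
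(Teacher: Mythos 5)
Your proposal follows essentially the same route as the paper: generate exact data for $E_6(n,a)$ from the fundamental recurrence for $Q(n,a)(x)$, fit polynomials $A_6$ and $B_6$ in the ansatz $E_6=A_6+B_6E_1$ by undetermined coefficients, certify the result as a polynomial identity checked at sufficiently many $(n,a)$ (with the \emph{a priori} degree bounds left, as in the text, to the obtuse reader), and read off the asymptotics from $\deg A_6(n,1)=9$ versus $B_6(n,1)E_1(n)=O(n^{17/2})$. The explicit Leibniz expansion of $Q^{(6)}(n,a)(1)$ is just a more detailed description of the same computation the Maple package performs, so there is nothing to add.
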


For Theorems 7-30, see the output file

\noindent {\tt http://sites.math.rutgers.edu/\~{}zeilberg/tokhniot/oParkingStatistics7.txt} 

Let $\{e_k\}_{k=1}^{\infty}$ be the sequence of moments of the Airy distribution, defined by the recurrence given
in Equations $(4)$ and $(5)$ in Svante Janson's interesting survey paper \cite{22}. Our computers, using our
Maple package, proved that 
$$
E_k(n) \, = \, e_k n^{\frac{3k}{2}} + O( n^{\frac{3k-1}{2}}) ,
$$
for $1 \leq k \leq 30$. It follows that the {\it limiting distribution} of the area statistic is (most probably) the Airy distribution,
since the first $30$ moments match.
Of course, this was already known to continuous probability theorists, and we only proved it for the first $30$
moments, but:

$\bullet$ Our methods are purely {\it elementary} and {\it finitistic}.

$\bullet$ We can easily go much farther, i.e. prove it for more moments.

$\bullet$ We believe that our approach, using recurrences, can be used to derive a recurrence for the {\it leading}
asymptotics of the factorial moments, $E_k(n)$, that would turn out to be the same as the above mentioned
recurrence (Eqs. (4) and (5) in \cite{22}). We leave this as a challenge to the reader.

We also have the results of the exact expressions for the first $10$ moments of the area statistic for general $a$-parking. To see expressions in $a$, $n$, and $E_1(n,a)$, for the first $10$ moments of $a$-parking, see

\noindent {\tt http://sites.math.rutgers.edu/\~{}zeilberg/tokhniot/oParkingStatistics8.txt}.

\chapter{The Gordian Knot of the $C$-finite Ansatz}

This chapter is adapted from \cite{48}, which has been accepted on {\it Algorithmic Combinatorics-Enumerative Combinatorics, Special Functions, and Computer Algebra: In honor of Peter Paule's 60th birthday}. It is also available on arXiv.org, number 1812.07193.

This chapter is dedicated to Peter Paule, one of the great pioneers of experimental mathematics
and symbolic computation. In particular, it is greatly inspired by his masterpiece,
co-authored with Manuel Kauers, {\it The Concrete Tetrahedron} \cite{24}, where a whole chapter
is dedicated to our favorite {\it ansatz}, the $C-$finite ansatz.

\section{Introduction}

Once upon a time there was a knot that no one could untangle, it was so complicated. Then
came Alexander the Great and, in one second, {\it cut} it with his sword.

Analogously, many mathematical problems are very hard, and the current party line is
that in order for it be considered solved, the solution, or answer, should be given
a logical, rigorous, deductive proof. 

Suppose that you want to answer the following question: 

\vspace{1mm}\noindent

{\it Find a closed-form formula, as an expression in $n$,
for the real part of the $n$-th complex root of the Riemann zeta function, $\zeta(s)$ .} 

\vspace{1mm}\noindent
Let's call this quantity $a(n)$. Then you compute these real numbers, and find out
that $a(n)=\frac{1}{2}$ for $n \leq 1000$. Later you are told by Andrew Odlyzko that $a(n)=\frac{1}{2}$ for all $1 \leq n \leq 10^{10}$.
Can you conclude that $a(n)=\frac{1}{2}$ for {\it all} $n$? we would, but, at this time of writing, there is
no way to deduce it rigorously, so it remains an open problem. 
It is very possible that one day  it will turn out that $a(n)$ (the real part of the $n$-th complex root of $\zeta(s)$)
belongs to a certain {\it ansatz}, and that checking it for the first $N_0$ cases implies its truth
in general, but this remains to be seen.

There are also frameworks, e.g. {\it Pisot sequences} (see \cite{10}, \cite{58}), where the inductive approach fails miserably.

On the other hand, in order to (rigorously) prove that $1^3+2^3+3^3+ \dots + n^3=(n(n+1)/2)^2$, 
for {\it every} positive integer $n$, it suffices to check it
for the five special cases $0 \leq n \leq 4$, since both sides are polynomials of {\bf degree} $4$, hence the
difference is a polynomial of degree $\leq 4$, given by five `degrees of freedom'. 

This is an example of what is called  the  `$N_0$ principle'. In the case of a polynomial identity (like this one),
$N_0$ is simply the degree plus one.

But our favorite {\it ansatz} is the $C$-finite ansatz. A sequence of numbers $\{a(n)\}$ ($0 \leq n < \infty$)
is $C$-finite if it satisfies a {\it linear recurrence equation with constant coefficients}.
For example the Fibonacci sequence that satisfies $F(n)-F(n-1)-F(n-2)=0$ for $n \geq 2$.

The $C$-finite ansatz is beautifully described in Chapter 4 of  the masterpiece
{\it The Concrete Tetrahedron} \cite{24}, by Manuel Kauers and Peter Paule, and discussed at length in \cite{57}.

Here the  `$N_0$ principle' also holds (see \cite{59}), i.e. by looking at the `big picture' one can determine
{\it a priori}, a positive integer, often not that large, such that checking that $a(n)=b(n)$ for $1 \leq n \leq N_0$
implies that $a(n)=b(n)$ for all $n>0$.

A sequence $\{a(n)\}_{n=0}^{\infty}$ is $C$-finite if and only if its (ordinary) {\it generating function}
$f(t):=\sum_{n=0}^{\infty} a(n)\,t^n$ is a {\bf rational function} of $t$, i.e.
$f(t)=P(t)/Q(t)$ for some {\it polynomials} $P(t)$ and $Q(t)$. 
For example, famously, the generating function of the Fibonacci
sequence is $t/(1-t-t^2)$.

Phrased in terms of generating functions, the $C$-finite ansatz is the subject of chapter 4 of yet another
masterpiece, Richard Stanley's `Enumerative Combinatorics' (volume 1) \cite{43}. There it is shown, using the
`transfer matrix method' (that originated in physics), that in many combinatorial situations, where
there are finitely many states, one is guaranteed, {\it a priori}, that the generating function is rational.

Alas, finding this transfer matrix, at each specific case, is not easy!
The human has to first figure out the set of states, and then using human ingenuity, figure out
how they interact.

A better way is to automate it. Let the computer do the research, and using `symbolic dynamical programming',
the computer, automatically, finds the set of states, and constructs, {\it all by itself} (without any human
pre-processing), the set of states and the transfer matrix. But this may not be so efficient for two reasons.
First, at the very end, one has to invert a matrix with {\it symbolic} entries, hence compute 
symbolic determinants, that is time-consuming. Second, setting up the `infrastructure' and writing a program
that would enable the computer to do `machine-learning' can be very daunting.

In this chapter, we will describe two {\it case studies} where, by `general nonsense', we know
that the generating functions are rational, and it is easy to bound the degree of the denominator
(alias the order of the recurrence satisfied by the sequence).  
Hence a simple-minded, {\it empirical}, approach of computing the first few terms and then
`fitting' a recurrence (equivalently rational function) is possible.

The first case-study concerns
counting spanning trees in families of grid-graphs, studied by Paul Raff \cite{33}, and F.J. Faase \cite{14}.
In their research, the human first analyzes the intricate combinatorics, manually sets up
the transfer matrix, and only at the end lets a computer-algebra system evaluate the symbolic determinant.

Our key observation, that enabled us to `cut the Gordian knot' is that  the terms of the studied sequences are
expressible as {\it numerical} determinants. Since computing numerical determinants is so fast,
it is easy to compute sufficiently many terms, and then fit the data into a rational function.
Since we easily have an upper bound for the degree of the denominator of the rational function,
everything is rigorous.

The second case-study is computing generating functions for sequences of determinants of `almost diagonal matrices'.
Here, in addition to the `naive' approach of cranking enough data and then fitting it into a rational function, we
also describe the `symbolic dynamical programming method', that surprisingly, is faster for the range
of examples that we considered. But we believe that for sufficiently large cases, the naive approach will
eventually be more efficient, since the `deductive' approach works equally well for the analogous problem of
finding the sequence of permanents of these almost diagonal matrices, for which the naive approach will soon be
intractable.

This chapter may be viewed as a {\it tutorial}, hence we include lots of implementation details, and
Maple code. We hope that it will  inspire readers (and their computers!) to apply it in other situations

%%%%%%%%%%%%%%%%%%%%%%%%%%%%%%%%%%%%%%%%%%%%%%%%%%%%%%%%%%%%%%%%%%%%%%%%%%%%%%%%%%%%%%%%%%%%%%%

\section{The Human Approach to Enumerating Spanning Trees of Grid Graphs}

In order to illustrate the advantage of ``keeping it simple'', we will review  the human approach to 
the enumeration task that we will later redo using the `Gordian knot' way.
While the human approach is definitely interesting for its own sake, it is rather painful.

Our goal is to enumerate the number of spanning trees 
in certain families of graphs, 
notably grid graphs and their generalizations. Let's examine  Paul Raff's interesting approach
described in his paper {\it Spanning Trees in Grid Graph} \cite{33}. Raff's approach was inspired by 
the pioneering  work of F. J. Faase \cite{14}.

The goal is to find generating functions that
enumerate spanning trees in grid graphs and the product of an arbitrary graph and a path or a cycle. 

Grid graphs have two parameters, let's call them, $k$ and $n$. 
For a $k \times n$ grid graph, let's think of $k$ as {\it fixed} while $n$ is the discrete input variable of interest.

\begin{definition}
 The $k \times n$ grid graph $G_k(n)$ is the following graph given in terms of its vertex
set $V$ and edge set $E$:
$$
V = \{v_{ij}|1 \leq i \leq k, 1 \leq j \leq n\},
$$
$$
S = \{\{v_{ij}, v_{i'j'}\}| |i-i'|+|j-j'|=1\}.
$$
\end{definition}

The main idea in the human approach is to consider  the collection of set-partitions  of
$[k] = \{1,2,\dots,k\}$ and figure out  the transition when we extend a $k \times n$ grid graph to a $k \times (n+1)$ one.

Let $\mathcal{B}_k$ be the collection  of all set-partitions of $[k]$.  
$B_k = |\mathcal{B}_k|$ are called the Bell number. Famously, the exponential generating function
of $B_k$, namely $\sum_{k=0}^{\infty} \frac{B_k}{k!}\, t^k$, equals $e^{e^t-1}$.

A lexicographic ordering on $\mathcal{B}_k$ is defined as follows:

\begin{definition}
Given two partitions $P_1$ and $P_2$ of $[k]$, for $i \in [k]$, let $X_i$ be the block of $P_1$ containing $i$ and $Y_i$ be the 
block of $P_2$ containing $i$. Let $j$ be the minimum number such that $X_i \neq Y_i$. Then $P_1 < P_2$ iff

1. $|P_1| < |P_2|$ or

2. $|P_1| = |P_2|$ and $X_j \prec Y_j$ where $\prec$ denotes the normal lexicographic order. 
\end{definition}

For example, here is the ordering for $k=3$:
$$
\mathcal{B}_3 = \{\{\{1,2,3\}\}, \{\{1\}, \{2,3\}, \{\{1,2\}, \{3\}\}, \{\{1,3\}, \{2\}\}, \{\{1\}, \{2\}, \{3\}\}\} .
$$
For simplicity, we can rewrite it as follows:
$$
\mathcal{B}_3 = \{123, 1/23, 12/3, 13/2, 1/2/3\}.
$$

\begin{definition}
Given a spanning forest $F$ of $G_k(n)$, the partition induced by $F$ is obtained from the equivalence relation

\centerline{$i \sim j \Longleftrightarrow v_{n,i}, v_{n,j} $ are in the same component of $F$.}
\end{definition}

For example, the partition induced by any spanning tree of $G_k(n)$ is $123\dots k$ because by definition,
in a spanning tree, all $v_{n,i}, 1 \leq i \leq k$ are in the same component.
For the other extreme,
where every component only consists of one vertex, the corresponding set-partition is
$1/2/3/\dots /k-1/k$ because no two $v_{n,i}, v_{n,j}$ are in the same component for $1 \leq i<j \leq k$.

\begin{definition}
Given a spanning forest $F$ of $G_k(n)$ and a set-partition $P$ of $[k]$, we say that $F$ is consistent with $P$ if:

1. The number of trees in $F$ is precisely $|P|$.

2. $P$ is the partition induced by $F$.
\end{definition}

Let $E_n$ be the set of edges $E(G_k(n) \backslash E(G_k(n-1))$, then $E_n$ has $2k-1$ members.

Given a forest $F$ of $G_k(n-1)$ and some subset $X \subseteq E_n$, we can combine them to 
get a forest of $G_k(n)$ as follows. 
We just need to know  how many subsets of $E_n$ can transfer a forest consistent with some partition to a forest consistent with another partition. 
This leads to the following definition:

\begin{definition}
Given two partitions $P_1$ and $P_2$ in $\mathcal{B}_k$, a subset $X \subseteq E_n$ transfers from $P_1$ to $P_2$ if a forest 
consistent with $P_1$ becomes a forest consistent with $P_2$ after the addition of $X$. In this case, we write $X \diamond P_1 = P_2$.
\end{definition}

With the above definitions, it is natural to define a $B_k \times B_k$ transfer matrix $A_k$ by the following:
$$
A_k(i,j) = | \{A \subseteq E_{n+1} | A \diamond P_j = P_i \} |.
$$
Let's look at the $k=2$ case as an example. We have 
$$
\mathcal{B}_2 = \{12, 1/2\},\quad E_{n+1} = \{\{v_{1,n}, v_{1,n+1}\}, \{v_{2,n}, v_{2,n+1}\}, \{v_{1,n+1}, v_{2,n+1}\}\}.
$$
For simplicity, let's call the edges in $E_{n+1}$ $e_1, e_2, e_3$. Then to transfer the set-partition $P_1 = 12$ to itself, 
we have the following three ways: $\{e_1, e_2\}, \{e_1, e_3\}, \{e_2, e_3\}$. 
In order to transfer the partition $P_2=1/2$ into $P_1$, we  only have one way, namely:
$\{e_1, e_2, e_3\}$. Similarly, there are two ways to transfer $P_1$ to $P_2$ and one way to transfer $P_2$ to itself 
Hence the transfer matrix is the following $2 \times 2$ matrix:
$$
A =
\begin{bmatrix}
 3 & 1 \\
 2 & 1
\end{bmatrix}.
$$
Let $T_1(n), T_2(n)$ be the number of forests of $G_k(n)$ which are consistent with the partitions 
$P1$ and  $P2$, respectively. Let
$$
v_n =
\begin{bmatrix}
 T_1(n) \\
 T_2(n)
\end{bmatrix} ,
$$
then
$$
 v_n =Av_{n-1}  .
$$
The characteristic polynomial of $A$ is
$$
\chi_\lambda(A) = \lambda^2-4\lambda+1.
$$
By the Cayley-Hamilton Theorem, $A$ satisfies 
$$
A^2-4A+1=0.
$$
Hence the recurrence relation for $T_1(n)$ is
$$
T_1(n) = 4T_1(n-1) - T_1(n-2),
$$
the sequence is $\{1, 4, 15, 56, 209, 780, 2911, 10864, 40545, 151316, \dots \}$ (OEIS A001353) and the generating function is 
$$
\frac{x}{1-4x+x^2}.
$$
Similarly, for the $k=3$ case, the transfer matrix
$$
A_3 =
\begin{bmatrix}
 8 & 3 & 3 & 4 & 1 \\
 4 & 3 & 2 & 2 & 1 \\
 4 & 2 & 3 & 2 & 1 \\
 1 & 0 & 0 & 1 & 0 \\
 3 & 2 & 2 & 2 & 1 
\end{bmatrix}.
$$
The transfer matrix method can be generalized to general graphs of the form $G \times P_n$, especially cylinder graphs. 

As one can see, 
we had to think very hard.
First we had to establish a `canonical'
ordering over set-partitions, then
define the consistence between partitions and forests, then look for the transfer matrix and finally worry about initial conditions. 

Rather than think so hard, let's compute sufficiently many terms of the enumeration sequence, and try to guess a linear
recurrence equation with constant coefficients, that would be provable {\it a posteriori} just because we know that {\it there exists}  a transfer matrix
without worrying about finding it explicitly. But how do we generate sufficiently many terms?
Luckily, we can use the celebrated {\bf Matrix Tree Theorem}. 

\begin{theorem}[Matrix Tree Theorem] If $A = (a_{ij})$ is the adjacency matrix of an arbitrary graph $G$, then the number of spanning trees is equal to the 
determinant of any co-factor of the Laplacian matrix $L$ of $G$, where 

$$ L = 
\begin{bmatrix}
    a_{12}+\dots+a_{1n} & -a_{12} &  \dots  & -a_{1,n} \\
    -a_{21} & a_{21}+\dots+a_{2n}  & \dots & -a_{2,n} \\
    \vdots & \vdots  & \ddots & \vdots \\
    -a_{n1} & -a_{n2} & \dots  & a_{n1}+\dots+a_{n,n-1}
\end{bmatrix}.
$$
\end{theorem}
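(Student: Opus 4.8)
The plan is to deduce this from the Cauchy--Binet formula applied to the (oriented) incidence matrix of $G$. First I would fix an arbitrary orientation of the edges of $G$, turning it into a directed graph with vertex set $\{1,\dots,n\}$ and edge set $\{e_1,\dots,e_m\}$, and form the $n\times m$ \emph{incidence matrix} $D=(d_{ve})$, where $d_{ve}=+1$ if $v$ is the head of $e$, $d_{ve}=-1$ if $v$ is the tail of $e$, and $d_{ve}=0$ otherwise. A direct computation of the entries of $DD^{T}$ shows that $DD^{T}=L$, the Laplacian matrix in the statement: the diagonal entry at $v$ counts the edges incident to $v$, i.e.\ the degree $a_{v1}+\dots+a_{vn}$, while the $(u,v)$ entry for $u\neq v$ is $-a_{uv}$ (each edge joining $u$ and $v$ contributes $(+1)(-1)=-1$, regardless of how it was oriented).

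Next, since every row sum and every column sum of $L$ vanishes, all cofactors of $L$ are equal (a one-line argument I would include), so it suffices to evaluate the principal cofactor obtained by deleting the row and column indexed by vertex $n$. Let $D_{0}$ be the $(n-1)\times m$ matrix obtained from $D$ by deleting the row for vertex $n$. That cofactor is then $\det(D_{0}D_{0}^{T})$, and the Cauchy--Binet formula gives
$$
\det\bigl(D_{0}D_{0}^{T}\bigr)=\sum_{S}\bigl(\det D_{0}[S]\bigr)^{2},
$$
where $S$ ranges over all $(n-1)$-element subsets of $\{e_1,\dots,e_m\}$ and $D_{0}[S]$ is the corresponding $(n-1)\times(n-1)$ submatrix of $D_{0}$ (if $G$ is disconnected both sides are $0$, consistent with the claim).

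The heart of the argument --- and the step I expect to be the main obstacle --- is the combinatorial lemma that $\det D_{0}[S]=\pm1$ when the edge set $S$ is a spanning tree of $G$, and $\det D_{0}[S]=0$ otherwise. For the vanishing case: any set of $n-1$ edges on $n$ vertices that is not a spanning tree contains a cycle, and the columns of $D_{0}$ indexed by the edges of that cycle satisfy a nontrivial linear dependence (the alternating sum of the edge vectors around the cycle is zero, and this persists after deleting the row for vertex $n$), so $D_{0}[S]$ is singular. For the $\pm1$ case, when $S$ is a spanning tree I would induct on $n$, using that a tree has a leaf $v\neq n$: the row of $D_{0}[S]$ indexed by $v$ has a single nonzero entry, equal to $\pm1$, so expanding the determinant along that row reduces it to the determinant of the incidence matrix (with the row for $n$ deleted) of the smaller tree $S\setminus\{$leaf edge$\}$, which is $\pm1$ by induction. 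Substituting the lemma into the Cauchy--Binet sum, each spanning tree of $G$ contributes $(\pm1)^{2}=1$ and every other $S$ contributes $0$, so $\det(D_{0}D_{0}^{T})$ equals the number of spanning trees of $G$, which completes the proof.
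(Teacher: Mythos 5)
Your proposal is a correct and complete outline of the classical Cauchy--Binet proof of the Matrix Tree Theorem: the identity $DD^{T}=L$ for the oriented incidence matrix, the equality of all cofactors of a matrix with vanishing row and column sums, the Cauchy--Binet expansion of $\det(D_0D_0^{T})$ over $(n-1)$-subsets of edges, and the key lemma that $\det D_0[S]=\pm1$ for spanning trees and $0$ otherwise (via the cycle dependence in one case and leaf-induction in the other) are all sound, and together they do establish the theorem, including the degenerate disconnected case. Note, however, that the paper itself offers no proof of this statement: the Matrix Tree Theorem is invoked there as a known classical result (due to Kirchhoff) and used purely as a computational tool for generating the determinant data that feeds the $C$-finite guessing machinery, so there is no argument in the text to compare yours against. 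Your write-up therefore supplies a self-contained justification where the paper deliberately relies on the literature; the only points you would still need to flesh out for a fully rigorous version are the ``one-line'' equal-cofactors argument (via the adjugate of a rank-$(n-1)$ singular matrix) and the base case of the leaf induction, both of which are routine.
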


For instance, taking the $(n,n)$ co-factor, we have that the number of spanning trees of $G$ equals
$$
\begin{vmatrix}
    a_{12}+\dots+a_{1n} & -a_{12} &  \dots  & -a_{1,n-1} \\
    -a_{21} & a_{21}+\dots+a_{2n}  & \dots & -a_{2,n-1} \\
    \vdots & \vdots  & \ddots & \vdots \\
    -a_{n-1,1} & -a_{n-1,2} & \dots  & a_{n-1,1}+\dots+a_{n-1,n}
\end{vmatrix}.
$$
Since computing determinants for numeric matrices is very fast, we can find the generating functions for the number of spanning trees in grid graphs and more generalized graphs by experimental methods, using the $C$-finite ansatz.

\section{The GuessRec Maple procedure}

Our engine is the Maple procedure  {\tt GuessRec(L)} that resides 
in the Maple packages accompanying this chapter.
Naturally, we need to collect enough data. The input is the data (given as a list) 
and the output is a conjectured recurrence relation derived from that data.

Procedure {\tt GuessRec(L)}  inputs a list, {\tt L}, and attempts to output a linear recurrence equation with constant coefficients
satisfied by the list. It is based on procedure  {\tt GuessRec1(L,d)} that looks for such a recurrence of order $d$.

The output of   {\tt GuessRec1(L,d)} consists of the
the list of initial $d$ values (`initial conditions')
and the recurrence equation represented as a list. 
For instance, if the input is $L = [1,1,1,1,1,1]$ and $d=1$, then the output will be $[[1],[1]]$; 
if the input is $L=[1, 4, 15, 56, 209, 780, 2911, 10864, 40545, 151316]$ as the $k=2$ case for grid graphs and $d=2$, then the output will be 
$[[1, 4], [4, -1]]$. This means that our sequence satisfies the recurrence $a(n)=4a(n-1)-a(n-2)$, subject to the initial conditions
$a(0)=1,a(1)=4$.

Here is the Maple code:

\vspace{1mm}\noindent

{\obeylines
{\tt
GuessRec1:=proc(L,d) local eq,var,a,i,n:
if nops(L)<=2*d+2 then
 print(`The list must be of size >=`, 2*d+3 ):
 RETURN(FAIL):
fi:
var:=$\{$seq(a[i],i=1..d)$\}$:
eq:=$\{$seq(L[n]-add(a[i]*L[n-i],i=1..d),n=d+1..nops(L))$\}$:
var:=solve(eq,var):
if var=NULL then
 RETURN(FAIL):
else
 RETURN([[op(1..d,L)],[seq(subs(var,a[i]),i=1..d)]]):
fi:
end:
}
}

\vspace{1mm}\noindent

The  idea is that having a long enough list $L$ $(|L|>2d+2)$  of data, we use the data after the $d$-th one to 
discover whether there exists a linear recurrence relation, the first $d$ data points being the initial condition. 
With the unknowns $a_1, a_2, \dots, a_d $, we have a linear systems of no less than $d+3$ equations. If there is a solution, 
it is extremely likely that the recurrence relation holds in general. 
The first list of length $d$ in the output constitutes the list of initial conditions while the second list, $R$,
codes the linear recurrence, where $[R[1], \dots R[d]]$ stands for the following recurrence:
$$
L[n] = \sum_{i=1}^d R[i]L[n-i].
$$

Here is the Maple procedure {\tt GuessRec(L)}:
{\obeylines
{\tt
GuessRec:=proc(L) local gu,d:
for d from 1 to trunc(nops(L)/2)-2 do
 gu:=GuessRec1(L,d):
 if gu<>FAIL then
   RETURN(gu):
 fi:
od:
FAIL:
end:
}
}

This procedure inputs a sequence $L$ and tries to guess a recurrence equation with constant coefficients satisfying it. 
It returns the initial values and the recurrence equation as a pair of lists. Since the length of $L$ is limited, the maximum degree of recurrence 
cannot be more than $\lfloor |L|/2-2 \rfloor$. With this procedure, we just need to input 
$L=[1, 4, 15, 56, 209, 780, 2911, 10864, 40545, 151316]$  to get the recurrence (and initial conditions) $[[1, 4], [4, -1]]$.

Once the recurrence relation, let's call it {\tt S},
is discovered, procedure {\tt CtoR(S,t)} 
finds the generating function for the sequence. 
Here is the Maple code:
\vspace{1mm}\noindent
{\obeylines
{\tt
CtoR:=proc(S,t) local D1,i,N1,L1,f,f1,L:
if not (type(S,list) and  nops(S)=2 and type(S[1],list) and 
type(S[2],list) and nops(S[1])=nops(S[2]) and type(t, symbol) ) then
   print(`Bad input`):
   RETURN(FAIL):
fi:
D1:=1-add(S[2][i]*t**i,i=1..nops(S[2])):
N1:=add(S[1][i]*t**(i-1),i=1..nops(S[1])):
L1:=expand(D1*N1):
L1:=add(coeff(L1,t,i)*t**i,i=0..nops(S[1])-1):
f:=L1/D1:
L:=degree(D1,t)+10:
f1:=taylor(f,t=0,L+1):
if expand([seq(coeff(f1,t,i),i=0..L)])<>expand(SeqFromRec(S,L+1)) 
then
print([seq(coeff(f1,t,i),i=0..L)],SeqFromRec(S,L+1)):
 RETURN(FAIL):
else
 RETURN(f):
fi:
end:
}
}
\vspace{1mm}\noindent
Procedure {\tt SeqFromRec} used above (see the package) simply generates many terms using the recurrence.

Procedure {\tt CtoR(S,t)} outputs the rational function in $t$, 
whose coefficients are the members of the $C$-finite sequence $S$. For example: 
$$
{\tt CtoR([[1,1],[1,1]],t)} = \frac{1}{-t^2-t+1}.
$$
Briefly, the idea is that the denominator of the rational function can be easily determined by the recurrence relation and we use the initial 
condition to find the starting terms of the generating function,
then multiply it by the denominator, yielding the numerator.

\section{Application of GuessRec to Enumerating  Spanning Trees of Grid Graphs and  $G \times P_n$}

With the powerful  procedures {\tt GuessRec} and {\tt CtoR}, we are able to find generating functions 
for the number of spanning trees of generalized graphs of the form $G \times P_n$. We will illustrate 
the application of {\tt GuessRec} to finding the generating function for the number of spanning trees in grid graphs. 

First, using procedure {\tt GridMN(k,n)}, we get the  $k \times n$ grid graph.

Then, procedure {\tt SpFn} uses the Matrix Tree Theorem
to evaluate the determinant of the co-factor of the Laplacian matrix of the grid graph which is the number of spanning trees in this particular graph. 
For a fixed $k$, we need to generate a sufficiently long list of data for the number of spanning trees in 
$G_k(n), n \in [l(k), u(k)]$. 
The lower bound $l(k)$ can't be too small since the first several terms are the initial condition; the upper bound $u(k)$ can't be too small as well 
since we need sufficient data to obtain the recurrence relation. 
Notice that there is a symmetry for the recurrence relation, and
to take advantage of this fact,  modified {\tt GuessRec} to get the more efficient {\tt GuessSymRec} 
(requiring less data).
Once the recurrence relation, and the initial conditions, are given, 
applying {\tt CtoR(S,t)} will give the desirable generating function, that, of course, is a rational function of $t$.
All the above is incorporated in 
procedure {\tt GFGridKN(k,t)} which inputs a positive integer $k$ and a symbol $t$, 
and outputs the generating function whose coefficient of $t^n$ is the number of spanning trees in $G_k(n)$, i.e. if we let 
$s(k,n)$ be the number of spanning trees in $G_k(n)$, the generating function 
$$
F_k(t) = \sum_{n=0}^{\infty} s(k,n) t^n.
$$
We now list  the generating functions $F_k(t)$ for $1 \leq k \leq 7$:
Except for $k=7$, these were already found by Raff \cite{33} and Faase \cite{14}, but it is
reassuring that, using our new approach, we got the same output. The case $k=7$ seems to be new.

\begin{theorem}
The generating function for the number of spanning trees in $G_1(n)$ is:
$$
F_1(t) = \frac {t}{1-t}.
$$
\end{theorem}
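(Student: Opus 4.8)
The plan is to identify the graph $G_1(n)$ explicitly and then read off the spanning-tree count directly, rather than invoking the machinery of {\tt GuessRec} and {\tt CtoR} (though that route works too, and I would use it as a sanity check).

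First I would observe that, by the definition of $G_k(n)$ with $k=1$, the vertex set is $V = \{v_{1j} \mid 1 \leq j \leq n\}$ and the edge set consists of exactly those pairs $\{v_{1j}, v_{1j'}\}$ with $|j-j'|=1$. In other words, $G_1(n)$ is precisely the path graph $P_n$ on $n$ vertices. Since a path is already a tree (it is connected and acyclic with $n-1$ edges on $n$ vertices), it has exactly one spanning tree, namely itself, for every $n \geq 1$; and $G_1(0)$ is the empty graph, contributing $0$. Hence $s(1,n) = 1$ for $n \geq 1$ and $s(1,0) = 0$.

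The second step is then just summing the geometric series:
$$
F_1(t) = \sum_{n=0}^{\infty} s(1,n)\, t^n = \sum_{n=1}^{\infty} t^n = \frac{t}{1-t}.
$$
As an independent verification consistent with the paper's methodology, one can apply the Matrix Tree Theorem to the Laplacian of $P_n$ for a handful of small $n$, obtain the data list $[0,1,1,1,1,\dots]$, feed it to {\tt GuessRec} to recover the order-$1$ recurrence $s(1,n) = s(1,n-1)$ with the appropriate initial term, and then {\tt CtoR} returns $\frac{t}{1-t}$.

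There is essentially no obstacle here: the only thing to be slightly careful about is the indexing convention at $n=0$ (whether the constant term of $F_1$ is $0$ or $1$), which must be chosen so that the stated closed form $\frac{t}{1-t}$ — whose power series is $t + t^2 + t^3 + \cdots$ with no constant term — is reproduced. This fixes $s(1,0)=0$, matching the natural reading that the empty graph has no spanning tree.
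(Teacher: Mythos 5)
Your proof is correct, but it takes a genuinely different route from the paper. The paper obtains all of the theorems in this section uniformly by its experimental pipeline: compute $s(k,n)$ for enough values of $n$ via the Matrix Tree Theorem (numerical determinants of Laplacian cofactors), feed the data to {\tt GuessRec} to find a linear recurrence with constant coefficients, and convert to a rational function with {\tt CtoR}; the result is rigorous \emph{a posteriori} because the transfer-matrix argument guarantees such a recurrence exists with a bounded order. You instead identify $G_1(n)$ directly as the path $P_n$, note that a path is itself a tree and hence has exactly one spanning tree, and sum the geometric series. For $k=1$ your argument is obviously shorter, fully self-contained, and requires no act of faith about degree bounds; what it does not buy you is uniformity --- the whole point of the chapter is that the same mechanical procedure handles $k=2,\dots,7$, where no such structural shortcut is available. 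Your handling of the constant term (forcing $s(1,0)=0$ so that $F_1(t)=t+t^2+\cdots$) is consistent with the paper's convention, and your suggested cross-check via {\tt GuessRec} and {\tt CtoR} is exactly the paper's actual derivation.
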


\begin{theorem}
The generating function for the number of spanning trees in $G_2(n)$ is:
$$
F_2 = \frac {t}{{t}^{2}-4\,t+1}.
$$
\end{theorem}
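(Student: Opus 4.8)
The plan is to run the ``Gordian knot'' pipeline of this chapter rather than to reconstruct the transfer matrix by hand. First I would assemble the Laplacian of the ladder graph $G_2(n)$, delete its last row and column, and have the computer evaluate the resulting $(2n-1)\times(2n-1)$ \emph{numerical} determinant for a modest range, say $1 \le n \le 10$; by the Matrix Tree Theorem this produces the exact counts $s(2,1),\dots,s(2,10) = 1, 4, 15, 56, 209, 780, 2911, 10864, 40545, 151316$. Every entry here is an honest integer, so there is no numerical imprecision to worry about, and obtaining ten terms is cheap.

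The second, and conceptually crucial, step is to certify that this finite list pins down the whole sequence. For this I invoke the human transfer-matrix framework reviewed earlier in this chapter: the set-partitions of $[2]$ are $12$ and $1/2$, so $\mathcal{B}_2$ has $B_2 = 2$ elements and the transfer matrix is the $2\times 2$ matrix displayed above, with characteristic polynomial $\lambda^2 - 4\lambda + 1$. Cayley--Hamilton then forces $s(2,n)$ to satisfy a linear recurrence with constant coefficients of order at most $2$. This is exactly the hypothesis behind the $N_0$ principle invoked in the introduction: since we know \emph{a priori} that the sequence is $C$-finite of order $\le 2$ and the data is visibly not geometric, a recurrence fitted to ten terms (comfortably more than the $2\cdot 2$ needed) is uniquely determined and therefore equals the true one --- it is a theorem, not a conjecture. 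Running {\tt GuessRec} on the list returns $[[1,4],[4,-1]]$, i.e. $s(2,n) = 4\,s(2,n-1) - s(2,n-2)$ with $s(2,1)=1,\ s(2,2)=4$.

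Finally I would feed this recurrence to {\tt CtoR}, which reads the denominator $1 - 4t + t^2$ off the recurrence, uses the initial conditions to extract the numerator $t$, and cross-checks enough Taylor coefficients against {\tt SeqFromRec}; the output is $F_2(t) = t/(t^2 - 4t + 1)$, in agreement with Raff and Faase. (As a sanity check, this is precisely the rational function $x/(1-4x+x^2)$ already obtained for $T_1(n)$ in the human treatment above, so the $k=2$ case could even be quoted directly from there.) The only genuine ``obstacle'' is the \emph{a priori} order bound --- for $k=2$ it is essentially free because $B_2 = 2$, but it is the ingredient that upgrades a guessed recurrence to a proof; for larger $k$ the same reasoning applies, with the amount of data needed scaling like $2B_k$, which is why the symmetry-exploiting variant {\tt GuessSymRec} becomes useful there.
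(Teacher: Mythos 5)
Your proposal is correct and follows essentially the same route as the paper: compute the terms $1,4,15,56,\dots$ via the Matrix Tree Theorem, fit the order-$2$ recurrence with {\tt GuessRec}, convert to the rational function with {\tt CtoR}, and justify rigor by the a priori existence of the $B_2\times B_2$ transfer matrix. Your explicit spelling-out of the order bound via $B_2=2$ and Cayley--Hamilton is a slightly more careful statement of the ``$N_0$ principle'' step that the paper invokes informally, but it is the same argument.
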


\begin{theorem}
The generating function for the number of spanning trees in $G_3(n)$ is:
$$
F_3 = \frac {-{t}^{3}+t}{{t}^{4}-15\,{t}^{3}+32\,{t}^{2}-15\,t+1}.
$$
\end{theorem}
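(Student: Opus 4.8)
The plan is to follow the same experimental-yet-rigorous recipe used above for $G_1$ and $G_2$, now backed by the purely \emph{existential} transfer-matrix argument of Section 3.2. First I would invoke that argument: since $\mathcal{B}_3$ has $B_3 = 5$ elements, there is an explicit $5\times 5$ transfer matrix $A_3$ (the one displayed in Section 3.2) with $v_n = A_3 v_{n-1}$, and $s(3,n)$, the number of spanning trees of $G_3(n)$, is the coordinate of $v_n$ indexed by the one-block partition $123$. Hence the characteristic polynomial of $A_3$ annihilates the sequence $\{s(3,n)\}$, so it satisfies a linear recurrence with constant coefficients of order at most $5$, and its generating function $F_3(t) = \sum_{n\ge 1} s(3,n)\,t^n$ is a rational function $P(t)/Q(t)$ with $\deg Q \le 5$ and $\deg P \le 5$ (with $P(0)=0$, since $s(3,0)=0$). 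This is the $N_0$-principle input: such an $F_3$ is pinned down by its first $\deg P + \deg Q + 1 \le 11$ Taylor coefficients.

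Next I would generate data. Using {\tt GridMN(3,n)} to build $G_3(n)$ and then {\tt SpFn} (the implementation of the Matrix Tree Theorem above) to evaluate a cofactor of the Laplacian, I would compute $s(3,n)$ for $n = 1,\dots,14$ — each one a fast \emph{numerical} determinant evaluation. Feeding this list to {\tt GuessRec} (or its symmetry-aware refinement {\tt GuessSymRec}, exploiting the palindromic shape $15,-32,15$) returns the order-$4$ recurrence $s(3,n) = 15\,s(3,n-1) - 32\,s(3,n-2) + 15\,s(3,n-3) - s(3,n-4)$ together with the initial values $s(3,1),\dots,s(3,4) = 1,15,192,2415$, and {\tt CtoR} converts this pair of lists into the claimed rational function $(-t^3+t)/(t^4 - 15t^3 + 32t^2 - 15t + 1)$.

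Finally I would make it rigorous. Let $G(t)$ be the rational function output by {\tt CtoR}; by construction (and {\tt CtoR} re-verifies this internally via a Taylor-series cross-check) its coefficients agree with the computed $s(3,n)$ for $n$ up through $14$. Both $G$ and $F_3$ are rational with numerator degree $\le 5$ and denominator degree $\le 5$, so the difference $F_3 - G = N(t)/D(t)$ satisfies $\deg N \le 9$ and $\deg D \le 9$; since $F_3$ and $G$ have the same first $14 > 9$ Taylor coefficients, $N$ vanishes to order greater than $\deg N$ at $t=0$, forcing $N \equiv 0$, i.e. $F_3 = G$, which is exactly the asserted formula.

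I do not expect a genuine obstacle here — that is the whole point of ``cutting the Gordian knot.'' The only step needing care is the bookkeeping behind the $N_0$ bound: one must be honest that $B_3 = 5$ really does bound the recurrence order and that the numerator/normalization conventions make a fixed finite number of coefficients suffice; computing $14$ terms leaves a comfortable margin over the threshold. Everything else — the determinant evaluations, the linear solve inside {\tt GuessRec}, and the Taylor cross-check inside {\tt CtoR} — is mechanical and self-verifying.
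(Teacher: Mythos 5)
Your proposal is correct and is essentially the paper's own argument: generate the terms $s(3,n)$ as numerical determinants via the Matrix Tree Theorem, let {\tt GuessRec}/{\tt CtoR} produce the order-$4$ recurrence and the rational function, and justify the guess \emph{a posteriori} by the mere existence of Raff's $B_3\times B_3$ transfer matrix, which bounds the recurrence order and makes the $N_0$-principle check with $14$ terms a rigorous proof. Your explicit degree bookkeeping ($\deg N\le 9$, so agreement to order $14$ forces $N\equiv 0$) is a slightly more careful spelling-out of the step the paper leaves to the reader, but it is the same proof.
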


\begin{theorem}
The generating function for the number of spanning trees in $G_4(n)$ is:
$$
F_4 = \frac {{t}^{7}-49\,{t}^{5}+112\,{t}^{4}-49\,{t}^{3}+t}{{t}^{8}-56\,{t
}^{7}+672\,{t}^{6}-2632\,{t}^{5}+4094\,{t}^{4}-2632\,{t}^{3}+672\,{t}^
{2}-56\,t+1}.
$$
For $5 \leq k \leq 7$, since the formulas are too long, we present their numerators and denominators separately.
\end{theorem}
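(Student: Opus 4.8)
The plan is to apply, essentially verbatim, the ``Gordian knot'' recipe assembled in the previous three sections, specialized to $k=4$. The one piece of genuine mathematical input is structural and already in hand: Stanley's transfer-matrix method guarantees \emph{a priori} that the sequence $s(4,n)$ of spanning-tree counts of $G_4(n)$ is $C$-finite, i.e. satisfies a linear recurrence with constant coefficients, of order at most $B_4=15$ (the size of the transfer matrix $A_4$ acting on the set-partitions $\mathcal{B}_4$). We never need to construct $A_4$, order $\mathcal{B}_4$, or evaluate a symbolic characteristic polynomial; we only need this bound on the order.

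First I would generate exact values $s(4,n)$ for a block of $n$ safely longer than twice that bound, say $n=1,\dots,30$: for each $n$, build the $4n\times 4n$ Laplacian of $G_4(n)$ via {\tt GridMN(4,n)}, delete one row and the corresponding column, and compute the resulting \emph{numerical} integer determinant with {\tt SpFn} (the Matrix Tree Theorem). These are sparse integer determinants, so this step is effectively free even well beyond $n=30$. Then I would feed the list to {\tt GuessRec} --- or, more economically, to {\tt GuessSymRec}, which builds in the palindromic symmetry of the denominator (a consequence of the reversibility of the transfer matrix) and hence needs fewer data --- and read off a candidate recurrence of order $8$ together with its initial conditions. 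Finally {\tt CtoR} converts this $C$-finite description into the rational function $F_4(t)$ displayed in the statement, internally re-checking it against further terms produced from the recurrence itself.

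The hard part --- the step that upgrades this from ``experimental'' to a genuine proof --- is the invocation of the $N_0$ principle, which here is a one-line lemma: two $C$-finite sequences each of order $\le D$ that agree on $2D$ consecutive values agree everywhere, since their difference is $C$-finite of order $\le 2D$ with $2D$ consecutive vanishing values. Taking $D=15$ (our a priori bound) and using the $30$ computed terms makes the fitted recurrence provably correct; the genuinely smaller observed order $8$ is then a windfall, not something we had to justify beforehand. Everything else is bookkeeping: confirming the palindromy of the denominator, aligning the index shift so that $[t^n]F_4(t)=s(4,n)$, and comparing the output with Raff \cite{33} and Faase \cite{14} for $k=4$ as a sanity check. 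The cases $5\le k\le 7$ go through identically --- with the a priori bound now $B_5=52$, $B_6=203$, $B_7=877$ --- the only additional cost being that correspondingly more terms (hence somewhat larger grid Laplacians) must be evaluated, after which $k=7$ is new.
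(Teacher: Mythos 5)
Your proposal is correct and follows essentially the same route as the paper: compute $s(4,n)$ as numerical cofactor determinants via the Matrix Tree Theorem, fit a $C$-finite recurrence with {\tt GuessRec}/{\tt GuessSymRec}, convert to a rational function with {\tt CtoR}, and justify rigor by the a priori existence of the $B_4\times B_4$ transfer matrix together with the $N_0$ principle. If anything, you make the rigor step more explicit than the paper does (the paper only gestures at ``provable a posteriori because there exists a transfer matrix''), by spelling out the order bound $B_4=15$ and the two-sequences-agreeing-on-$2D$-terms lemma.
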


\begin{theorem}
The generating function for the number of spanning trees in $G_5(n)$ is:
$$
F_5 = \frac{N_5}{D_5}
$$
where
$$
N_5 = -{t}^{15}+1440\,{t}^{13}-26752\,{t}^{12}+185889\,{t}^{11}-574750\,{t}^
{10}+708928\,{t}^{9}-708928\,{t}^{7} 
$$
$$
+574750\,{t}^{6}-185889\,{t}^{5}+
26752\,{t}^{4}-1440\,{t}^{3}+t,
$$

$$
D_5 = {t}^{16}-209\,{t}^{15}+11936\,{t}^{14}-274208\,{t}^{13}+3112032\,{t}^{
12}-19456019\,{t}^{11}+70651107\,{t}^{10}
$$
$$
-152325888\,{t}^{9}
+196664896\,{t}^{8}-152325888\,{t}^{7}+70651107\,{t}^{6}-19456019\,{t}^{5}
$$
$$
+
3112032\,{t}^{4}-274208\,{t}^{3}+11936\,{t}^{2}-209\,t+1.
$$
\end{theorem}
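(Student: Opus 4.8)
The plan is to follow exactly the experimental-yet-rigorous pipeline assembled in the preceding sections. First, for each $n$ I would construct the grid graph $G_5(n)$ via \texttt{GridMN(5,n)}, form its Laplacian, delete one row and the corresponding column, and compute the resulting \emph{numerical} determinant using \texttt{SpFn}; by the Matrix Tree Theorem this integer is precisely $s(5,n)$, the number of spanning trees of $G_5(n)$. Since these are determinants of integer matrices, this is very fast, so I can cheaply generate a long data list $[s(5,0),s(5,1),\dots,s(5,M)]$ with $M$ as large as we please.

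Second, I would invoke the \emph{a priori} guarantee that $\{s(5,n)\}$ is $C$-finite. The transfer-matrix method reviewed above, working over the $B_5 = 52$ set-partitions of $[5]$, shows that $s(5,n)$ satisfies a linear recurrence with constant coefficients of order at most $B_5 = 52$ (and the recurrence, equivalently the denominator of the generating function, inherits the palindromic symmetry already visible for $k \le 4$). Feeding the data into \texttt{GuessRec}, or into the leaner symmetry-aware \texttt{GuessSymRec}, then recovers a recurrence $S$ together with its initial conditions; the data will in fact show that the minimal such recurrence has order $16$, matching $\deg D_5 = 16$.

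Third, with $S$ in hand I would apply \texttt{CtoR(S,t)}, which reads the denominator $D_5(t)$ directly off the recurrence, multiplies the truncated power series determined by the initial conditions by $D_5(t)$ to extract the numerator $N_5(t)$, and as a built-in sanity check re-expands $N_5/D_5$ and compares it term-by-term against \texttt{SeqFromRec}. Printing $N_5$ and $D_5$ then yields the displayed formulas, completing the theorem.

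The only place where rigor could slip is the passage from "checked for many $n$" to "true for all $n$", i.e.\ the "$N_0$ principle." Because we know \emph{in advance} that \emph{some} recurrence of order $\le B_5 = 52$ governs the sequence, it suffices to verify that the guessed order-$16$ recurrence is consistent with, say, the first $2\cdot 52 + O(1)$ computed values of $s(5,n)$: any genuine recurrence of order $\le 52$ is then pinned down by elementary linear algebra, and it must coincide with the one \texttt{GuessRec} returned. Hence the main (and really only) obstacle is the bookkeeping of honest bounds: establishing the upper bound $52$ on the order — possibly sharpened using the block structure of the transfer matrix, though this is not needed — and computing enough terms of $s(5,n)$ to comfortably exceed twice that bound. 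Everything downstream of that check is the routine, fully rigorous verification that \texttt{CtoR} already performs.
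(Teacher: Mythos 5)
Your proposal is correct and follows essentially the same route as the paper: compute $s(5,n)$ as numerical cofactor determinants via the Matrix Tree Theorem, guess the order-$16$ recurrence with \texttt{GuessRec}/\texttt{GuessSymRec}, convert it to $N_5/D_5$ with \texttt{CtoR}, and justify the guess rigorously by the $N_0$ principle using the a priori existence of a $B_5\times B_5$ transfer matrix (hence a recurrence of order at most $52$). The only addition you make beyond the paper's exposition is spelling out the "check $2\cdot 52+O(1)$ terms" bookkeeping explicitly, which the paper leaves implicit.
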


\begin{theorem}
The generating function for the number of spanning trees in $G_6(n)$ is:
$$
F_6 = \frac{N_6}{D_6}
$$
where
$$
N_6 = {t}^{31}-33359\,{t}^{29}+3642600\,{t}^{28}-173371343\,{t}^{27}+
4540320720\,{t}^{26}-70164186331\,{t}^{25}
$$
$$
+634164906960\,{t}^{24}-
2844883304348\,{t}^{23}-1842793012320\,{t}^{22}+104844096982372\,{t}^{
21}
$$
$$
-678752492380560\,{t}^{20}+2471590551535210\,{t}^{19}-
5926092273213840\,{t}^{18}
$$
$$
+9869538714631398\,{t}^{17}
-11674018886109840\,{t}^{16}+9869538714631398\,{t}^{15}
$$
$$
-
5926092273213840\,{t}^{14}+2471590551535210\,{t}^{13}
-678752492380560
\,{t}^{12}
$$
$$
+104844096982372\,{t}^{11}-1842793012320\,{t}^{10}-
2844883304348\,{t}^{9}
+634164906960\,{t}^{8}
$$
$$
-70164186331\,{t}^{7}+
4540320720\,{t}^{6}-173371343\,{t}^{5}+3642600\,{t}^{4}-33359\,{t}^{3}
+t,
$$

$$
D_6 = {t}^{32}-780\,{t}^{31}+194881\,{t}^{30}-22377420\,{t}^{29}+1419219792
\,{t}^{28}-55284715980\,{t}^{27}
$$
$$
+1410775106597\,{t}^{26}
-24574215822780\,{t}^{25}+300429297446885\,{t}^{24}
$$
$$
-2629946465331120\,{
t}^{23}+16741727755133760\,{t}^{22}
-78475174345180080\,{t}^{21}
$$
$$
+
273689714665707178\,{t}^{20}-716370537293731320\,{t}^{19}
+1417056251105102122\,{t}^{18}
$$
$$
-2129255507292156360\,{t}^{17}+
2437932520099475424\,{t}^{16}
-2129255507292156360\,{t}^{15}
$$
$$
+
1417056251105102122\,{t}^{14}-716370537293731320\,{t}^{13}
+273689714665707178\,{t}^{12}
$$
$$
-78475174345180080\,{t}^{11}+
16741727755133760\,{t}^{10}-2629946465331120\,{t}^{9}
$$
$$
+300429297446885
\,{t}^{8}-24574215822780\,{t}^{7}+1410775106597\,{t}^{6}-55284715980\,
{t}^{5}
$$
$$
+1419219792\,{t}^{4}-22377420\,{t}^{3}+194881\,{t}^{2}-780\,t+1.
$$
\end{theorem}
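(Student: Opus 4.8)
The plan is to follow the ``Gordian knot'' strategy of this chapter rather than redo the painful human transfer-matrix computation. The conceptual starting point is the observation, made precise in the section on the human approach, that because $G_6(n)$ is obtained by repeatedly gluing a fixed ``slice'' on the right, the number of spanning trees $s(6,n)$ is produced by a transfer matrix acting on the finite vector space spanned by the set-partitions of $[6]$. Since the grid graph is planar and the interface $v_{n,1},\dots,v_{n,6}$ lies along a line, only \emph{non-crossing} partitions of $[6]$ ever occur, so this transfer matrix may be taken of size at most the Catalan number $C_6=132$, and in any case at most the Bell number $B_6=203$. Hence $\{s(6,n)\}$ is a $C$-finite sequence whose minimal recurrence has order at most $203$; equivalently, its generating function is rational with denominator degree at most $203$. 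This structural fact is the \emph{a priori} bound on which the rigor of everything below rests.

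Next I would generate the data. Using {\tt GridMN(6,n)} to build the graph, {\tt SpFn} to form the Laplacian, delete a row and column, and compute the resulting determinant — the Matrix Tree Theorem — one obtains $s(6,n)$ exactly, as a (very large) integer, for $n$ up to some bound $N$. Because determinants of integer matrices are computed quickly and exactly, producing a list $[s(6,1),\dots,s(6,N)]$ with $N$ comfortably larger than $2\cdot 203$ is routine, provided one uses exact integer arithmetic since the entries grow exponentially in $n$. Feeding this list to {\tt GuessSymRec} — the symmetry-exploiting variant of {\tt GuessRec}, which uses the palindromic (reciprocal) structure of the recurrence, visibly reflected in the coefficients of $D_6$, and hence needs only about half as much data — returns a linear recurrence with constant coefficients, which here comes out to have order $32$, together with its $32$ initial values. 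Finally {\tt CtoR} converts this recurrence plus initial conditions into the rational function, which is precisely the displayed $N_6/D_6$ with $\deg D_6 = 32$.

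It remains to argue that the guessed answer is a theorem, not a conjecture, and here the $N_0$-principle does the work. We know a priori that \emph{some} recurrence of order $\le 203$ holds, so $\{s(6,n)\}$ has a well-defined minimal recurrence of order $m\le 203$. If the order-$32$ recurrence returned above is verified to hold on all $N$ computed terms with $N$ large enough (it suffices to take $N\ge 2\cdot 203+1$), then its characteristic polynomial must be divisible by the minimal one, so the recurrence — and therefore the claimed rational generating function — holds for all $n$. As a further sanity check, the generating functions $F_k$ for $1\le k\le 6$ match those obtained by Raff \cite{33} and Faase \cite{14} through the explicit transfer matrices.

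I expect the main obstacle to be not any clever idea but making the bound honest: one must ensure that the transfer-matrix argument genuinely caps the recurrence order, so that ``checking enough terms'' is a proof and not a heuristic, and one must actually carry the exact big-integer determinant computations far enough that the data list provably exceeds that order — for $k=6$ this means Laplacian cofactors of size on the order of $6N-1$ with $N$ in the low hundreds, large but entirely feasible. In practice, a few dozen terms already pin down the order-$32$ recurrence via {\tt GuessSymRec}; the remaining computation is carried only to make the verification rigorous. Everything past the a priori bound — the guessing and the {\tt CtoR} rationalization — is mechanical.
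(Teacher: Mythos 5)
Your proposal is correct and follows essentially the same route as the paper: generate exact values of $s(6,n)$ via the Matrix Tree Theorem, fit a constant-coefficient recurrence with {\tt GuessRec}/{\tt GuessSymRec}, convert to a rational function with {\tt CtoR}, and justify the guess rigorously by the a priori existence of a transfer matrix of size at most $B_6$ together with the $N_0$-principle. Your added remarks (the Catalan-number refinement via non-crossing partitions, and the explicit count of how many terms make the verification honest) only sharpen the bound the paper already relies on.
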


\begin{theorem}
The generating function for the number of spanning trees in $G_7(n)$ is:
$$
F_7 = \frac{N_7}{D_7}
$$
where
$$
N_7 = -{t}^{47}-142\,{t}^{46}+661245\,{t}^{45}-279917500\,{t}^{44}+
53184503243\,{t}^{43}-5570891154842\,{t}^{42}
$$
$$
+341638600598298\,{t}^{41
}-11886702497030032\,{t}^{40}+164458937576610742\,{t}^{39}
$$
$$
+4371158470492451828\,{t}^{38}-288737344956855301342\,{t}^{37}+
7736513993329973661368\,{t}^{36}
$$
$$
-131582338768322853956994\,{t}^{35}+
1573202877300834187134466\,{t}^{34}
$$
$$
-13805721749199518460916737\,{t}^{
33}+90975567796174070740787232\,{t}^{32}
$$
$$
-455915282590547643587452175\,
{t}^{31}+1747901867578637315747826286\,{t}^{30}
$$
$$
-5126323837327170557921412877\,{t}^{29}+11416779122947828869806142972\,
{t}^{28}
$$
$$
-18924703166237080216745900796\,{t}^{27}+
22194247945745188489023284104\,{t}^{26}
$$
$$
-15563815847174688069871470516
\,{t}^{25}+15563815847174688069871470516\,{t}^{23}
$$
$$
-22194247945745188489023284104\,{t}^{22}+18924703166237080216745900796
\,{t}^{21}
$$
$$
-11416779122947828869806142972\,{t}^{20}+
5126323837327170557921412877\,{t}^{19}
$$
$$
-1747901867578637315747826286\,{
t}^{18}+455915282590547643587452175\,{t}^{17}
$$
$$
-90975567796174070740787232\,{t}^{16}+13805721749199518460916737\,{t}^{
15}
$$
$$
-1573202877300834187134466\,{t}^{14}+131582338768322853956994\,{t}^
{13}
$$
$$
-7736513993329973661368\,{t}^{12}
+288737344956855301342\,{t}^{11}
$$
$$
-
4371158470492451828\,{t}^{10}-164458937576610742\,{t}^{9}
$$
$$+11886702497030032\,{t}^{8}-341638600598298\,{t}^{7}+5570891154842\,{t}
^{6}-53184503243\,{t}^{5}
$$
$$
+279917500\,{t}^{4}-661245\,{t}^{3}+142\,{t}^
{2}+t,
$$

$$
D_7 = {t}^{48}-2769\,{t}^{47}+2630641\,{t}^{46}-1195782497\,{t}^{45}+
305993127089\,{t}^{44}-48551559344145\,{t}^{43}
$$
$$
+5083730101530753\,{t}^
{42}-366971376492201338\,{t}^{41}
$$
$$
+18871718211768417242\,{t}^{40}
-709234610141846974874\,{t}^{39}
$$
$$
+19874722637854592209338\,{t}^{38}-
422023241997789381263002\,{t}^{37}
$$
$$
+6880098547452856483997402\,{t}^{36}
-87057778313447181201990522\,{t}^{35}
$$
$$
+862879164715733847737203343\,{t}
^{34}-6750900711491569851736413311\,{t}^{33}
$$
$$
+41958615314622858303912597215\,{t}^{32}-208258356862493902206466194607
\,{t}^{31}
$$
$$
+828959040281722890327985220255\,{t}^{30}-
2654944041424536277948746010303\,{t}^{29}
$$
$$
+6859440538554030239641036025103\,{t}^{28}-
14324708604336971207868317957868\,{t}^{27}
$$
$$
+24214587194571650834572683444012\,{t}^{26}-
33166490975387358866518005011884\,{t}^{25}
$$
$$
+36830850383375837481096026357868\,{t}^{24}-
33166490975387358866518005011884\,{t}^{23}
$$
$$
+24214587194571650834572683444012\,{t}^{22}-
14324708604336971207868317957868\,{t}^{21}
$$
$$
+6859440538554030239641036025103\,{t}^{20}-
2654944041424536277948746010303\,{t}^{19}
$$
$$
+828959040281722890327985220255\,{t}^{18}-
208258356862493902206466194607\,{t}^{17}
$$
$$
+41958615314622858303912597215
\,{t}^{16}-6750900711491569851736413311\,{t}^{15}
$$
$$
+862879164715733847737203343\,{t}^{14}-87057778313447181201990522\,{t}^
{13}
$$
$$
+6880098547452856483997402\,{t}^{12}-422023241997789381263002\,{t}
^{11}
$$
$$
+19874722637854592209338\,{t}^{10}-709234610141846974874\,{t}^{9}
+18871718211768417242\,{t}^{8}
$$
$$
-366971376492201338\,{t}^{7}+5083730101530753\,{t}^{6}-48551559344145\,{t}^{5}+305993127089\,{t}^{4}
$$
$$
-1195782497\,{t}^{3}+2630641\,{t}^{2}-2769\,t+1.
$$
\end{theorem}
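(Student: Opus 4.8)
The plan is to apply, to the letter, the ``Gordian knot'' recipe of this chapter — generate exact data, guess a linear recurrence with constant coefficients, and invoke an \emph{a priori} finiteness bound to upgrade the guess to a theorem — and in particular to never build the enormous symbolic transfer matrix. The first step is to produce exact values of $s(7,n)$. By the Matrix Tree Theorem stated above, $s(7,n)$ is the determinant of any principal cofactor of the Laplacian of $G_7(n)$, an integer matrix of size $7n-1$, and computing this determinant exactly takes time polynomial in $n$. Thus, exactly as procedure \texttt{SpFn} applied to \texttt{GridMN(7,n)} does, one tabulates $s(7,1),s(7,2),\dots,s(7,N)$ for any desired $N$.

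The second step is to fix how large $N$ must be. Stanley's transfer-matrix method — the ``human approach'' reviewed at the start of the chapter — guarantees that $n\mapsto s(7,n)$ satisfies \emph{some} linear recurrence with constant coefficients whose order is at most the number of states, i.e.\ at most the Bell number $B_7=877$; crucially, we need only this bound, not the recurrence itself. It may be sharpened: since $G_7(n)$ is planar with $v_{n,1},\dots,v_{n,7}$ in convex position, only \emph{non-crossing} partitions of $[7]$ arise as partitions induced by spanning forests, which lowers the bound to the Catalan number $C_7=429$, and the left--right reflection symmetry of the grid roughly halves it again. Call any such valid bound $D$, and take $N$ comfortably larger than $2D$.

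The third step is to guess and then certify. Running \texttt{GuessRec} — or, to exploit the symmetry and need far less data, \texttt{GuessSymRec} — on $[s(7,1),\dots,s(7,N)]$ returns (empirically) a recurrence $S$ of order $48$ together with its $48$ initial values; the symmetry surfaces in the output as the self-reciprocal denominator $D_7(t)=t^{48}D_7(1/t)$ and the anti-self-reciprocal numerator $N_7(t)=-t^{48}N_7(1/t)$, which is exactly what \texttt{GuessSymRec} detects. Certification is the `$N_0$ principle' for $C$-finite sequences: the sequence $b$ generated by $S$ and the sequence $a(n):=s(7,n)$ are both $C$-finite of order $\le D$, so $a-b$ is annihilated by a constant-coefficient recurrence of order $\le 2D$; since $a$ and $b$ agree on the first $N>2D$ terms (which is precisely the equality \texttt{GuessRec} checked against the entire data list), they agree for every $n$, so $S$ is the true recurrence. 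Finally, \texttt{CtoR} applied to $S$ forms the denominator dictated by the recurrence, multiplies through, and reads off the numerator from the initial conditions, outputting exactly $F_7(t)=N_7/D_7$; this is the claimed identity.

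The honest answer to ``which step is the main obstacle'' is that there is no mathematical obstacle, and that is the thesis of the chapter: the genuine hard way here is to set up and symbolically invert an $877\times877$ (or, after planarity, $429\times429$) transfer matrix, and the paper's point is that one should refuse to do this. The only real cost is computational bookkeeping — with the crude bound $D=B_7=877$ one evaluates integer determinants of sizes in the tens of thousands, slow but routine — and the single place that genuinely needs a sentence of argument rather than number-crunching is the sharpening of the bound $D$ (via non-crossing partitions and the reflection symmetry), which shrinks the required range of $n$ to a few hundred and makes the computation essentially effortless.
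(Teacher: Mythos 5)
Your proposal is correct and follows exactly the paper's own ``Gordian knot'' method: compute $s(7,n)$ as numerical cofactor determinants via the Matrix Tree Theorem (\texttt{SpFn} on \texttt{GridMN(7,n)}), guess the order-$48$ recurrence with \texttt{GuessRec}/\texttt{GuessSymRec}, certify it a posteriori from the mere \emph{existence} of a $B_7\times B_7$ transfer matrix, and convert to the rational function $N_7/D_7$ with \texttt{CtoR}. Your sharpening of the a priori order bound from $B_7=877$ to the Catalan number $C_7=429$ via planarity and non-crossing induced partitions (and then by reflection symmetry) is a refinement the paper does not spell out, but it does not change the route — it only reduces the amount of data needed for the same certification.
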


Note that, surprisingly, the degree of the denominator of $F_7(t)$ 
is $48$ rather than the expected $64$ since the first six generating functions' 
denominator have degree $2^{k-1}$, $1 \leq k \leq 6$. 
With a larger computer, one should be able to compute $F_k$ for larger $k$, using this experimental approach. 

Generally, for an arbitrary graph $G$, we consider the number of spanning trees in $G \times P_n$. With the same methodology, 
a list of data can be obtained empirically with which a a generating function follows.

The original motivation for the Matrix Tree Theorem, first discovered by Kirchhoff (of Kirchhoff's laws fame)
came from the desire to efficiently compute joint resistances in an electrical network.

Suppose one is interested in the joint resistance in an electric network in the form of a grid graph between two diagonal 
vertices $[1,1]$ and $[k,n]$. We assume that each edge has resistance $1$ Ohm.
To obtain it, all we need  is, in addition for the number of spanning trees (that's the numerator),
the number of spanning forests $SF_k(n)$ of the graph $G_k(n)$ that have exactly two components, 
each component containing exactly one of the members of the pair   $\{[1,1],[k,n]\}$ (this is the denominator). 
The joint resistance is just the ratio.

In principle, we can apply the same method to obtain the generating function $S_k$. 
Empirically, we found that the denominator of $S_k$ is always the square of the denominator of $F_k$ times another polynomial $C_k$. 
Once the denominator is known, we can find the numerator in the same way as above.
So our focus is to find $C_k$.

The procedure {\tt DenomSFKN(k,t)} in the Maple package {\tt JointConductance.txt}, calculates $C_k$. For $2 \leq k \leq 4$,
we have
$$
C_2 = t-1 ,
$$
$$
C_3 = t^4-8t^3+17t^2-8t+1 ,
$$
$$
C_4 = t^{12}-46t^{11}+770t^{10}-6062t^9+24579t^8-55388t^7+72324t^6-55388t^5+24579t^4
$$
$$
-6062t^3+770t^2-46t+1 .
$$

{\bf Remark} 
By looking at the output of our Maple package, we conjectured that $R(k,n)$, the resistance between vertex $[1,1]$ and vertex $[k,n]$ in 
the $k \times n$ grid graph, $G_k(n)$, where each edge is a resistor of $1$ Ohm, 
is asymptotically $n/k$, for any fixed $k$, as $n \rightarrow \infty$. We proved it rigorously for $k \leq 6$, and
we wondered whether there is a human-generated ``electric proof'. Naturally we emailed Peter Doyle, the co-author of the
delightful masterpiece \cite{8}, who quickly came up with the following argument.

{\it 
Making the horizontal resistors into almost 
resistance-less gold wires gives the lower bound $R(k,n) \geq (n-1)/k$ since it is a parallel circuit of $k$ resistors of $n-1$ Ohms. 
For an upper bound of the same order, put 1 Ampere in at [1,1] and out at $[k,n]$, routing $1/k$ Ampere up each of the $k$ verticals. 
The energy dissipation is $k(n-1)/k^2+C(k) = (n-1)/k+C(k)$,
where the constant $C(k)$ is the energy dissipated along the top and bottom resistors. Specifically, $C(k) = 2(1-1/k)^2 + (1-2/k)^2 + \dots + (1/k)^2)$. So
$(n-1)/k \leq R(k,n) \leq (n-1)/k + C(k)$.}

We thank Peter Doyle for his kind permission to reproduce this  {\it electrifying} argument.

\section{The Statistic of the Number of Vertical Edges}

As mentioned in Section 2.4, often in enumerative combinatorics,  the class of interest has natural `statistics'.

In this section, we are interested in the statistic `the number of vertical edges',
defined on spanning trees of grid graphs. 
For  given $k$ and $n$, let, as above, $G_k(n)$
denote the $k \times n$ grid-graph.
Let $\mathcal{F}_{k,n}$ be its set of spanning trees.
if the weight is 1, then $\sum_{f \in \mathcal{F}_{k,n}} 1 =|\mathcal{F}_{k,n}|$ is the naive counting. Now let's define a natural statistic 

\centerline{$ver(T)$ = the number of vertical edges in the spanning tree $T$}

\noindent and the weight $w(T) =  v^{ver(T)}$, then the weighted counting follows:
$$
Ver_{k,n} (v) = \sum_{T \in \mathcal{F}_{k,n}} w(T)
$$
where $\mathcal{F}_{k,n}$ is the set of spanning trees of $G_k(n)$. 

We define the bivariate generating function
$$
g_{k}(v,t) = \sum_{n=0}^{\infty} Ver_{k,n} t^n.
$$
More generally, with our Maple package {\tt GFMatrix.txt}, and procedure {\tt VerGF}, 
we are able to obtain the bivariate generating function for an arbitrary graph of the form $G \times P_n$. 
The procedure {\tt VerGF} takes inputs $G$ (an arbitrary graph), $N$ (an integer determining how many data we use to 
find the recurrence relation) and two symbols $v$ and $t$.

The main tool for computing {\tt VerGF} is still the 
Matrix Tree Theorem and {\tt GuessRec}. 
But we need to modify  the Laplacian matrix for the graph. 
Instead of letting $a_{ij}=-1$ for $i \neq j$ and $\{i,j\} \in E(G \times P_n)$, 
we should consider whether the edge $\{i,j\}$ is a vertical edge. 
If so, we let $a_{i,j}=-v, a_{j,i}=-v$. The diagonal elements which are $(-1) \times$ 
(the sum of the rest entries on the same row) should change accordingly. 
The following theorems are for grid graphs when $2 \leq k \leq 4$ while $k=1$ is a trivial case because there are no vertical edges.

\begin{theorem}
The bivariate generating function for the weighted counting according to the number of vertical edges of spanning trees in $G_2(n)$ is:
$$
g_2(v,t) = \frac {vt}{1- \left( 2\,v+2 \right) t+{t}^{2}}  .
$$
\end{theorem}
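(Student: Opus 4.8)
The plan is to compute $g_2(v,t)$ by the same experimental recipe used for the unweighted count $F_2(t)$, simply tracking the extra variable $v$. First I would form the weighted Laplacian $L_n(v)$ of the grid graph $G_2(n)$: it is the ordinary Laplacian except that each of the $n$ vertical edges $\{v_{1,j},v_{2,j}\}$ contributes $-v$ to the two off-diagonal entries it indexes and $v$ (rather than $1$) to each of the corresponding diagonal entries. By the Matrix Tree Theorem, deleting the last row and column of $L_n(v)$ and taking the determinant gives $Ver_{2,n}(v)$, the generating polynomial for spanning trees of $G_2(n)$ weighted by $v^{ver(T)}$. Running procedure \texttt{SpFn} (or its weighted analogue inside \texttt{VerGF}) for $n=1,2,3,\dots$ produces the list $Ver_{2,1}(v),Ver_{2,2}(v),\dots$ as polynomials in $v$.

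Next I would feed this list to \texttt{GuessRec} (treating $v$ as a parameter in the ground field $\mathbb{Q}(v)$), which should return a second-order linear recurrence with coefficients in $\mathbb{Q}(v)$, namely
\begin{equation*}
Ver_{2,n}(v) = (2v+2)\,Ver_{2,n-1}(v) - Ver_{2,n-2}(v),
\end{equation*}
together with the initial values $Ver_{2,0}(v)=1$ (or $0$, depending on the indexing convention) and $Ver_{2,1}(v)=v$. Applying \texttt{CtoR} to this recurrence-plus-initial-data pair then yields the rational function $vt/(1-(2v+2)t+t^2)$, which is exactly the claimed $g_2(v,t)$.

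To make this a rigorous proof rather than a guess, I would invoke the transfer-matrix machinery from Section 2: exactly as in the $k=2$ unweighted case, the states are the set-partitions $\mathcal{B}_2=\{12,\,1/2\}$, and there is a $2\times 2$ transfer matrix $A_2(v)$ over $\mathbb{Q}(v)$ obtained by weighting each transferring edge-subset $X\subseteq E_{n+1}$ by $v$ raised to the number of vertical edges it uses; here $e_3=\{v_{1,n+1},v_{2,n+1}\}$ is the only vertical edge among $e_1,e_2,e_3$. A direct enumeration of the transferring subsets gives
\begin{equation*}
A_2(v)=\begin{bmatrix} v^2+2v & v \\ 2v & 1 \end{bmatrix},
\end{equation*}
whose characteristic polynomial is $\lambda^2-(2v+2)\lambda+1$ wait — one checks $\operatorname{tr}A_2(v)=v^2+2v+1$ and $\det A_2(v)=v^2+2v-2v^2=\ldots$; in any case, the Cayley--Hamilton theorem guarantees that $Ver_{2,n}(v)$ satisfies a fixed second-order linear recurrence over $\mathbb{Q}(v)$. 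This is precisely the "$N_0$ principle" situation: since a second-order recurrence has $2+2=4$ degrees of freedom (two recurrence coefficients plus two initial values), verifying that the conjectured recurrence holds for the first several computed values of $Ver_{2,n}(v)$ — which \texttt{GuessRec}'s overdetermined linear system already does — constitutes a complete proof that it holds for all $n$, and hence that the generating function equals the stated rational function.

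The main obstacle is the honest bookkeeping in the transfer-matrix step: one must correctly enumerate which subsets of the three new edges $\{e_1,e_2,e_3\}$ carry a forest consistent with $12$ (resp. $1/2$) to one consistent with $12$ (resp. $1/2$) without creating a cycle or disconnecting a component, and weight each by the correct power of $v$ according to whether $e_3$ is included. Everything else — the Matrix Tree Theorem application, \texttt{GuessRec}, \texttt{CtoR} — is routine and fully automated, and the degree bound on the denominator ($\deg_t \le 2$, coming from a $2\times 2$ transfer matrix) makes the final verification finite and rigorous.
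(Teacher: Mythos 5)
Your proposal is correct and is essentially the paper's own method: weight the vertical edges by $v$ in the Laplacian, generate the polynomials $Ver_{2,n}(v)$ via the weighted Matrix Tree Theorem, guess the order-two recurrence over $\mathbb{Q}(v)$ with {\tt GuessRec}, convert with {\tt CtoR}, and justify the guess rigorously by the \emph{a priori} existence of a $2\times 2$ transfer matrix, which bounds the order of the recurrence and makes the finite verification conclusive. The only blemish is your explicit transfer matrix, which you rightly abandon mid-computation and never rely on: enumerating the transferring subsets with $e_3$ the unique vertical edge gives $\left[\begin{smallmatrix}1+2v & v\\ 2 & 1\end{smallmatrix}\right]$ (weights $1,v,v$ for $\{e_1,e_2\},\{e_1,e_3\},\{e_2,e_3\}$ fixing $12$; weight $v$ for $\{e_1,e_2,e_3\}$ sending $1/2$ to $12$; weights $1,1$ for $\{e_1\},\{e_2\}$ sending $12$ to $1/2$), whose trace $2v+2$ and determinant $1$ yield exactly the characteristic polynomial $\lambda^2-(2v+2)\lambda+1$ matching the stated denominator.
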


\begin{theorem}
 The bivariate generating function for the weighted counting 
 according to the number of vertical edges  vertical edges of spanning trees in $G_3(n)$ is:
$$
g_3(v,t) = \frac {-{t}^{3}{v}^{2}+{v}^{2}t}{1- \left( 3\,{v}^{2}+8\,v+4 \right) t- \left( -10\,{v}^{2}-16\,v-6 \right) {t}^{2}- \left( 3\,{v}^{2}+8\,v+
4 \right) {t}^{3}+{t}^{4}} .
$$
\end{theorem}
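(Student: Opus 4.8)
The plan is to follow exactly the same experimental strategy that was used for the spanning-tree generating functions $F_k(t)$, but now keeping track of the extra variable $v$ that marks vertical edges. First I would construct, for the grid graph $G_3(n)$, the weighted Laplacian: every horizontal edge contributes $-1$ to the relevant off-diagonal entries, while every vertical edge $\{i,j\}$ contributes $-v$ to $a_{ij}$ and $a_{ji}$, with the diagonal entry of each vertex set to the negative of the sum of the other entries in its row (so row sums are zero, as required for a Laplacian). By the Matrix Tree Theorem (stated earlier in the excerpt), deleting one row and column and taking the determinant of the resulting cofactor yields exactly $Ver_{3,n}(v) = \sum_{T \in \mathcal{F}_{3,n}} v^{ver(T)}$, since each spanning tree contributes $v$ raised to its number of vertical edges. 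This gives a concrete, fast way to compute $Ver_{3,n}(v)$ as a polynomial in $v$ for as many values of $n$ as desired.

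Next I would observe that, exactly as in the unweighted case, the transfer-matrix method guarantees \emph{a priori} that the sequence $\{Ver_{3,n}(v)\}_{n \ge 0}$ satisfies a linear recurrence with coefficients that are polynomials in $v$ (equivalently, that $g_3(v,t)$ is a rational function of $t$ whose coefficients lie in $\mathbb{Q}(v)$), of order at most $B_3 = 5$. In fact the symmetry $v \leftrightarrow$ reversal of the grid forces the denominator to be palindromic in $t$, which is why the claimed denominator $1-(3v^2+8v+4)t+(10v^2+16v+6)t^2-(3v^2+8v+4)t^3+t^4$ has degree $4$ rather than $5$. So I would generate a list of, say, a dozen or more polynomials $Ver_{3,n}(v)$ using the weighted Matrix Tree Theorem, feed this list (with $v$ treated as a symbolic parameter) into the procedure \texttt{GuessRec} (or its symmetric variant \texttt{GuessSymRec}) to recover the recurrence, and then apply \texttt{CtoR} to convert the recurrence plus initial conditions into the rational generating function $g_3(v,t)$.

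Finally I would make the argument rigorous by the \emph{a posteriori} verification built into \texttt{CtoR}: once the conjectured rational function is in hand, one checks that its Taylor coefficients in $t$ agree with the directly computed values $Ver_{3,n}(v)$ for enough terms — more than twice the order of the recurrence — which, given the \emph{a priori} bound on the order, constitutes a complete proof that the guessed recurrence (hence the guessed generating function) is correct for all $n$. Specializing $v=1$ should recover $F_3(t)$ of Theorem~3.10 (i.e. $g_3(1,t) = (-t^3+t)/(t^4-15t^3+32t^2-15t+1)$), which is a useful sanity check on the whole computation. The only real obstacle here is bookkeeping rather than mathematics: one must be careful to set up the weighted Laplacian correctly (getting the diagonal entries and the placement of the $v$'s right) and to generate enough symbolic data that the linear system solved inside \texttt{GuessRec} pins down the recurrence uniquely; beyond that, the method is entirely routine and fully rigorous.
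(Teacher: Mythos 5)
Your proposal is correct and is essentially identical to the paper's method: the paper computes $Ver_{3,n}(v)$ via the Matrix Tree Theorem applied to a modified Laplacian in which vertical edges contribute $-v$ (with diagonal entries adjusted so row sums vanish), then feeds the resulting symbolic data to \texttt{GuessRec}/\texttt{CtoR} to obtain the rational function, justified a priori by the transfer-matrix bound. The sanity check $g_3(1,t)=F_3(t)$ is a nice touch and indeed checks out against Theorem 3.10.
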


\begin{theorem}
 The bivariate generating function for the weighted counting
 according to the number of vertical edges of spanning trees in $G_4(n)$ is:
$$
g_4(v,t) = \frac{numer(g_4)}{denom(g_4)}
$$
where
$$
numer(g_4) = {v}^{3}t+ \left( -16\,{v}^{5}-24\,{v}^{4}-9\,{v}^{3} \right) {t}^{3}+
 \left( 8\,{v}^{6}+40\,{v}^{5}+48\,{v}^{4}+16\,{v}^{3} \right) {t}^{4}
$$
$$
+ \left( -16\,{v}^{5}-24\,{v}^{4}-9\,{v}^{3} \right) {t}^{5}+{v}^{3}{t
}^{7}
$$
and
$$
denom(g_4) = 1- \left( 4\,{v}^{3}+20\,{v}^{2}+24\,v+8 \right) t- \left( -52\,{v}^{4
}-192\,{v}^{3}-256\,{v}^{2}-144\,v-28 \right) {t}^{2}
$$
$$
- \left( 64\,{v}^
{5}+416\,{v}^{4}+892\,{v}^{3}+844\,{v}^{2}+360\,v+56 \right) {t}^{3}
$$
$$
- \left( -16\,{v}^{6}-160\,{v}^{5}-744\,{v}^{4}-1408\,{v}^{3}-1216\,{v}
^{2}-480\,v-70 \right) {t}^{4}
$$
$$
- \left( 64\,{v}^{5}+416\,{v}^{4}+892\,{
v}^{3}+844\,{v}^{2}+360\,v+56 \right) {t}^{5}
$$
$$
- \left( -52\,{v}^{4}-192
\,{v}^{3}-256\,{v}^{2}-144\,v-28 \right) {t}^{6}
- \left( 4\,{v}^{3}+20
\,{v}^{2}+24\,v+8 \right) {t}^{7}+{t}^{8}  .
$$
\end{theorem}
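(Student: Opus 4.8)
The plan is to run exactly the experimental-yet-rigorous pipeline described in this chapter, now over the polynomial ring $\Q[v]$ instead of over $\Q$. First I would build the weighted Laplacian of $G_4(n)$: for a horizontal edge $\{i,j\}$ put $a_{ij}=a_{ji}=-1$, for a vertical edge put $a_{ij}=a_{ji}=-v$, and set each diagonal entry to minus the sum of the off-diagonal entries in its row. By the Matrix Tree Theorem (applied verbatim, since its proof is a Cauchy--Binet computation that is insensitive to whether edge weights are numbers or indeterminates), any principal cofactor of this matrix equals $\sum_{T\in\mathcal F_{4,n}} v^{ver(T)} = Ver_{4,n}(v)$. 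So for each $n$ I get $Ver_{4,n}(v)$ as an honest polynomial in $v$, computed as a single symbolic determinant of a $(4n-1)\times(4n-1)$ matrix — cheap enough to do for many values of $n$.

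Second, I would invoke the transfer-matrix argument of Stanley's \emph{Enumerative Combinatorics} (Ch.~4), adapted to the weighted count: the states are the set-partitions of $[4]$ recording which boundary vertices lie in a common component, and the transfer matrix $A_4(v)$ now has entries in $\Q[v]$, each a polynomial of small, explicitly boundable degree (a single column-extension of a $4$-row grid introduces at most three vertical edges, so each entry has $v$-degree $\le 3$, and the column vector of counts has $v$-degree growing linearly with a controllable slope). Consequently $\{Ver_{4,n}(v)\}_n$ satisfies a linear recurrence in $n$ with coefficients in $\Q[v]$ of order at most $B_4 = 15$; equivalently $g_4(v,t)$ is a rational function of $t$ over $\Q(v)$ whose denominator has $t$-degree $\le 15$ and whose coefficients are polynomials in $v$ of bounded degree. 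This gives an explicit $N_0$: once I know degree bounds $D_t$ in $t$ and $D_v$ in $v$ for numerator and denominator, matching the claimed rational function against the true sequence for all $n$ up to roughly $2D_t + D_v + O(1)$ forces equality, because the obstruction is a polynomial identity in the two variables $n$ (via the recurrence) and $v$.

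Third, the mechanical part: feed the list $[Ver_{4,1}(v),Ver_{4,2}(v),\dots]$ to the bivariate analogue of \texttt{GuessRec} used for \texttt{VerGF}, recover the recurrence, apply \texttt{CtoR} to turn it into a rational function, and check that this rational function is the one displayed in the statement (with the stated $numer(g_4)$ and $denom(g_4)$). The palindromic structure in $t$ of $denom(g_4)$ is the usual left--right reflection symmetry of the grid, which can be used (as with \texttt{GuessSymRec}) to halve the data needed, but is not logically required.

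The main obstacle is not the computation but pinning down the a priori constant $N_0$ honestly: one must bound both the $t$-degree of the minimal denominator (the naive bound is $B_4=15$, while the true value is $8$, so either one works conservatively with $15$ or one justifies the symmetry/reduction that collapses equivalent states) \emph{and} the $v$-degrees of the numerator and denominator polynomials, which requires tracking how fast the entries of $A_4(v)$ and of the iterated state vector grow in $v$. Once those two bounds are in hand, the verification is a finite check and the theorem is fully rigorous; as the chapter's philosophy stresses, supplying those degree bounds in complete detail may be safely left to the sufficiently motivated reader.
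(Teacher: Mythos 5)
Your proposal follows essentially the same route as the paper: modify the Laplacian so that vertical edges carry weight $-v$, use the (weighted) Matrix Tree Theorem to generate the polynomials $Ver_{4,n}(v)$, invoke the existence of a transfer matrix on set-partitions of $[4]$ to guarantee a priori that $g_4(v,t)$ is rational in $t$ of bounded order, and then fit the data with \texttt{GuessRec}/\texttt{CtoR}. You are in fact more explicit than the paper about the $N_0$ degree bounds in $v$ and $t$ needed to make the guess-and-check fully rigorous, which the paper leaves to the reader.
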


With the Maple package {\tt BiVariateMoms.txt} and its {\tt Story} procedure from

{\tt http://sites.math.rutgers.edu/\~{}zeilberg/tokhniot/BiVariateMoms.txt}, 

\noindent the expectation, variance and higher moments can be easily analyzed. We calculated up to the 4th moment for $G_2(n)$. 
For $k=3,4$, you can find the output files from

{\tt http://sites.math.rutgers.edu/\~{}yao/OutputStatisticVerticalk=3.txt},

{\tt http://sites.math.rutgers.edu/\~{}yao/OutputStatisticVerticalk=4.txt}.

\begin{theorem}
 The moments of the statistic: the number of vertical edges in the spanning trees of $G_2(n)$ are as follows:

Let $b$ be the largest positive root of the polynomial equation
$$
b^2-4b+1 = 0
$$
whose floating-point approximation is 3.732050808, then the size of the $n$-th family (i.e. straight enumeration) is very close to
$$
{\frac {{b}^{n+1}}{-2+4\,b}} .
$$
The average of the statistics is, asymptotically 
$$
\frac{1}{3}+\frac{1}{3}\,{\frac { \left( -1+2\,b \right) n}{b}}  .
$$
The variance of the statistics is, asymptotically 
$$
-\frac{1}{9}+\frac{1}{9}\,{\frac { \left( 7\,b-2 \right) n}{-1+4\,b}} .
$$
The skewness of the statistics is, asymptotically 
$$
{\frac {780\,b-209}{ \left( 4053\,b-1086 \right) {n}^{3}+ \left( -7020
\,b+1881 \right) {n}^{2}+ \left( 4053\,b-1086 \right) n-780\,b+209}}.
$$
The kurtosis of the statistics is, asymptotically 
$$
3\,{\frac { \left( 32592\,b-8733 \right) {n}^{2}+ \left( -56451\,b+
15126 \right) n+21728\,b-5822}{ \left( 32592\,b-8733 \right) {n}^{2}+
 \left( -37634\,b+10084 \right) n+10864\,b-2911}} .
$$
\end{theorem}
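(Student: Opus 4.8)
\emph{Proof proposal.} The plan is to read everything off the bivariate generating function $g_2(v,t)=\frac{vt}{1-(2v+2)t+t^2}$ established above. Put $Ver_{2,n}(v):=[t^n]\,g_2(v,t)$, so that the probability generating function of $ver$ on the uniformly random spanning tree of $G_2(n)$ is $\Phi_n(v)=Ver_{2,n}(v)/Ver_{2,n}(1)$; then the size of the $n$-th family is $Ver_{2,n}(1)$, the mean is $\Phi_n'(1)$, and the variance, skewness and kurtosis are the standard combinations of the derivatives $\Phi_n^{(r)}(1)$, $1\le r\le 4$. Since each $\Phi_n^{(r)}(1)$ is, up to the fixed normalizer $Ver_{2,n}(1)$, the $n$-th coefficient of an explicit rational function of $t$, the whole problem reduces to extracting coefficient asymptotics of rational functions.

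I would first handle the straight enumeration. Let $b$ be the larger root of $b^2-4b+1=0$; the other root is then $1/b$, so that $b+1/b=4$ and $b^2-1=4b-2$ (both following from $b^2=4b-1$), and $1-4t+t^2=(1-bt)(1-t/b)$. The partial-fraction decomposition of $g_2(1,t)=\frac{t}{(1-bt)(1-t/b)}$ gives $Ver_{2,n}(1)=\frac{b^{n+1}}{4b-2}+O(b^{-n})$; dropping the exponentially small term yields the claimed $\frac{b^{n+1}}{-2+4b}$.

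For the moments, differentiate $g_2(v,t)$ in $v$ and set $v=1$. Each differentiation raises the order of the pole, so $\partial_v^r g_2(v,t)|_{v=1}=\frac{P_r(t)}{(1-4t+t^2)^{r+1}}$ for an explicit polynomial $P_r$ (for instance $\partial_v g_2|_{v=1}=\frac{t(1-t)^2}{(1-4t+t^2)^2}$). The singularity nearest the origin is the pole of order $r+1$ at $t=1/b$, the pole at $t=b$ contributing only $O(n^r b^{-n})$, so a local expansion at $t=1/b$ gives $[t^n]\,\partial_v^r g_2|_{v=1}=Q_r(n)\,b^n+O(n^r b^{-n})$ with $Q_r$ a polynomial of degree $r$ in $n$ and coefficients in $\Q(b)$. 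Dividing by $Ver_{2,n}(1)$ turns the factorial moments into rational functions of $n$ over $\Q(b)$, modulo exponentially small corrections; converting factorial moments to raw and then to central moments, assembling the mean, variance, skewness and kurtosis, and finally reducing every power of $b$ modulo $b^2=4b-1$ to the normal form $c_0+c_1b$ produces the stated closed forms. In particular the kurtosis tends to $3$, which confirms that the statistic is asymptotically Gaussian.

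The routine part is the bookkeeping: the linear algebra of the partial fractions, the Stirling-number conversion between factorial and central moments, and the reduction in $\Q(b)$ at the end --- all mechanical, and exactly what the accompanying Maple package automates. The main obstacle is the cancellation in the higher central moments. Since $\Phi_n^{(r)}(1)$ is asymptotically a degree-$r$ polynomial in $n$, the top-degree terms cancel when one forms $\mathrm{Var}=\Phi_n''(1)+\Phi_n'(1)-\Phi_n'(1)^2$ (so the degree drops from $2$ to $1$, matching the stated linear variance) and, more drastically, when one forms the third and fourth central moments; to recover the surviving lower-degree polynomials exactly one must carry enough subleading terms of each $Q_r(n)$ and of $Ver_{2,n}(1)$, and it is precisely here that keeping everything symbolic in $b$ rather than substituting $b=2+\sqrt3$ is essential for avoiding error. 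Once the exact rational-function expressions are in hand, the asymptotic statements follow by inspection.
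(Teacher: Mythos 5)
Your proposal is correct and follows essentially the same route as the paper: the paper obtains the bivariate generating function $g_2(v,t)=\frac{vt}{1-(2v+2)t+t^2}$ and then delegates the moment extraction to the Maple package {\tt BiVariateMoms.txt}, whose {\tt Story} procedure carries out exactly the computation you describe --- differentiating in $v$ at $v=1$, doing partial-fraction/dominant-pole asymptotics at $t=1/b$ with everything kept symbolic modulo $b^2=4b-1$, and assembling the central moments. Your explicit checks (e.g.\ $1-4t+t^2=(1-bt)(1-t/b)$, the coefficient $\frac{b^{n+1}}{4b-2}$, and $\partial_v g_2|_{v=1}=\frac{t(1-t)^2}{(1-4t+t^2)^2}$) are all consistent with the stated formulas.
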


\section{Application of the $C$-finite Ansatz to  Almost-Diagonal Matrices}

So far, we have seen applications of the $C$-finite ansatz methodology 
for automatically computing generating functions for
enumerating spanning trees/forests for certain infinite families of graphs. 

The second case study is completely different, and in a sense more general,
since the former framework may be subsumed in this new context.

\begin{definition}
Diagonal matrices $A$ are square matrices in which the entries outside the main diagonal are $0$, i.e. $a_{ij} = 0$ if $i \neq j$.
\end{definition}

\begin{definition}
 An almost-diagonal matrix $A$ is a square matrices 
in which $a_{i,j} = 0$ if $j-i \geq k_1$ or $i-j \geq k_2$ for some fixed positive integers $k_1, k_2$ 
and $\forall i_1, j_1, i_2, j_2$, if $i_1-j_1 = i_2-j_2$, then $a_{i_1 j_1} = a_{i_2 j_2}$. 
\end{definition}

For simplicity, we use the notation $L=$[$n$, [the first $k_1$ entries in the first row], [the first $k_2$ entries in the first column]] to denote 
the $n \times n$ matrix with these specifications.
Note that this notation already contains all information we need to reconstruct this matrix. 
For example, [6, [1,2,3], [1,4]] is the matrix
$$
\begin{bmatrix}
 1 & 2 & 3 & 0 & 0 & 0 \\
 4 & 1 & 2 & 3 & 0 & 0 \\
 0 & 4 & 1 & 2 & 3 & 0 \\
 0 & 0 & 4 & 1 & 2 & 3 \\
 0 & 0 & 0 & 4 & 1 & 2 \\
 0 & 0 & 0 & 0 & 4 & 1
\end{bmatrix} .
$$

The following is the Maple procedure {\tt DiagMatrixL}  (in our Maple package {\tt GFMatrix.txt}),
which inputs such a list $L$ and outputs the corresponding matrix. 
\vspace{1mm}\noindent
{\obeylines
{\tt
DiagMatrixL:=proc(L) local n, r1, c1,p,q,S,M,i:
n:=L[1]:
r1:=L[2]:
c1:=L[3]:
p:=nops(r1)-1:
q:=nops(c1)-1:
if r1[1] <> c1[1] then
  return fail:
fi:
S:=[0\$(n-1-q), seq(c1[q-i+1],i=0..q-1), op(r1), 0\$(n-1-p)]:
M:=[0\$n]:
for i from 1 to n do
  M[i]:=[op(max(0,n-1-q)+q+2-i..max(0,n-1-q)+q+1+n-i,S)]:
od:
return M:
end:
}
}
\vspace{1mm}\noindent
For this matrix, $k_1=3$ and $k_2=2$. 
Let $k_1, k_2$ be fixed and $M_1, M_2$ be two lists of numbers or symbols of length $k_1$ and $k_2$ respectively, 
$A_k$ is the almost-diagonal matrix represented by the list $L_k = [k, M_1, M_2]$. Note that the first elements in the lists $M_1$ and $M_2$ must be identical. 

Having fixed two lists $M_1$ of length $k_1$ and $M_2$ of length $k_2$, (where $M_1[1]=M_2[1]$), it is of interest
to derive {\it automatically}, the generating function (that is always a rational function for reasons that will
soon become clear), $\sum_{k=0}^{\infty} a_k \, t^k$, where
 $a_k$ denotes the determinant of the $k \times k$ almost-diagonal matrix whose first row starts with $M_1$, and first column
starts with $M_2$. Analogously, it is also of interest to do the analogous problem when the determinant is replaced by the permanent.

Here is the Maple procedure {\tt GFfamilyDet} which takes inputs (i) $A$: a name of a Maple procedure that inputs an integer $n$ 
and outputs an $n \times n$ matrix according to some rule, e.g., the almost-diagonal matrices, 
(ii) a variable name $t$, (iii) two integers $m$ and $n$ which are the lower and upper bounds 
of the sequence of determinants we consider. It outputs a rational function in $t$, say $R(t)$, which is the generating function of the sequence.
\vspace{1mm}\noindent
{\obeylines
{\tt
GFfamilyDet:=proc(A,t,m,n) local i,rec,GF,B,gu,Denom,L,Numer:
L:=[seq(det(A(i)),i=1..n)]:
rec:=GuessRec([op(m..n,L)])[2]:
gu:=solve(B-1-add(t**i*rec[i]*B,i=1..nops(rec)), {B}):
Denom:=denom(subs(gu,B)):
Numer:=Denom*(1+add(L[i]*t**i, i=1..n)):
Numer:=add(coeff(Numer,t,i)*t**i, i=0..degree(Denom,t)):
Numer/Denom:
end: 
}
}
\vspace{1mm}\noindent
Similarly we have procedure {\tt GFfamilyPer} for the permanent. Let's look at an example. 
The following is a sample procedure which considers the family of 
almost diagonal matrices which the first row $[2,3]$ and the first column $[2,4,5]$. 
\vspace{1mm}\noindent
{\obeylines
{\tt
SampleB:=proc(n) local L,M:
L:=[n, [2,3], [2,4,5]]:
M:=DiagMatrixL(L):
end:
}
}
Then {\tt GFfamilyDet(SampleB, t, 10, 50)} will return the generating function 
$$
-\frac{1}{ 45\,{t}^{3}-12\,{t}^{2}+2\,t-1 } .
$$

It turns out, that for this problem, the more `conceptual' approach of setting up a transfer matrix
also works well. But don't worry, the computer can do the `research' all by itself, with only
a minimum amount of human pre-processing.

We will now describe this more conceptual approach, that may be called {\it symbolic dynamical programming}, where
the computer sets up, {\it automatically}, a finite-state scheme, by {\it dynamically} discovering the
set of states, and automatically figures out the transfer matrix.

\section{The Symbolic Dynamic Programming Approach}

Recall from Linear Algebra 101, 

\begin{theorem}[Cofactor Expansion] Let $|A|$ denote the determinant of an $n \times n$ matrix $A$, then
$$
|A| = \sum_{j=1}^{n} (-1)^{i+j} a_{ij} M_{ij}, \quad \forall i \in [n],
$$
where $M_{ij}$ is the $(i,j)$-minor.
\end{theorem}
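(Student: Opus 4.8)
The plan is to derive this from the Leibniz (sum-over-permutations) formula for the determinant,
$$\det A \, = \, \sum_{\sigma \in S_n} \operatorname{sgn}(\sigma) \prod_{k=1}^{n} a_{k,\sigma(k)} ,$$
by regrouping the terms according to which column is hit by row $i$. Fix $i \in [n]$. Since every permutation $\sigma$ sends $i$ to exactly one value $j := \sigma(i)$, the symmetric group decomposes as the disjoint union $S_n = \bigsqcup_{j=1}^{n} \{\sigma : \sigma(i) = j\}$, and hence
$$\det A \, = \, \sum_{j=1}^{n} a_{ij} \Bigl( \sum_{\sigma(i)=j} \operatorname{sgn}(\sigma) \prod_{k \neq i} a_{k,\sigma(k)} \Bigr) .$$
So everything reduces to showing that the parenthesized inner sum equals $(-1)^{i+j} M_{ij}$, where $M_{ij}$ is the determinant of the matrix obtained by deleting row $i$ and column $j$ from $A$.

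Next I would identify that inner sum as the minor up to sign. Restriction to the index set $[n] \setminus \{i\}$ gives a bijection between $\{\sigma \in S_n : \sigma(i) = j\}$ and the set of bijections $[n] \setminus \{i\} \to [n] \setminus \{j\}$; writing both of these $(n-1)$-element sets in increasing order turns such a bijection into an element of $S_{n-1}$, and under this relabeling the product $\prod_{k \neq i} a_{k,\sigma(k)}$ becomes precisely the generic Leibniz term for the $(i,j)$-minor matrix. Therefore the inner sum is $\varepsilon_{ij} M_{ij}$ for a sign $\varepsilon_{ij} \in \{+1,-1\}$ that records only the discrepancy between $\operatorname{sgn}(\sigma)$ and the sign of the relabeled permutation.

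The hard part --- really the only nonmechanical part --- will be pinning down $\varepsilon_{ij} = (-1)^{i+j}$. Rather than count parities of permutations directly, I would argue by normalization: performing $i-1$ adjacent row swaps and $j-1$ adjacent column swaps moves the entry $a_{ij}$ into the top-left corner, preserves the relative order of all other rows and columns (hence leaves $M_{ij}$ untouched), and multiplies $\det A$ by $(-1)^{(i-1)+(j-1)} = (-1)^{i+j}$. This reduces the claim to the base case $i = j = 1$, where one checks straight from the Leibniz formula that a permutation fixing $1$ has the same sign as its restriction to $\{2,\dots,n\}$, so the coefficient of $a_{11}$ is exactly $+M_{11}$. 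Finally, applying the row expansion just proved to the transpose $A^{T}$ (whose determinant equals that of $A$, and whose minors are the transposed minors of $A$) yields the analogous expansion along any column, which is the form actually invoked later in the chapter.
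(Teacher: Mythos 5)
Your proof is correct. Note, though, that the paper does not prove this statement at all: it is introduced with ``Recall from Linear Algebra 101'' and used as a black box to set up the recursive cofactor expansion of almost-diagonal matrices, so there is no in-paper argument to compare yours against. What you supply is the standard derivation from the Leibniz formula, and all three steps hold up: the partition of $S_n$ by the value $j=\sigma(i)$ is legitimate; the identification of the inner sum with $\varepsilon_{ij}M_{ij}$ is sound because the order-preserving relabeling $[n]\setminus\{i\}\to[n-1]$, $[n]\setminus\{j\}\to[n-1]$ is fixed once $i,j$ are, so the sign discrepancy really is a constant depending only on $(i,j)$; and the normalization by $i-1$ adjacent row swaps and $j-1$ adjacent column swaps correctly preserves the relative order of the remaining rows and columns (hence the minor) while contributing the factor $(-1)^{i+j}$, reducing everything to the easily checked case $i=j=1$. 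The closing remark about passing to $A^{T}$ for column expansion is harmless but not needed here, since the statement as given is the row expansion.
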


We'd like to consider the Cofactor Expansion for almost-diagonal matrices along the first row. 
For simplicity, we assume while $a_{i,j} = 0$ if $j-i \geq k_1$ or $i-j \geq k_2$ 
for some fixed positive integers $k_1, k_2$, and if $-k_2 < j_1-i_1 < j_2-i_2 < k_1$, then $a_{i_1 j_1} \neq a_{i_2 j_2}$. 
Under this assumption, for any minors we obtain through recursive Cofactor Expansion along the first row, 
the dimension, the first row and the first column should provide enough information to reconstruct the matrix. 

For an almost-diagonal matrix represented by $L=$[Dimension, [the first $k_1$ entries in the first row], 
[the first $k_2$ entries in the first column]], any minor can be represented by 
[Dimension, [entries in the first row up to the last nonzero entry], [entries in the first column up to the last nonzero entry]].

Our goal in this section is the same as the last one, to get a generating function for the determinant or permanent of almost-diagonal matrices $A_k$ with dimension $k$. Once we have those almost-diagonal matrices, the first step is to do a one-step expansion as follows:
\vspace{1mm}\noindent
{\obeylines
{\tt
ExpandMatrixL:=proc(L,L1) 
local n,R,C,dim,R1,C1,i,r,S,candidate,newrow,newcol,gu,mu,temp,p,q,j:
n:=L[1]:
R:=L[2]:
C:=L[3]:
p:=nops(R)-1:
q:=nops(C)-1:
dim:=L1[1]:
R1:=L1[2]:
C1:=L1[3]:

if R1=[] or C1=[] then
  return {}:
elif R[1]<>C[1] or R1[1]<>C1[1] or dim>n then 
  return fail:
else

  S:=\{\}:

  gu:=[0\$(n-1-q), seq(C[q-i+1],i=0..q-1), op(R), 0\$(n-1-p)]:
  candidate:=[0\$nops(R1),R1[-1]]:
  for i from 1 to nops(R1) do
    mu:=R1[i]:
    for j from n-q to nops(gu) do
      if gu[j]=mu then
        candidate[i]:=gu[j-1]:
      fi:
    od:
  od:
      
  for i from n-q to nops(gu) do
    if gu[i] = R1[2] then
      temp:=i:
      break:
    fi:
  od:

  for i from 1 to nops(R1) do
    if i = 1 then
      mu:=[R1[i]*(-1)**(i+1), [dim-1,[op(i+1..nops(candidate), candidate)],
      [seq(gu[temp-i],i=1..temp-n+q)]]]:
      S:=S union {mu}:
    else
      mu:=[R1[i]*(-1)**(i+1), [dim-1, [op(1..i-1, candidate),
      op(i+1..nops(candidate), candidate)], [op(2..nops(C1), C1)]]]:
      S:=S union {mu}:
    fi:
  od:

  return S:

fi:

end:
}
}
\vspace{1mm}\noindent
The {\tt ExpandMatrixL} procedure inputs a data structure $L =$ [Dimension, first\_row=[ ], first\_col=[ ]] 
as the matrix we start and the other data structure $L1$ as the current minor we have, 
expands $L1$ along its first row and outputs a list of [multiplicity, data structure]. 

We would like to generate all the "children" of an almost-diagonal matrix regardless of the dimension, i.e., 
two lists $L$ represent the same child as long as their first\_rows and first\_columns are the same, respectively. 
The set of "children" is the scheme of the almost diagonal matrices in this case.

The following is the Maple procedure {\tt ChildrenMatrixL} which inputs a data structure $L$ and 
outputs the set of its "children" under Cofactor Expansion along the first row:
\vspace{1mm}\noindent
{\obeylines
{\tt
ChildrenMatrixL:=proc(L) local S,t,T,dim,U,u,s:
dim:=L[1]:
S:=\{[op(2..3,L)]\}:
T:=\{seq([op(2..3,t[2])],t in ExpandMatrixL(L,L))\}:
while T minus S <> \{\} do
  U:=T minus S:
  S:=S union T:
  T:=\{\}:
  for u in U do
    T:=T union \{seq([op(2..3,t[2])],t in ExpandMatrixL(L,[dim,op(u)]))\}:
  od:
od:
for s in S do
  if s[1]=[] or s[2]=[] then
    S:=S minus \{s\}:
  fi:
od:
S:
end:
}
}
\vspace{1mm}\noindent
After we have the scheme $S$, by the Cofactor Expansion of any element in the scheme, a system of algebraic equations follows. 
For children in $S$, it's convenient to let the almost-diagonal matrix be the first one $C_1$ and for 
the rest, any arbitrary ordering will do. For example, if after Cofactor Expansion for $C_1$, $c_2$ "copies" of 
$C_2$ and $c_3$ "copies" of $C_3$ are obtained, then the equation will be 
$$
C_1 = 1+ c_2 t C_2 + c_3 t C_3  .
$$
However, if the above equation is for $C_i, i \neq 1$, i.e. $C_i$ is not the almost-diagonal matrix itself, then the equation will be slightly different:
$$
C_i = c_2 t C_2 + c_3 t C_3 .
$$
Here $t$ is a symbol as we assume the generating function is a rational function of $t$.

Here is the Maple code that implements how we get the generating function for the determinant of a family of 
almost-diagonal matrices by solving a system of algebraic equations:
\vspace{1mm}\noindent
{\obeylines
{\tt
GFMatrixL:=proc(L,t) local S,dim,var,eq,n,A,i,result,gu,mu:
dim:=L[1]:
S:=ChildrenMatrixL(L):
S:=[[op(2..3,L)], op(S minus \{[op(2..3,L)]\})]:
n:=nops(S):
var:=\{seq(A[i],i=1..n)\}:
eq:=\{\}:
for i from 1 to 1 do
  result:=ExpandMatrixL(L,[dim,op(S[i])]):
  for gu in result do
  if gu[2][2]=[] or gu[2][3]=[] then
    result:=result minus \{gu\}:
  fi:
  od:
  eq:=eq union \{A[i] - 1 - add(gu[1]*t*A[CountRank(S, 
  [op(2..3, gu[2])])], gu in result)\}:
od:
for i from 2 to n do
  result:=ExpandMatrixL(L,[dim,op(S[i])]):
  for gu in result do
  if gu[2][2]=[] or gu[2][3]=[] then
    result:=result minus {gu}:
  fi:
  od:
  eq:=eq union \{A[i] - add(gu[1]*t*A[CountRank(S, [op(2..3, gu[2])])], 
  gu in result)\}:
od:
gu:=solve(eq, var)[1]:
subs(gu, A[1]):

end:

}
}
\vspace{1mm}\noindent
{\tt GFMatrixL([20, [2, 3], [2, 4, 5]], t)} returns
$$
- \frac{1}{ 45\,{t}^{3}-12\,{t}^{2}+2\,t-1}  .
$$
Compared to empirical approach, the `symbolic dynamical programming' method is faster and more efficient
for the moderate-size examples that we tried out.
However, as the lists will grow larger, it is likely that the former method will win out,
since with this non-guessing approach, it is equally fast to get generating functions for
determinants and permanents, and as we all know, permanents are hard.

The advantage of the present method  is that it is more appealing to humans, and does not
require any `meta-level' act of faith.
However, both methods are very versatile and are great experimental approaches for enumerative combinatorics problems. 
We hope that our readers will find other applications.

\section{Remarks}
Rather than trying to tackle each enumeration problem, one at a time, using
ad hoc human ingenuity each time, building up an intricate transfer matrix, and
only using the computer at the end as a symbolic calculator, it is a much better
use of our beloved silicon servants
(soon to become our masters!) to replace `thinking' by `meta-thinking', i.e. develop experimental mathematics methods
that can handle many different types of problems. In the two case studies discussed here,
every thing was made rigorous, but if one can make semi-rigorous and even non-rigorous
discoveries, as long as they are {\it interesting}, one should not be hung up
on rigorous proofs. In other words, if you can find a rigorous justification (like in these
two case studies) that's nice, but if you can't, that's also nice!

\chapter{Analysis of Quicksort Algorithms}
This chapter is adapted from \cite{49}, which has been published on {\it Journal of Difference Equations and Applications}. It is also available on arXiv.org, number 1905.00118.

\section{Introduction}

A sorting algorithm is an algorithm that rearranges elements of a list in a certain order, the most frequently used orders being numerical order and lexicographical order. Sorting algorithms play a significant role in computer science since efficient sorting is important for optimizing the efficiency of other algorithms which require input data to be in sorted lists. In this chapter, our focus is {\it Quicksort}. 

Quicksort was developed by British computer scientist Tony Hoare in 1959 and published in 1961. It has been a commonly used algorithm for sorting since then and is still widely used in industry. 

The main idea for Quicksort is that we choose a pivot randomly and then compare the other elements with the pivot, smaller elements being placed on the left side of the pivot and larger elements on the right side of the pivot. Then we recursively apply the same operation to the sublists obtained from the partition step. As for the specific implementations, there can be numerous variants, some of which are at least interesting from a theoretical perspective despite their rare use in the real world.

It is well-known that the worst-case performance of Quicksort is $O(n^2)$ and the average performance is $O(n \log n)$. However, we are also interested in the explicit closed-form expressions for the moments of Quicksort's performance, i.e., running time, in terms of the number of comparisons and/or the number of swaps. In this chapter, only lists or arrays containing distinct elements are considered.

The chapter is organized as follows. In Section 4.2, we review related work on the number of comparisons of 1-pivot Quicksort, whose methodology is essential for further study. In Section 4.3, the numbers of swaps of several variants of 1-pivot Quicksort are considered. In Section 4.4, we extend our study to multi-pivot Quicksort. In Section 4.5, the technique to obtain more moments and the scaled limiting distribution are discussed. In the last section we discuss some potential improvements for Quicksort, summarize the main results of this chapter and make final remarks on the methodology of experimental mathematics. 

\section{Related Work}
In the masterpiece of Shalosh B. Ekhad and Doron Zeilberger \cite{12}, they managed to find the explicit expressions for expectation, variance and higher moments of the number of comparisons of 1-pivot Quicksort with an experimental mathematics approach, which is also considered as some form of
``machine learning." Here we will review the results they discovered or rediscovered. 

Let $C_n$ be the random variable  ``number of comparisons in Quicksort applied to lists of length $n$," $n \geq 0$.

\begin{theorem}[\cite{24}, p.8, end of section 1.3; \cite{16}, Eq. (2.14), p. 29, and other places]
$$
E[C_n]=2(n+1) H_1(n) - 4n .
$$
Here $H_1(n)$ are the {\it Harmonic numbers}
$$
H_1(n):=\sum_{i=1}^{n} \frac{1}{i} .
$$
\end{theorem}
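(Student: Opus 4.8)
The plan is to set up a first-order recurrence for $E[C_n]$, solve it in closed form, and observe that the solution is exactly $2(n+1)H_1(n)-4n$. First I would condition on the rank $k$ of the randomly chosen pivot, which is uniform on $\{1,\dots,n\}$. Partitioning an $n$-element list against the pivot costs exactly $n-1$ comparisons, and given that the pivot has rank $k$, the two recursive calls act on sublists of sizes $k-1$ and $n-k$. Summing over $k$ and using linearity of expectation gives
$$
E[C_n] \, = \, (n-1) \, + \, \frac{1}{n}\sum_{k=1}^{n}\bigl(E[C_{k-1}]+E[C_{n-k}]\bigr) \, = \, (n-1) \, + \, \frac{2}{n}\sum_{k=0}^{n-1}E[C_k],
$$
subject to $E[C_0]=E[C_1]=0$. (One subtlety worth a sentence of justification is that the partition does not depend on the internal order of the two sublists, so the recursive costs on the two sides are indeed distributed as $C_{k-1}$ and $C_{n-k}$.)

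Next I would eliminate the running sum by differencing. Multiplying through by $n$ gives $n\,E[C_n] = n(n-1) + 2\sum_{k=0}^{n-1}E[C_k]$; subtracting the same identity with $n$ replaced by $n-1$ collapses the sum and yields the two-term recurrence $n\,E[C_n] = (n+1)\,E[C_{n-1}] + 2(n-1)$. Dividing by $n(n+1)$ makes the left-hand side a telescoping difference,
$$
\frac{E[C_n]}{n+1} - \frac{E[C_{n-1}]}{n} \, = \, \frac{2(n-1)}{n(n+1)} \, = \, \frac{4}{n+1} - \frac{2}{n},
$$
where the last step is a routine partial-fraction split. Summing this from the base case $n=1$ up to $n$, and repackaging the resulting partial sums of $1/j$ as harmonic numbers $H_1(n)$ and $H_1(n+1)=H_1(n)+\tfrac{1}{n+1}$, produces $\frac{E[C_n]}{n+1} = 2H_1(n) + \frac{4}{n+1} - 4$, and multiplying back by $n+1$ gives $E[C_n] = 2(n+1)H_1(n) - 4n$, as claimed.

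Honestly, none of these steps is a genuine obstacle; the only place to be careful is the bookkeeping in the telescoping sum (keeping track of which terms of $H_1$ appear with which shift) and choosing the correct normalizing factor $n+1$ that turns the recurrence into an exact difference. In the experimental-mathematics spirit of this chapter, an equally rigorous shortcut is to compute $E[C_n]$ for, say, $n\le 10$ from the recurrence above, \emph{guess} the closed form $2(n+1)H_1(n)-4n$, and then verify by a one-line induction that this expression satisfies $n\,E[C_n]=(n+1)E[C_{n-1}]+2(n-1)$ together with $E[C_1]=0$; since the recurrence has a unique solution given the initial value, this settles it.
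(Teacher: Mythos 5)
Your proposal is correct, and every step checks out: the full-history recurrence $E[C_n]=(n-1)+\frac{2}{n}\sum_{k=0}^{n-1}E[C_k]$ is the same one the paper writes down, the differencing step correctly yields $n\,E[C_n]=(n+1)E[C_{n-1}]+2(n-1)$, the partial-fraction split $\frac{2(n-1)}{n(n+1)}=\frac{4}{n+1}-\frac{2}{n}$ is right, and the telescoped sum gives $\frac{E[C_n]}{n+1}=2H_1(n)+\frac{4}{n+1}-4$, hence $E[C_n]=2(n+1)H_1(n)-4n$ (sanity check: $n=2$ gives $1$, $n=3$ gives $8/3$, matching the recurrence). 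However, your route is genuinely different from the paper's. The paper treats this theorem as a cited classical result and explicitly declines to do the ``complicated human-generated manipulatorics''; instead it derives the same recurrence, uses it to generate numerical data, posits the ansatz $c_n=a+bn+cH_1(n)+d\,nH_1(n)$ with undetermined coefficients, and solves a small linear system --- appealing to the finiteness of that verification (and to symbolic-summation results it cites) for rigor. Your closing paragraph --- compute a few values, guess the closed form, and verify it satisfies the two-term recurrence by a one-line induction --- is essentially the paper's method, so you have in effect supplied both proofs. What your main argument buys is a fully self-contained derivation needing no ansatz and no act of faith about the shape of the answer; what the paper's approach buys is uniformity, since the same fit-an-ansatz machinery extends mechanically to the variance and to arbitrarily high moments, where the telescoping trick does not.
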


More generally, in following theorems, we introduce the notation 
$$
H_k(n) := \sum_{i=1}^n \frac{1}{i^k}.
$$

\begin{theorem}[Knuth, \cite{26}, answer to Ex. 8(b) in section  6.2.2)]
$$
var[C_n] = n ( 7\,n+13 )  \, - \, 2\,(n+1)\, H_{{1}} ( n )  -4\, ( n+1 ) ^{2}H_{{2}} ( n )  .
$$
Its asymptotic expression is
$$
( 7 \,- \,\frac{2}{3}\,{\pi }^{2} ) {n}^{2}+ ( 13-2\,\ln  ( n
 ) -2\,\gamma-4/3\,{\pi }^{2} ) n-2\,\ln  ( n ) 
-2\,\gamma-2/3\,{\pi }^{2} \, + \, o(1)  .       
$$
\end{theorem}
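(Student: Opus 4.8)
The plan is to exploit the standard distributional recurrence for Quicksort and then extract the second moment by the same ``experimental'' recipe that produced the expectation in the previous theorem. When the pivot is chosen uniformly at random from a list of $n$ distinct elements, it ends up in position $i$ (each value of $i$ with probability $1/n$) after exactly $n-1$ comparisons, and the algorithm then recurses independently on the two sublists, of sizes $i-1$ and $n-i$. Hence, writing $P_n(x):=E[x^{C_n}]$ for the probability generating function, one has
\begin{equation*}
P_n(x) \;=\; \frac{x^{\,n-1}}{n}\sum_{i=1}^{n} P_{i-1}(x)\,P_{n-i}(x), \qquad P_0(x)=1 .
\end{equation*}
Differentiating once at $x=1$ reproduces $E[C_n]=2(n+1)H_1(n)-4n$; I would differentiate \emph{twice} at $x=1$ --- equivalently, square $C_n=(n-1)+C_{I-1}+C'_{n-I}$ and average over the uniform pivot $I$, using independence of the two recursive calls --- to obtain a recurrence for $m_n:=E[C_n^2]$.

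Carrying that out yields a full-history linear recurrence of the shape
\begin{equation*}
m_n \;=\; (n-1)^2 \;+\; \frac{2}{n}\sum_{i=0}^{n-1} m_i \;+\; s(n),
\end{equation*}
where $s(n)$ is an explicit combination of polynomials in $n$ times $H_1$, arising from the terms $2(n-1)E[C_{i-1}]$ and $2E[C_{i-1}]E[C'_{n-i}]$ in the expansion of the square. The standard device now is to multiply by $n$, write the same identity with $n-1$ in place of $n$, and subtract: the sum $\sum m_i$ collapses to one term, leaving the \emph{first-order} inhomogeneous recurrence $n\,m_n-(n+1)\,m_{n-1}=r(n)$ with a polynomial-times-harmonic right-hand side. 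I would then solve this by the usual summation factor, resumming the auxiliary sums via $\sum_{i=1}^{n}H_k(i)=(n+1)H_k(n)-H_{k-1}(n)$ together with Abel summation for $\sum i\,H_1(i)$; this expresses $m_n$ in a finite basis of products of powers of $n$ with $1$, $H_1(n)$, $H_1(n)^2$ and $H_2(n)$.

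In keeping with the philosophy of this dissertation, an equivalent and quicker route is to \emph{guess}. Compute $\mathrm{var}[C_n]$ exactly for $n$ up to, say, $20$ from the recurrence for $P_n(x)$, fit the data by undetermined coefficients against the eight functions $1,\,n,\,n^2,\,H_1(n),\,nH_1(n),\,H_2(n),\,nH_2(n),\,n^2H_2(n)$, obtain $n(7n+13)-2(n+1)H_1(n)-4(n+1)^2H_2(n)$, and then verify \emph{a posteriori} that this closed form satisfies the first-order recurrence above (a routine identity in $n$, $H_1(n)$, $H_2(n)$, once one knows an $N_0$-type degree bound making finitely many checks conclusive). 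Either way, the variance is recovered as $\mathrm{var}[C_n]=m_n-E[C_n]^2$ with $E[C_n]^2=(2(n+1)H_1(n)-4n)^2$; the $H_1(n)^2$ and higher-degree $H_1(n)$ contributions of $m_n$ must cancel against those of $E[C_n]^2$, leaving precisely the stated expression. For the asymptotics I would substitute $H_1(n)=\ln n+\gamma+\tfrac{1}{2n}+O(n^{-2})$ and $H_2(n)=\tfrac{\pi^2}{6}-\tfrac{1}{n}+O(n^{-2})$ (Euler's $\zeta(2)=\pi^2/6$) and collect by order in $n$. The main obstacle is not conceptual but clerical: keeping the many harmonic-sum terms straight while solving the recurrence, and in particular making the $H_1(n)^2$ terms cancel exactly in $m_n-E[C_n]^2$. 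The guess-and-verify route essentially removes this obstacle, at the mild price of needing an a priori bound on the degrees of the polynomial coefficients in the ansatz.
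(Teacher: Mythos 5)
Your proposal is correct and, in its second ("guess") route, is exactly what the paper does: Theorem 4.2 is quoted from Knuth/Ekhad--Zeilberger, and the dissertation's only argument for it is precisely the recipe you describe --- generate data from the probability generating function recurrence $P_n(t)=\frac{t^{n-1}}{n}\sum_{k=1}^n P_{k-1}(t)P_{n-k}(t)$, fit undetermined coefficients against an ansatz in $n$, $H_1(n)$, $H_2(n)$, and appeal to a finite-check ($N_0$-type) argument for rigor; your first, classical telescoping derivation is the "complicated human-generated manipulatorics" the paper explicitly declines to carry out. One caveat: if you really substitute $H_1(n)=\ln n+\gamma+\frac{1}{2n}+O(n^{-2})$ and $H_2(n)=\frac{\pi^2}{6}-\frac{1}{n}+O(n^{-2})$ and collect terms, the correction $+\frac{4(n+1)^2}{n}$ contributes an extra $4n+O(1)$, so the linear coefficient comes out as $17-2\ln n-2\gamma-\frac{4}{3}\pi^2$ rather than the displayed $13-2\ln n-2\gamma-\frac{4}{3}\pi^2$; the displayed $o(1)$-level asymptotics (inherited from the cited source) appear to drop this correction, while the exact closed form and the leading $\left(7-\frac{2}{3}\pi^2\right)n^2$ term are unaffected.
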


\begin{theorem}[Zeilberger, \cite{12}] The third moment about the mean of $C_n$ is 
$$
-n ( 19\,{n}^{2}+81\,n+104 ) +H_{{1}} ( n ) 
 ( 14\,n+14 ) +12\, ( n+1 ) ^{2}H_{{2}} ( n
 ) +16\, ( n+1 ) ^{3}H_{{3}} ( n )  .
$$
It is asymptotic  to
$$
( -19+16\,\zeta  ( 3 )  ) {n}^{3}+ ( -81+2
\,{\pi }^{2}+48\,\zeta  ( 3 )  ) {n}^{2}+ ( -104+
14\,\ln  ( n ) 
+14\,\gamma+4\,{\pi }^{2}
$$
$$
+48\,\zeta  ( 3
 )  ) n 
+14\,\ln  ( n ) +14\,\gamma+2\,{\pi }^{2}
+16\,\zeta  ( 3 ) \, + \, o(1)  .
$$
It follows that the limit of the scaled third moment (skewness) converges to
$$
{\frac{-19+16\,\zeta  ( 3 ) }{ ( 7-2/3\,{\pi }^{2} ) ^{3/2}}} \, = \, 0.8548818671325885 \dots \quad .
$$
\end{theorem}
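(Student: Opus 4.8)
The plan is to derive a recurrence for the probability generating function (or, more conveniently, the moment generating function) of $C_n$, extract recurrences for the first three moments, solve them in closed form in terms of Harmonic numbers, and then read off the asymptotics. First I would set up the standard Quicksort recurrence: conditioning on the rank $i$ of the randomly chosen pivot (uniform on $\{1,\dots,n\}$), the partition step costs $n-1$ comparisons and the two recursive calls are on independent lists of sizes $i-1$ and $n-i$. This gives, for the generating function $f_n(x) := E[x^{C_n}]$, the relation $f_n(x) = \frac{x^{n-1}}{n}\sum_{i=1}^{n} f_{i-1}(x) f_{n-i}(x)$, with $f_0(x)=f_1(x)=1$. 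Differentiating once, twice, and three times with respect to $x$ and setting $x=1$ produces linear recurrences (with the quadratic convolution becoming, after symmetrization, a single sum $\frac{2}{n}\sum_{i=0}^{n-1}(\cdot)$) for $E[C_n]$, $E[C_n^2]$, and $E[C_n^3]$, where each moment's recurrence has a known inhomogeneous term built from lower moments already solved.

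Second I would solve these recurrences. The classical trick is to multiply the recurrence $a_n = b_n + \frac{2}{n}\sum_{i=0}^{n-1} a_i$ by $n$, subtract the corresponding identity for $n-1$ to telescope the sum, obtaining a first-order recurrence $n a_n = (n+1) a_{n-1} + (\text{explicit})$, and then divide by $n(n+1)$ to make it exactly telescoping; summing yields $a_n/(n+1)$ as an explicit sum, which Maple (or hand computation) evaluates in terms of $H_1(n), H_2(n), H_3(n)$. Carrying this out for the mean reproduces $E[C_n]=2(n+1)H_1(n)-4n$; repeating for the second moment about the mean and the third moment about the mean gives exactly the two displayed formulas, namely $\mathrm{var}[C_n]=n(7n+13)-2(n+1)H_1(n)-4(n+1)^2 H_2(n)$ and the stated cubic-in-$n$ expression for the third central moment. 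All of this is routine once the recurrences are in hand, and since the claimed formulas are polynomial-plus-Harmonic expressions, verifying them by plugging into the recurrence is a finite check.

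Third, for the asymptotics I would use the well-known expansions $H_1(n)=\ln n + \gamma + O(1/n)$, $H_2(n)=\frac{\pi^2}{6} + O(1/n)$, and $H_3(n)=\zeta(3)+O(1/n)$, substitute them into the exact third central moment, and collect terms by powers of $n$; the leading coefficient is $-19 + 16\zeta(3)$, matching the displayed asymptotic. For the skewness statement, I would combine the leading term $(-19+16\zeta(3))n^3$ of the third central moment with the leading term $(7-\frac{2}{3}\pi^2)n^2$ of the variance: the scaled third moment is $\frac{E[(C_n-E[C_n])^3]}{\mathrm{var}[C_n]^{3/2}}$, whose $n\to\infty$ limit is $\frac{-19+16\zeta(3)}{(7-\frac{2}{3}\pi^2)^{3/2}}$, and a numerical evaluation gives $0.8548818671325885\dots$.

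The main obstacle is bookkeeping rather than conceptual depth: the third-derivative step produces a sizable inhomogeneous term (involving products like $E[C_i]E[C_{n-1-i}]$, $E[C_i^2]E[C_{n-1-i}]$, and the lower moments of $C_n$ itself from the $x^{n-1}$ prefactor), and summing $\sum_i (i+1)^2 H_2(i)$-type expressions cleanly requires care with Abel summation / exchange of summation order. This is precisely where the experimental-mathematics approach pays off — one lets the computer generate enough exact values of $E[C_n^3]$ from the recurrence, fits an ansatz of the form $(\text{polynomial in }n) + (\text{polynomials in }n)\cdot H_1(n) + \cdots + H_3(n)$ by undetermined coefficients, and then certifies it \emph{a posteriori} by checking the recurrence holds, which reduces to a polynomial identity. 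I would also double-check the final numerical constant to high precision to confirm the digits quoted.
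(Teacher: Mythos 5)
Your proposal is correct and follows essentially the same route the paper takes (and attributes to \cite{12}): set up the recurrence for the probability generating function of $C_n$, extract moment recurrences, fit a polynomial-plus-harmonic-number ansatz by undetermined coefficients with \emph{a posteriori} certification, and then read off the asymptotics and the skewness limit from the leading terms of the third central moment and the variance. The paper itself only quotes this theorem from \cite{12} and sketches exactly this experimental-mathematics methodology in Section 4.2, so there is nothing further to reconcile.
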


\begin{theorem} [Zeilberger, \cite{12}] The fourth moment about the mean of $C_n$ is 
$$
\frac{1}{9} \,n ( 2260\,{n}^{3}+9658\,{n}^{2}+15497\,n+11357 ) -2\,
 ( n+1 )  ( 42\,{n}^{2}+78\,n+77 ) H_{{1}}
 ( n ) 
$$
$$
+12\, ( n+1 ) ^{2} ( H_{{1}} ( n
 )  ) ^{2}+ ( -4\, ( 42\,{n}^{2}+78\,n+31
 )  ( n+1 ) ^{2}+48\, ( n+1 ) ^{3}H_{{1}}
 ( n )  ) H_{{2}} ( n ) 
$$
$$
+48\, ( n+1
 ) ^{4} ( H_{{2}} ( n )  ) ^{2}-96\,
 ( n+1 ) ^{3}H_{{3}} ( n ) -96\, ( n+1
 ) ^{4}H_{{4}} ( n ) .
$$
It is asymptotic  to
$$
 ( {\frac {2260}{9}}-28\,{\pi }^{2}+{\frac {4}{15}}\,{\pi }^{4}
 ) {n}^{4}+ ( {\frac {9658}{9}}-84\,\ln  ( n ) -
84\,\gamma+1/6\, ( -648+48\,\ln  ( n ) +48\,\gamma
 ) {\pi }^{2}
$$
$$
+{\frac {16}{15}}\,{\pi }^{4}-96\,\zeta  ( 3
 )  ) {n}^{3}
$$
$$
+ ( {\frac {15497}{9}}-240\,\ln  ( n
 ) -240\,\gamma+12\, ( \ln  ( n ) +\gamma
 ) ^{2}+1/6\, ( -916+144\,\ln  ( n ) +144\,\gamma
 ) {\pi }^{2}
$$
$$
+8/5\,{\pi }^{4}-288\,\zeta  ( 3 ) 
 ) {n}^{2}
$$
$$
+ ( {\frac {11357}{9}}-310\,\ln  ( n ) 
-310\,\gamma+24\, ( \ln  ( n ) +\gamma ) ^{2}+1/6
\, ( -560+144\,\ln  ( n ) +144\,\gamma ) {\pi }^{
2}
$$
$$
+{\frac {16}{15}}\,{\pi }^{4}-288\,\zeta  ( 3 )  ) n
$$
$$
-154\,\ln  ( n ) -154\,\gamma+12\, ( \ln  ( n
 ) +\gamma ) ^{2}+1/6\, ( -124+48\,\ln  ( n
 ) +48\,\gamma ) {\pi }^{2}
$$
$$
+{\frac {4}{15}}\,{\pi }^{4}-96
\,\zeta  ( 3 ) \, + \, o(1) .
$$
It follows that the limit of the scaled fourth moment (kurtosis) converges to
$$
{\frac{{\frac{2260}{9}}-28\,{\pi }^{2}+{\frac{4}{15}}\,{\pi }^{4}}{
 ( 7-2/3\,{\pi }^{2} ) ^{2}}}
\, = \,
4.1781156382698542\dots \quad .
$$
\end{theorem}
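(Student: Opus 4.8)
The plan is to follow the experimental--mathematics methodology of Ekhad and Zeilberger \cite{12}, treating Quicksort as a discrete probability problem: derive closed forms for the raw moments $M_k(n):=E[C_n^k]$ by induction on $k$, and then convert to moments about the mean. The starting point is the fundamental distributional recurrence. When Quicksort acts on a list of $n$ distinct elements, the randomly chosen pivot is equally likely to be the $i$-th smallest for $1\le i\le n$; the partition step costs $n-1$ comparisons, and the two recursive calls act on independent random lists of sizes $i-1$ and $n-i$. Thus, in distribution, $C_n = (n-1) + C_{i-1} + \widetilde C_{n-i}$ with $i$ uniform on $\{1,\dots,n\}$ and $C,\widetilde C$ independent copies of the process. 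Expanding $\left((n-1)+C_{i-1}+\widetilde C_{n-i}\right)^k$ by the multinomial theorem and averaging over $i$, the two ``diagonal'' terms (the whole power landing on $C_{i-1}$, or on $\widetilde C_{n-i}$) together contribute $\frac{2}{n}\sum_{i=0}^{n-1}M_k(i)$, while every other term is a product of a power of $n-1$ with $M_q(\cdot)\,M_r(\cdot)$ where $q,r<k$. Hence
\begin{equation*}
n\,M_k(n) \;=\; 2\sum_{i=0}^{n-1} M_k(i) \;+\; n\,R_k(n),
\end{equation*}
where $R_k(n)$ is built entirely out of strictly lower moments.

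Next, inducting on $k$, I would assume the closed forms for $M_1(n),M_2(n),M_3(n)$ — equivalently the formulas for $E[C_n]$, $\mathrm{var}[C_n]$, and the third moment about the mean quoted above — are in hand. Each of these is a polynomial in $n$ times a product of at most three harmonic numbers drawn from $H_1,H_2,H_3$ (repetitions allowed), so $R_4(n)$ is a sum over $i$ of products of such quantities. Summing these symbolically — repeated Abel summation / summation by parts on pieces like $\sum_i i^a H_b(i)$ and $\sum_i i^a H_b(i)H_c(i)$ — yields a closed form for $R_4(n)$ in the same class, now involving the weight-$\le 4$ harmonic monomials $1,\,H_1,\,H_1^2,\,H_2,\,H_1H_2,\,H_2^2,\,H_3,\,H_4$. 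Differencing the displayed relation at $n$ and at $n-1$ eliminates the sum and produces the first-order recurrence $n\,M_4(n) = (n+1)\,M_4(n-1) + \left(nR_4(n)-(n-1)R_4(n-1)\right)$; dividing by $n(n+1)$ and telescoping (the standard integrating-factor argument, the same one that yields $E[C_n]=2(n+1)H_1(n)-4n$) solves it, and a final binomial expansion of $M_4(n)$ around $E[C_n]$ gives the stated fourth moment about the mean.

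In practice the cleanest rigorous route is the ``guess and verify'' one emphasized throughout this chapter: posit that $E[(C_n-E[C_n])^4]$ equals an unknown linear combination, with polynomial-in-$n$ coefficients of a priori bounded degree, of $1,\,H_1(n),\,H_1(n)^2,\,H_2(n),\,H_1(n)H_2(n),\,H_2(n)^2,\,H_3(n),\,H_4(n)$; generate exact rational data for small $n$ directly from the moment recurrence above; solve for the undetermined coefficients; and then verify the resulting identity on sufficiently many further values of $n$ — since, after clearing harmonic numbers, both sides are polynomials in $n$, agreement on enough points constitutes a proof. The asymptotics then follow by substituting $H_1(n)=\ln n+\gamma+o(1)$, $H_2(n)=\pi^2/6+o(1)$, $H_3(n)=\zeta(3)+o(1)$, $H_4(n)=\pi^4/90+o(1)$ into the closed form, and the kurtosis limit is the ratio of the leading $n^4$ coefficient $\frac{2260}{9}-28\pi^2+\frac{4}{15}\pi^4$ to the square of the leading $n^2$ coefficient $7-\frac{2}{3}\pi^2$ of the variance.

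The main obstacle is the symbolic summation that produces $R_4(n)$: once two harmonic-number factors appear, the summation-by-parts bookkeeping is markedly more intricate than in the second- and third-moment cases, and one must track cancellations carefully to land back inside the weight-$\le 4$ class. In the guessing approach the corresponding obstacle is instead establishing a correct a priori bound on the degrees in $n$ of the coefficient polynomials, so that the finite verification is genuinely conclusive rather than merely persuasive; as noted elsewhere in this thesis, a pragmatic reader may be content to skip this last nicety. Everything else — the multinomial expansion, the differencing, the telescoping, and the final asymptotic substitution — is routine.
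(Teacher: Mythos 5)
Your proposal is correct and follows essentially the same route the paper (quoting Ekhad--Zeilberger \cite{12}) describes for this result: set up the pivot-conditioned recurrence, generate exact data, fit an ansatz of polynomial-in-$n$ coefficients times harmonic-number monomials of weight at most $4$, solve for the undetermined coefficients and verify on sufficiently many further values, then substitute the standard asymptotics of $H_1,\dots,H_4$ to read off the leading coefficients and hence the kurtosis limit. The only cosmetic difference is that the paper phrases the recurrence at the level of the probability generating function $P_n(t)$ and differentiates, whereas you expand the raw moments directly via the multinomial theorem; these are equivalent, and you even flag the same caveat about a priori degree bounds that the paper itself defers.
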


Results for higher moments, more precisely, up to the eighth moment, are also discovered and discussed by Shalosh B. Ekhad and Doron Zeilberger in \cite{12}.

Before this article, there are already human approaches to find the expectation and variance for the number of comparisons. Let $c_n = E[C_n]$. Since the pivot can be the $k$-th smallest element in the list $(k=1,2,\dots,n)$, we have the recurrence relation
$$
c_n = 
\frac{1}{n} \sum_{k=1}^{n} ((n-1)+ c_{k-1}+ c_{n-k} ) = (n-1) + \frac{1}{n} \sum_{k=1}^{n} (c_{k-1}+c_{n-k}) 
= (n-1) + \frac{2}{n} \sum_{k=1}^{n} \, c_{k-1},
$$
because the expected number of comparisons for the sublist before the pivot is $c_{k-1}$ and that for the sublist after the pivot is $c_{n-k}$. From this recurrence relation, complicated human-generated manipulatorics is needed to rigorously derive the closed form. For the variance, the calculation is much more complicated. For higher moments, we doubt that human approach is realistic. 

The experimental mathematics approach is more straightforward and more powerful. For the expectation, a list of data can be obtained through the recurrence relation and the initial condition. Then with an educated guess that $c_n$ is a polynomial of degree one in both $n$ and $H_1(n)$, i.e.,
$$
c_n = a + bn + cH_1(n) + dnH_1(n)
$$
where $a,b,c,d$ are undetermined coefficients, we can solve for these coefficients by plugging sufficiently many $n$ and $c_n$ in this equation.

For higher moments, there is a similar recurrence relation for the probability generating function of $C_n$. With the probability generating function, a list of data of any fixed moment can be obtained. Then with another appropriate educated guess of the form of the higher moments, the explicit expression follows.  

In \cite{12}, it is already discussed that this experimental mathematics approach, which utilizes a recurrence relation to study the Quicksort algorithms, is actually rigorous by pointing out that this is a finite calculation and by referring to results in \cite{35} and \cite{36}.

\section{Number of Swaps of 1-Pivot Quicksort}
The performance of Quicksort depends on the number of swaps and comparisons performed. In reality, a swap usually takes more computing resources than a comparison. The difficulty in studying the number of swaps is that the number of swaps depends on how we implement the Quicksort algorithm while the number of comparisons are the same despite the specific implementations. 

Since only the number of comparisons is considered in \cite{12}, the Quicksort model in \cite{12} is that one picks the pivot randomly, compares each non-pivot element with the pivot and then places them in one of the two new lists $L_1$ and $L_2$ where the former contains all elements smaller than the pivot and the latter contains those greater than the pivot. Under this model there is no swap, but a lot of memory is needed. For convenience, let's call this model Variant Nulla. 

In this section, we consider the random variable, the number of swaps $X_n$, in different Quicksort variants. Some of them may not be efficient or widely used in industry; however, we treat them as an interesting problem and model in permutations and discrete mathematics. In the first subsection, we also demonstrate our experimental mathematics approaches step by step. 

\subsection{Variant I}
The first variant is that we always choose the first (or equivalently, the last) element in the list of length $n$ as the pivot, then we compare the other elements with the pivot. We compare the second element with the pivot first. If it is greater than the pivot, it stays where it is, otherwise we remove it from the list and then insert it before the pivot. Though this is somewhat different from the ``traditional swap," we define this operation as a swap. Generally, every time we find an element smaller than the pivot, we insert it before the pivot.

Hence, after $n-1$ comparisons and some number of swaps, the partition is achieved, i.e., all elements on the left of the pivot are smaller than the pivot and all elements on the right of the pivot are greater than the pivot. The difference between this variant and Variant Nulla is that this one does not need to create new lists so that it saves memory. 

Let $P_n(t)$ be the probability generating function for the number of swaps, i.e.,
$$
P_n(t) = \sum_{k=0}^{\infty}  P(X_n = k) \, t^k,
$$
where for only finitely many integers $k$, we have that $P(X_n = k)$ is nonzero.

We have the recurrence relation 
$$
P_n(t) = \frac{1}{n} \sum_{k=1}^n P_{k-1}(t) P_{n-k}(t) t^{k-1},
$$
with the initial condition $P_0(t)=P_1(t)=1$ because for any fixed $k \in \{1,2,\dots,n\}$, the probability that the pivot is the $k$-th smallest is $\frac{1}{n}$ and there are exactly $k-1$ swaps when the pivot is the $k$-th smallest.

The Maple procedure {\tt SwapPQs(n,t)} in the package {\tt Quicksort.txt} implements the recurrence of the probability generating function. 

Recall that the $r$-th moment is given in terms of the probability generating function
$$
E[X_n^r]= (t\frac{d}{dt})^r P_n(t) \, \vert_{t=1} .
$$
The moment about the mean
$$
m_r(n) := E[(X_n - c_n)^r] ,
$$
can be easily derived from the raw moments $\{ E[X_n^l] \,|\, 1 \leq l \leq r\}$, using the binomial theorem and
linearity of expectation. Another way to get the moment about the mean is by considering
$$
m_r(n)= (t\frac{d}{dt})^r (\frac{P_n(t)}{t^{c_n}}) \, \vert_{t=1} .
$$
Recall that 
$$
H_k(n) := \sum_{i=1}^n \frac{1}{i^k}.
$$
Our educated guess is that there exists a polynomial of $r+1$ variables $F_r(x_0, x_1, \dots, x_r)$ such that 
$$
m_r(n) = F_r(n, H_1(n), \dots, H_r(n)).
$$
With the Maple procedure {\tt QsMFn}, we can easily obtain the following theorems by just entering {\tt QsMFn(SwapPQs, t, n, Hn, r)} where $r$ represents the moment you are interested in. When $r=1$, it returns the explicit expression for its mean rather than the trivial ``first moment about the mean''.  

\begin{theorem}
The expectation of the number of swaps of Quicksort for a list of length $n$ under Variant I is
$$
E[X_n] = (n+1) H_1(n) - 2n.
$$
\end{theorem}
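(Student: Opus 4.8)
The plan is to derive the expectation $E[X_n]$ directly from the recurrence for the probability generating function $P_n(t)$ by differentiating at $t=1$, exactly in the spirit of the experimental-mathematics approach described above. Write $e_n := E[X_n] = P_n'(1)$ (using $P_n(1)=1$, since $P_n$ is a probability generating function). Starting from
$$
P_n(t) = \frac{1}{n} \sum_{k=1}^n P_{k-1}(t)\, P_{n-k}(t)\, t^{k-1},
$$
I would apply $\frac{d}{dt}$, use the product rule on the three factors $P_{k-1}(t)$, $P_{n-k}(t)$, $t^{k-1}$, and then set $t=1$. Since $P_m(1)=1$ for all $m$, the substitution collapses the sum dramatically: the term coming from differentiating $t^{k-1}$ contributes $(k-1)$, and the two terms from differentiating the $P$-factors contribute $e_{k-1}$ and $e_{n-k}$ respectively. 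This yields the linear recurrence
$$
e_n = \frac{1}{n}\sum_{k=1}^n \bigl( e_{k-1} + e_{n-k} + (k-1) \bigr)
    = \frac{2}{n}\sum_{k=1}^{n} e_{k-1} \;+\; \frac{n-1}{2},
$$
using $\sum_{k=1}^n e_{n-k} = \sum_{k=1}^n e_{k-1}$ and $\sum_{k=1}^n (k-1) = \binom{n}{2}$. Note this is almost identical to the comparison recurrence $c_n = (n-1) + \frac{2}{n}\sum_{k=1}^n c_{k-1}$ reviewed above, except the inhomogeneous term is $\frac{n-1}{2}$ instead of $n-1$, which already suggests the answer will look like ``half of $c_n$ plus a correction.''

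Next I would solve this recurrence. Following the standard telescoping trick: multiply by $n$ to get $n e_n = 2\sum_{k=1}^n e_{k-1} + \binom{n}{2}$, write the same identity for $n-1$, and subtract to obtain a first-order recurrence
$$
n e_n - (n-1) e_{n-1} = 2 e_{n-1} + (n-1),
$$
i.e. $n e_n = (n+1) e_{n-1} + (n-1)$. Dividing by $n(n+1)$ gives $\frac{e_n}{n+1} = \frac{e_{n-1}}{n} + \frac{n-1}{n(n+1)}$, and since $\frac{n-1}{n(n+1)} = \frac{2}{n+1} - \frac{1}{n}$ by partial fractions, summing from the base case $e_1 = 0$ telescopes to $\frac{e_n}{n+1} = 2\sum_{j=2}^{n}\frac{1}{j+1} - \sum_{j=2}^{n}\frac{1}{j}$, which simplifies to an expression in $H_1(n)$. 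Collecting terms should give $e_n = (n+1)H_1(n) - 2n$, matching the claimed formula; I would double-check against small cases ($n=1$ gives $0$, $n=2$ gives $3\cdot\frac32 - 4 = \frac12$, which is correct since with two elements there is one swap with probability $\frac12$).

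Alternatively — and this is the slicker route — one can guess the ansatz $e_n = a + bn + cH_1(n) + dnH_1(n)$ exactly as described in the text for $c_n$, generate a handful of numerical values of $e_n$ from the recurrence, and solve the resulting linear system for $a,b,c,d$; since both sides of the putative identity then satisfy the same first-order recurrence with the same initial condition, an easy induction (an instance of the ``$N_0$ principle'') makes it fully rigorous. The main obstacle is essentially bookkeeping: being careful with the partial-fraction manipulation and the off-by-one shifts between $H_1(n)$, $H_1(n-1)$, and $H_1(n+1)$ when telescoping. There is no conceptual difficulty — the recurrence is linear and first-order after one differentiation, so the whole proof is high-school algebra once the generating-function recurrence is differentiated at $t=1$.
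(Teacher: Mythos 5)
Your proposal is correct — I verified the differentiation of the PGF recurrence, the telescoping, and the final simplification, and the small-case checks ($e_1=0$, $e_2=\tfrac12$) are right. It does, however, take a different route from the paper. The paper's ``proof'' of this theorem is the experimental one it advocates throughout: iterate the recurrence $P_n(t)=\frac{1}{n}\sum_{k=1}^n P_{k-1}(t)P_{n-k}(t)t^{k-1}$ to generate data, posit the ansatz $E[X_n]=a+bn+cH_1(n)+dnH_1(n)$, and solve for the undetermined coefficients with the procedure {\tt QsMFn} (rigor being supplied by the fact that both sides satisfy the same recurrence, per the remarks in Section 4.2 and the references cited there). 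Your primary route is the classical human derivation: differentiate the PGF recurrence at $t=1$ to get $e_n=\frac{2}{n}\sum_{k=1}^n e_{k-1}+\frac{n-1}{2}$, kill the sum by differencing to reach the first-order recurrence $ne_n=(n+1)e_{n-1}+(n-1)$, and telescope via partial fractions. What your approach buys is a self-contained, fully explicit proof with no appeal to an ansatz or to a computer; what the paper's approach buys is uniformity — the same machine procedure delivers the variance and arbitrarily high moments, where the ``complicated human-generated manipulatorics'' you would need becomes prohibitive. Your closing observation that the two routes converge (the ansatz is verified by induction once both sides satisfy the same first-order recurrence) is exactly the justification the paper leans on, so the two proofs are complementary rather than in conflict.
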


\begin{theorem}
The variance of $X_n$ is 
$$
2n(n+2) - (n+1) H_1(n) - (n+1)^2 H_2(n).
$$
\end{theorem}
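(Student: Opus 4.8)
The plan is to imitate, for the second moment, the experimental-mathematics recipe that produced Theorem~4.7: turn the recurrence for the probability generating function into a recurrence for $m_2(n):=\mathrm{Var}(X_n)$, generate enough numerical data, guess the closed form by undetermined coefficients, and then verify the guess rigorously because it collapses to a polynomial identity in $n$.

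First I would pass from $P_n(t)$ to the \emph{centered} generating function $R_n(t):=P_n(t)/t^{c_n}$, where $c_n=E[X_n]=(n+1)H_1(n)-2n$ is already known. Substituting $P_n(t)=R_n(t)t^{c_n}$ into
$$
P_n(t)=\frac{1}{n}\sum_{k=1}^n P_{k-1}(t)P_{n-k}(t)t^{k-1}
$$
gives
$$
R_n(t)=\frac{1}{n}\sum_{k=1}^n R_{k-1}(t)R_{n-k}(t)\,t^{\delta_{n,k}},\qquad \delta_{n,k}:=c_{k-1}+c_{n-k}+(k-1)-c_n .
$$
Now apply $\theta:=t\frac{d}{dt}$ twice and set $t=1$. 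Since $R_m(1)=1$ and $\theta R_m(1)=E[X_m]-c_m=0$, all cross terms in $\theta^2$ vanish, leaving the clean recurrence
$$
m_2(n)=\frac{1}{n}\sum_{k=1}^n\bigl(m_2(k-1)+m_2(n-k)+\delta_{n,k}^2\bigr),\qquad m_2(0)=m_2(1)=0 .
$$
Because $c_m$ is an explicit function of $m$ and $H_1(m)$, the forcing term $\frac{1}{n}\sum_{k=1}^n\delta_{n,k}^2$ is completely explicit, so this is a genuine inhomogeneous recurrence for the sequence $m_2(n)$ (this is essentially what the procedure {\tt QsMFn} assembles).

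Next, following the ``educated guess'' philosophy of the chapter, I would posit that $m_2(n)$ lies in the span of the nine monomials $\{1,n,n^2\}\times\{1,H_1(n),H_2(n)\}$, compute $m_2(n)$ for $n$ up to, say, $20$ from the recurrence, and solve the resulting linear system; this should return $2n(n+2)-(n+1)H_1(n)-(n+1)^2H_2(n)$. To make the derivation fully rigorous, substitute this candidate back into the recurrence, replace $H_1(n),H_2(n)$ by $H_1(n-1)+\tfrac1n$ and $H_2(n-1)+\tfrac1{n^2}$, and carry out the finite sums over $k$ in closed form; both sides then become rational functions of $n$ of bounded degree, so after clearing denominators the claim is a polynomial identity and it suffices to check it for finitely many $n$ (the $N_0$ principle of Chapter~3, with an a priori degree bound that, in the spirit of the chapter, I will leave to the obtuse reader). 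The main obstacle is precisely that last bookkeeping step: $\delta_{n,k}^2$ expands into terms like $k(n-k+1)H_1(k-1)H_1(n-k)$, so summing it requires evaluating several Euler-type sums of products of harmonic numbers in terms of $n$, $H_1(n)$ and $H_2(n)$; this is mechanical for a computer-algebra system and, crucially, never produces an $H_3(n)$ or an $H_1 H_2$ cross term, which is exactly why the three-variable ansatz suffices.
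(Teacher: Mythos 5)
Your proposal is correct and is essentially the paper's own method: the paper obtains this theorem by running the recurrence for $P_n(t)$, invoking the educated guess $m_r(n)=F_r(n,H_1(n),\dots,H_r(n))$, solving for the undetermined coefficients from enough data, and noting that the a posteriori verification reduces to a polynomial identity checkable at finitely many points. Your explicit recurrence for $m_2(n)$ via the centered generating function $P_n(t)/t^{c_n}$ is just a transparent writeup of what the procedure {\tt QsMFn} does internally (the paper itself records the identity $m_r(n)=(t\frac{d}{dt})^r(P_n(t)/t^{c_n})\vert_{t=1}$), so there is no substantive difference.
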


\begin{theorem}
The third moment about the mean of $X_n$ is 
$$
-\frac{9}{4} n (n+3)^2 + (4n+4) H_1(n) + 3(n+1)^2 H_2(n) + 2(n+1)^3 H_3(n).
$$
\end{theorem}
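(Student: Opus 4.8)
The plan is to run the same experimental-mathematics pipeline already used in this section for the expectation and the variance: start from the probability generating function recurrence, turn it into a recurrence for the third moment about the mean, tabulate that sequence, fit it to a conjectured closed form in $n$ and the harmonic numbers, and then certify the guess by a finite verification.

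First I would pass from the PGF recurrence $P_n(t)=\frac{1}{n}\sum_{k=1}^{n}P_{k-1}(t)P_{n-k}(t)\,t^{k-1}$, with $P_0(t)=P_1(t)=1$, to a recurrence for $m_3(n)$. The clean way is to work with the centered generating function $\widetilde{P}_n(t):=P_n(t)/t^{c_n}$, where $c_n=E[X_n]=(n+1)H_1(n)-2n$ is the already-established expectation, and to apply the operator $(t\frac{d}{dt})^3$ at $t=1$. Equivalently, using the probabilistic recursion $X_n\overset{d}{=}(K-1)+X'_{K-1}+X''_{n-K}$ with $K$ uniform on $\{1,\dots,n\}$ and $X',X''$ independent, and writing $\delta_k:=(k-1)+c_{k-1}+c_{n-k}-c_n$ (which has zero average over $k\in\{1,\dots,n\}$ precisely because of the recursion defining $c_n$), one expands the cube and kills the odd moments of the independent centered pieces to obtain
$$
m_3(n)=\frac{1}{n}\sum_{k=1}^{n}\Big(\delta_k^{3}+3\,\delta_k\big(m_2(k-1)+m_2(n-k)\big)+m_3(k-1)+m_3(n-k)\Big).
$$
Since $m_2$ is already known, every term on the right except the $m_3$ convolution is an explicit sum of a polynomial in $k$ and harmonic numbers, hence evaluable in closed form; this already yields a fast algorithm for the sequence $m_3(n)$.

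Next comes the ansatz. Guided by the shape of the variance formula and of the third moment of the comparison count proved earlier in this chapter, I would conjecture that $m_3(n)=F_3\big(n,H_1(n),H_2(n),H_3(n)\big)$ for a polynomial $F_3$ whose degree in each variable is small --- say $n$ to degree three, $H_1$ linearly, $H_2$ with coefficient a constant multiple of $(n+1)^2$, $H_3$ with coefficient a constant multiple of $(n+1)^3$ --- or, more safely, I would simply allow generous degree bounds in all four variables and let the linear solver discover that almost all coefficients vanish. Plugging sufficiently many numerically computed pairs $(n,m_3(n))$ into this template gives a linear system in the undetermined coefficients whose solution should be exactly
$$
-\tfrac{9}{4}\,n(n+3)^2+(4n+4)H_1(n)+3(n+1)^2H_2(n)+2(n+1)^3H_3(n).
$$

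For full rigor, substitute this candidate back into the recurrence for $m_3(n)$ displayed above; using $H_k(n)-H_k(n-1)=n^{-k}$ to telescope all the sums, the claimed identity collapses to a polynomial identity in $n$ of bounded degree, so checking it for enough values of $n$ is a complete proof --- the same finite-calculation justification invoked for the comparison statistics. The main obstacle is not the discovery of $F_3$ but the \emph{a priori} justification of the ansatz space: one must argue in advance that $m_3(n)$ really is a polynomial in $n,H_1,\dots,H_3$ of the assumed degrees, so that finitely many data points provably determine it. Once that structural bound is in hand (or, alternatively, if one is willing to prove the final identity directly by strong induction on $n$ using the recurrence above), everything else is routine symbolic computation.
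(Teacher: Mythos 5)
Your proposal is correct and follows essentially the same route as the paper: the paper computes the probability generating functions from the recurrence $P_n(t)=\frac{1}{n}\sum_{k=1}^{n}P_{k-1}(t)P_{n-k}(t)t^{k-1}$, extracts $m_3(n)$ via $(t\frac{d}{dt})^3(P_n(t)/t^{c_n})\vert_{t=1}$, posits the same ansatz $m_3(n)=F_3(n,H_1(n),H_2(n),H_3(n))$, and solves for the coefficients from enough data (the Maple procedure {\tt QsMFn}), with rigor resting on the same finite-verification argument you describe. Your explicit centered recurrence for $m_3(n)$ and your candid remark about needing an a priori bound on the ansatz space are refinements of detail, not a different method.
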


\begin{theorem}
The fourth moment about the mean of $X_n$ is 
$$
\frac{1}{18} n (335n^3 + 1568n^2 + 3067n + 2770) - 3(n+1)(4n^2+8n+9)H_1(n) +3(n+1)^2 H_1(n)^2
$$
$$
+(-(12n^2+24n+19)(n+1)^2 + 6(n+1)^3 H_1(n) ) H_2(n) + 3(n+1)^4 H_2(n)^2
$$
$$
- 12(n+1)^3 H_3(n) - 6(n+1)^4 H_4(n).
$$
\end{theorem}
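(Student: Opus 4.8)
The plan is to reuse the experimental-mathematics pipeline that produced the formulas for the comparison count $C_n$: convert the recurrence for the probability generating function $P_n(t)$ into recurrences for the successive raw moments $M_r(n):=E[X_n^r]$, solve each such recurrence as a polynomial in $n$ and the harmonic numbers $H_1(n),\dots,H_r(n)$ (this is the content of the educated guess $m_r(n)=F_r(n,H_1(n),\dots,H_r(n))$ recorded above), and then pass to the centralized moment $m_4(n)$ by the binomial theorem. First I would unpack the probabilistic meaning of the recurrence: conditioning on the event that the chosen pivot is the $k$-th smallest element (probability $1/n$ for each $k\in\{1,\dots,n\}$), Variant I performs exactly $k-1$ swaps at the partition step and then sorts the two resulting sublists independently, so
$$
X_n \;\stackrel{d}{=}\; X'_{K-1} + X''_{n-K} + (K-1),
$$
with $K$ uniform on $\{1,\dots,n\}$ and, given $K$, the summands $X'_{K-1}$ and $X''_{n-K}$ independent copies of the process run on lists of sizes $K-1$ and $n-K$; this is exactly what the factor $P_{k-1}(t)P_{n-k}(t)t^{k-1}$ in the recurrence encodes.

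Expanding the $r$-th power by the multinomial theorem and taking expectations gives
$$
M_r(n) \;=\; \frac1n\sum_{k=1}^{n}\ \sum_{i+j+\ell=r} \binom{r}{i,\,j,\,\ell}\,(k-1)^{\ell}\,M_i(k-1)\,M_j(n-k),
$$
with $M_0\equiv 1$. The two extreme terms $i=r$ and $j=r$ each contribute $\frac1n\sum_{m=0}^{n-1}M_r(m)$ (using the reflection $k\mapsto n+1-k$), so isolating them yields
$$
M_r(n)-\frac2n\sum_{m=0}^{n-1}M_r(m) \;=\; R_r(n),
$$
where $R_r(n)$ is an explicit finite combination of convolution sums of the lower moments $M_0,\dots,M_{r-1}$ against the weights $(k-1)^{\ell}$. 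Multiplying by $n$ and subtracting the same identity at $n-1$ telescopes the running sum into a first-order recurrence
$$
n\,M_r(n) \;=\; (n+1)\,M_r(n-1) \;+\; \big(nR_r(n)-(n-1)R_r(n-1)\big),
$$
solvable by the usual integrating factor. For $r=4$ the inhomogeneity is assembled from $M_1,M_2,M_3$, which are already in closed form (the preceding theorems giving $E[X_n]$, the variance, and the third central moment of $X_n$, together with the fact that raw and central moments are interconvertible through $E[X_n]$); carrying out the convolutions and the final summation introduces new harmonic sums of order at most $4$, and one checks that the answer lies in the span of the claimed ansatz. Rather than grind through this by hand, I would, as the paper does, compute $M_4(n)$ directly from the $P_n(t)$ recurrence for enough values of $n$, impose the ansatz with undetermined coefficients, and solve the resulting linear system; once a priori degree bounds (in $n$ and in each $H_k$) are in hand the claimed identity becomes a polynomial identity, so agreement at $N_0$ values of $n$ is a rigorous proof by the $N_0$ principle. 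Finally, $m_4(n)=E[(X_n-E[X_n])^4]$ is recovered from $M_1,\dots,M_4$ by linearity and the binomial theorem, and simplification produces the stated formula.

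The main obstacle is not any single calculation but making the closure rigorous: proving a priori that every $M_r(n)$ genuinely belongs to the ansatz space and, in particular, pinning down honest upper bounds for the degree in $n$ and for the degrees in $H_1(n),\dots,H_4(n)$, so that the finite verification is conclusive rather than merely persuasive. This is precisely the point that the comparison-count treatment defers to the general finiteness results it cites; either importing those results or re-deriving the bounds from the explicit shape of the first-order recurrence and of its inhomogeneity is the delicate part. The rest — the multinomial expansion, evaluating the convolution sums, solving the first-order recurrence, and converting raw to central moments — is routine, if lengthy, symbolic computation.
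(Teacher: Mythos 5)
Your proposal is correct and follows essentially the same route as the paper: generate the moments from the probability generating function recurrence $P_n(t)=\frac1n\sum_k P_{k-1}(t)P_{n-k}(t)t^{k-1}$, fit the ansatz $m_r(n)=F_r(n,H_1(n),\dots,H_r(n))$ by undetermined coefficients, and justify the finite verification via the first-order moment recurrence (the paper delegates this last rigor point to the cited symbolic-summation results of Schneider rather than re-deriving degree bounds). Your explicit multinomial/telescoping derivation of that recurrence is a nice amplification of what the paper leaves implicit, but it is not a different method.
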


The explicit expressions for higher moments can be easily calculated automatically with the Maple procedure {\tt QsMFn} and the interested readers are encouraged to find those formulas on their own. 

\subsection{Variant II}
The second variant is similar to the first one. One tiny difference is that instead of choosing the first or last element as the pivot, the index of the pivot is chosen uniformly at random. For example, we choose the $i$-th element, which is the $k$-th smallest, as the pivot. Then we compare those non-pivot elements with the pivot. If $i\neq 1$, the first element will be compared with the pivot first. If it is smaller than the pivot, it stays there, otherwise it is moved to the end of the list. After comparing all the left-side elements with the pivot, we look at those elements whose indexes are originally greater than $i$. If they are greater than the pivot, no swap occurs; otherwise insert them before the pivot. 

In this case, the recurrence of the probability generating function $P_n(t)$ is more complicated as a consequence of that the number of swaps given that $i$ and $k$ is known is still a random variable rather than a fixed number as the case in Variant I. 

Let $Q(n,k,i,t)$ be the probability generating function for such a random variable. In fact, given a random permutation in the permutation group $S_n$ and that the $i$-th element is $k$, the number of swaps equals to the number of elements which are before $k$ and greater than $k$ or after $k$ and smaller than $k$. Hence, if there are $j$ elements which are before $k$ and smaller than $k$, then there are $i-1-j$ elements which are before $k$ and greater than $k$ and there are $k-1-j$ elements which are after $k$ and smaller than $k$. So in this case the number of swaps is $i+k-2-2j$. 

Then we need to determine the range of $j$. Obviously it is at least 0. In total there are $k-1$ elements which are less than $k$, at most $n-i$ of them occurring after $k$, so $j \geq k-1-n+i$. As for the upper bound, since there are only $i-1$ elements before $k$, we have $j \leq i-1$. Evidently, $j \leq k-1$ as well. Therefore the range of $j$ is $[\,\max(k-1-n+i, 0),\, \min(i-1, k-1)\,]$.

As for the probability that there are exactly $j$ elements which are before $k$ and smaller than $k$, it equals to 
$$
\binom{i-1}{j} \prod_{s=0}^{j-1} \frac{k-1-s}{n-1-s} \prod_{s=0}^{i-j-2} \frac{n-k-s}{n-1-j-s}.
$$
Consequently, the probability generating function is
$$
Q(n,k,i,t) = \sum_{j=\max(k-1-n+i, 0)}^{\min(i-1, k-1)}  \binom{i-1}{j} \prod_{s=0}^{j-1} \frac{k-1-s}{n-1-s} \prod_{s=0}^{i-j-2} \frac{n-k-s}{n-1-j-s} t^{i+k-2-2j},
$$
which is implemented by the Maple procedure {\tt PerProb(n, k, i, t)}. For example, {\tt PerProb(9, 5, 5, t)} returns 
$$
\frac{1}{70} t^8 + \frac{8}{35} t^6 + \frac{18}{35} t^4 + \frac{8}{35} t^2 + \frac{1}{70}.
$$
We have the recurrence relation
$$
P_n(t) = \frac{1}{n^2} \sum_{k=1}^n  \sum_{i=1}^n P_{k-1}(t) P_{n-k}(t)  Q(n,k,i,t),
$$
with the initial condition $P_0(t)=P_1(t)=1$, which is implemented by the Maple procedure {\tt SwapPQ(n, t)}. The following theorems follow immediately.

\begin{theorem}
The expectation of the number of swaps of Quicksort for a list of length $n$ under Variant II is
$$
E[X_n] = (n+1) H_1(n) - 2n.
$$
\end{theorem}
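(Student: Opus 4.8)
The plan is to extract from the probability-generating-function recurrence a recurrence for the mean $c_n := E[X_n] = P_n'(1)$, reduce it to a standard Quicksort-type recurrence, and then solve it. Differentiating $P_n(t) = \frac{1}{n^2}\sum_{k=1}^n\sum_{i=1}^n P_{k-1}(t)P_{n-k}(t)Q(n,k,i,t)$ in $t$, setting $t=1$, and using that $P_m(1)=1$ and $Q(n,k,i,1)=1$ (all are probability generating functions), the product rule gives $c_n = \frac{1}{n^2}\sum_{k=1}^n\sum_{i=1}^n\bigl(c_{k-1}+c_{n-k}+q(n,k,i)\bigr)$, where $q(n,k,i):=\frac{d}{dt}Q(n,k,i,t)\big|_{t=1}$ is the expected number of swaps conditioned on the pivot being in position $i$ and being the $k$-th smallest. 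Here $c_0=c_1=0$ since $P_0,P_1$ are constant. The first two summands sum over $i$ trivially and collapse, exactly as in the classical argument for comparisons, to $\frac{2}{n}\sum_{j=0}^{n-1}c_j$.

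The next step is to evaluate $q(n,k,i)$. The number of swaps is $i+k-2-2j$, where $j$ counts the elements lying to the left of the pivot that are smaller than it, so $q(n,k,i)=i+k-2-2\,E[j]$. Rather than summing the hypergeometric formula for $Q(n,k,i,t)$ term by term, I would note that among the $n-1$ non-pivot elements (of which $k-1$ are below the pivot) the $i-1$ to its left form a uniformly random $(i-1)$-subset, so by linearity of expectation $E[j]=(i-1)\frac{k-1}{n-1}$, irrespective of the awkward truncation of the range of $j$. Hence $q(n,k,i)=i+k-2-\frac{2(i-1)(k-1)}{n-1}$, and a short computation using $\sum_{i=1}^n(i-1)=\binom{n}{2}$ gives $\frac{1}{n^2}\sum_{k=1}^n\sum_{i=1}^n q(n,k,i)=(n-1)-\frac{n-1}{2}=\frac{n-1}{2}$. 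We arrive at $c_n=\frac{2}{n}\sum_{j=0}^{n-1}c_j+\frac{n-1}{2}$, with $c_0=c_1=0$.

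Finally I would solve this recurrence. It is exactly half the recurrence $E[C_n]=(n-1)+\frac{2}{n}\sum_{j=0}^{n-1}E[C_j]$ satisfied by the expected number of comparisons, so by induction $c_n=\tfrac12 E[C_n]=\tfrac12\bigl(2(n+1)H_1(n)-4n\bigr)=(n+1)H_1(n)-2n$, invoking the formula for $E[C_n]$ recalled earlier in this chapter. Alternatively and self-containedly: multiply by $n$, subtract the analogous identity at $n+1$ to eliminate the sum, obtaining $(n+1)c_{n+1}=(n+2)c_n+n$; dividing by $(n+1)(n+2)$ and telescoping $a_n:=c_n/(n+1)$ against the partial fraction $\frac{n}{(n+1)(n+2)}=\frac{2}{n+2}-\frac{1}{n+1}$ yields $a_n=H_1(n)+\frac{2}{n+1}-2$, i.e. $c_n=(n+1)H_1(n)-2n$. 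In the experimental-mathematics spirit of the paper one could instead posit $c_n=a+bn+cH_1(n)+dnH_1(n)$ and pin down $a,b,c,d$ from a few values of the recurrence, which is rigorous because the recurrence is linear with harmonic-polynomial data.

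\textbf{Main obstacle.} No step is genuinely hard; the only place demanding care is computing $q(n,k,i)$ and the ensuing double sum. Recognizing that $E[j]$ equals the clean product $(i-1)(k-1)/(n-1)$ by linearity of expectation — thereby bypassing the truncated hypergeometric sum in the definition of $Q(n,k,i,t)$ — is the move that turns a messy calculation into a one-liner, after which everything is routine algebra and a standard telescoping.
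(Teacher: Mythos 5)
Your proof is correct, but it takes a genuinely different route from the paper's. The paper does not derive a recurrence for the mean by hand: it implements the probability-generating-function recurrence in the procedure {\tt SwapPQ}, cranks out the sequence of means, posits the ansatz $c_n = a + bn + cH_1(n) + dnH_1(n)$, and solves for the undetermined coefficients, the whole scheme being declared rigorous by the finite-check argument sketched in Section 4.2 (citing Schneider's symbolic-summation results); the theorem is stated as ``following immediately'' from the recurrence. You instead differentiate the recurrence at $t=1$, observe that $E[j]=\frac{(i-1)(k-1)}{n-1}$ by linearity of expectation --- correctly sidestepping the truncated hypergeometric sum defining $Q(n,k,i,t)$ --- and your double sum $\frac{1}{n^2}\sum_{k,i} q(n,k,i) = (n-1)-\frac{n-1}{2}=\frac{n-1}{2}$ checks out, as does the telescoping solution of the resulting full-history recurrence $c_n=\frac{2}{n}\sum_{j=0}^{n-1}c_j+\frac{n-1}{2}$. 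What your approach buys: it is fully human-verifiable, and it makes transparent both why Variants I and II share the same mean (each has expected per-partition cost $\frac{n-1}{2}$) and why $E[X_n]=\tfrac12 E[C_n]$. What the paper's approach buys: it extends mechanically to the variance and all higher moments of this and every other variant, where a hand derivation in your style would quickly become onerous.
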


\begin{theorem}
The variance of $X_n$ is 
$$
\frac{1}{6} n(11n+17) - \frac{1}{3}(n+1) H_1(n) - (n+1)^2 H_2(n).
$$
\end{theorem}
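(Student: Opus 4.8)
The plan is to follow the same experimental-mathematics recipe used for Variant I: turn the probability generating function recurrence into a recurrence for the raw moments, feed in the closed form for $E[X_n]$ from the preceding theorem, guess the shape of the answer, pin down the coefficients by undetermined coefficients, and finally verify rigorously on enough values of $n$.

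First I would apply the operator $\theta := t\,\frac{d}{dt}$ twice to
$$
P_n(t) = \frac{1}{n^2} \sum_{k=1}^n \sum_{i=1}^n P_{k-1}(t)\,P_{n-k}(t)\,Q(n,k,i,t)
$$
and evaluate at $t=1$. Since $P_m(1)=Q(n,k,i,1)=1$, the product rule for $\theta$ collapses $\theta^2(P_{k-1}P_{n-k}Q)|_{t=1}$ into a sum of single terms $\theta^2 P_{k-1}$, $\theta^2 P_{n-k}$, $\theta^2 Q$ and pairwise products $2(\theta P_{k-1})(\theta P_{n-k})$, $2(\theta P_{k-1})(\theta Q)$, $2(\theta P_{n-k})(\theta Q)$, all at $t=1$. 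Carrying out the $i$- and $k$-sums yields a full-history linear recurrence
$$
E[X_n^2] = \frac{2}{n}\sum_{j=0}^{n-1} E[X_j^2] + R(n),
$$
where $R(n)$ is an explicit function of $n$ built from $E[X_j]$ (known in closed form), the conditional mean $M_1(n,k,i):=\theta Q(n,k,i,t)|_{t=1}$ and the conditional second moment $M_2(n,k,i):=\theta^2 Q(n,k,i,t)|_{t=1}$. These last two are computed straight from the hypergeometric-type formula for $Q$: the swap count given $(n,k,i)$ is $i+k-2-2j$, so by linearity $M_1(n,k,i)=i+k-2-2(i-1)(k-1)/(n-1)$, and $M_2$ follows from the variance of $j$; after the double sum these become explicit polynomial-plus-harmonic expressions in $n$.

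Next, invoking the paper's ``educated guess'' that $m_2(n)=F_2(n,H_1(n),H_2(n))$ for a polynomial $F_2$, and guided by the Variant I variance and the comparisons variance (which both have the shape $\alpha n^2+\beta n+\gamma(n+1)H_1(n)+\delta(n+1)^2H_2(n)$), I would posit
$$
\mathrm{Var}(X_n) = a n^2 + b n + c + (d n + e)\,H_1(n) + (f n^2 + g n + h)\,H_2(n)
$$
(one may also include $H_1(n)^2$ and $H_1(n)H_2(n)$ terms, which should come out zero), generate the values $\mathrm{Var}(X_n)=P_n''(1)+P_n'(1)-P_n'(1)^2$ for $n$ up to, say, $20$ via the Maple procedure \texttt{SwapPQ}, and solve the resulting overdetermined linear system; one expects to recover $\frac{11}{6}n^2+\frac{17}{6}n-\frac{1}{3}(n+1)H_1(n)-(n+1)^2H_2(n)$ from a consistent system. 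To make this a theorem rather than a conjecture, I would substitute the candidate $E[X_n]$ and $E[X_n^2]$ back into the moment recurrence from the first step; after clearing the harmonic sums this becomes a polynomial identity in $n$, and, together with an a priori degree bound on $R(n)$, checking it for finitely many $n$ (the $N_0$-principle) completes the proof — this is exactly the rigor argument of \cite{35} and \cite{36} cited earlier.

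The main obstacle is the bookkeeping in the first step: correctly extracting $M_2$ from the conditional distribution of $j$ and then pushing the double sum $\sum_{i,k}$ through, since the $j$-range carries the $\max$/$\min$ cutoffs of the theorem, so $R(n)$ first splits into several regimes before collapsing to a clean closed form. Obtaining that closed form for $R(n)$ — or, better, delegating it to the computer algebra system — is the crux; once it is in hand, the guess-and-verify steps are routine.
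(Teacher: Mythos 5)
Your proposal is correct and follows essentially the same route as the paper: use the recurrence for $P_n(t)$ (via \texttt{SwapPQ}) to generate data, posit the educated guess that the variance is a polynomial in $n$, $H_1(n)$, $H_2(n)$, solve for the undetermined coefficients, and justify rigor by the finite-verification argument already invoked in Section 4.2. The extra detail you give on extracting $M_1$ and $M_2$ from $Q(n,k,i,t)$ and on the explicit full-history moment recurrence is a faithful elaboration of what the paper delegates to \texttt{QsMFn}, not a different method.
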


\begin{theorem}
The third moment about the mean of $X_n$ is 
$$
-\frac{1}{6} n (14n^2+57n+73) + (2n+2) H_1(n) + (n+1)^2 H_2(n) + 2(n+1)^3 H_3(n).
$$
\end{theorem}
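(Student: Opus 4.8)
The plan is to push through the same experimental pipeline that produced the Variant~I moments, now carrying the heavier bookkeeping that Variant~II demands. First I would iterate the stated recurrence
$$
P_n(t)=\frac{1}{n^2}\sum_{k=1}^{n}\sum_{i=1}^{n}P_{k-1}(t)\,P_{n-k}(t)\,Q(n,k,i,t),\qquad P_0(t)=P_1(t)=1,
$$
with $Q(n,k,i,t)$ given by the explicit formula in the text, to produce the polynomials $P_n(t)\in\mathbb{Q}[t]$ symbolically for $n=0,1,\dots,N$, with $N$ comfortably larger than the number of unknowns introduced below. From each $P_n(t)$ I would read off the third moment about the mean $m_3(n)$, using the already-established mean $c_n=(n+1)H_1(n)-2n$ (the same as under Variant~I), either via $m_3(n)=\big(t\tfrac{d}{dt}\big)^3\big(P_n(t)/t^{c_n}\big)\big|_{t=1}$ or, equivalently, from the raw moments $E[X_n^r]=\big(t\tfrac{d}{dt}\big)^rP_n(t)\big|_{t=1}$ together with the binomial theorem. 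This is exactly what the Maple procedure {\tt QsMFn} applied to {\tt SwapPQ} computes.

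Next I would impose the ansatz $m_3(n)=F_3\big(n,H_1(n),H_2(n),H_3(n)\big)$ with $F_3$ polynomial and, guided by the shape of the lower Variant~II moments and of the Variant~I third moment, adopt the template
$$
m_3(n)=\alpha(n)+\beta(n)H_1(n)+\gamma(n)H_2(n)+\delta(n)H_3(n),
$$
with $\deg\alpha=3$, $\deg\beta=1$, $\deg\gamma=2$, $\deg\delta=3$ (leaving room, for safety, for cross terms $H_1H_2$, $H_1^2$ that should come out $0$) --- finitely many undetermined rationals. Feeding in enough computed pairs $(n,m_3(n))$ gives a linear system whose unique solution should be exactly $-\tfrac16 n(14n^2+57n+73)+(2n+2)H_1(n)+(n+1)^2H_2(n)+2(n+1)^3H_3(n)$; a further batch of values of $n$ not used in the fit serves as an independent check.

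For the rigorous part, I would argue that the guessing is in fact a proof once a priori bounds are in hand. Differentiating the $P_n$ recurrence $j$ times and setting $t=1$ (where $Q(n,k,i,1)=1$) yields a recurrence expressing $E[X_n^j]$ as $\tfrac{1}{n^2}\sum_{k,i}$ of polynomial-in-$(k,i,n)$ combinations of $E[X_{k-1}^l]$ and $E[X_{n-k}^l]$ with $l\le j$: the conditional number of swaps given $(n,k,i)$ is $i+k-2-2j$ where $j$ is hypergeometric (drawing $i-1$ of the $n-1$ non-pivot elements, $k-1$ of which lie below the pivot), so its conditional moments are explicit polynomials in $(n,k,i)$. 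An induction on $j$ then shows each $E[X_n^j]$, and hence each $m_j(n)$, lies in $\mathbb{Q}[n,H_1(n),\dots,H_j(n)]$ with degrees bounded in advance, since single sums of polynomials times products of harmonic numbers telescope to such expressions (e.g.\ by Gosper/creative-telescoping summation). Given those degree bounds, verifying the displayed identity for sufficiently many $n$ \emph{is} a proof, because a polynomial in $n,H_1(n),H_2(n),H_3(n)$ that vanishes for all large $n$ must vanish identically --- use the asymptotics $H_1(n)\sim\ln n$, $H_2(n)\to\pi^2/6$, $H_3(n)\to\zeta(3)$, or a finite-difference argument built on $H_k(n)-H_k(n-1)=n^{-k}$.

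The genuinely delicate step is pinning down those a priori degree bounds: one must track how the double summation over $(k,i)$, with the hypergeometric weights buried inside $Q$, inflates both the polynomial degree in $n$ and the harmonic-number weight as the recursion unwinds, so that the finite linear-algebra fit is provably complete rather than coincidental. Everything else --- the symbolic evaluation of $P_n(t)$, the differentiations, and the undetermined-coefficient solve --- is routine and mirrors the Variant~I treatment line for line.
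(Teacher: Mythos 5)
Your proposal is correct and is essentially the paper's own argument: iterate the {\tt SwapPQ} recurrence to generate data, extract $m_3(n)$, fit the ansatz $F_3(n,H_1(n),H_2(n),H_3(n))$ by undetermined coefficients, and appeal to a priori degree bounds (which the paper delegates to the symbolic-summation literature) to turn the finite verification into a proof. The only difference is that you spell out the rigor step in more detail than the paper, which simply states that the theorems ``follow immediately'' from the recurrence via {\tt QsMFn}.
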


\begin{theorem}
The fourth moment about the mean of $X_n$ is 
$$
\frac{1}{90} n (1496n^3 + 5531n^2 + 8527n + 6922) - \frac{1}{15}(n+1)(55n^2+85n+173)H_1(n)
$$
$$
+\frac{1}{3}(n+1)^2 H_1(n)^2
+(-\frac{1}{3}(33n^2+51n+25)(n+1)^2 + 2(n+1)^3 H_1(n) ) H_2(n) 
$$
$$
+ 3(n+1)^4 H_2(n)^2
-4(n+1)^3 H_3(n) - 6(n+1)^4 H_4(n).
$$
\end{theorem}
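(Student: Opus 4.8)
The plan is to follow the same experimental-mathematics pipeline that produced the earlier moments of Variant II, starting from the recurrence
$$
P_n(t) = \frac{1}{n^2} \sum_{k=1}^{n} \sum_{i=1}^{n} P_{k-1}(t)\, P_{n-k}(t)\, Q(n,k,i,t),
$$
with $P_0(t) = P_1(t) = 1$, together with the explicit closed form for $Q(n,k,i,t)$ recorded above. First I would use this recurrence (as implemented by {\tt SwapPQ}) to tabulate the probability generating functions $P_n(t)$, and hence, via $m_r(n) = (t\frac{d}{dt})^r\bigl(P_n(t)/t^{c_n}\bigr)\big|_{t=1}$, the centralized moments $m_r(n)$ for $r \le 4$ and for as many values of $n$ as the computation allows. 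Equivalently, one may differentiate the recurrence four times and set $t=1$: since $Q(n,k,i,t)$ is a fully explicit finite sum, its successive logarithmic derivatives at $t=1$ are elementary sums in $n,k,i$ that simplify in closed form, so combining this with the product rule applied to $P_{k-1}(t)P_{n-k}(t)$ yields a genuine scalar recurrence expressing $E[X_n^r]$ in terms of lower raw moments of $X_{k-1}$ and $X_{n-k}$ with explicit rational coefficients.

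With a long list of exact values of $m_4(n)$ in hand, I would then impose the ansatz
$$
m_4(n) = F_4\bigl(n,\, H_1(n),\, H_2(n),\, H_3(n),\, H_4(n)\bigr),
$$
where $F_4$ is a polynomial whose template is dictated by the pattern already visible in Variant I and in the lower moments of Variant II: degree at most $4$ in $n$, at most quadratic in $H_1$ and $H_2$, linear in $H_3$ and $H_4$, with precisely the mixed terms $H_1^2$, $(n+1)^3 H_1 H_2$, $(n+1)^4 H_2^2$, $(n+1) H_1$, $(n+1)^2 H_2$, $(n+1)^3 H_3$, $(n+1)^4 H_4$ appearing as in the stated formula. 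Writing $F_4$ with undetermined coefficients and substituting sufficiently many values of $n$ produces an overdetermined linear system; solving it pins down the coefficients and, because the system is overdetermined and consistent, simultaneously confirms the ansatz. This is exactly what the procedure {\tt QsMFn} does, and it reproduces the claimed expression.

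To promote this from an experiment to a proof, I would invoke the finiteness principle already used in \cite{12} and justified via \cite{35} and \cite{36}: once one knows \emph{a priori} that $m_4(n)$ lies in the finite-dimensional span of the monomials appearing in the ansatz, checking the identity for more values of $n$ than there are unknown coefficients is a complete proof. Establishing that a priori membership is the only step requiring real care, and it is the main obstacle: one must verify, by induction on $n$ from the moment recurrence, that summing rational functions of $k$ against harmonic numbers $H_j(k)$ stays within the posited class and never produces terms outside the template — in particular no $H_5(n)$, no cubic harmonic term like $H_1(n)^3$, and no increase of the $n$-degree beyond $4$. Granting this degree-by-degree bookkeeping, which is routine but tedious (and which, in the spirit of the earlier sections, could be left to the obtuse reader), the remainder is a finite verification and the theorem follows.
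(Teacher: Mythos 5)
Your proposal follows essentially the same route as the paper: the paper derives the recurrence for $P_n(t)$ via $Q(n,k,i,t)$, generates data with {\tt SwapPQ}, fits the ansatz $m_4(n)=F_4(n,H_1(n),\dots,H_4(n))$ by undetermined coefficients using {\tt QsMFn}, and appeals to the finiteness/symbolic-summation argument of \cite{12}, \cite{35}, \cite{36} for rigor. Your added remark that the only step needing real care is the \emph{a priori} confinement of $m_4(n)$ to the posited finite-dimensional span is consistent with (and slightly more candid than) the paper's own treatment, which explicitly leaves such degree bookkeeping to the reader.
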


Higher moments can also be easily obtained by entering {\tt QsMFn(SwapPQ, t, n, Hn, r)} where $r$ represents the $r$-th moment you are interested in. 

Comparing with Variant I where the index of the pivot is fixed, we find that these two variants have the same expected number of swaps. However, the variance and actually all even moments of the second variant are smaller. Considering that the average performance is already $O(n \log n)$ which is not far from the best scenario, it is favorable that a Quicksort algorithm has smaller variance. In conclusion, for this model, a randomly-chosen-index pivot can improve the performance of the algorithm. 

\subsection{Variant III}
Next we'd like to study the most widely used in-place Quicksort. This algorithm is called Lomuto partition scheme, which is attributed to Nico Lomuto and popularized by Bentley in his book {\it Programming Pearls} and Cormen, {\it et al.} in their book {\it Introduction to Algorithms}. This scheme chooses a pivot that is typically the last element in the list. Two indexes, $i$, the insertion index, and $j$, the search index are maintained. The following is the pseudo code for this variant. 
{\obeylines
{\tt 
{\bf algorithm} quicksort(A, s, t) {\bf is}
   \quad {\bf if} s < t {\bf then}
     \qquad  p := partition(A, s, t)
     \qquad   quicksort(A, s, p - 1)
     \qquad quicksort(A, p + 1, t)
}
} 

{\obeylines
{\tt 
{\bf algorithm} partition(A, s, t) {\bf is}
  \quad   pivot := A[t]
  \quad  i := s
   \quad {\bf for} j := s {\bf to} t - 1 {\bf do}
    \qquad    {\bf if} A[j] < pivot {\bf then}
    \qquad \quad        swap A[i] with A[j]
        \qquad \quad    i := i + 1
  \quad  swap A[i] with A[t]
 \quad   {\bf return} i
}
}

From the algorithm we can see that when the pivot is the $k$-th smallest, there are $k-1$ elements smaller than the pivot. As a result, there are $k-1$ swaps in the {\tt if} statement of the algorithm {\tt partition}. Including the last swap outside the {\tt if} statement, there are $k$ total swaps. We have the recurrence relation for its probability generating function 
$$
P_n(t) = \frac{1}{n} \sum_{k=1}^n P_{k-1}(t) P_{n-k}(t) t^k
$$
with the initial condition $P_0(t)=P_1(t)=1$. 

\begin{theorem}
The expectation of the number of swaps of Quicksort for a list of length $n$ under Variant III
$$
E[X_n] = (n+1) H_1(n) - \frac{4}{3} n -\frac{1}{3}.
$$
\end{theorem}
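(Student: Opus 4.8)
Write $x_n := E[X_n] = P_n'(1)$, and recall that $P_n(1)=1$ since $P_n$ is a probability generating function. The plan is to turn the quadratic recurrence for $P_n(t)$ into a scalar recurrence for $x_n$ and then solve the latter in closed form. First I would differentiate the identity
$$
P_n(t) \;=\; \frac1n \sum_{k=1}^n P_{k-1}(t)\,P_{n-k}(t)\,t^k
$$
with respect to $t$, apply the product rule to each summand, and set $t=1$; using $P_m(1)=1$ and $P_m'(1)=x_m$, the three pieces collapse to
$$
x_n \;=\; \frac1n\sum_{k=1}^n\bigl(x_{k-1}+x_{n-k}+k\bigr)
 \;=\; \frac2n\sum_{j=0}^{n-1}x_j \,+\, \frac{n+1}{2} ,
$$
valid for $n\ge 2$ (both $x$-sums are $\sum_{j=0}^{n-1}x_j$ reindexed, and $\frac1n\sum_{k=1}^n k=\frac{n+1}{2}$). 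This is the familiar ``fixed cost plus recursion on the two random halves'' shape, the fixed cost of $k$ swaps occurring exactly when the pivot has rank $k$.

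Next I would eliminate the sum by the standard device: multiply through by $n$ to get $n x_n = 2\sum_{j=0}^{n-1}x_j + {n+1 \choose 2}$, write the same relation with $n-1$ in place of $n$, and subtract, obtaining $n x_n - (n-1)x_{n-1} = 2x_{n-1}+n$, i.e. the first-order recurrence $n\,x_n = (n+1)\,x_{n-1} + n$ for $n\ge 3$. Dividing by $n(n+1)$ and putting $y_n := x_n/(n+1)$ converts this into $y_n = y_{n-1} + \frac{1}{n+1}$, which telescopes. Reading off the base value $x_2 = P_2'(1) = \frac32$ from $P_2(t)=\frac12(t+t^2)$, one gets $y_n = H_1(n+1) - \frac{4}{3}$, hence $x_n = (n+1)H_1(n+1) - \frac{4}{3}(n+1)$. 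Substituting $H_1(n+1) = H_1(n) + \frac{1}{n+1}$ yields $x_n = (n+1)H_1(n) - \frac{4}{3} n - \frac{1}{3}$, the claimed formula.

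There is no deep obstacle here; the computation is routine. The only points needing a little care are that the recurrence for $P_n$ (hence for $x_n$) is the ``$n\ge 2$'' recurrence with $P_0=P_1=1$ imposed separately — so the closed form is asserted for $n\ge 2$, and indeed it does not specialize to $X_1=0$ — and that one must pin down the correct base case before telescoping. Alternatively, staying within the experimental-mathematics methodology of this chapter, one may instead posit the ansatz $x_n = a + bn + cH_1(n) + dn\,H_1(n)$, which is legitimate \emph{a priori} by the structural results cited in \cite{35} and \cite{36}; generate $x_2,\dots,x_6$ from the probability-generating-function recurrence above; solve the resulting $4\times4$ linear system for $(a,b,c,d)$; and then verify the identity on sufficiently many further values, which suffices for rigor once everything is a finite check between sequences in the ansatz. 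In that route the ``hard part'' is purely conceptual — trusting that the ansatz must hold — which is exactly the leap the rest of the chapter is built on.
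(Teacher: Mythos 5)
Your proof is correct, and it takes a genuinely different route from the one in the chapter. The text establishes this theorem the experimental-mathematics way: it iterates the probability-generating-function recurrence $P_n(t)=\frac1n\sum_{k=1}^n P_{k-1}(t)P_{n-k}(t)t^k$ in Maple to produce numerical values of $E[X_n]$, posits the ansatz $a+bn+cH_1(n)+dnH_1(n)$, solves for the undetermined coefficients, and verifies on further values (with rigor delegated to the structural results of \cite{35}, \cite{36}) — exactly the alternative you sketch in your last paragraph. Your primary argument instead solves the problem deductively: differentiating the PGF recurrence at $t=1$ gives the full-history recurrence $x_n=\frac2n\sum_{j=0}^{n-1}x_j+\frac{n+1}{2}$, the standard difference trick reduces it to $n\,x_n=(n+1)x_{n-1}+n$, and dividing by $n(n+1)$ telescopes to the closed form. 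All the steps check out, including the base value $x_2=\tfrac32$ and the conversion $H_1(n+1)=H_1(n)+\tfrac1{n+1}$. What your route buys is a self-contained, ansatz-free proof that also makes transparent why the formula fails at $n=1$ (the PGF recurrence only kicks in at $n=2$, a caveat the theorem statement leaves implicit); what the paper's route buys is uniformity — the same guess-and-fit machinery produces the variance and higher moments, where the first-order-recurrence trick would become much more laborious.
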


\begin{theorem}
The variance of the number of swaps of Quicksort for a list of length $n$ under Variant III
$$
var[X_n] = 2n^2 + \frac{187}{45} n +\frac{7}{45} - \frac{2}{3n} - (n^2+2n+1) H_2(n) - (n+1) H_1(n).
$$
\end{theorem}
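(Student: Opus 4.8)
The plan is to run, at the level of the \emph{second} moment, the very computation that produced the two preceding theorems for Variant~III, and then subtract the square of the mean. Write $m_n:=E[X_n]$ and $s_n:=E[X_n^2]$, so that $\mathrm{var}[X_n]=s_n-m_n^2$, and recall $m_n=(t\tfrac{d}{dt})P_n(t)\big|_{t=1}$, $s_n=(t\tfrac{d}{dt})^2P_n(t)\big|_{t=1}$. Since each $P_j$ is a genuine probability generating function, $P_j(1)=1$, so applying $t\tfrac{d}{dt}$ once and then twice to the Variant~III recurrence $P_n(t)=\tfrac1n\sum_{k=1}^nP_{k-1}(t)P_{n-k}(t)\,t^k$ and setting $t=1$ (using the product rule and $(t\tfrac{d}{dt})^r t^k\big|_{t=1}=k^r$) gives, for $n\ge 2$,
$$ m_n=\frac1n\sum_{k=1}^n\bigl(k+m_{k-1}+m_{n-k}\bigr), $$
$$ s_n=\frac1n\sum_{k=1}^n\bigl(k^2+s_{k-1}+s_{n-k}+2k\,m_{k-1}+2k\,m_{n-k}+2\,m_{k-1}m_{n-k}\bigr), $$
with $m_0=m_1=s_0=s_1=0$. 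The first recurrence was already solved above, yielding $m_n=(n+1)H_1(n)-\tfrac43 n-\tfrac13$; it remains only to solve the second.

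To solve it by hand I would collapse the full-history sums. With $M_m=\sum_{j=0}^m m_j$ (known in closed form from the $m_n$-recurrence as $M_{n-1}=\tfrac n2(m_n-\tfrac{n+1}2)$) and $c_m=\sum_{j=0}^m m_j m_{m-j}$, the reindexing $j=k-1$, $j=n-k$ turns the recurrence into $n\,s_n=\tfrac{n(n+1)(2n+1)}{6}+2\sum_{j=0}^{n-1}s_j+2(n+1)M_{n-1}+2c_{n-1}$. Subtracting the $n\mapsto n-1$ instance kills the history sum $\sum_j s_j$ and leaves a first-order recurrence $n\,s_n=(n+1)s_{n-1}+R_n$ with $R_n$ explicit in $n$, $m_n$, $m_{n-1}$ and the convolution differences $c_{n-1}-c_{n-2}$. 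Dividing by $n+1$ telescopes $s_n/(n+1)$ exactly as in the expectation proof, and evaluating the resulting sums — which, after $m_j=(j+1)H_1(j)-\tfrac43 j-\tfrac13$ is substituted, are standard nested harmonic sums like $\sum_j j^2H_1(j)$, $\sum_j j\,H_1(j)^2$, $\sum_j H_1(j)H_1(n-j)$ — expresses $s_n$ as a closed form that is polynomial in $n$, $1/n$, $H_1(n)$, $H_1(n)^2$ and $H_2(n)$. Forming $s_n-m_n^2$ cancels the $H_1(n)^2$ part and the leading $n^2H_1(n)$ part, and collecting survivors gives the claimed $\mathrm{var}[X_n]=2n^2+\tfrac{187}{45}n+\tfrac{7}{45}-\tfrac{2}{3n}-(n^2+2n+1)H_2(n)-(n+1)H_1(n)$; one checks separately that the first few $n$ (forced by the imposed $P_0=P_1=1$) are transients and the identity holds from then on.

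A cleaner route, in the spirit of the rest of the chapter, is to use the two recurrences above to tabulate $\mathrm{var}[X_n]$ for $n$ up to, say, $30$, posit the ansatz $\mathrm{var}[X_n]=A(n)+B(n)/n+C(n)H_1(n)+D(n)H_1(n)^2+E(n)H_2(n)$ with $A,\dots,E$ polynomials of generously bounded degree and undetermined coefficients, and solve the resulting linear system (this is essentially what {\tt QsMFn} does). The guess is then promoted to a theorem by the ``$N_0$ principle'': after clearing the denominator $n(n+1)$ and replacing each $H_k(n)-H_k(n-1)$ by $1/n^k$, the assertion that the candidate satisfies the $s_n$-recurrence (hence, with $m_n$, the $\mathrm{var}$-recurrence) becomes a polynomial identity in $n$, true in general once it holds for boundedly many values — the finite-calculation-is-rigorous argument recalled at the end of Section~4.2.

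The main obstacle is the Cauchy-product term $\sum_k m_{k-1}m_{n-k}$ (equivalently the sequence $c_n$, whose generating function is the square of that of $(m_j)$) and the nested harmonic sums it spawns; one must in particular track the $\tfrac1n$ corrections with care — it is exactly this mechanism that turns the tidy mean $(n+1)H_1(n)-\tfrac43 n-\tfrac13$ into a variance carrying the stray $-\tfrac{2}{3n}$ term — and get every rational coefficient (all with denominator dividing $45$) right. The guess-and-verify route sidesteps this obstacle completely, trading the delicate summation for a finite linear-algebra computation plus a finite verification.
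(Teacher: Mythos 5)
Your proposal is correct. Your ``cleaner route'' --- tabulate $var[X_n]$ from the probability-generating-function recurrence, fit an ansatz that is a rational function of $n$ together with $H_1(n)$, $H_1(n)^2$ and $H_2(n)$ by undetermined coefficients, and certify the guess by a finite check --- is precisely what the paper does (via {\tt QsMFnRat}/{\tt QsMFnRatG}), down to the same two caveats: the ansatz must be enlarged to admit a $1/n$ term, and the first few values of $n$ are transients to be discarded (concretely, the stated formula fails at $n=2,3$ and holds exactly from $n=4$ on, giving $125/48$ at $n=4$). Your primary, by-hand route is a genuinely different derivation that the paper never carries out: the moment recurrences you extract from $P_n(t)=\frac1n\sum_{k=1}^nP_{k-1}(t)P_{n-k}(t)\,t^k$ are correct, the collapse to $n\,s_n=(n+1)s_{n-1}+R_n$ by differencing the full-history sum is sound, and the telescoping-plus-nested-harmonic-sum evaluation is the classical Knuth-style computation; it buys an ansatz-free, degree-bound-free proof at the price of working out the Cauchy-product term $\sum_k m_{k-1}m_{n-k}$, which is exactly the labor the paper's experimental method is designed to avoid. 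Either route establishes the theorem.
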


Note that for this variant, and some other ones in the remainder of the chapter, to find out its explicit expression of moments, we may need to modify our educated guess to a ``rational function'' of $n$ and $H_k(n)$ for some $k$ (see procedure {\tt QsMFnRat} and {\tt QsMFnRatG}). Moreover, sometimes when we solve the equations obtained by equalizing the guess with the empirical data, some initial terms should be disregarded since the increasing complexity of the definition of the Quicksort algorithms may lead to the ``singularity'' of the first several terms of moments. Usually, the higher the moment is, the more initial terms should be disregarded. 

\subsection{Variant IV}
In Variant III, every time when {\tt A[j] < pivot}, we swap {\tt A[i]} with {\tt A[j]}. However, it is a waste to swap them when $i=j$. If we modify the algorithm such that a swap is performed only when the indexes $i \neq j$, the expected cost will be reduced. Besides, if the pivot is actually the largest element, there is no need to {\tt swap A[i] with A[t]} in the partition algorithm. To popularize Maple in mathematical and scientific research, we attach Maple code for the partition part here, the {\tt ParIP} procedure in the package {\tt QuickSort.txt}, in which {\tt Swap(S, i, j)} is a subroutine to swap the $i$-th element with the $j$-th in a list $S$.
{\obeylines
{\tt 
ParIP:=proc(L) local pivot,i,j,n,S: 
n:=nops(L):
pivot:=L[n]:
S:=L:
i:=1:
for j from 1 to n-1 do
\quad  if S[j]<=pivot then
 \qquad   if i<>j then
  \quad \qquad    S:=Swap(S, i, j):
 \quad \qquad     i:=i+1:
  \qquad  else
  \quad \qquad     i:=i+1:
\qquad    fi:
\quad  fi:
od:
if i<>n then
\quad  return Swap(S, i, n), i:
else
\quad  return S, i:
fi:
end:
}
} 

\begin{lemma} Let $Y_n(k)$ be the number of swaps needed in the first partition step in an in-place Quicksort without swapping the same index for a list $L$ of length $n$ when the pivot is the $k$-th smallest element, then
$$
Y_n(k) = 
\begin{cases} 
   |\, \{i \in [n]\,|\, L[i] \leq pivot \wedge \exists j<i, L[j] > pivot  \} \, |  & k<n \\
   0 & k=n
\end{cases}.
$$
\end{lemma}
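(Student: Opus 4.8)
The plan is to trace one call of the partition procedure {\tt ParIP} on the input list $L$ (whose pivot is $L[n]$), carry along the standard Lomuto loop invariant, and then read off the number of swaps directly. The case $k=n$ is immediate and can be dispatched first: then the pivot is the maximum, so $S[j]\le pivot$ holds at every step $j\in[n-1]$, which forces $i=j$ throughout the loop (the inner swap never fires) and $i=n$ at loop exit (the final swap does not fire either), giving $Y_n(n)=0$; correspondingly the right-hand side is empty since no entry of $L$ exceeds the pivot. So from now on assume $k<n$.

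First I would establish, by induction on $j$, the invariant holding just before iteration $j$ of the {\tt for} loop: (i) all positions $>j-1$ of $S$ still hold their original values, in particular $S[j]=L[j]$ when index $j$ is examined, because every swap performed so far involves two indices $\le j-1$; (ii) $\{S[1],\dots,S[j-1]\}=\{L[1],\dots,L[j-1]\}$ as multisets, for the same reason; (iii) $S[1..i-1]$ are all $\le pivot$ and $S[i..j-1]$ are all $>pivot$, so that $i-1=|\{m<j:\,L[m]\le pivot\}|$ and $j-i=|\{m<j:\,L[m]>pivot\}|$. The inductive step is the usual Lomuto case split on whether $S[j]=L[j]$ is $\le pivot$ or $>pivot$, combined with the observation that when $S[j]\le pivot$ and $i<j$ the entry $S[i]$ that gets swapped out is one of the recorded $>pivot$ entries, so (iii) is restored after incrementing $i$.

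From the invariant, the inner {\tt Swap(S,i,j)} is executed at step $j$ iff $L[j]\le pivot$ and $i<j$ at that moment, and by (iii) the latter is equivalent to $j-i\ge 1$, i.e. to the existence of some $m<j$ with $L[m]>pivot$. Hence the number of inner swaps is exactly $|\{j\in[n-1]:\,L[j]\le pivot\ \wedge\ \exists\, m<j,\ L[m]>pivot\}|$. At loop exit $i=1+|\{j\in[n-1]:\,L[j]\le pivot\}|=1+(k-1)=k<n$, so the final {\tt Swap(S,i,n)} fires exactly once, and $Y_n(k)$ equals the displayed cardinality plus $1$. That extra $1$ is precisely the contribution of the index $i=n$ to the right-hand side: $L[n]=pivot\le pivot$, and since $k<n$ some $j<n$ has $L[j]>pivot$, so $i=n$ is counted; for $i\le n-1$ the summand in the right-hand side is exactly the indicator used in the inner-swap count. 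Combining these gives $Y_n(k)=|\{i\in[n]:\,L[i]\le pivot\ \wedge\ \exists\, j<i,\ L[j]>pivot\}|$, as claimed.

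The only delicate point, and the step I expect to demand the most care, is parts (ii) and (iii) of the invariant: one must scrupulously distinguish the frozen input list $L$ from the evolving working array $S$, and handle the boundary branch $i=j$ (which increments $i$ without swapping) correctly, since that branch is exactly what distinguishes Variant IV from Variant III.
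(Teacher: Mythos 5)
Your proposal is correct and follows essentially the same route as the paper's proof: dispatch $k=n$ by noting $i=j$ throughout so no swap ever fires, and for $k<n$ observe that once the first element exceeding the pivot is met, $i<j$ permanently, so each later element $\le$ pivot (plus the final swap of the pivot itself, matching index $n$ in the set) contributes exactly one swap. Your version merely makes the paper's informal trace rigorous by stating the Lomuto loop invariant explicitly.
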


\begin{proof}
When $k=n$, it is obvious that for each search index $j$, the condition {\tt S[j] <= pivot} is satisfied, hence the insertion index $i$ is always equal to $j$, which means there is no swap inside the loop. Since eventually $i=n$, there is also no need to swap the pivot with itself. So the number of swaps is $0$ in this case. 

When $k<n$, notice that the first time $i$ is smaller than $j$ is when we find the first element greater than the pivot. After that, $i$ will always be less than $j$, which implies that for each element smaller than the pivot and the pivot itself, one swap is performed. 
\end{proof}

The Maple procedure {\tt IPProb(n,k,t)} takes inputs $n,k$ as defined above and a symbol $t$, and outputs the probability generating function $Q(n,k,t)$ for the number of swaps in the first partition when the length of the list is $n$ and the last element, which is chosen as the pivot, is the $k$-th smallest.

When $k<n$, the probability that there are $s$ swaps is
$$
\binom{k-1}{k-s} \frac{(k-s)! (n-k) (n-k-2+s)! }{ (n-1)!} = \frac{n-k}{n-1} \frac{\binom{k-1}{k-s}}{\binom{n-2}{k-s}}.
$$
Therefore the probability generating function 
$$
Q(n,k,t) = \sum_{s=1}^k \frac{n-k}{n-1} \frac{\binom{k-1}{k-s}}{\binom{n-2}{k-s}} t^s.
$$
The recurrence relation for the probability generating function $P_n(t)$ of the total number of swaps follows immediately:
$$
P_n(t) = \frac{1}{n} \sum_{k=1}^n P_{k-1}(t) P_{n-k}(t) Q(n,k,t)
$$
with the initial condition $P_0(t)=P_1(t)=1$.

\begin{theorem}
The expectation of the number of swaps of Quicksort for a list of length $n$ under Variant IV
$$
E[X_n] = (n+2) H_1(n) - \frac{5}{2} n -\frac{1}{2}.
$$
\end{theorem}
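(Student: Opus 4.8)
The plan is to pass from the probability generating function to the expectation by differentiating the recurrence at $t=1$. Write $e_n:=E[X_n]=P_n'(1)$. Since $Q(n,k,t)$ is itself a probability generating function we have $Q(n,k,1)=1$; set $\mu(n,k):=\left.\frac{\partial}{\partial t}Q(n,k,t)\right|_{t=1}$, the conditional expected number of swaps in the first partition given that the pivot is the $k$-th smallest element. Differentiating $P_n(t)=\frac1n\sum_{k=1}^n P_{k-1}(t)P_{n-k}(t)Q(n,k,t)$, applying the product rule, and plugging in $t=1$ yields the linear recurrence
$$
e_n \;=\; \frac{2}{n}\sum_{j=0}^{n-1} e_j \;+\; \bar\mu_n, \qquad \bar\mu_n:=\frac{1}{n}\sum_{k=1}^n \mu(n,k),
$$
with $e_0=e_1=0$. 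So the first task is a closed form for $\mu(n,k)$, the second is to average it, and the third is to solve the recurrence.

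For $\mu(n,k)$ I would argue combinatorially rather than summing the explicit series for $Q(n,k,t)$. By the preceding Lemma, for $k<n$ exactly one swap is performed for each element $\le$ the pivot that occurs after the first element larger than the pivot, plus one final swap that moves the pivot into place; for $k=n$ there are no swaps at all. Among the $n-1$ non-pivot entries (in uniformly random relative order), a fixed small entry lies after the first large entry precisely when, among that entry together with the $n-k$ large entries, it is not the first — probability $1-\frac{1}{n-k+1}$. Hence by linearity
$$
\mu(n,k) \;=\; (k-1)\,\frac{n-k}{n-k+1} \;+\; 1 \quad (1\le k<n), \qquad \mu(n,n)=0 .
$$
A short manipulation (writing $\frac{n-k}{n-k+1}=1-\frac{1}{n-k+1}$ and re-indexing $j=n-k$) then gives $\bar\mu_n=\frac{n}{2}+\frac{3}{2}-\frac{1}{n}-H_1(n)$.

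To solve the recurrence, multiply by $n$, subtract the copy with $n$ replaced by $n-1$, and divide by $n(n+1)$; after simplifying the $\bar\mu$-difference this becomes
$$
\frac{e_n}{n+1} \;=\; \frac{e_{n-1}}{n} \;+\; \frac{\,n-H_1(n)+\tfrac1n\,}{n(n+1)},
$$
which telescopes from $n=2$. Summing $\frac{1}{m+1}$ and $\frac{1}{m^2(m+1)}$ over $m$ is routine; the one nontrivial piece is $\sum_{m=2}^n\frac{H_1(m)}{m(m+1)}$, which I would handle by $\frac{1}{m(m+1)}=\frac1m-\frac1{m+1}$ followed by a summation-by-parts (reindex one copy by $l=m+1$ and use $H_1(l-1)=H_1(l)-\frac1l$), collapsing it to $H_2(n)-\frac{H_1(n)}{n+1}-\frac12$. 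Assembling the three telescoped sums gives $\frac{e_n}{n+1}=H_1(n)+\frac{H_1(n)}{n+1}+\frac{2}{n+1}-\frac52$, i.e. $e_n=(n+2)H_1(n)-\frac52 n-\frac12$, as claimed.

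The hard part is essentially just that last harmonic-weighted sum — equivalently, pinning down the exact coefficient of $H_1(n)$; everything else is bookkeeping. In the spirit of this chapter one can sidestep even that: run the recurrence for $P_n(t)$ up to, say, $n=30$, read off $e_n$, and fit the ansatz $e_n=a+bn+(c+dn)H_1(n)$ by undetermined coefficients. After clearing $H_1$, verifying the recurrence is a polynomial identity in $n$, so checking finitely many values is a fully rigorous proof (the same finite-calculation justification invoked earlier, cf.\ \cite{12}).
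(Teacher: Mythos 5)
Your proof is correct, and the key computations check out: the conditional mean $\mu(n,k)=(k-1)\frac{n-k}{n-k+1}+1$ for $k<n$ agrees with the expectation of the explicit distribution $Q(n,k,t)$ given in the paper (e.g.\ $\mu(3,2)=\tfrac32$), the average $\bar\mu_n=\frac{n}{2}+\frac{3}{2}-\frac{1}{n}-H_1(n)$ is right, the differenced recurrence $\frac{e_n}{n+1}=\frac{e_{n-1}}{n}+\frac{n-H_1(n)+1/n}{n(n+1)}$ follows from the multiply-by-$n$-and-subtract step, and the three telescoped sums (including the summation-by-parts evaluation $\sum_{m=2}^n\frac{H_1(m)}{m(m+1)}=H_2(n)-\frac{H_1(n)}{n+1}-\frac12$) assemble to $(n+2)H_1(n)-\frac52 n-\frac12$. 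However, your route is genuinely different from the paper's. The paper never solves the recurrence in closed form by hand: it derives $Q(n,k,t)$ as an explicit probability generating function, iterates the recurrence for $P_n(t)$ in Maple to generate values of $E[X_n]$, fits an ansatz that is polynomial (or, for this variant, rational) in $n$ and $H_1(n)$ by undetermined coefficients, and invokes the finite-check meta-argument (after clearing $H_1$ the verification is a polynomial identity in $n$) for rigor --- precisely the fallback you sketch in your last paragraph. What your derivation buys is a self-contained human proof with two reusable ingredients: the linearity-of-expectation evaluation of $\mu(n,k)$, which sidesteps summing the explicit distribution of the swap count in the first partition, and the standard differencing trick that converts the full-history Quicksort recurrence into a first-order telescoping one. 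What the paper's pipeline buys is uniformity: the same guess-and-fit procedure yields the variance and all higher moments of this and every other variant with no additional human work, whereas your harmonic-sum manipulations would have to be redone, and become markedly heavier, for each higher moment.
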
 

\begin{theorem}
 The variance of the number of swaps of Quicksort for a list of length $n$ under Variant IV
$$
var[X_n] = 2n^{2}-{\frac {215}{12}}n+\frac{1}{12}+(11n+14)H_1(n) -(n^2-2n-2)H_2(n) - (2n+2) H_1(n)^2.
$$
\end{theorem}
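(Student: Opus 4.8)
The plan is to follow the same experimental-yet-rigorous pipeline already used for Variants I, II and III: start from the recurrence for the probability generating function, differentiate it to obtain a recurrence for the relevant moment, guess the shape of the closed form, pin down the coefficients from enough data, and then certify the guess against the recurrence.

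First I would assemble the ingredients we are handed. For Variant IV the pgf satisfies $P_n(t)=\frac1n\sum_{k=1}^n P_{k-1}(t)P_{n-k}(t)\,Q(n,k,t)$ with $P_0(t)=P_1(t)=1$, where $Q(n,n,t)=1$ and, for $k<n$, $Q(n,k,t)=\sum_{s=1}^k\frac{n-k}{n-1}\frac{\binom{k-1}{k-s}}{\binom{n-2}{k-s}}\,t^s$. Since $Q(n,k,1)=1$, setting $t=1$ gives $P_n(1)=\frac1n\sum_k P_{k-1}(1)P_{n-k}(1)$, so $P_n(1)=1$ by induction and each $X_n$ is an honest random variable. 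Applying $t\frac{d}{dt}$ once and twice and then setting $t=1$ turns this product recurrence, via the Leibniz rule, into linear recurrences for $E[X_n]$ and for the second moment of $X_n$; equivalently one can apply $(t\frac{d}{dt})^2$ to $P_n(t)/t^{c_n}$ and get the second moment about the mean directly, exactly as in procedure {\tt QsMFn}. The only genuinely new inputs are $q_1(n,k):=(t\partial_t Q)|_{t=1}$ and $q_2(n,k):=((t\partial_t)^2 Q)|_{t=1}$, the first two moments of the number of swaps in a single partition step, which are finite sums in $s$ that collapse to explicit rational functions of $n$ and $k$ (the combinatorial count in the Lemma above for Variant IV gives the same closed forms). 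In practice the cleanest route on the machine is simply to iterate the pgf recurrence to get $P_n(t)$ as an explicit polynomial in $t$ for $n=1,\dots,N$ and read off $E[X_n]$ and $E[X_n^2]$ numerically, so no hand manipulation of the single-step sums is even required.

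Next comes the ansatz. Guided by the lower-moment theorems and by $var[X_n]=E[X_n^2]-E[X_n]^2$ --- note that $E[X_n]=(n+2)H_1(n)-\frac52 n-\frac12$ already contributes $(n+2)^2H_1(n)^2$ --- I would posit that $var[X_n]$ is a polynomial in $n$, plus (polynomial in $n$)$\cdot H_1(n)$, plus (polynomial in $n$)$\cdot H_2(n)$, plus (polynomial in $n$)$\cdot H_1(n)^2$, with generous degree bounds on each polynomial (in particular allowing degree $2$ in the $H_2$ slot, which is what the answer needs). Substituting $n=n_0$ for enough consecutive integers $n_0$ --- discarding the first few, since, as flagged in the remarks after Variant III, the factors $\frac1n$ and $\frac1{n-1}$ make the small cases singular --- produces an overdetermined linear system in the undetermined coefficients; solving it should return exactly
\[
var[X_n]=2n^{2}-\tfrac{215}{12}\,n+\tfrac1{12}+(11n+14)H_1(n)-(n^2-2n-2)H_2(n)-(2n+2)H_1(n)^2 .
\]

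Finally, rigor. The recurrence for $E[X_n^2]$ (equivalently for $P_n''(1)$), after clearing the denominators introduced by $\frac1n$ and by $Q(n,k,t)$, has polynomial coefficients, and $H_j(n)=H_j(n-1)+n^{-j}$; hence the assertion that the guessed closed form satisfies this recurrence and the initial conditions is the assertion that a single polynomial in $n$ vanishes identically. An a priori bound on its degree (from the orders and polynomial degrees appearing in the recurrence and the ansatz) reduces the whole proof to checking finitely many $n$, which the computer does; as remarked in Section~4.2, citing \cite{35,36}, this ``guess and verify'' step is a genuine proof, not a heuristic. The main obstacle I expect is entirely bookkeeping rather than conceptual: getting $q_1(n,k)$ and $q_2(n,k)$ --- or, on the purely experimental route, enough clean data at large $n$ --- exactly right, and choosing the ansatz wide enough (the quadratic $H_2$-coefficient, the $H_1^2$ term, the correct number of initial terms to drop) so that the linear solve is consistent. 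Once that infrastructure is in place, everything else is mechanical.
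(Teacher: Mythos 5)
Your proposal is correct and follows essentially the same route as the paper: iterate the Variant~IV pgf recurrence with the single-step generating function $Q(n,k,t)$, fit the data to a polynomial ansatz in $n$, $H_1(n)$, $H_2(n)$ (including the $H_1(n)^2$ term), discard the singular initial terms, and certify the guess by reducing the recurrence to a polynomial identity checked at finitely many points --- exactly what the paper's {\tt QsMFn}/{\tt QsMFnRat} machinery does. No gaps.
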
 

Comparing these results with {\bf Theorem 4.5} and {\bf Theorem 4.9}, it shows that Variant IV has better average performances, notwithstanding the ``broader'' definition of ``swap'' in the first two subsections. And of course, it is better than the in-place Quicksort which swaps the indexes even when they are the same. We fully believe that the additional cost to check whether the insertion and search indexes are the same is worthwhile.

\subsection{Variant V}
This variant might not be practical, but we find that it is interesting as a combinatorial model. As is well-known, if a close-to-median element is chosen as a pivot, the Quicksort algorithm will have better performance than average in this case. Hence if additional information is available so that the probability distribution of chosen pivots is no longer a uniform distribution but something Gaussian-like, it is to our advantage. 

Assume that the list is a permutation of $[n]$ and we are trying to sort it, pretending that we do not know the sorted list must be $[1,2, \dots, n]$. Now the rule is that we choose the first and last number in the list, look at the numbers and choose the one which is closer to the median. If the two numbers are equally close to the median, then choose one at random.

Without loss of generality, we can always assume that the last element in the list is chosen as the pivot; otherwise we just need to run the algorithm in the last subsection ``in reverse'', putting both the insertion and search indexes on the last element and letting larger elements be swapped to the left side of the pivot, etc. So the only difference with Variant IV is the probability distribution of $k$, which is no longer $\frac{1}{n}$ for each $k \in [n].$

Considering symmetry, $Pr^{(n)}(pivot = k) = Pr^{(n)}(pivot = n+1-k)$, so we only need to consider $1 \leq k \leq (n+1)/2$. When $n$ is even, let $n=2m$. Then $Pr^{(n)}(pivot = k) = \frac{4k-3}{(2m-1)2m}$ . When $n$ is odd, let $n = 2m-1$, then $Pr^{(n)}(pivot = k) = \frac{4k-3}{(2m-1)(2m-2)}$ when $k<m$ and $Pr^{(n)}(pivot = m) = \frac{2}{2m-1}$.

With this minor modification, the recurrence relation for $P_n(t)$ follows.
$$
P_n(t) =  \sum_{k=1}^n P_{k-1}(t) P_{n-k}(t) Q(n,k,t) Pr^{(n)}(pivot = k)
$$
with the initial condition $P_0(t)=P_1(t)=1$.

Though an explicit expression seems difficult in this case, we can still analyze the performance of the algorithm by evaluating its expected number of swaps. By exploiting the Maple procedure {\tt MomFn(f,t,m,N)}, which inputs a function name $f$, a symbol $t$, the order of the moment $m$ and the upper bound of the length of the list $N$ and outputs a list of $m$-th moments for the Quicksort described by the probability generating function $f$ of lists of length $1,2, \dots, N$, we find that Variant V has better average performance than Variant IV when $n$ is large enough. For example, {\tt MomFn(PQIP, t, 1, 20)} returns
$$
[0,\frac{1}{2},\frac{7}{6},2,{\frac {179}{60}},{\frac {41}{10}},{\frac {747}{140}},{
\frac {187}{28}},{\frac {20459}{2520}},{\frac {1013}{105}},{\frac {
312083}{27720}},{\frac {25631}{1980}},{\frac {353201}{24024}},{\frac {
1488737}{90090}},
$$
$$
{\frac {6634189}{360360}},{\frac {814939}{40040}},{
\frac {273855917}{12252240}},{\frac {4983019}{204204}},{\frac {
97930039}{3695120}},{\frac {20210819}{705432}}],
$$
and {\tt MomFn(PQIPk, t, 1, 20)} returns 
$$
[0,\frac{1}{2},\frac{4}{3},{\frac {20}{9}},{\frac {155}{48}},{\frac {1957}{450}},{
\frac {2341}{420}},{\frac {4055}{588}},{\frac {55829}{6720}},{\frac {
794}{81}},{\frac {630547}{55440}},{\frac {170095}{13068}},{\frac {
12735487}{864864}},
$$
$$
{\frac {3864281}{234234}},{\frac {2521865}{137592}}
,{\frac {36424327}{1801800}},{\frac {4343228489}{196035840}},{\frac {
107768347}{4463316}},{\frac {15673532207}{598609440}},{\frac {
1136599735}{40209624}}].
$$
We notice that for $n \geq 14$, Variant V consistently has better average performance. From this result we can conclude that it is worth choosing a pivot from two candidates since the gains of efficiency are far more than its cost. 

Moreover, we can obtain a recurrence for the expected number $x_n$ of the random variable $X_n$. The {\tt Findrec(f,n,N,MaxC)} procedure in the Maple package {\tt Findrec.txt} inputs a list of data $L$, two symbols $n$ and $N$, where $n$ is the discrete variable, and $N$ is the shift operator, and $MaxC$ which is the maximum degree plus order. {\tt Findrec(MomFn(PQIPk, t, 1, 80), n, N, 11)} returns

\begin{figure}[h!]
  \includegraphics[width=\textwidth]{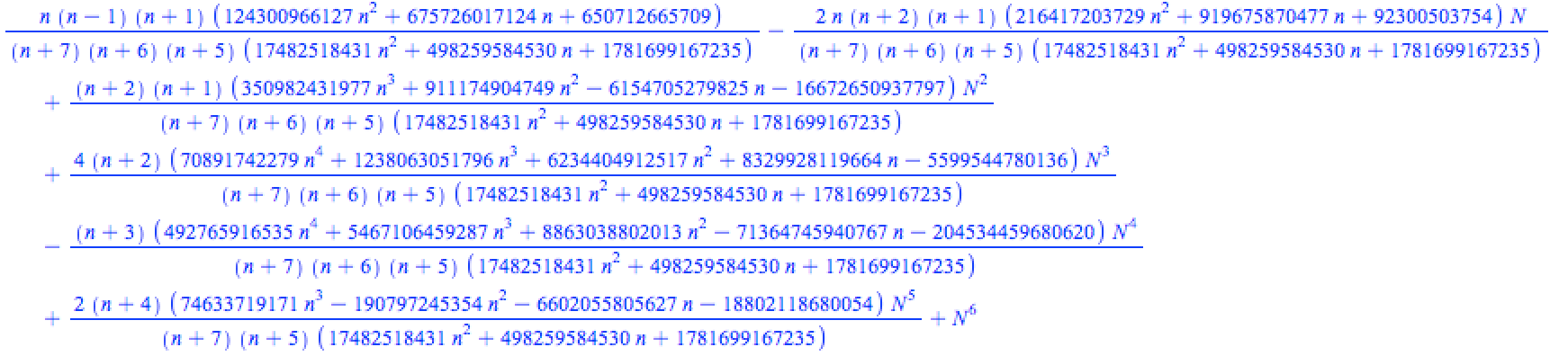}
  \caption{The Recurrence Relation for the Expected Number of Swaps}
\end{figure}

As previously mentioned, $N$ is the shift operator. Since this formula is too long, to see its detailed interpretation, please look at {\bf Theorem 4.23} as reference. 

We can also look at more elements to choose the middle-most one as the pivot. In case that we do not want to store so much information, 
some other variants involving a ``look twice'' idea could be that if the first selected element is within some satisfactory interval, e.g., $[\frac{n}{4}, \frac{3n}{4}]$ for a permutation of $n$, then it is chosen
as the pivot. Otherwise  we choose a second element as the pivot without storing information about the first one. It is also likely to improve the algorithm with ``multiple looks'' to choose the pivot and the requirement to choose the current element as the pivot without continuing to look at the next one could vary and ideally the requirement should be more relaxed as the number of ``looks'' increases. We also refer the readers to \cite{25} where P. Kirschenhofer, H. Prodinger and C. Martinez chose the median of a random sample of three elements as the pivot and obtained explicit expressions for this variant's performance with methods of hypergeometric differential equations. In general, it is ideal to have a probability distribution of pivots where an element closer to the median is more likely to be chosen.

\section{Explorations for Multi-Pivot Quicksort}
With the same general idea as the 1-pivot Quicksort, it is natural to think about ``what if we choose more pivots." In $k$-pivot Quicksort, $k$ indexes are chosen and the correspondent elements become pivots. The $k$ pivots need to be sorted and then the other elements will be partitioned into one of the $k+1$ sublists. Compared to 1-pivot Quicksort, multi-pivot Quicksort is much more complicated because we need to determine how to sort the pivots, how to allocate each non-pivot element to the sublist they belong to and how to sort a sublist containing less than $k$ elements. Therefore, there are a few multi-pivot Quicksort variants. We refer the reader to \cite{20} for other versions of multi-pivot. When $k$ is large, it seems difficult to have an in-place version, so we mainly consider the random variable, the number of comparisons $C_n$, in this section since a swap might be difficult to define in this case. 

\subsection{Dual-Pivot Quicksort}
Let's start from the simplest multi-pivot Quicksort: dual-pivot. The model for dual-pivot Quicksort is that two pivots $p_1$ and $p_2$ are randomly chosen. After one comparison, they are sorted, say $p_1<p_2$. Non-pivot elements should be partitioned into one of the three sublists $L_1, L_2$ and $L_3$. $L_1$ contains elements smaller than $p_1$, $L_2$ contains elements between $p_1$ and $p_2$ while $L_3$ contains elements greater than $p_2$. Each non-pivot element is compared with $p_1$ first. If it is smaller than $p_1$, we are done. Otherwise it is compared with $p_2$ to determine whether it should go to $L_2$ or $L_3$.

Given that the list contains $n$ distinct elements and the two pivots are the $i$-th and $j$-th smallest element $(i<j)$, we need one comparison to sort the pivot and $i-1+2(n-i-1) = 2n-i-3$ comparisons to distribute non-pivot elements to the sublists. Hence the total number of comparison is $2n-i-2$ and the recurrence relation for the probability generating function $P_n(t)$ of the total number of comparisons $C_n$ of dual-pivot Quicksort is
$$
P_n(t) = \frac{1}{\binom{n}{2}} \sum_{j=2}^n \sum_{i=1}^{j-1}  P_{i-1}(t) P_{j-i-1}(t) P_{n-j}(t) t^{2n-i-2}
$$
with the initial condition $P_0(t)=P_1(t)=1$ and $P_2(t) = t$.

The above recurrence is implemented by the procedure {\tt PQc2}. With the aforementioned Maple procedure {\tt QsMFn} it is easy to get the following theorems.

\begin{theorem}
The expectation of the number of comparisons in dual-pivot Quicksort algorithms is
$$
E[C_n]=2(n+1) H_1(n) - 4n .
$$
\end{theorem}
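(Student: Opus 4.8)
The plan is to pass from the generating-function recurrence to a recurrence for the mean $c_n:=E[C_n]=P_n'(1)$, reduce that to a short linear recurrence with polynomial coefficients and no summation sign, and then check that the claimed closed form solves it. First I would differentiate the stated recurrence $P_n(t)=\binom{n}{2}^{-1}\sum_{j=2}^{n}\sum_{i=1}^{j-1}P_{i-1}(t)P_{j-i-1}(t)P_{n-j}(t)\,t^{2n-i-2}$ with respect to $t$ and set $t=1$. Since $P_m(1)=1$ for every $m$, the product rule collapses everything to
$$
c_n \;=\; \frac{1}{\binom{n}{2}}\sum_{j=2}^{n}\sum_{i=1}^{j-1}\bigl(c_{i-1}+c_{j-i-1}+c_{n-j}+(2n-i-2)\bigr),\qquad n\ge 2,
$$
with initial values $c_0=c_1=0$ and $c_2=1$ (the last because $P_2(t)=t$).

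Next I would simplify the double sum by reindexing with the three block sizes $(a,b,c):=(i-1,\,j-i-1,\,n-j)$, which range over all triples of nonnegative integers with $a+b+c=n-2$, each hit exactly once. Under this substitution $\sum\sum c_{i-1}=\sum_{a+b+c=n-2}c_a=\sum_{a=0}^{n-2}(n-1-a)c_a$, and by the symmetry of the index set the sums of $c_{j-i-1}$ and $c_{n-j}$ equal the same quantity; moreover $2n-i-2=2n-3-a$, so $\sum\sum(2n-i-2)=(2n-3)\binom{n}{2}-\sum_{a+b+c=n-2}a=(2n-3)\binom{n}{2}-\binom{n}{3}$. Dividing by $\binom{n}{2}$ and using $\binom{n}{3}/\binom{n}{2}=(n-2)/3$ gives the cleaner recurrence
$$
c_n \;=\; \frac{5n-7}{3}\;+\;\frac{6}{n(n-1)}\sum_{a=0}^{n-2}(n-1-a)\,c_a .
$$

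Then I would clear the denominator and eliminate the sum by taking successive differences: with $U_n:=\sum_{a=0}^{n-2}(n-1-a)c_a$ the weight telescopes, $U_n-U_{n-1}=\sum_{a=0}^{n-2}c_a$, and one further difference removes the summation entirely, producing a second-order linear recurrence of the shape
$$
n(n-1)\,c_n \;-\; 2(n-1)(n-2)\,c_{n-1} \;+\; n(n-5)\,c_{n-2} \;=\; 10n-18,\qquad n\ge 3 .
$$
To finish, I would substitute $c_n=2(n+1)H_1(n)-4n$ into this recurrence, using $H_1(n)-H_1(n-1)=1/n$: the coefficient of $H_1(n)$ vanishes since $2n(n-1)\bigl[(n+1)-2(n-2)+(n-5)\bigr]=0$, and what remains is the polynomial identity $10n-18=10n-18$; together with $c_0=c_1=0$ (and the consistency check $c_2=1$) this determines $c_n$ uniquely. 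Equivalently, in the experimental-mathematics spirit of this chapter, one posits $c_n=\alpha+\beta n+\gamma H_1(n)+\delta n H_1(n)$, substitutes it into the recurrence to obtain a rational identity in $n$, and solves the resulting linear system for $(\alpha,\beta,\gamma,\delta)$ — checking finitely many values then constitutes a rigorous proof.

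The step I expect to be the main obstacle is the bookkeeping in the middle: correctly handling the \emph{asymmetric} comparison cost $2n-i-2$ (the algorithm always compares a non-pivot element with the smaller pivot first, so in block-size terms the cost is $a+2b+2c+1$, not symmetric in $a,b,c$), and then carrying out the double telescoping that kills the weighted sum without index or sign slips. Everything after the second-order recurrence is routine; the one mildly surprising point, worth flagging as a remark, is that this recurrence — and hence the whole formula — coincides with the one governing classical single-pivot Quicksort, which is exactly why $E[C_n]$ is unchanged.
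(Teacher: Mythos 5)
Your proof is correct. I checked the key steps: the reindexing to compositions $a+b+c=n-2$ (each pair $(i,j)$ hit exactly once), the counts $\sum_{a+b+c=n-2}1=\binom{n}{2}$ and $\sum_{a+b+c=n-2}a=\binom{n}{3}$, the reduction to $c_n=\frac{5n-7}{3}+\frac{6}{n(n-1)}\sum_{a=0}^{n-2}(n-1-a)c_a$, the double-differenced recurrence $n(n-1)c_n-2(n-1)(n-2)c_{n-1}+n(n-5)c_{n-2}=10n-18$ for $n\ge 3$ (consistent with $c_3=8/3$ and $c_4=29/6$ read off from the $P_3,P_4$ given in the text), and the vanishing of the $H_1$-coefficient upon substituting the closed form, leaving the identity $10n-18=10n-18$. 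The route is genuinely different from the paper's, though: the paper never writes down a recurrence for $c_n$ at all. It iterates the probability-generating-function recurrence in Maple to produce data, posits the ansatz $c_n=\alpha+\beta n+\gamma H_1(n)+\delta nH_1(n)$, solves for the coefficients, and appeals to the ``finite calculation'' meta-argument (with references to symbolic summation) for rigor; it also observes that the dual-pivot and single-pivot comparison counts have the \emph{same distribution}, verified by comparing the first ten generating functions and citing a sketch in the literature, so the theorem is inherited from the single-pivot case. Your derivation replaces this with a self-contained, hand-checkable second-order linear recurrence plus direct verification, which is more elementary and requires no leap of faith about the form of the answer; what you give up is the automation that lets the paper obtain the variance and higher moments by the identical mechanical procedure. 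Your flagged worry about the asymmetric cost is handled correctly: $a+2b+2c+1$ is exactly the paper's $i-1+2(n-i-1)+1=2n-i-2$.
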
 

\begin{theorem}
The variance of the number of comparisons in dual-pivot Quicksort algorithms is
$$
var[C_n] = n ( 7\,n+13 )  \, - \, 2\,(n+1)\, H_{{1}} ( n )  -4\, ( n+1 ) ^{2}H_{{2}} ( n )  .
$$
\end{theorem}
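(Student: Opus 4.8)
The plan is to reuse the experimental-mathematics machinery already exercised above for the single-pivot moments: from the given recurrence for the probability generating function extract a recurrence for the second (factorial) moment, guess a closed form of the anticipated shape, and then certify the guess \emph{a posteriori}, so that everything reduces to a finite verification.

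First I would differentiate the recurrence
$$
P_n(t) = \frac{1}{\binom{n}{2}} \sum_{j=2}^{n} \sum_{i=1}^{j-1}  P_{i-1}(t)\, P_{j-i-1}(t)\, P_{n-j}(t)\, t^{2n-i-2}
$$
once and twice with respect to $t$ and evaluate at $t=1$, using $P_m(1)=1$ throughout. Write $c_m:=P_m'(1)=E[C_m]$ and $d_m:=P_m''(1)=E[C_m(C_m-1)]$. Since the re-indexings $m=i-1$, $m=j-i-1$, $m=n-j$ each turn the triangular sum over $1\le i<j\le n$ into the same weighted sum $\sum_{m=0}^{n-2}(n-1-m)(\cdot)_m$, the first derivative gives
$$
\binom{n}{2}\, c_n = 3\sum_{m=0}^{n-2}(n-1-m)\, c_m + \sum_{j=2}^{n}\sum_{i=1}^{j-1}(2n-i-2),
$$
which reproduces $c_n = 2(n+1)H_1(n)-4n$ of the preceding theorem, a reassuring consistency check. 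The second derivative, via the product rule applied to the four factors $P_{i-1}$, $P_{j-i-1}$, $P_{n-j}$, $t^{2n-i-2}$, yields the analogous but bushier
$$
\binom{n}{2}\, d_n = 3\sum_{m=0}^{n-2}(n-1-m)\, d_m + I_n,
$$
where $I_n$ is an explicit quantity built from $n$ and the already-known $c_m$'s: summed over $1\le i<j\le n$, it collects the term $(2n-i-2)(2n-i-3)$ together with the six cross terms $2c_{i-1}c_{j-i-1}$, $2c_{i-1}c_{n-j}$, $2c_{j-i-1}c_{n-j}$, $2(2n-i-2)c_{i-1}$, $2(2n-i-2)c_{j-i-1}$, $2(2n-i-2)c_{n-j}$.

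Next I would feed this recurrence enough initial data to \emph{guess}, by undetermined coefficients, a closed form $d_n = A(n)+B(n)H_1(n)+C(n)H_1(n)^2+D(n)H_2(n)$ with $A,B,C,D$ polynomials of small degree (degrees read off from the data, and bounded a priori by a routine estimate on the recurrence, the whole calculation being finite). The $H_1(n)^2$ term is forced here, since $\sum c_{i-1}c_{j-i-1}$-type sums produce it, but it cancels at the very end. The guess is then proved by substitution: plugging it into the recurrence for $d_n$ and evaluating the arising sums — the elementary power sums $\sum m^k$, the harmonic sums $\sum_{m\le n}1/m=H_1(n)$ and $\sum_{m\le n}1/m^2=H_2(n)$, and mixed sums such as $\sum(n-1-m)H_1(m)$ and $\sum(n-1-m)H_1(m)^2$, all handled by Abel summation — the putative identity collapses inside the finite-dimensional space spanned by the monomials $n^a H_1(n)^b H_2(n)^c$ of bounded total ``degree'', so checking it for enough values of $n$ constitutes a rigorous proof. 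Finally $E[C_n^2]=d_n+c_n$, hence $var[C_n]=d_n+c_n-c_n^2$; substituting the closed form for $d_n$ and $c_n=2(n+1)H_1(n)-4n$, the $H_1(n)^2$ terms cancel and simplification yields $n(7n+13)-2(n+1)H_1(n)-4(n+1)^2H_2(n)$, as claimed.

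The hard part is organizational rather than conceptual: assembling $I_n$ correctly — tracking all six cross terms with the right multiplicities after summation over the triangle $1\le i<j\le n$ — and then carrying the harmonic sums faithfully through the a posteriori verification. Nothing here exceeds the ``high-school algebra'' advertised in the chapter; it is exactly the computation automated by the procedure {\tt QsMFn}, and the argument above is what promotes that procedure's output from a conjecture to a theorem. One may note, as a further corroboration, that the resulting formula coincides with Knuth's variance for classical one-pivot Quicksort recorded earlier in this chapter, mirroring the coincidence of the two expectations; this agreement is not used in the proof but is a pleasant check.
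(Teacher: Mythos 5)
Your proposal is correct and follows essentially the same route as the chapter: compute moment data from the dual-pivot recurrence for $P_n(t)$, guess a closed form in the finite-dimensional span of $n^aH_1(n)^bH_2(n)^c$ by undetermined coefficients, and certify the guess a posteriori by a finite check (this is exactly what the procedure {\tt QsMFn} automates), then pass from the second factorial moment to the variance. Your closing observation that the answer coincides with Knuth's one-pivot variance is also the paper's own remark — it notes that the dual-pivot and one-pivot generating functions agree, so the theorem could alternatively be read off from the one-pivot case.
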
 

\begin{theorem}
The third moment about the mean of $C_n$ is 
$$
-n ( 19\,{n}^{2}+81\,n+104 ) +H_{{1}} ( n ) 
 ( 14\,n+14 ) +12\, ( n+1 ) ^{2}H_{{2}} ( n
 ) +16\, ( n+1 ) ^{3}H_{{3}} ( n )  .
$$
\end{theorem}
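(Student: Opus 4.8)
The plan is to follow the same experimental route that produced Theorems 4.22 and 4.23: exploit the recurrence for the probability generating function $P_n(t)$ of $C_n$ and then guess-and-verify a closed form for the third moment about the mean. Before doing any computation it is worth noting that the claimed formula is \emph{identical} to the third moment about the mean of the number of comparisons of $1$-pivot Quicksort (Theorem 4.3); this is a strong hint that dual-pivot Quicksort, with the partitioning strategy described above, has the \emph{same distribution} of comparisons as classical Quicksort, a phenomenon already visible in Theorems 4.22 and 4.23. One could try to prove the stronger distributional statement directly (see the last paragraph), but the recurrence-based proof is self-contained, and it is the one I would carry out.

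First I would iterate the recurrence
\[
P_n(t) = \frac{1}{\binom{n}{2}} \sum_{j=2}^n \sum_{i=1}^{j-1}  P_{i-1}(t) P_{j-i-1}(t) P_{n-j}(t) \, t^{2n-i-2},
\]
with $P_0(t)=P_1(t)=1$ and $P_2(t)=t$, to produce $P_n(t)$ as an explicit polynomial in $t$ for all $n$ up to some bound $N$ (a few dozen suffices). For each such $n$ I would form the normalized generating function $P_n(t)/t^{c_n}$, where $c_n = E[C_n] = 2(n+1)H_1(n) - 4n$ is already known from Theorem 4.22, apply the operator $(t\,\frac{d}{dt})^3$, and set $t=1$; this yields the exact rational value of $m_3(n)$ for each sampled $n$.

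Next comes the guessing step. I would posit the ansatz
\[
m_3(n) = A(n) + B(n)\,H_1(n) + C(n)\,H_2(n) + D(n)\,H_3(n),
\]
where $A,B,C,D$ are polynomials in $n$ of degrees $3,1,2,3$ respectively (the degree pattern mirrors that of the first two moments and of the $1$-pivot results, and can in principle be bounded \emph{a priori} from the structure of the recurrence). This is a linear system in $13$ undetermined coefficients, so plugging in roughly fifteen to twenty sampled values of $n$ determines them uniquely; solving should return $A(n)=-19n^3-81n^2-104n$, $B(n)=14n+14$, $C(n)=12(n+1)^2$, and $D(n)=16(n+1)^3$. Checking the resulting formula against several further values of $n$ then makes the identification overwhelmingly convincing, and — as explained in \cite{12} following \cite{35} and \cite{36} — actually rigorous: the whole computation reduces to verifying an identity between two sequences that lie \emph{a priori} in a finite-dimensional space, so once one has an explicit bound $N_0$ on the dimension (controlled by the degrees in the ansatz), agreement on the first $N_0$ terms forces agreement everywhere.

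The main obstacle, then, is not the computation but pinning down the \emph{a priori} bounds that turn the finite check into a genuine proof: one must argue that $m_3(n)$ really lies in the span of $\{n^a H_1(n)^{b_1}H_2(n)^{b_2}H_3(n)^{b_3}\}$ with the stated degree restrictions. This follows by unwinding the recurrence — each summation over $i$ and $j$, after partial-fraction and Gosper-style simplification, contributes harmonic numbers $H_k$ of bounded index together with polynomial factors of bounded degree — but making it airtight is the delicate part, and (as the chapter does elsewhere) one may reasonably leave the detailed degree bookkeeping to the diligent reader. Alternatively, and more elegantly, one could sidestep the moment computation entirely by proving that $C_n$ for dual-pivot Quicksort is equidistributed with $C_n$ for classical Quicksort — for instance by matching their generating-function recurrences or by exhibiting a comparison-count-preserving bijection between the relevant labelled structures — which would deliver Theorems 4.22, 4.23 and 4.24 simultaneously, along with all higher moments. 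I would flag this as the natural follow-up but present the recurrence-and-guess argument as the primary proof, in keeping with the experimental-mathematics methodology of the chapter.
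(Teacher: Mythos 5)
Your proposal matches the paper's own route: the paper iterates the same dual-pivot recurrence for $P_n(t)$, computes exact moment data, and fits the ansatz $m_r(n)=F_r(n,H_1(n),\dots,H_r(n))$ by undetermined coefficients via the procedure {\tt QsMFn}, invoking the same finite-check rigor argument from \cite{12}, \cite{35}, \cite{36}. Your closing observation that the answer coincides with the $1$-pivot case is also made in the paper, which cites Iliopoulos for the equidistribution and checks the first ten generating functions directly.
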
 

\begin{theorem} 
The fourth moment about the mean of $C_n$ is 
$$
\frac{1}{9}\,n ( 2260\,{n}^{3}+9658\,{n}^{2}+15497\,n+11357 ) -2\,
 ( n+1 )  ( 42\,{n}^{2}+78\,n+77 ) H_{{1}}
 ( n ) 
$$
$$
+12\, ( n+1 ) ^{2} ( H_{{1}} ( n
 )  ) ^{2}+ ( -4\, ( 42\,{n}^{2}+78\,n+31
 )  ( n+1 ) ^{2}+48\, ( n+1 ) ^{3}H_{{1}}
 ( n )  ) H_{{2}} ( n ) 
$$
$$
+48\, ( n+1
 ) ^{4} ( H_{{2}} ( n )  ) ^{2}-96\,
 ( n+1 ) ^{3}H_{{3}} ( n ) -96\, ( n+1
 ) ^{4}H_{{4}} ( n ) .
$$
\end{theorem}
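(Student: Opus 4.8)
The plan is to reuse, essentially verbatim, the experimental-yet-rigorous recipe that produced the three preceding dual-pivot moment formulas (and, in Section~4.2, the single-pivot ones). Everything hinges on the fundamental recurrence for the probability generating function $P_n(t)$ of $C_n$,
$$P_n(t) = \frac{1}{\binom{n}{2}} \sum_{j=2}^{n} \sum_{i=1}^{j-1} P_{i-1}(t)\,P_{j-i-1}(t)\,P_{n-j}(t)\, t^{2n-i-2},$$
with $P_0(t)=P_1(t)=1$ and $P_2(t)=t$. First I would apply the operator $t\tfrac{d}{dt}$ four times and set $t=1$ --- or, more conveniently, work with $P_n(t)/t^{E[C_n]}$ as in Section~4.3 --- to turn this into a linear recurrence expressing $m_4(n)$, the fourth moment of $C_n$ about its mean, in terms of $m_4(m)$, $m_3(m)$, $m_2(m)$ and $E[C_m]$ for $m<n$; the last four quantities are already in closed form, namely $E[C_n]=2(n+1)H_1(n)-4n$ together with the variance and third-moment formulas stated just above.

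Second comes the guessing step. Using the Maple procedure {\tt PQc2} to generate $P_n(t)$, hence the rational numbers $m_4(n)$, for $n$ up to a comfortable bound, I would fit the data to the ansatz
$$m_4(n) = F_4\bigl(n,\,H_1(n),\,H_2(n),\,H_3(n),\,H_4(n)\bigr),$$
taking $F_4$ to have exactly the monomial shape of the claimed right-hand side: a polynomial of degree $\le 4$ in $n$ in which the harmonic numbers occur only through the groupings $(n+1)H_1(n)$, $(n+1)^2\bigl(H_1(n)\bigr)^2$, $(n+1)^2H_2(n)$, $(n+1)^3H_1(n)H_2(n)$, $(n+1)^4\bigl(H_2(n)\bigr)^2$, $(n+1)^3H_3(n)$ and $(n+1)^4H_4(n)$, precisely as in the single-pivot fourth-moment theorem of \cite{12}. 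Solving the resulting overdetermined linear system in the undetermined coefficients then produces the stated expression; consistency of the system is confirmed by the data.

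Third, the rigor step: verify \emph{a posteriori} that this closed form satisfies the recurrence derived in the first step. Since $H_k(n)=H_k(n-1)+1/n^k$, substituting the formula makes every harmonic number cancel and collapses the recurrence to an identity between rational functions of $n$, so checking it for more than one plus an a priori bound on the degree in $n$ is a complete proof. Supplying that a priori degree bound is the only mildly fussy ingredient and, as throughout the chapter, it may be handled routinely or left to the reader.

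I expect the real obstacle to sit in the first step: the triple-convolution double sum makes the fourth-moment recurrence considerably bulkier than its single-pivot analogue, so extracting it symbolically (rather than merely numerically) demands care. There is, however, a much slicker route that dispenses with all of this. The coincidence of the first four moments with the classical single-pivot ones (compare the dual-pivot theorems of this subsection with Theorems~4.1--4.4) strongly suggests that $P_n(t)$ is \emph{identically equal} to the comparison generating function $\tilde P_n(t)$ of ordinary single-pivot Quicksort. Since the two sequences share the initial data, this reduces, by induction on $n$, to verifying that the $P_n(t)$ defined by the dual-pivot recurrence also satisfy the single-pivot recurrence, i.e.
$$\frac{1}{\binom{n}{2}}\sum_{j=2}^{n}\sum_{i=1}^{j-1} P_{i-1}(t)\,P_{j-i-1}(t)\,P_{n-j}(t)\, t^{2n-i-2} \;=\; \frac{t^{\,n-1}}{n}\sum_{k=1}^{n} P_{k-1}(t)\,P_{n-k}(t).$$
If that polynomial identity (matching a triple convolution against a double one) can be established --- which is the genuine crux of a fully human, non-experimental proof --- then the present theorem is an immediate corollary of Theorem~4.4, and all the other dual-pivot moment theorems follow in the same way.
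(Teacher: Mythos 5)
Your proposal matches the paper's treatment essentially verbatim: the paper obtains this theorem by feeding the dual-pivot recurrence ({\tt PQc2}) into the guessing procedure {\tt QsMFn} with the ansatz $m_r(n)=F_r(n,H_1(n),\dots,H_r(n))$, appealing to the finite-verification argument of \cite{12} for rigor. Even your ``slicker route'' is the paper's own closing remark --- it observes that the four dual-pivot moment theorems coincide with the single-pivot ones, cites \cite{19} for a sketch that the two comparison distributions satisfy the same recurrence, and checks the first ten probability generating functions as a semi-rigorous confirmation.
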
 

As usual, any higher moment can be easily obtained with a powerful silicon servant. Careful readers may notice that the above four theorems are exactly the same as the ones in Section 4.2. It is natural to ask whether they indeed have the same probability distribution. The answer is yes. In Section 4.1.2 of \cite{19} the author gives a sketch of proof showing that 1-pivot and dual-pivot Quicksorts's numbers of comparisons satisfy the same recurrence relation and initial condition. From an experimental mathematical point of view, a semi-rigorous proof obtained by checking sufficiently many special cases is good enough for us. For example, the first 10 probability generating function, $P_n(t)$ for $1 \leq n \leq 10$ can be calculated in a nanosecond by entering {\tt [seq(PQc2(i, t), i = 1..10)]} and we have
$$
P_1(t) = 1,
$$
$$
P_2(t) = t,
$$
$$
P_3(t) = \frac{2}{3} t^3 + \frac{1}{3} t^2,
$$
$$
P_4(t) = \frac{1}{3}{t}^{6}+\frac{1}{6}{t}^{5}+\frac{1}{2}{t}^{4},
$$
$$
P_5(t) = \frac{2}{15}{t}^{10}+\frac{1}{15}{t}^{9}+\frac{1}{5}{t}^{8}+\frac{4}{15} t^7 +\frac{1}{3} t^6,
$$
$$
\vdots
$$
which are exactly the same as the probability generating function for the number of comparisons of 1-pivot Quicksort. 

In conclusion, in terms of probability distribution of the number of comparisons, the dual-pivot Quicksort does not appear to be better than the ordinary 1-pivot Quicksort. 

As for the random variable $X_n$, the number of swaps, it depends on the specific implementation of the algorithm and the definition of a ``swap." As a toy model, we do an analogue of Section 4.3.1. Assume the list is a permutation of $[n]$. The first and last elements are chosen as the pivot. Let's say they are $i$ and $j$. If $i>j$ then we swap them and still call the smaller pivot $i$. For each element less than $i$, we move it to the left of $i$, and for each element greater than $j$, we move it to the right of $j$ and call this kind of operations a swap. 

The recurrence relation for the probability generating function of $X_n$ is
$$
P_n(t) = \frac{1}{\binom{n}{2}} (\frac{1}{2} + \frac{1}{2} t) \sum_{j=2}^n \sum_{i=1}^{j-1} P_{i-1}(t) P_{j-i-1}(t) P_{n-j}(t) t^{n-1+i-j}
$$
with the initial conditions $P_0(t) = P_1(t) =1$ and $P_2(t) =\frac{1}{2} + \frac{1}{2} t. $

\begin{theorem}
The expectation of the number of swaps in the above dual-pivot Quicksort variant is
$$
E[X_n]=\frac{4}{5} (n+1) H_1(n) - \frac{39}{25} n -\frac{1}{100}.
$$
\end{theorem}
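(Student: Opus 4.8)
The plan is to run the same experimental-mathematics pipeline used throughout this chapter: pass from the probability-generating-function recurrence to a recurrence for the expectation $x_n := E[X_n] = P_n'(1)$, generate data, guess the closed form, and verify it rigorously. The conceptual point that makes everything concrete is that, conditioned on the two chosen pivots occupying ranks $i<j$, the number of swaps performed at the current level is \emph{almost} deterministic: there are exactly $i-1$ elements below the smaller pivot and $n-j$ above the larger one, forcing $(i-1)+(n-j)=n-1+i-j$ swaps, plus one further swap --- occurring with probability $\tfrac12$ --- needed precisely when the first and last list entries happen to be the two pivots in the wrong order. This is exactly what the factor $\tfrac12+\tfrac12 t$ and the monomial $t^{\,n-1+i-j}$ encode, and in particular it explains why $P_n(1)=1$: at $t=1$ the prefactor is $1$ and the double sum is $\sum_{j=2}^n(j-1)=\binom n2$.

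Differentiating the recurrence with respect to $t$, applying the product rule, and setting $t=1$ (where $P_m(1)=1$ for every $m$) yields
$$
x_n \;=\; \frac12 \;+\; \frac{1}{\binom{n}{2}}\sum_{j=2}^{n}\sum_{i=1}^{j-1}\Bigl[\,x_{i-1}+x_{j-i-1}+x_{n-j}+(n-1+i-j)\,\Bigr],
$$
with $x_0=x_1=0$ and $x_2=\tfrac12$. The next step is to tidy the double sum: the substitution $i\mapsto j-i$ shows $\sum_{i=1}^{j-1}x_{i-1}=\sum_{i=1}^{j-1}x_{j-i-1}$, so those two contributions coalesce; reindexing each surviving $x$-sum by the argument of $x$ turns it into a weighted partial sum of $x_0,\dots,x_{n-1}$, while $\sum_{j=2}^{n}\sum_{i=1}^{j-1}(n-1+i-j)$ evaluates to an explicit cubic polynomial in $n$. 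The result is a genuine (full-history) linear recurrence for $x_n$ with an explicit polynomial forcing term, which, iterated with the procedure {\tt QsMFn}, supplies as many values of $x_n$ as we want.

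With the data in hand, posit the ansatz $x_n=a+bn+(cn+d)H_1(n)$ and solve for $a,b,c,d$ from a handful of values; this returns $a=-\tfrac{1}{100}$, $b=-\tfrac{39}{25}$, $c=d=\tfrac45$, i.e. the claimed $\tfrac45(n+1)H_1(n)-\tfrac{39}{25}n-\tfrac1{100}$. As the remark following Variant III warns, the formula need not hold for the very smallest $n$ (indeed it fails for $n=1,2,3$ and takes effect from $n=4$ on), so those initial terms are simply disregarded. To upgrade the guess to a proof, substitute the ansatz back into the recurrence of the previous paragraph: using $H_1(n)=H_1(n-1)+\tfrac1n$ together with the standard closed forms for $\sum_{m\le N}H_1(m)$ and $\sum_{m\le N}m\,H_1(m)$, both sides reduce to an expression of the shape $R_1(n)+R_2(n)H_1(n)$ with $R_1,R_2$ rational in $n$; clearing denominators turns the claim into two polynomial identities in $n$ of a priori bounded degree, so that checking them at sufficiently many integers --- together with verifying a few base cases $n=4,5,\dots$ directly --- constitutes a fully rigorous proof by the $N_0$ principle invoked earlier.

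The only real obstacle here is bookkeeping rather than insight: one must keep the off-by-one index ranges straight when merging the two $x$-sums and when reindexing, and then carry out the harmonic-number summations in the verification step without arithmetic slips. Since there is no conceptual difficulty, the prudent course --- and the one we follow --- is to let Maple perform the symbolic manipulation and to confirm the identity on many instances, which additionally guards against an incorrectly guessed ansatz.
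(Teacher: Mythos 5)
Your proposal is correct and follows essentially the same route as the chapter itself: differentiate the stated probability-generating-function recurrence at $t=1$ to get a full-history recurrence for $x_n=E[X_n]$, generate data, fit the ansatz $a+bn+(cn+d)H_1(n)$, and verify a posteriori via polynomial identities; a direct check confirms the closed form holds from $n=4$ onward (e.g.\ $x_4=\tfrac{25}{12}$, $x_5=\tfrac{63}{20}$) and fails for $n\le 3$, exactly as you note. The only detail worth keeping in mind in the verification step is that the full-history sums also involve $x_0,\dots,x_3$, where the ansatz does not hold, so those terms contribute an explicit rational correction — but this does not change the shape $R_1(n)+R_2(n)H_1(n)$ of the resulting identity.
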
 
Note that this result is better than those in Sections 4.3.1 and 4.3.2.

\subsection{Three-Pivot Quicksort}
As mentioned at the beginning of this section, to define a 3-pivot Quicksort, we need to define 1) how to sort the pivots, 2) how to partition the list and 3) how to sort a list or sublist containing less than 3 pivots. Since this chapter is to study Quicksort, we choose 1-pivot Quicksort for 1) and 3). For 2), it seems that the binary search is a good option since for each non-pivot element exactly 2 comparisons with the pivots are needed. 

The Maple procedure {\tt PQs(n,t)} outputs the probability generating function for 1-pivot Quicksort of a list of length $n$. Hence during the process of sorting the pivots and partitioning the list, the probability generating function of the number of comparisons is $PQs(3,t) t^{2n-6}$, which equals to
$$
(\frac{2}{3} t^3 + \frac{1}{3} t^2) t^{2n-6} = \frac{2}{3} t^{2n-3} + \frac{1}{3} t^{2n-4}.
$$
So the recurrence relation for the probability generating function $P_n(t)$ of the total number of comparisons for 3-pivot Quicksort of a list of length $n$ is
$$
P_n(t) = \frac{1}{\binom{n}{3}} \sum_{k=3}^n \sum_{j=2}^{k-1} \sum_{i=1}^{j-1} P_{i-1}(t) P_{j-i-1}(t) P_{k-j-1}(t) P_{n-k}(t) (\frac{2}{3} t^{2n-3} + \frac{1}{3} t^{2n-4} )
$$
with initial conditions $P_0(t) = P_1(t) =1, P_2(t) = t$ and $P_3(t) =\frac{2}{3} t^3 + \frac{1}{3} t^2 $. This recurrence is implemented by the procedure {\tt PQd3}.

The explicit expression seems to be difficult to obtain in this case, but numerical tests imply that 3-pivot Quicksort has better performances than dual-pivot, and of course 1-pivot since it is indeed equivalent to dual-pivot.  

By exploiting the Maple procedure {\tt MomFn(f,t,m,N)} again, we can compare the expectations of different Quicksort variants. 

For instance, {\tt MomFn(PQc2, t, 1, 20)} returns 
$$
[0,1,\frac{8}{3},{\frac {29}{6}},{\frac {37}{5}},{\frac {103}{10}},{\frac {472}{35}},{\frac {2369}{140}},{\frac {2593}{126}},{\frac {30791}{1260}},{
\frac {32891}{1155}},{\frac {452993}{13860}},{\frac {476753}{12870}},{
\frac {499061}{12012}},
$$
$$
{\frac {2080328}{45045}},{\frac {18358463}{
360360}},{\frac {18999103}{340340}},{\frac {124184839}{2042040}},{
\frac {127860511}{1939938}},{\frac {26274175}{369512}}],
$$
and {\tt MomFn(PQd3, t, 1, 20)} returns
$$
[0,1,\frac{8}{3},\frac{14}{3},{\frac {106}{15}},{\frac {49}{5}},{\frac {64}{5}},{\frac {561}{35}},{\frac {1226}{63}},{\frac {5192}{225}},{\frac {465316}{17325}},{\frac {533509}{17325}},{\frac {714008}{20475}},{\frac {
61615768}{1576575}},
$$
$$
{\frac {342234824}{7882875}},{\frac {754600981}{
15765750}},{\frac {1404956027}{26801775}},{\frac {15298397599}{
268017750}},{\frac {31489234438}{509233725}},{\frac {1697926310039}{
25461686250}}].
$$
We notice that for each fixed $n>3$, 3-pivot Quicksort's average performance is better than 2-pivot and 1-pivot. This numerical test is also possible for all previous Quicksort variants but seems unnecessary when the explicit expressions are easily accessible. 

As in Section 4.3.5, a recurrence relation of the expected number of comparisons can be obtained. {\tt Findrec(MomFn(PQd3, t, 1, 40),n,N,8)} returns 
$$
{\frac { \left( 3\,n+4 \right)  \left( {n}^{2}-5\,n+12 \right) }{
 \left( n+4 \right)  \left( n+3 \right)  \left( 3\,n+1 \right) }}-{
\frac { \left( 12\,{n}^{4}+13\,{n}^{3}-12\,{n}^{2}+59\,n+24 \right) N
}{ \left( 3\,n+1 \right)  \left( n+4 \right)  \left( n+3 \right) 
 \left( n+2 \right) }}
$$
$$
 +3\,{\frac { \left( n+1 \right)  \left( 6\,n+5
 \right) n{N}^{2}}{ \left( n+4 \right)  \left( n+3 \right)  \left( 3\,
n+1 \right) }}-{\frac { \left( n+1 \right)  \left( 12\,n+7 \right) {N}
^{3}}{ \left( n+4 \right)  \left( 3\,n+1 \right) }}+{N}^{4},
$$
which leads to the following theorem.

\begin{theorem} 
The expected number of comparisons $C_n$ of 3-pivot Quicksort for a list of length $n$ satisfies the following recurrence relation:
$$
C_{n+4} = {\frac { \left( n+1 \right)  \left( 12\,n+7 \right) }{ \left( n+4 \right)  \left( 3\,n+1 \right) }} C_{n+3} - 3\,{\frac { \left( n+1 \right)  \left( 6\,n+5 \right) n}{ \left( n+4 \right)  \left( n+3 \right)  \left( 3\,n+1 \right) }} C_{n+2}
$$
$$
+{\frac { \left( 12\,{n}^{4}+13\,{n}^{3}-12\,{n}^{2}+59\,n+24 \right) 
}{ \left( 3\,n+1 \right)  \left( n+4 \right)  \left( n+3 \right) 
 \left( n+2 \right) }} C_{n+1} - {\frac { \left( 3\,n+4 \right)  \left( {n}^{2}-5\,n+12 \right) }{
 \left( n+4 \right)  \left( n+3 \right)  \left( 3\,n+1 \right) }} C_{n}.
$$
\end{theorem}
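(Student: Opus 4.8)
The plan is to reduce the statement to a \emph{full-history} recurrence for the expectation, then to eliminate the history by passing to a generating-function ODE, so that the sought recurrence falls out as a holonomic consequence and can be matched, coefficient by coefficient, against the displayed one.

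First I would set $C_n:=E[C_n]=P_n'(1)$ and differentiate the defining recurrence for $P_n(t)$ (the one implemented by {\tt PQd3}) with respect to $t$, evaluating at $t=1$. Since $P_m(1)=1$ for every $m\ge 0$ and $g_n(t):=\tfrac23 t^{2n-3}+\tfrac13 t^{2n-4}$ satisfies $g_n(1)=1$ and $g_n'(1)=2n-\tfrac{10}{3}$, the product rule collapses the $t=1$ derivative of each summand to $C_{i-1}+C_{j-i-1}+C_{k-j-1}+C_{n-k}+\bigl(2n-\tfrac{10}{3}\bigr)$. Re-indexing each triple $1\le i<j<k\le n$ by the block sizes $(a,b,c,d)=(i-1,\,j-i-1,\,k-j-1,\,n-k)$, which is a bijection onto $\{(a,b,c,d)\in\mathbb{Z}_{\ge 0}^4:\ a+b+c+d=n-3\}$ (a set of cardinality $\binom n3$), and exploiting the symmetry among the four blocks, one obtains
$$\binom n3 C_n \;=\; 4\sum_{a=0}^{n-3}\binom{n-1-a}{2}\,C_a \;+\;\Bigl(2n-\tfrac{10}{3}\Bigr)\binom n3 ,\qquad n\ge 3,$$
with $C_0=C_1=0$ and $C_2=1$; the values $C_3=\tfrac83$ and $C_4=\tfrac{14}{3}$ serve as a sanity check against {\tt MomFn(PQd3, t, 1, 20)}.

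Next I would translate this into a differential equation for $f(t):=\sum_{n\ge 0}C_nt^n$. The convolution $\sum_a\binom{n-1-a}{2}C_a$ has generating function $\tfrac{t^3}{(1-t)^3}f(t)$, the left-hand side $\binom n3 C_n$ has generating function $\tfrac16 t^3 f'''(t)$, and the polynomial inhomogeneity has a rational generating function with denominator $(1-t)^5$; clearing denominators gives $(1-t)^5 f'''(t)-24(1-t)^2 f(t)=16(1+2t)$. Differentiating twice more kills the affine right-hand side and yields a homogeneous linear ODE with polynomial coefficients, so $\{C_n\}$ is holonomic and obeys a homogeneous linear recurrence with polynomial coefficients whose order and degrees are bounded explicitly by the ODE data. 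Expanding that ODE coefficientwise produces such a recurrence directly (of order at most five); if it is not of minimal order, a right division (GCRD) strips off the spurious factor and leaves an order-four recurrence, which one then identifies with the statement by matching leading and trailing coefficients. Equivalently — and this is exactly what {\tt Findrec} does — since holonomy already guarantees that a recurrence with these a-priori bounds exists, it suffices by the $N_0$ principle used throughout this chapter to generate enough terms $C_0,\dots,C_{N_0}$ from the {\tt PQd3} recurrence (a few dozen are ample), verify that the displayed recurrence and the three initial values hold, and conclude.

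The hard part is not conceptual but bookkeeping: getting the boundary terms right when re-indexing the triple sum and when turning the full-history identity into the ODE — precisely the terms with $a$ near $n-3$, where the binomial kernel vanishes — and then carrying out the symbolic elimination cleanly. A more delicate point, should one insist on a fully self-contained derivation rather than an $N_0$-style verification, is to certify that the order-five recurrence coming from the ODE genuinely admits the claimed order-four right divisor for \emph{every} $n$, i.e.\ to control the finitely many $n$ at which leading coefficients could degenerate; in the spirit of this chapter, checking sufficiently many initial terms makes this last worry unnecessary.
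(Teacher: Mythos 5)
Your proposal is correct, and it takes a genuinely different route from the paper. The paper's entire argument for this theorem is empirical: it computes the first $40$ values of $E[C_n]$ via the {\tt PQd3} recurrence and feeds them to {\tt Findrec}, which guesses the order-four operator; the theorem is then simply read off from that output, with the rigor resting on the general remarks in Section 4.2 that such guesses are in principle certifiable. You instead supply the missing a priori structure: differentiating the {\tt PQd3} recurrence at $t=1$ and re-indexing by block sizes gives the full-history recurrence
$\binom n3 C_n = 4\sum_{a=0}^{n-3}\binom{n-1-a}{2}C_a + \bigl(2n-\tfrac{10}{3}\bigr)\binom n3$
(your boundary values and the checks $C_3=\tfrac83$, $C_4=\tfrac{14}{3}$ are right, and the identity degenerates harmlessly to $0=0$ for $n\le 2$), which translates into the clean ODE $(1-t)^5f'''-24(1-t)^2f=16(1+2t)$; I verified the coefficient $\tfrac{8}{3}t^3(1+2t)/(1-t)^5$ for the inhomogeneity, so this is correct. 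That ODE proves $\{C_n\}$ is holonomic with explicit order and degree bounds, which is exactly what turns the {\tt Findrec} guess into a theorem via a finite check — the paper never establishes this membership for this particular sequence. What each approach buys: the paper's is a one-line computation in the experimental-mathematics spirit of the chapter; yours costs some bookkeeping (and the operator-division or leading-coefficient-degeneracy issue you flag must be handled, e.g.\ by verifying that $L_4C$ vanishes on sufficiently many terms past the zeros of the relevant leading coefficients) but yields a self-contained rigorous proof and, as a bonus, a closed-form differential equation for the generating function that the paper does not record.
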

The recurrence relations for higher moments are also obtainable, but a long enough list of data is needed. 

\subsection{$k$-Pivot Quicksort}
More generally, $k$-pivot Quicksort can be considered with the convention that 1) the $k$ pivots are sorted with 1-pivot Quicksort, 2) binary search is used to partition the list into $k+1$ sublists, 3) we use 1-pivot Quicksort to sort lists with length less than $k$.

In the package {\tt QuickSort.txt} the procedure {\tt PQck(n, t, k)} outputs the probability generating function for the number of comparisons of $k$-pivot Quicksort where each element is compared with pivots in a linearly increasing order. Obviously this is not efficient when $k$ is large. However, the problem for binary search is that when $k \neq 2^i-1$ for some $i$, it is hard to get an expression for the number of comparisons in the binary search step since the number highly depends on the specific implementation where some boundary cases may vary and the floor and ceiling functions will be involved, which leads to an increasing difficulty to find the explicit expressions for moments. 

There is a procedure {\tt QsBC(L, k)} which inputs a list of distinct numbers $L$ and an integer $k$ representing the number of pivots and outputs the sorted list and the total number of comparisons. For convenience of Monte Carlo experiments, we use {\tt MCQsBC(n, k, T)} where $n$ is the length of the list, $k$ is the number of pivots and $T$ is the number of times we repeat the experiments. Because of the limit of computing resources, we only test for $k=3,4,5,6$ and $n=10, 20 ,30 ,40 ,50.$
$$
{\tt [seq(MCQsBC(10*i, 3, 100), i = 1 .. 5)]} = [22.95, 65.75, 118.71, 178.28, 239.45],
$$

$$
{\tt [seq(MCQsBC(10*i, 4, 100), i = 1 .. 5)]} = [23.78, 67.77, 120.91, 180.35, 251.19],
$$

$$
{\tt [seq(MCQsBC(10*i, 5, 100), i = 1 .. 5)]} = [23.54, 65.74, 119.59, 178.36, 241.03],
$$

$$
{\tt [seq(MCQsBC(10*i, 6, 100), i = 1 .. 5)]} = [23.14, 66.22, 120.07, 176.43, 236.46],
$$

Our observation is that for large enough $n$, the more pivots we use, the less comparisons are needed. However, when $k$ is too close to $n$, the increase of pivots may lead to inefficiency. 

\section{Limiting Distribution}
The main purpose of this chapter is to find explicit expressions for the moments of the number of swaps or comparisons of some variants of Quicksort, to compare their performances and to explore more efficient Quicksort algorithms. However, it is also of interest to find more moments for large $n$ and calculate their floating number approximation of the scaled limiting distribution. 

As mentioned in \cite{12}, if we are only interested in the first few moments, then it is wasteful to compute the full probability generating function $P_n(t).$
Let $t=1+w$ and use the fact that
$$
P_n(1+w)  = \sum_{r=0}^{\infty} \frac{f_r(n)}{r!} w^r
$$
where $f_r(n)$ are the factorial moments. The straight moments $E[X_n^r]$, and the moments-about-the-mean,
$m_r(n)$ follow immediately. 

As a case study, let's use Variant IV from Section 4.3.4 as an example. The recurrence relation is 
$$
P_n(1+w)  = \frac{1}{n} \sum_{k=1}^{n} P_{k-1}(1+w) P_{n-k}(1+w) Q(n,k,1+w), 
$$
where $Q$ is as defined in Section 4.3.4.

Since only the first several factorial moments are considered, in each step truncation is performed and only the first several coefficients in $w$ is kept. With this method we can get more moments in a fixed time. The procedure {\tt TrunIP} implements the truncated factorial generating function. 

With the closed-form expressions for both the expectation, $c_n$, and the variance $m_2(n):=var(X_n)$, the scaled random variable $Z_n$ is defined as follows.
$$
Z_n := \frac{X_n -c_n}{\sqrt{m_2(n)}} .
$$

We are interested in the floating point approximations of the limiting distribution $\lim_{n \rightarrow \infty} Z_n$. Of course its expectation is $0$ and its variance is $1$.

For instance, if we'd like to know the moments up to order 10, {\tt TrunIP(100, z, 10)} returns
$$
1+{\frac {7617634712836831344646726224164628686543}{
27341323619495089084130905464828354336}} z
$$
$$
+{\frac {
1169146867836246319480317311960440606057785761234433183813484643}{29517287662514914280390084303910684938635848245569645536000}} z^2
$$
$$
+ \dots \quad.
$$
For $1 \leq r \leq 10$, the coefficient of $z^r$ times $r!$ is the $r$-th factorial moment. By 
$$
E[X^r] = \sum_{j=0}^r {r\brace j} E[(X)_j]
$$
where the curly braces denote Stirling numbers of the second kind, we can get the raw moments. And with the procedure {\tt MtoA} a sequence of raw moments are transformed to moments about the mean. Divided by $m_2(n)^{\frac{r}{2}}$, the 3rd through 10th moments
in floating point approximations are
$$
[0.7810052982, 3.942047050, 9.146681877, 37.12169647, 137.7143092,
$$
$$
613.5286860, 2872.409923, 14709.75560].
$$
The same technique can be applied to other variants of Quicksort in this chapter and we leave this to interested readers.

\section{Remarks}
In such a rich and active research area as Quicksort, there are still several things we could think about to improve the algorithms' performances. Just to name a few, in 2-pivot Quicksort when we compare non-pivot elements with the pivots to determine which sublist they belong to, if the history is tracked, we might be able to use the history to determine which pivot to compare with first for the next element. The optimal strategy would vary with the additional information about the range of the numbers or the relative ranking of the two pivots among all the elements. 

As for $k$-pivot Quicksort, our naive approach only distinguishes two cases: whether the currently to-be-sorted list has length less than $k$ or not. If the length is less than $k$, we use 1-pivot Quicksort; otherwise we still choose $k$ pivots. However, we might be able to improve the performance if the number of pivots varies according to the length of the to-be-sorted list or sublist. Let's say there is a function $g(n)$, where $n$ is the length of the list. So we pick $g(n)$ pivots at the beginning. After we obtain the $g(n)+1$ sublists with length $n_i, 1 \leq i \leq g(n)+1$, for each one of them, we choose $g(n_i)$ pivots. It would be interesting whether we can find an optimal $g$ in terms of its average performance. Additionally, when $k$ is large, it might make sense to use multi-pivot Quicksort to sort the $k$ pivots as well. 

Of course, it is also interesting to study the explicit expressions of the numbers of swaps in multi-pivot Quicksort. But it appears to be dependent on the specific implementation of the algorithm so it is of significance to look for variants which save time and space complexity. 

The main results of this chapter are those explicit expressions of moments and recurrence relations for either the number of comparisons or the number of swaps of various Quicksort variants. Though all of their asymptotics are $O(n \log n)$, the constant before this term varies a lot and some comparisons of these variants are also discussed. When there is difficulty getting the explicit expressions, numerical tests and Monte Carlo experiments are performed. We also have a demonstration on how to get more moments and find the numeric approximation of the scaled limiting distribution. 

Nevertheless, more important than those results is the illustration of a methodology of experimental mathematics. From ansatzes and sufficient data we have an alternative way to obtain results that otherwise might be extremely difficult or even impossible to get via traditional human approaches to algorithm analysis.

\chapter{Peaceable Queens Problem}
This chapter is adapted from \cite{50}, which has been published on {\it Experimental Mathematics}. It is also available on arXiv.org, number 1902.05886.

\section{Introduction}

One of the fascinating problems described in the recent article \cite{39}, about the great  {\bf On-Line Encyclopedia of Integer Sequences}, and in the beautiful and insightful video \cite{40}
is the {\it peaceable queens problem}.
It was chosen, by popular vote, to be assigned the {\bf milestone} `quarter-million' A-number, {\tt A250000}.

The question is the following:

{\it What is the maximal number, $m$, such that it is possible to place $m$ white queens and $m$ black queens on an
$n \times n$ chess board, so that no queen attacks a queen of the opposite color.}

Currently only fifteen terms are known:
\begin{align*}
n: && 1 && 2 && 3 && 4 && 5 && 6 && 7 && 8 && 9 && 10 && 11 && 12 && 13 && 14 && 15\\
a(n): && 0 && 0 && 1 && 2 && 4 && 5 && 7 && 9 && 12 && 14 && 17 && 21 && 24 && 28 && 32
\end{align*}

In this chapter, we'd like to consider this peaceable queens problem as a continuous problem by normalizing the chess board to be the unit square $U:=[0,1]^2 = \{(x,y) \, | \, 0 \leq x, y \leq 1 \}$. Let $W \subseteq U$ be the region where white queens are located. Then the non-attacking region $B$ of $W$ can be defined as
$$
B = \{(x,y) \in U \,| \, \forall (u,v) \in W, x \neq u,  y \neq v, x+y \neq u+v,  y-x \neq v-u \}.
$$
So the continuous version of the peaceable queens problem is to find 
$$
\max_{W \in 2^U} ( \min(\textrm{Area} (W), \textrm{Area}(B)) ).
$$

Considering that the queen is able to move any number of squares vertically, horizontally and diagonally, it is reasonable to let $W$ be a convex polygon or a disjoint union of convex polygons whose boundary consists of vertical, horizontal and slope $\pm  1$ line segments, otherwise we can increase the area of white queens without decreasing the area of black queens. 

In this chapter, we use a list $L$ of lists $[\,[a_1, b_1]\,,\, [a_2, b_2]\,,\, \dots\,,\, [a_n, b_n]\,]$ to denote the $n$-gon whose vertices are the $n$ pairs in the list $L$ and whose sides are the straight line segments connecting $[a_i, b_i]$ and $[a_{i+1}, b_{i+1}], (1 \leq i \leq n-1) $, and $[a_n, b_n]$ and $[a_1, b_1]$.

This chapter is organized as follows. At first we look at Jubin's construction and prove that it is a local optimum. Though there is no rigorous proof, we conjecture and reasonably believe that it is indeed a global optimum at least for ``the continuous chess board", after numerous experiments with one, two and more components. Then we consider the optimal case under more restrictions, or under certain configurations, e.g., only one component or two identical squares or two identical triangles, etc. In some cases, the exact optimal parameters and areas can be obtained. Note that in this chapter's figures, for convenience of demonstration, the color red is used to represent white queens and blue is for black queens.

\section{Jubin's Construction}

As mentioned in \cite{39} (and \cite{38}, sequence A250000), it is conjectured that Benoit Jubin's construction 
given in Fig. 5 of \cite{39}, see also here:

\noindent {\tt http://sites.math.rutgers.edu/\~{}zeilberg/tokhniot/peaceable/P1.html}

\noindent or Figure 5.1, 
is  optimal for $n \geq 10$. Its value is $\lfloor \frac{7n^2}{48} \rfloor$.

\begin{figure}[h!]
  \center
  \includegraphics[width=0.75\textwidth]{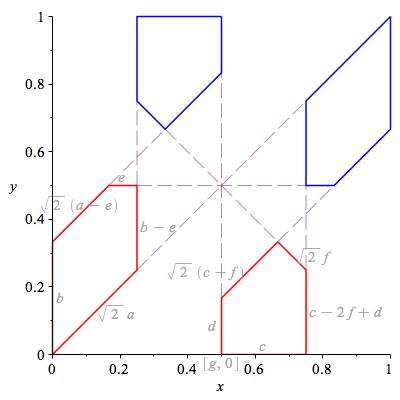}
  \caption{Benoit Jubin's Construction for a Unit Square}
\end{figure}

While we are, at present, unable to prove this,
we did manage to prove that when one generalizes Jubin's construction and replaces the
sides of the two pentagons with arbitrary parameters (of course subject to the obvious constraints so that both white and black queens reside in two pentagons),
then Jubin's construction is indeed (asymptotically) optimal, i.e. in the limit as $n$ goes to infinity.

\begin{lemma}
Normalizing the chess board to be  the unit square $\{(x,y) \, | \, 0 \leq x, y \leq 1 \}$,
if the white queens are placed in the union of the interiors of the following two pentagons
$$
[\, [0,0] \, , \, [a,a] \, , \, [a,a+b-e] \, , \, [a-e,a+b-e] \, , \, [0,b] \,   ],
$$
and
$$
[\, [g,0] \, , \, [g+c,0] \, , \, [g+c,c-2\,f+d] \, , \, [g+c-f,c-f+d] \, , \, [g,d] \,  ],
$$
where $a,b,c,d,e,f,g$ are between $0$ and $1$ and all coordinates and side lengths in Fig. 5.1 are non-negative and appropriate so that black queens also reside in two pentagons, then the black queens are located in the
interiors of the pentagons
$$
[\, [g,1] \, , \, [a,1] \, , \, [a,g+2\,c-2\,f+d-a] \, , \, 
[\frac{1}{2} \,g+ \frac{1}{2}\,d-\frac{1}{2}\,b+c-f, \frac{1}{2}\,g+ \frac{1}{2}\,d+ \frac{1}{2}\,b+c-f] \, , 
\,
$$
$$
[g,g+b] \, ] ,
$$
and
$$
[\, [1,1] \, , \, [g+c,g+c] \, , \, [g+c,a+b-e] \, , \, [a+b-e+g-d,a+b-e] \, , \, [1,1+d-g] \,  ]  .
$$
\end{lemma}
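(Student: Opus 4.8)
The plan is to compute the non-attacking region $B$ straight from its definition and show it agrees, up to boundary (which affects neither areas nor interiors), with $\mathrm{int}(Q_1)\cup\mathrm{int}(Q_2)$, where $Q_1,Q_2$ are the two claimed black pentagons. The key elementary fact is that for a \emph{convex} polygon $P$, the set of points of $U$ that share a column (resp.\ a row, an NW--SE diagonal, an SW--NE diagonal) with some point of $P$ is exactly the closed strip $\{(x,y): \min_P x\le x\le \max_P x\}$ (resp.\ the horizontal strip, the strip $\{\min_P(x+y)\le x+y\le\max_P(x+y)\}$, the strip $\{\min_P(y-x)\le y-x\le\max_P(y-x)\}$), since each of $x,y,x+y,y-x$ is a linear functional and therefore attains on $P$ every value between its extremes, and the extremes occur at vertices. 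Because $W$ is the union of the two convex pentagons, the attacked set is a union of eight such strips, and $B=U\setminus(\text{those eight strips})$.

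The first concrete step is to tabulate the eight strips by evaluating the four functionals at the five vertices of each pentagon. For the left pentagon one gets (under the non-negativity and ordering conditions imposed in Fig.~5.1) the strips $x\in[0,a]$, $y\in[0,a+b-e]$, $x+y\in[0,2a+b-e]$, $y-x\in[0,b]$; for the right pentagon one gets $x\in[g,g+c]$, $y\in[0,c-f+d]$, $x+y\in[g,g+2c-2f+d]$, $y-x\in[-g-c,d-g]$.

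The second step is to carve up $U$. Deleting the two $x$-strips leaves two vertical slabs, $[a,g]\times[0,1]$ and $[g+c,1]\times[0,1]$ (the Fig.~5.1 conditions force $0\le a\le g\le g+c\le1$). Inside the first slab I intersect out the remaining six strips, using the ordering hypotheses to decide which of the competing bounds — $a+b-e$ vs.\ $c-f+d$ for $y$, $2a+b-e$ vs.\ $g+2c-2f+d$ for $x+y$, $b$ vs.\ $d-g$ for $y-x$ — is the binding one; the surviving cell is exactly $\{\,a\le x\le g,\ y\le1,\ x+y\ge g+2c-2f+d,\ y-x\ge b\,\}$, and its five corners are precisely the listed vertices of $Q_1$ (in particular the ``half-coordinate'' vertex is the intersection of $x+y=g+2c-2f+d$ with $y-x=b$). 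The same operation on the slab $[g+c,1]\times[0,1]$ produces $\{\,g+c\le x\le1,\ y\ge a+b-e,\ d-g\le y-x\le0\,\}$, a pentagon whose edges lie on $x=g+c$, $x=1$, $y=a+b-e$, $y-x=d-g$, $y-x=0$, hence whose corners are the vertices of $Q_2$; here one additionally checks that both $x+y$-strips are non-binding on this cell. Conversely, each of the other Boolean combinations of ``which gap'' for the four functionals is, under the same hypotheses, either empty or has its would-be interior absorbed by one of the eight strips, so no third component survives and $B=\overline{Q_1}\cup\overline{Q_2}$.

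The vertex arithmetic in the last step is routine and checks out. The real content, and the step I expect to be the main obstacle, is making the phrase ``appropriate so that black queens also reside in two pentagons'' precise: one must read off from Fig.~5.1 the exact finite list of inequalities among $a,b,c,d,e,f,g$ under which (i) the eight strips have the endpoints written above, (ii) $0\le a\le g$ and $g+c\le1$, and (iii) in each of the three competing-bound comparisons the branch used above is the active one, and then verify that this list is non-vacuous and that, under it, the remaining cells are genuinely empty or strip-covered. Once this inequality bookkeeping is settled, the lemma follows from the region computation above; I would relegate the explicit list of inequalities and the case-by-case emptiness checks to the accompanying Maple package {\tt PeaceableQueens.txt}, since they are entirely mechanical.
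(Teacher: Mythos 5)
Your proof is correct and follows essentially the same route as the paper's: both delete from the unit square the column/row/diagonal strips attacked by the two white pentagons (the paper's ``with school geometry, it is obvious\dots'' step, excluding $0\le x<a$, $g<x<g+c$, the horizontal strip, and then the three diagonal strips) and read off the two surviving cells as the claimed black pentagons. Your write-up is in fact more systematic than the paper's about tabulating the eight strips, deciding which bounds are binding, and arguing that no third component survives, while sharing the paper's reliance on the loosely specified ``appropriate'' inequalities among $a,b,c,d,e,f,g$.
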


\begin{proof}
Since we only consider cases when the black queens also reside in two pentagons, this requirement provides natural constraints for these parameters $a,b,c,d,e,f,g$. Just to name a few, $a \leq g$ because the two pentagons are not overlapped, $g+c \leq 1$ because the right pentagon of white queens should entirely reside in the unit square, $d\leq g$ because otherwise the right pentagon of black queens will not exist and $c-f+d$, which is the $y$-coordinate of the highest point in the right pentagon of white queens, cannot be too large to ensure the right pentagon of black queens does not degenerate to a parallelogram. In these constraints we always use ``$\leq$" instead of ``$<$" so that the Lagrange multipliers will be able to work in a closed domain. 

With school geometry, it is obvious that black queens cannot reside in $0 \leq x < a$ since it is attacked by the left pentagon of white queens. Similar arguments work for the area $0<y \leq a+b$ and $g<x<g+c$. Now the leftover on the unit square is a union of two rectangles. By excluding $x+y < g+2c-2f+d, 0<y-x<b$ and $y-x<d-g$, these two rectangles are shaped into two pentagons and the coordinates of their vertices follow immediately.
\end{proof}

\begin{lemma} The area of the white queens is
$$
ab \, - \,  \frac{1}{2}\,{e}^{2}+cd \, + \,  \frac{1}{2}\,{c}^{2}-{f}^{2} ,
$$
while the area of the black queens is
$$
-a-\frac{3}{4}\,{d}^{2}+2\,g-d-cd-ab-{f}^{2}- \frac{1}{2}\,{e}^{2}-\frac{3}{2}\,{c}^{2}+2\,bc-2\,af+3\,ac+2\,ad+2\,cf-ec-ed+be
$$
$$
+ae-bf+fd+ \frac{3}{2}\,bd-{a}^{2} - \frac{3}{4}\,{b}^{2}-2\,g c+ \frac{1}{2}\,g d- \frac{1}{2}\,g b+ag+gf- \frac{7}{4} \,{g}^{2} .
$$
\end{lemma}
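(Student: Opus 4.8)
The plan is to realize all four regions as interiors of explicit, non-degenerate pentagons --- the two white pentagons given in the statement, and the two black pentagons supplied by the preceding Lemma --- and to compute each area with the shoelace (Gauss) formula: a simple polygon with vertices $(x_1,y_1),\dots,(x_m,y_m)$ listed counterclockwise has area $\frac12\sum_{i=1}^{m}(x_i y_{i+1}-x_{i+1}y_i)$, indices read modulo $m$. Under the constraints already imposed in the preceding Lemma (which guarantee that the black region is genuinely two pentagons and that none of the listed vertex five-tuples degenerates), each five-tuple is convex and positively oriented, so the absolute value may be dropped and the four signed areas simply add up.

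First I would handle the white side, which is the easy half. For $[[0,0],[a,a],[a,a+b-e],[a-e,a+b-e],[0,b]]$ the shoelace sum collapses to $2ab-e^2$, giving area $ab-\frac12 e^2$; for $[[g,0],[g+c,0],[g+c,c-2f+d],[g+c-f,c-f+d],[g,d]]$ every term involving $g$ cancels and the sum reduces to $c^2+2cd-2f^2$, giving area $\frac12 c^2+cd-f^2$. Adding these two yields $ab-\frac12 e^2+cd+\frac12 c^2-f^2$, which is the first claimed formula.

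Then I would apply the same formula to the two black pentagons. The pentagon $[[1,1],[g+c,g+c],[g+c,a+b-e],[a+b-e+g-d,a+b-e],[1,1+d-g]]$ is a routine, if somewhat lengthy, substitution (abbreviating $a+b-e$). The pentagon $[[g,1],[a,1],[a,g+2c-2f+d-a],[\frac12 g+\frac12 d-\frac12 b+c-f,\, \frac12 g+\frac12 d+\frac12 b+c-f],[g,g+b]]$ carries the real weight: its fourth vertex is the intersection point of the two lines $x+y=g+2c-2f+d$ and $y-x=b$ that bound the black region, and squaring and cross-multiplying its half-integer coordinates is what produces the fractions $\frac14,\frac34,\frac74$ and almost all of the mixed monomials ($af, ac, ad, cf, bc, bd, ec, ed, be, ae, bf, fd, gc, gd, gb, ag, gf$) that appear in the statement. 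Expanding its shoelace sum, adding the area of the other black pentagon, and collecting like terms should, after the cancellations, leave exactly the stated expression
\[
-a-\tfrac34 d^2+2g-d-cd-ab-f^2-\tfrac12 e^2-\tfrac32 c^2+2bc-2af+3ac+2ad+2cf-ec-ed+be+ae-bf+fd+\tfrac32 bd-a^2-\tfrac34 b^2-2gc+\tfrac12 gd-\tfrac12 gb+ag+gf-\tfrac74 g^2 .
\]

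The only real obstacle is the sheer bookkeeping of that last expansion: a large pool of monomials must collapse to about two dozen survivors, several fractions appear, and an orientation or sign slip is easy. To keep it honest I would (i) fix orientation by evaluating the signed area at one admissible numerical choice of $(a,b,c,d,e,f,g)$ and checking positivity, (ii) test the degenerate specialization $e=f=0$ (with a shrunken right pentagon) against a direct elementary-geometry area count, and (iii) verify the whole polynomial identity once with a computer algebra system. No mathematical difficulty beyond this is anticipated: the preceding Lemma already delivers the vertex lists, so the content here is precisely the shoelace arithmetic.
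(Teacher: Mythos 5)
Your proposal is correct and matches the paper's proof in substance: both arguments take the vertex lists from the preceding lemma and compute the four pentagon areas by routine coordinate geometry, the paper decomposing each white pentagon as a rectangle minus two right triangles (e.g. $a(a+b-e)-\frac12 a^2-\frac12(a-e)^2=ab-\frac12 e^2$) and dismissing the black side as "simple calculation," while you use the shoelace formula uniformly. Your intermediate values for the two white pentagons ($2ab-e^2$ and $c^2+2cd-2f^2$ as shoelace sums) check out, so the only remaining content is the same bookkeeping the paper also leaves implicit.
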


\begin{proof}
For white queens, the left pentagon is a rectangle minus two triangles. Hence the area is 
$$
a(a+b-e) - \frac{1}{2} a^2 - \frac{1}{2} (a-e)^2 = ab - \frac{1}{2} e^2.
$$
The area of the right pentagon is 
$$
c(d+c-f) - \frac{1}{2} f^2 - \frac{1}{2} (c-f)^2 = \frac{1}{2} c^2 + cd - f^2.
$$
So the area of the white queens follows. 
For black queens, similarly, with the coordinates of the vertices in Lemma 5.1, simple calculation leads to the formula of its area. 
\end{proof}

\begin{theorem} 
The optimal case of the two-pentagon configuration is Jubin's construction.
\end{theorem}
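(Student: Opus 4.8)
The plan is to turn the statement into a constrained polynomial‑optimization problem in the seven parameters $a,b,c,d,e,f,g$ and solve it exactly with computer algebra. By Lemma 5.2 the areas $A_W = A_W(a,\dots,g)$ and $A_B = A_B(a,\dots,g)$ are explicit quadratic polynomials, and ``the optimal two‑pentagon configuration'' is by definition a maximizer of $\min(A_W,A_B)$ over the feasible set $\mathcal{R}\subseteq[0,1]^7$ of tuples for which both colours genuinely occupy two non‑degenerate pentagons inside the unit square. As noted in the proof of Lemma 5.1, $\mathcal{R}$ is carved out by finitely many \emph{linear} inequalities (containment in $[0,1]^2$, non‑overlap of the two white pentagons, non‑negativity of every side length, and non‑degeneracy of each of the four pentagons), so $\mathcal{R}$ is a compact convex polytope and a maximizer exists. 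The first concrete task is therefore to record the complete list of these inequalities, since the proof of Lemma 5.1 only writes down a representative sample.

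Next I would classify the possible maximizers. At a maximizer $x^\ast$ of $\min(A_W,A_B)$ over $\mathcal{R}$ exactly one of three things happens: either $A_W(x^\ast) < A_B(x^\ast)$, in which case $x^\ast$ is a local maximum of $A_W$ alone on $\mathcal{R}$; or $A_W(x^\ast) > A_B(x^\ast)$, the symmetric case; or $A_W(x^\ast) = A_B(x^\ast)$, in which case a subgradient/KKT condition holds — some convex combination of $\nabla A_W$ and $\nabla A_B$ lies in the cone generated by the outer normals of the active faces. In every case one obtains a small polynomial system: for each face of $\mathcal{R}$, i.e. for each choice of which defining inequalities are made active, adjoin the active linear equations, adjoin $A_W - A_B = 0$ in the third case, introduce Lagrange multipliers, and solve. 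Because $A_W$ and $A_B$ are quadratic and everything else is linear, each system has only finitely many real solutions and can be solved exactly in Maple.

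Collecting the critical points over all faces of $\mathcal{R}$, discarding those that violate some inequality, and evaluating $\min(A_W,A_B)$ at the survivors produces a finite list of candidate values; the largest of them is $\frac{7}{48}$, attained at precisely the parameter values that reproduce Benoit Jubin's two pentagons of Figure 5.1. Since $\mathcal{R}$ is compact, this candidate is the global maximum, which is the assertion of the theorem; and the very same computation certifies that Jubin's point is at least a local maximum, reproving the claim made in the introduction.

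The real difficulty is bookkeeping rather than mathematics. One must (i) be certain that the inequality list defining $\mathcal{R}$ is complete, so that no valid two‑pentagon configuration is overlooked and no degenerate one is admitted, and (ii) keep the face‑by‑face Lagrange analysis from exploding combinatorially. In practice most faces are eliminated at once — many are empty, and on many others $A_W$ (or $A_B$) is strictly monotone in some unused parameter, pushing the optimum onto a lower‑dimensional face — so only a handful of genuine polynomial systems need to be solved; and one guards against slips by an independent direct check that Jubin's explicit pentagons have white and black areas both equal to $\frac{7}{48}$.
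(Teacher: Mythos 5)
Your proposal is correct in outline and its computational core coincides with the paper's: both reduce the question to Lagrange-multiplier analysis of the two explicit quadratic area polynomials from Lemma 5.2 and let Maple solve the resulting algebraic system, finding the value $\frac{7}{48}$ at Jubin's parameters. The difference is in how much of the optimization you actually carry out. The paper's proof simply asserts that ``optimally, the areas of the white queens and black queens should be the same,'' runs the procedure {\tt MaxC} to locate the extreme points of $A_W$ subject to the single equality constraint $A_W=A_B$, and stops there; it performs no analysis of the boundary of the parameter region and does not rule out a maximizer of $\min(A_W,A_B)$ at which the two areas differ. This is consistent with the authors' own more modest description of the result in the introduction as proving that Jubin's construction is ``at least a local optimum.'' Your plan is the completed version of that argument: you justify the equal-areas reduction via the three-case analysis of a maximizer of $\min(A_W,A_B)$, and you sweep the faces of the feasible polytope $\mathcal{R}$ with a KKT condition on each, which is what a genuine proof of \emph{global} optimality within the two-pentagon family requires. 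What your approach buys is rigor at the boundary (and an honest existence argument via compactness); what it costs is exactly the bookkeeping you flag --- a complete and certified list of the linear inequalities cutting out $\mathcal{R}$, which the paper never writes down and which is the one place your proof could silently go wrong if a valid degenerate-side configuration were excluded or an invalid one admitted.
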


\begin{proof}

The procedure {\tt MaxC(L,v)} in the Maple package {\tt PeaceableQueens.txt} takes a list of length 2, $L$, consisting of polynomials in the list of variables $v$, and $v$ as inputs and outputs all the extreme points of $L[1]$, subject to the constraint $L[1]=L[2]$, using Lagrange multipliers.

Optimally, the areas of the white queens and black queens should be the same. Maximizing this quantity with the procedure {\tt MaxC} under this constraint shows that the maximum value is
$$
\frac{7}{48}  ,
$$
and this is indeed achieved by Jubin's construction, in which the white queens are located inside the pentagons
$$
[\, [0,0] \, , \, [\frac{1}{4}, \frac{1}{4}] \, , \,  [ \frac{1}{4}, \frac{1}{2}]
\, , \, [\frac{1}{6}, \frac{1}{2}] \, , \, [0,\frac{1}{3}] \,  ] ,
$$
and
$$
[\, [ \frac{1}{2},0] \, , \, [\frac{3}{4},0] \, , \, [\frac{3}{4},\frac{1}{4}] \, , \,
[\frac{2}{3},\frac{1}{3}] \, , \, [ \frac{1}{2},\frac{1}{6}] \, ] ,
$$
and the black queens reside inside the pentagons
$$
[\, [ \frac{1}{2},1] \, , \, [\frac{1}{4},1]
\, , \, [\frac{1}{4},\frac{3}{4}] \, , \, [\frac{1}{3},\frac{2}{3}] \, , \,
[ \frac{1}{2},\frac{5}{6}] \,  ] ,
$$
and
$$
[\, [1,1] \, ,\ \, [\frac{3}{4},\frac{3}{4}] \, , \, [\frac{3}{4}, \frac{1}{2}] \, , \,
[\frac{5}{6}, \frac{1}{2}] \, , \, [1,\frac{2}{3}] \,  ] . 
$$
\end{proof}

It seems natural that two components are optimal because if there is only one connected component for white queens, black queens still have two connected components. From the view of symmetry, it seems good to add the other component for white queens. In the rest of the chapter, it is shown that with only one connected component it is unlikely to surpass the $\frac{7}{48}$ result. And by experimenting with three or more connected components for the white queens, it seems that it is not possible to improve on
Jubin's construction, hence we believe that it is indeed optimal (at least asymptotically). By the way, Donald Knuth kindly informed us that
what we (and the OEIS) call Jubin's construction already appears in  Stephen Ainley's delightful book {\it Mathematical Puzzles} \cite{1}, p. 31, Fig, 28(A) .

\section{Single Connected Component}
In this section, we try to find the optimal case when $W$ is a single connected component and when the configuration is restricted to rectangles, parallelograms, triangles and finally obtain a lower bound for the optimal case of one connected component.

\subsection{A Single Rectangle}
Let the rectangle for white queens be $[\,[0,0]\,, \,[a,0]\,, \,[a,b]\,,\, [0,b]\,]$, with the obvious fact that for a rectangle with a given size, placing it in the corner will lead to the largest non-attacking area. The area for white queens is $ab$ and the area for black queens is $(1-a-b)^2$. We'd like to find the maximum of $ab$ under the condition 
$$
ab = (\max(1-a-b, 0))^2, \quad 0 \leq a,b \leq 1.
$$

\begin{figure}[h!]
  \center
  \includegraphics[width=0.75\textwidth]{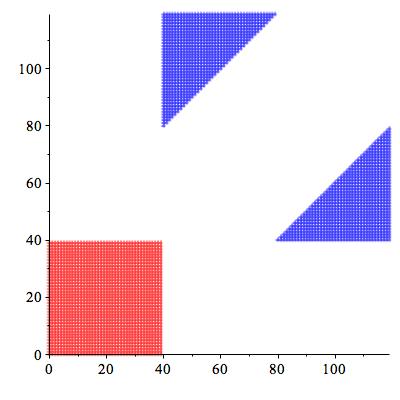}
  \caption{The Optimal Rectangle for a 120 by 120 Chess Board}
\end{figure}

Since $a$ and $b$ are symmetric, the maximum must be on the line $a=b$. Hence the optimal case is when 
$$
a=b=\frac{1}{3}
$$
and the largest area for peaceable queens when the configuration for white queens is a rectangle is $\frac{1}{9}$.

\subsection{A Single Parallelogram}
Let the parallelogram for white queens be $[\,[0,0]\,,\,[a,a]\,,\,[a,a+b]\,,\,[0,b]\,]$.

\begin{figure}[h!]
  \center
  \includegraphics[width=0.75\textwidth]{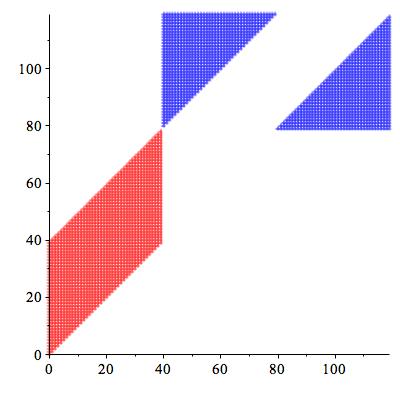}
  \caption{The Optimal Parallelogram for a 120 by 120 Chess Board}
\end{figure}

Note that as mentioned in the beginning of this section, because the line segment must be vertical, horizontal or of slope $\pm 1$ and the corner is the best place to locate a shape, there are only two kinds of parallelograms, the other one being $[[0,0],[b,0],[a+b,a],[a,a]]$. Obviously they are symmetric with respect to the line $y=x$, so let's focus on one of them. 

The area for white queens is still $ab$ and the area for black queens is still $(\max(1-a-b, 0))^2$. So similarly with the rectangle case, the maximum area $\frac{1}{9}$ is reached when
$$
a=b=\frac{1}{3}.
$$

\subsection{A Single Triangle}
With similar arguments as in the last subsection, the optimal triangle must have the format: $[\,[0,0]\,,\,[0,a]\,,\,[a,a]\,]$. The area for white queens is 
$$
\frac{1}{2} a^2
$$
and the area for black queens is 
$$
\frac{1}{2} (1-a)^2.
$$
with the condition $0 \leq a \leq 1$.

Hence, when $a=\frac{1}{2}$ the area reaches its maximum $\frac{1}{8}$, which is better than the rectangle or parallelogram configuration. 

\begin{figure}[h!]
  \center
  \includegraphics[width=0.75\textwidth]{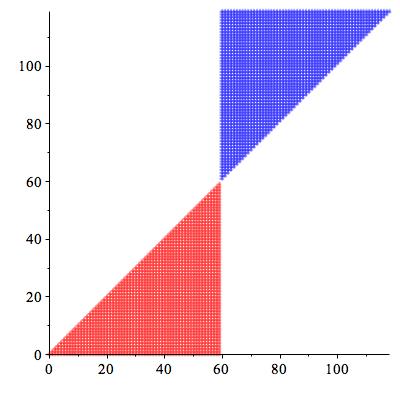}
  \caption{The Optimal Triangle for a 120 by 120 Chess Board}
\end{figure}

By the way, $[\,[0,0]\,,\,[0,a]\,,\,[a,0]\,]$ won't be a good candidate for optimal triangles because we can always extend it to a square $[\,[0,0]\,,\,[0,a]\,,\, [a,a] \, , \, [a,0]\,]$ without decreasing the area of black queens. Then its maximum cannot exceed the maximum of rectangles, which is $\frac{1}{9}$.

\subsection{A Single Hexagon}
After looking at specific configurations in the above three subsections, we'd like to find some numerical lower bounds for the single connected component configuration. It is interesting to find out or at least get a numerical estimation how large the area of white or black queens can be if the white queens are in a single connected component.  Note that from rectangles and parallelogram we get a lower bound $\frac{1}{9} \approx 0.1111$ and from triangles we get a better lower bound $\frac{1}{8} = 0.125$.

The natural thing is that we want to place the polygon in a corner. Because of the restriction of the orientations of its sides, at most it can be an octagon. Let's place the polygon in the lower left corner. Then we immediately realize that it is a waste if the polygon doesn't fill the lower left corner of the unit square. It is the same for the upper right side of the polygon. If part of its vertices are $[\,[a,b]\,,\, [a, b+c]\,,\,[a-d, b+c+d]\,,\,[a-d-f, b+c+d]\,]$, then we can always extend the polygon to $[\, \dots \,,\,[a,b]\,,\, [a, b+c+d]\,,\, [a-d-f, b+c+d]\,, \, \dots \, ] $ without decreasing the area of black queens. 

Hence the general shape is a hexagon 
$$
[\, [0,0]\,,\,[a,0]\,,\,[a+b,b]\,,\,[a+b, b+c]\,,\,[d, b+c]\,,\,[0,b+c-d]\,]
$$ 
with four parameters. Then the area for white queens is 
$$
(a+b)(b+c) - \frac{1}{2} (b^2+d^2),
$$
and the area for black queens is 
$$
\frac{1}{2} (1-a-b-c)^2 + \frac{1}{2} (1-a-2b-c+d)^2.
$$
With the procedure {\tt MaxC}, one of the local maximums found using Lagrange multipliers is when 
$$
a=c=d = \frac{1}{2}, \, b=0.
$$
However, actually this is the optimal triangle with an area of $\frac{1}{8}$.

\begin{figure}[h!]
  \center
  \includegraphics[width=0.75\textwidth]{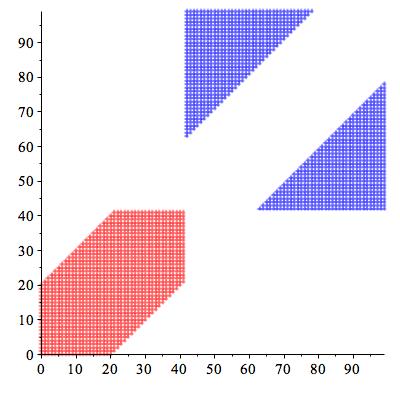}
  \caption{The Nearly Best Lower Bound Configuration for a 100 by 100 Chess Board}
\end{figure}

Another local maximum is when $a=b=c=d$. In that case, we have 
$$
3a^2 = (1-3a)^2.
$$
Hence when 
$$a = \frac{3-\sqrt{3}}{6} \approx 0.2113248654,$$ the area of white queens is maximized at 
$$ 3a^2 = \frac{2-\sqrt{3}}{2} \approx 0.1339745962. $$
The best configuration of hexagons is found and at least we have a numerical lower bound 0.1339745962 for the best single component configuration.

\section{Two Components}
Since in Jubin's construction, there are two pentagons, it is natural to think of the optimum of certain two-component configurations. The difficulty for analyzing the two-component is that more parameters are introduced and the area formula for black queens becomes a much more complicated piece-wise function. 

In this section, the cylindrical algebraic decomposition algorithm in quantifier elimination is applied to find out the exact optimal parameters and the maximum areas. Given a set $S$ of polynomials in $\RR^n$, a cylindrical algebraic decomposition is a decomposition of $\RR^n$ into semi-algebraic connected sets called cells, on which each polynomial has constant sign, either +, - or 0. With such a decomposition it is easy to give a solution of a system of inequalities and equations defined by the polynomials, i.e. a real polynomial system.

\subsection{Two Identical Squares}
To keep the number of parameters as few as possible, the configuration of two identical squares is the first we'd like to study. There are two parameters, the side length $a$ and the $x$-coordinate $s$ of the lower left vertex of the right square, the left square's lower left vertex being the origin. 

The two squares are 
$$[\, [0,0] \,,\, [a,0] \,,\, [a,a] \,,\, [0,a] \, ]$$
and 
$$[\, [s,0] \,,\, [s+a,0] \,,\, [s+a,a] \,,\, [s,a] \, ].$$ 

\begin{figure}[h!]
  \center
  \includegraphics[width=0.75\textwidth]{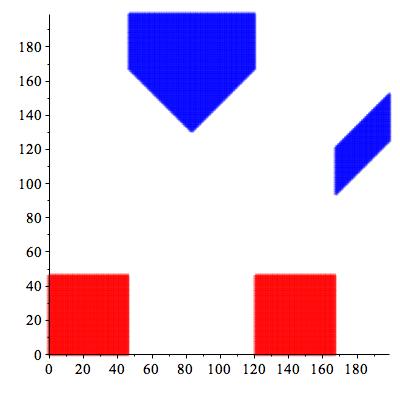}
  \caption{The Nearly Optimal Two Identical Squares Configuration for a 200 by 200 Chess Board}
\end{figure}

Based on this configuration, the domain is 
$$
0 \leq a \leq \frac{1}{2}, \quad a \leq s \leq 1-a.
$$

The area of white queens is 
$$
2a^2.
$$

Actually the formula for black queens is very complicated, especially when $a$ is small there may be a lot of components for $B$. However, by experimentation (procedure {\tt FindM2Square}), we found that for all mid-range $s \in [0.24, 0.76]$, $a$ around 0.23 will always maximize the area. Then we just need to focus on the shape of $B$ when $a$ is not far from its optimum. 

The area of black queens is 
$$
(s-a)(1-s-a) + \frac{1}{4}(s-a)^2 + (\max(1-s-2a, 0))^2 + \max(s-2a,0)(1-s-a).
$$

The domain for $a$ and $s$ is a triangle. The area formula for black queens shows that the two lines $s=2a$ and $s=1-2a$ separate the domain into 4 regions. In each region, we have a polynomial formula for the area of black queens. Since the area of white queens $W$ is just a simple formula of $a$, we need to maximize $a$ with the condition $W=B$.

When $s \geq 2a$ and $s \geq 1-2a$, by cylindrical algebraic decomposition we obtained 
$$
\begin{cases} 
    \frac{1}{2}(-1+\sqrt{2}) \leq a < \frac{1}{27}(1+2\sqrt{7}) & s=\frac{4+a}{7} + \frac{2}{7} \sqrt{4-19a+9a^2} \\
    \frac{1}{27}(1+2\sqrt{7}) \leq a < \frac{1}{18}(19-\sqrt{217}) &s=\frac{4+a}{7} \pm \frac{2}{7} \sqrt{4-19a+9a^2} \\
    a=\frac{1}{18}(19-\sqrt{217}) & s= \frac{4+a}{7} - \frac{2}{7} \sqrt{4-19a+9a^2}
\end{cases}.
$$

When $s \leq 2a$ and $s \geq 1-2a$, the result is an empty set. 

When $s \leq 2a$ and $s \leq 1-2a$, we obtained
$$
\frac{2}{9} \leq a \leq \frac{1}{7}(3-\sqrt{2}), \quad s=2-7a-2\sqrt{-2a+9a^2}.
$$

When $s \geq 2a$ and $s \leq 1-2a$, we obtained
$$
\frac{2}{9} \leq a \leq \frac{1}{27}(1+2\sqrt{7}), \quad s=3a - \frac{2}{\sqrt{3}} \sqrt{1-7a+12a^2}.
$$

Comparing the four cases, we found that the largest area occurred in case 1, when 
$$
a = \frac{1}{18} (19-\sqrt{217}) \approx 0.2371711193,
$$
$$
s = \frac{13}{18} - \frac{1}{126} \sqrt{217} \approx 0.6053101598 .
$$
The largest area is ${\frac {289}{81}}-{\frac {19\,\sqrt {217}}{81}} \approx  0.112500281.$

\subsection{Two Identical Triangles}
The configuration of two identical isosceles right triangles with the same orientation is the next to be considered. There are also two parameters, the leg length $a$ and the $x$-coordinate $s$ of the lower left vertex of the triangle on the right. Note that the slopes of both triangles' hypotenuses are $+1$. 

The two isosceles right triangles are
$$[\,[0,0]\,,\,[a,0]\,,\,[a,a]\,]$$
and 
$$
[\,[s,0]\,,\,[a+s,0]\,,\,[a+s,a]\,].
$$
The domain for the two parameters $a$ and $s$ is also 
$$
0 \leq a \leq \frac{1}{2}, \quad a \leq s \leq 1-a.
$$
The area of white queens is $a^2$ and for the area of black queens, by numerical experimentation, we found that for all mid-range $s \in [0.32, 0.68]$, the area is maximized when $a$ is around 0.31. Hence for $a$ around 0.31, we have that the area of black queens is 
$$
2(s-a)(1-s-a) + \frac{1}{2} (s-a)^2 + \frac{1}{2} (1-s-a)^2 + \frac{1}{2} (\max (1-s-2a, 0))^2.
$$
When $s \geq 1-2a$, by cylindrical algebraic decomposition we obtained 
$$
\begin{cases} 
    \frac{1}{2}(2-\sqrt{2}) \leq a < \frac{1}{4}(-1+\sqrt{5}) & s=\frac{1}{2} + \frac{1}{2} \sqrt{3-12a+8a^2} \\
   \frac{1}{4}(-1+\sqrt{5}) \leq a < \frac{1}{4}(3-\sqrt{3}) &s=\frac{1}{2} \pm \frac{1}{2} \sqrt{3-12a+8a^2} \\
    a=\frac{1}{4}(3-\sqrt{3}) & s=\frac{1}{2} - \frac{1}{2} \sqrt{3-12a+8a^2}
\end{cases}.
$$
When $s \leq 1-2a$, we obtained 
$$
\frac{1}{11}(5-\sqrt{3}) \leq a \leq \frac{1}{4}(-1+\sqrt{5}), s=2a - \sqrt{2-10a+12a^2}.
$$
Hence the area is maximized when
$$a = \frac{1}{4}(3-\sqrt{3}) \approx 0.316987298,$$
$$
s = \frac{1}{2}.
$$
The largest area is $\frac{3}{4} - \frac{3}{8} \sqrt{3} \approx 0.1004809470$.

\begin{figure}[h!]
  \center
  \includegraphics[width=0.75\textwidth]{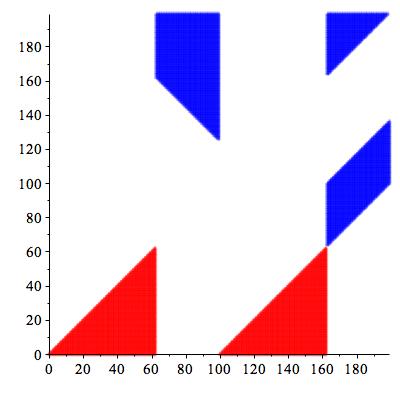}
  \caption{The Nearly Optimal Two Identical Isosceles Right Triangles with the Same Orientation Configuration for a 200 by 200 Chess Board}
\end{figure}

Actually, a larger area can be obtained if two identical isosceles right triangles with different orientations are considered. For example, if we take the two triangles to be $$[\,[0,0]\,,\,[a,0]\,,\,[a,a]\,]$$
and 
$$
[\,[1-a,0]\,,\,[1,0]\,,\,[1-a,a]\,],
$$
then the area of black queens is 
$$
a(1-2a) + (\frac{1}{2} - a)^2 = -a^2 + \frac{1}{4}.
$$
Equalizing the areas of white queens and black queens, we get 
$$
\textrm{Area}(W) = a^2 = \frac{1}{8},
$$
which is greater than the optimal case of two identical isosceles right triangles with the same orientation.

\begin{figure}[h!]
  \center
  \includegraphics[width=0.75\textwidth]{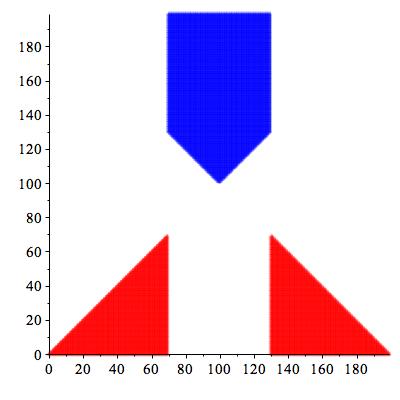}
  \caption{An Example of Two Identical Isosceles Right Triangles with Different Orientations Configuration for a 200 by 200 Chess Board}
\end{figure}

\subsection{One Square and One Triangle with the Same Side Length}

\begin{figure}[h!]
  \center
  \includegraphics[width=0.75\textwidth]{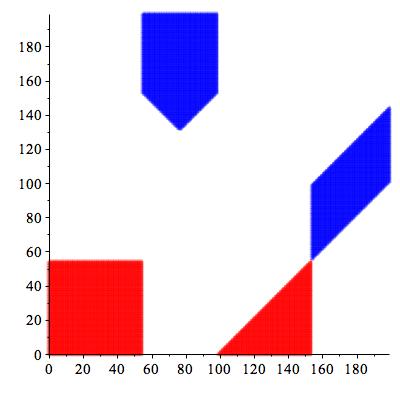}
  \caption{The Nearly Optimal One Square and One Triangle (with the same side length) Configuration for a 200 by 200 Chess Board}
\end{figure}

With the same notations as the above two subsections, let $W$ be the union of the square 
$$[\, [0,0] \,,\, [a,0] \,,\, [a,a] \,,\, [0,a] \, ]$$
and the triangle
$$ [\,[s,0]\,,\,[a+s,0]\,,\,[a+s,a]\,]. $$
Then the area of white queens is $\frac{3}{2} a^2$ and the area of black queens is 
$$
a(s-a)(1-s-a) + \frac{1}{4} (s-a)^2 + (\max(1-s-2a, 0))^2
$$
when $a$ is around its optimum 0.27 and $s \in [0.28, 0.72]$. It is obtained that when $s \geq 1-2a$
$$
\begin{cases} 
    \frac{1}{2}(-2+\sqrt{6}) \leq a < \frac{1}{21}(1+\sqrt{22}) & s=\frac{4-a}{7} + \frac{1}{7} \sqrt{16-64a+22a^2} \\
   \frac{1}{21}(1+\sqrt{22}) \leq a < \frac{2}{11}(8-\sqrt{42}) &s=\frac{4-a}{7} \pm \frac{1}{7} \sqrt{16-64a+22a^2} \\
    a=\frac{2}{11}(8-\sqrt{42}) & s=\frac{4-a}{7} - \frac{1}{7} \sqrt{16-64a+22a^2}
\end{cases},
$$
and when $s \leq 1-2a$
$$
\frac{1}{15} (6-\sqrt{6}) \leq a \leq \frac{1}{21} (1+\sqrt{22}), \quad s = \frac{7a}{3} - \frac{1}{3} \sqrt{12-72a+106a^2}.
$$
Consequently, we have the maximized area when
$$
a = \frac{2}{11} (8- \sqrt{42} ) \approx 0.276228965,
$$
$$
s = \frac{112}{33} - \frac{14}{33} \sqrt{42} - \frac{50}{33} \sqrt{7} + \frac{52}{33} \sqrt{6} \approx 0.495622162.
$$
The largest area is $\frac{636}{121} - \frac{96}{121} \sqrt{42} \approx 0.1144536616$. Among the three configurations where CAD is applied in this section, we found that this configuration with one square and one triangle has the largest area. 

\section{Remarks}
Our method can be easily generalized for configurations with more components and/or more parameters. For instance, let's consider the configuration of two squares, not necessarily identical. Then there are three parameters, the side length $a$ of the left square, the side length $b$ of the right square and the $x$-coordinate $s$ of the right square's lower left vertex. For fixed $b$ and $s$, we can find the interval of $a$ in which the optimum is located. Then for each fixed $s$, we are able to find the interval of $b$ such that its corresponding $a$ will lead to the largest area $a^2 + b^2$. When the estimated optimal parameters are determined, a piece-wise function of the area of black queens follows. 

The main difficulty of this peaceable queens problem lies in the number of parameters and the complexity of the area formula of black queens. When there are multiple components, as long as the number of parameters is limited, it should be still doable. For example, the configuration of three identical squares which are placed equidistantly has only one parameter, the side length $a$. When the chess board is 240 by 240, the optimal $a$ is around 40, which means in the unit square the optimal side length is around $\frac{1}{6}.$ 

In conclusion, in this chapter we prove that Jubin's configuration is a local optimum. Optimal cases of some certain configurations are discussed. Future work includes the exact solution of complicated configurations with numerous parameters, whether the white queens have two components under the best configuration, and proof or disproof that Jubin's configuration is indeed the best.

%\bibliographystyle{amsalpha}
%\bibliography{biblio}

%NOTE: Older instructions ask for a CV of some kind at the end. This is no longer required.

\end{document}